\numberwithin{equation}{section}
\newtheorem{theorem}{Theorem}[section]
\newtheorem{lemma}[theorem]{Lemma}
\newtheorem{cor}[theorem]{Corollary}
\newtheorem{prop}[theorem]{Proposition}
\theoremstyle{definition}
\newtheorem{definition}[theorem]{Definition}
\newtheorem{remark}[theorem]{Remark}
\newtheorem{example}[theorem]{Example}
\newtheorem{ass}[theorem]{Assumption}
\newcommand{\graph}{\mathrm{graph}}
\newcommand{\sing}{\mathrm{sing}}
\newcommand{\del}{\partial}
\newcommand{\mdiv}{\mathrm{div}}
\newcommand{\cL}{\mathcal{L}}
\newcommand{\bY}{\mathbf{Y}}
\newcommand{\cH}{\mathcal{H}}
\newcommand{\cA}{\mathcal{A}}
\newcommand{\osc}{\mathrm{osc}}
\newcommand{\grad}{\mathrm{grad}}
\newcommand{\D}{\mathcal{D}}
\newcommand{\R}{\mathbb{R}}
\newcommand{\N}{\mathbb{N}}
\newcommand{\haus}{\mathcal{H}}
\newcommand{\eps}{\varepsilon}
\newcommand{\ver}{\R^n}
\newcommand{\hor}{{P_{\pi/2}}}
\newcommand{\spt}{\mathrm{spt}}
\newcommand{\Gr}{\mathrm{Gr}}       %Grassmannian 
\newcommand{\interior}[1]{ {\kern0pt#1}^{\mathrm{o}} }
\newcommand*\dif{d}
\DeclareMathOperator{\dive}{div}
\def\subsection{\@startsection{subsection}{2}%
  \z@{.5\linespacing\@plus.7\linespacing}{.3\linespacing}%
  {\normalfont\bfseries}}
\begin{document}

\title{Regularity of minimal surfaces with capillary boundary conditions}

\author{Luigi De Masi}
\address{\textit{Luigi De Masi: } Dipartimento di Matematica, Università di Trento, Via Sommarive 14, I-38123 Povo (Trento), Italy}
\email{luigi.demasi@unitn.it}

\author{Nick Edelen}
\address{\textit{Nick Edelen: } Department of Mathematics, University of Notre Dame, Notre Dame, IN, 46556, USA}
\email{nedelen@nd.edu}

\author{Carlo Gasparetto}
\address{\textit{Carlo Gasparetto: } Dipartimento di Matematica, Università di Pisa, Largo Bruno Pontecorvo 5, 56127 Pisa, Italy}
\email{carlo.gasparetto@dm.unipi.it}

\author{Chao Li}
\address{\textit{Chao Li: } Courant Institute, New York University, 251 Mercer St, New York, NY 10012  }
\email{chaoli@nyu.edu}

\begin{abstract}
	We prove $\varepsilon$-regularity theorems for varifolds with capillary boundary condition in a Riemannian manifold. These varifolds were first introduced by Kagaya-Tonegawa \cite{KaTo}. We establish a uniform first variation control for all such varifolds (and free-boundary varifolds generally) satisfying a sharp density bound and prove that if a capillary varifold has bounded mean curvature and is close to a capillary half-plane with angle not equal to $\tfrac{\pi}{2}$, then it coincides with a $C^{1,\alpha}$ properly embedded hypersurface.  We apply our theorem to deduce regularity at a generic point along the boundary in the region where the density is strictly less than $1$.
\end{abstract}
\maketitle

\tableofcontents

\section{Introduction}

In this paper, we are interested in the boundary regularity of capillary hypersurfaces, which are surfaces with prescribed mean curvature that meet the container boundary at a prescribed angle.  Such surfaces arise as critical points of the Gauss' free energy: given a Riemannian manifold with boundary $(X^{n+1},g)$, functions $h, \beta \in C^1(X)$, the Gauss' free energy for an open subset $E\subset X$ is defined as
	\begin{equation}\label{eqn:gauss.free.energy}
		\cA(E) = \cH^n (\partial E\cap \mathring X) +\int_{\partial E\cap \partial X} \cos\beta(x) d\cH^n  + \int_E h(x)d\cH^{n+1}.
	\end{equation}
	When $E\subset X$ is stationary for $\cA$, or stationary among domains with a fixed volume, the interface $M = \partial E\cap \mathring X$ is called a capillary (hyper)surface. Such surfaces are the mathematical model for interfaces of incompressible fluids in an equilibrium state. Formally (or when $M$ is sufficiently regular), the Euler-Lagrange equation for $M$ - also known as \textit{Young's law } \cite{Young}-  asserts that the mean curvature of $M$ equals $h$ (plus a constant, if volume-constrained), and that $M$ meets $\partial X$ at an angle $\beta$. The existence, regularity, and geometric properties of capillary surfaces have attracted active investigations for the past two centuries, see, e.g. \cite{Finn} for a beautiful historical survey on the subject. There have also been interesting developments and geometric applications in recent years, see, e.g. \cite{King-Maggi-Stuvard-2022, Jia-Zhang-2024, Wang-Xia-2019, DePhil-Fusco-Morini,Li-Polyhedron,ChaiWang-dihedral}.
    A non-local version of Gauss' energy was introduced in \cite{Maggi-Valdinoci_2017}, see also the survey \cite{Valdinoci-survey} and the references therein.
    For a capillary problem involving the one-phase Bernoulli free boundary problem, see \cite{Ferreri-Tortone-Velichkov-2023}.

 	The primary scope of this paper is to establish Allard type \cite{allard1972}, $\varepsilon$-regularity results for capillary hypersurfaces near their boundary. Such results are fundamental in understanding the regularity of stationary capillary hypersurfaces, which can arise not only in minimization problems but also in min-max constructions.  Our weak notion of a hypersurface is a stationary varifold,  and since our regularity results are local, we model a neighborhood of $p\in \partial X$ with the half-ball $B_1\cap \{x_1\le 0\}$ in $\R^{n+1}$ endowed with a Riemannian metric $g$.
	
	We remark here that an $\varepsilon$-regularity theorem for \emph{minimizing} capillary hypersurfaces was obtained by Taylor in \cite{Taylor} (when $n = 2$), and by De Philippis-Maggi in \cite{DePhilippisMaggi} (in general dimensions).  Let us also remark that in the special case when $\beta \equiv \pi/2$, capillary surfaces are referred to as free-boundary surfaces, and the regularity of these has been extensively studied by \cite{GrJo}, \cite{Gruter}, and others.
 
	Varifolds with prescribed contact angle arise naturally as critical points of the Gauss' free energy \eqref{eqn:gauss.free.energy}.
    Indeed, assume $E\subset \{x_1\le 0\}$ is a smooth, relatively open set such that the first variation of $\cA$ vanishes at $E$ for every smooth compactly supported vector field $X$ that is tangential along $\R^n=\{x_1=0\}$, that is
    \begin{equation}
        \frac{d}{dt}\Big|_{t=0}\cA(\Phi_t(E))=0,
    \end{equation}
    where $\Phi_t(x)=x+tX(x)$.
    Then a simple computation (see section \ref{subsection.capillary.varifolds} for details) shows that the varifold
    % Indeed, let $E\subset \{x_1\le 0\}$ be a relatively open set, and $X$ a compactly supported vector field that is tangential along $\R^n=\{x_1=0\}$. Then a simple computation (see section \ref{subsection.capillary.varifolds} for details) shows that the varifold
	\begin{equation}\label{eqn:intro.capillary.varifolds.induced.open.sets}
		V=[\partial E\cap \{x_1<0\}] + \cos \beta [\partial E\cap (\R^n=\{x_1=0\})]
	\end{equation}
	behaves like a `free-boundary' varifold with bounded first variation, see \eqref{eqn:capillary.first.variation.intro} below. Motivated by this, we define a capillary varifold $V$ in the form of $V = V_I + V_B$, corresponding to the interior part (e.g. $V_I = [\partial E\cap \{x_1<0\}]$) and the boundary part (e.g. $V_B =\cos\beta[\partial E\cap \R^n]$), and ask specifically that $V_B=\cos \beta [S]$ for some set $S\in \R^n$. In this weak formulation, the (distributional) condition that $V_I$ meets $\R^n$ with angle $\beta$ and has integrable mean curvature (i.e. the Euler-Lagrange equation for \eqref{eqn:gauss.free.energy}) becomes:
	\begin{align}\label{eqn:capillary.first.variation.intro}
	\delta_{\beta, g} V(X) &:= \int \mdiv_{V, g}(X) d\mu_{V_I} + \int_S \mdiv_{\R^n, g}(X \cos\beta) d\haus^n_g \\
	&= - \int g(H^{tan}, X) d\mu_{V_I}
    % , \quad \forall X \in C^1_c(B_1, \R^{n+1}) \text{ tangential along $\{ x_1 = 0\}$},
	\end{align}
    for all $X\in C^1_c(B_1, \R^{n+1})$ tangential along $\{x_1=0\}$,
	for some mean curvature vector field $H^{tan}\in L_{loc}^1(\mu_{V_I})$. Here $\mdiv_{V, g}$ is the $g$-divergence in the tangent space of $V$, and $\haus^n_g$ is the $n$-dimensional Hausdorff measure with respect to $g$.
	
	\begin{definition}\label{def.capillary.varifolds}
		Given a Riemannian metric $g$ in $B_1\subset \R^{n+1}$ and $\beta\in C^1(B_1)$, we say a signed $n$-rectifiable varifold $V$ in $B_1\cap \{x_1\le 0\}$ is a $(\beta,S)$-capillary for some set $S\subset \{x_1=0\}$, if $V=V_I+V_B:= V\llcorner \{x_1<0\} + \cos \beta \cH^n_g \llcorner S$ satisfies \eqref{eqn:capillary.first.variation.intro}, where $V_I$ is integral and $V_B$ has density equal to $\cos \beta$.
	\end{definition}
	
	\begin{example}\label{example.model.capillary.half.plane}
		An archetype example, which serves the model for our regularity theorem, is the half-plane capillary varifold
		\[V^{(\theta)} = [P_\theta^-]+\cos\theta [\{x_1=0,x_{n+1}\le 0\}].\]
		Here $g$ is the Euclidean metric, $\beta=\theta\in (0,\pi)$ is constant, $H=0$, and $P_\theta^{-}$ is the half plane
		\[P_\theta^- = \{x\in \R^{n+1}: x_1\cos \theta + x_{n+1}\sin\theta =0, x_1\le 0\}.\]
	\end{example}
	
	\begin{example}[Swapping the wet region]\label{example.swap.region}
		One way of constructing new capillary varifolds from existing ones is by `swapping the wet region'. Suppose $V = V_I +\cos\beta \cH^n_g\llcorner S$ is $(\beta,S)$-capillary. We define $\tilde V$ by setting
		\[\tilde V = V_I  + \cos (\pi - \beta) \cH_g^n\llcorner (\R^n\setminus S).\]
		It is straightforward to check (using the divergence theorem) that $\tilde V$ is $(\pi -\beta, \R^n\setminus S)$-capillary. Particularly, if $V$ is induced by $E$ as in \eqref{eqn:intro.capillary.varifolds.induced.open.sets}, then $\tilde V$ is induced by $B_1\setminus E$.
	\end{example}
	
	\begin{remark}
		When $\beta$ is a constant, capillary varifolds were first defined and investigated by Kagaya-Tonegawa \cite{KaTo}, where a monotonicity formula for $V$ was established. Note that in this case $V$ is a free-boundary varifold with integrable mean curvature, in the sense that
            \begin{equation}\label{eqn:intro.free.boundary}
                \delta_g V(X):=\int \dive_{V,g}(X) d\mu_V=-\int g(H^{tan},X) d\mu_V
            \end{equation}
            for all $X\in C_c^1(B_1,\R^{n+1})$ tangential to $\R^n$.
	\end{remark}
	
	\begin{remark}
		Varifolds as in Definition \ref{def.capillary.varifolds} naturally arise in the min-max construction of capillary surfaces \cite{LiZhouZhu}, \cite{DeMasiDePhilippis}. Particularly, they appear as varifold limits of the suitably weighted boundary of a min-max sequence of  \eqref{eqn:gauss.free.energy}.
	\end{remark}

	Although well motivated, Definition \ref{def.capillary.varifolds} is very general and allows degenerate phenomena not necessarily seen in the formulation induced by a domain $E$.  For instance, when $V_I$ is the varifold associated to a sufficiently regular hypersurface $M$, \eqref{eqn:capillary.first.variation.intro} asserts that, at every boundary point $M$ either meets $\{ x_1 = 0 \}$ at angle $\beta$ (when the boundary of $S$ nearby coincides with the boundary of $\del M$), or instead $M$ could meet $\{ x_1 = 0 \}$ at angle $\pi/2$ (when $S$ has no boundary nearby).  Moreover, a sequence of $(\beta, S)$-capillary hypersurfaces $M_j$ all meeting $\{x_1=0\}$ at constant angle $\beta \ne \tfrac{\pi}{2}$ could convergence as varifolds to a $(\beta,S)$-capillary hypersurface meeting $\{x_1=0\}$ at an entirely different (constant) angle, see the examples in Remarks \ref{rmk:triple.junction} and \ref{rem:example-collision}.
	
	Another essential challenge with Definition \ref{def.capillary.varifolds} is that the condition that $V$ has bounded $g$-first variation (a consequence of an identity like \eqref{eqn:capillary.first.variation.intro}) does not necessarily guarantee that $V_I$ and $V_B$ each individually have bounded $g$-first variation. In other words, although the composite varifold $V$ will have a distributional notion of boundary and mean curvature, the interior $V_I$ and boundary $V_B$ components of $V$ need not themselves have distributional boundaries/mean curvatures.  Because of this, there is not a general compactness theory for capillary varifolds that respect the splitting of the interior and the boundary parts. See Remark \ref{rem:teaser1-sharp} for such an example.
	
	\subsection{First variation control}
	In light of these issues, we first establish a uniform first variation control for capillary varifolds, which asserts that under a necessary density bound, both $V_I$ and $V_B$ have uniform first variation bounds. Our result holds also for general free-boundary varifolds $V$ in the sense of \eqref{eqn:intro.free.boundary} without assuming that $V\llcorner \{x_1=0\} = 0$ - particularly, when $\beta$ is constant, a $(\beta,S)$-capillary varifold $V$ is free-boundary. As such, we anticipate that this should be widely applicable for the study of free-boundary minimal surfaces. See Theorems \ref{thm:cap-first-var} and \ref{thm:refined} for more detailed conclusions.
	
	\begin{theorem}[First variation control]\label{thm:teaser1}
		Let $g$ be a $C^1$ metric on $B_1 \subset \R^{n+1}$ (for $n \geq 1$), $\beta$ a $C^1$ function on $B_1$, $S \subset \R^n$, and $V$ a $(\beta, S)$-capillary varifold in $(B_1, g)$.  Given $\alpha > 0$, there is a constant $\delta(n, \alpha)>0$ so that if
		\begin{gather}
		\Theta_V(0, 1) + (\cos\beta(0))_- \leq 1-\alpha, \label{eqn:teaser1-hyp1} \\
		\max \{ ||H^{tan}_{V, g}||_{L^\infty(B_1)}, |g - g_{eucl}|_{C^1(B_1)}, |D\beta|_{C^0(B_1)}\} \leq \delta, \label{eqn:teaser1-hyp2}
		\end{gather}
	        (where $\Theta_V (0,1) = \omega_n^{-1} \mu_V(B_1)$ is the normalized mass, and $\lambda_- := - \min\{0, \lambda\}$ denotes the negative part of $\lambda$), 
		
		Then in $B_\delta$: each $V_I = V\llcorner \{x_1<0\}, V_B=V\llcorner\{x_1=0\}$ individually has finite $g$-first-variation; both the interior mass measure $\mu_{V_I}$ and the boundary measure $\sigma_{V, g}$ are Ahlfors-regular; the set $\spt \sigma_{V, g} \cap B_\delta$ is $(n-1)$-rectifiable with uniform $(n-1)$-packing estimates. %and at $\haus^{n-1}$-a.e. $x \in \spt\sigma_{V, g} \cap B_\delta$, every tangent cone to $V$ is (up to rigid motion) $V^{(\beta(x))}$, $V^{(\pi/2)}$, or $V^{(\pi/2)} + \cos\beta(x)[\R^n]$.
	\end{theorem}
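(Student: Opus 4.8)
The plan is to run an Allard-type analysis built around the monotonicity formula that the radial vector field produces, convert the sharp density hypothesis into uniform mass and tilt-excess estimates, and then reflect across $\{x_1=0\}$ to promote the tangential-only information in \eqref{eqn:capillary.first.variation.intro} into genuine two-sided control; the one ingredient the reflection does \emph{not} see — the normal component of the boundary measure — will be the technical heart, and is where the strict density gap is essential. In more detail:

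\textbf{Step 1 (monotonicity and tilt-excess).} By \eqref{eqn:teaser1-hyp2} the metric is uniformly $C^1$-close to Euclidean on $B_1$, so the standard monotonicity computation goes through with errors of size $O(\delta)$. The point is that, for $x_0\in\{x_1=0\}$, the radial field $X(x)=\phi(|x-x_0|)(x-x_0)$ is \emph{tangential} along $\{x_1=0\}$ (its $e_1$-component is $\phi(|x-x_0|)x_1$, vanishing there) and is entirely tangent to $\R^n$ along $S$; feeding it into \eqref{eqn:capillary.first.variation.intro} yields the almost-monotonicity, for $x_0\in\{x_1=0\}$ and $r<\delta$, of
\[ r\longmapsto e^{C\delta r}r^{-n}\big(\mu_{V_I}(B_r(x_0))+\cos\beta(x_0)\,\haus^n_g(S\cap B_r(x_0))\big) \]
together with the tilt-excess bound $\int_{B_r(x_0)}|x-x_0|^{-n-2}|\proj_{(TV_I)^\perp}(x-x_0)|^2\,d\mu_{V_I}<\infty$. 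For interior centres this is just the ordinary monotonicity, valid because \eqref{eqn:capillary.first.variation.intro} tested against fields supported in $\{x_1<0\}$ gives $\delta_gV_I=-H^{tan}\mu_{V_I}$ there.

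\textbf{Step 2 (density and slab bounds).} From \eqref{eqn:teaser1-hyp1}, $\mu_{V_I}(B_1)\le\omega_n(1-\alpha)$, and trivially $\haus^n_g(S\cap B_r)\le C\omega_n r^n$ since $S\subset\R^n$; combining these with Step 1 gives $\mu_{V_I}(B_r(x_0))+\haus^n_g(S\cap B_r(x_0))\le C(n,\alpha)r^n$ for $x_0\in B_\delta$, $r<\delta$, hence upper-Ahlfors-regularity of $\mu_{V_I}$ and $\haus^n_g\llcorner S$ on $B_\delta$, and the sharp bound $\omega_n^{-1}r^{-n}\mu_{V_I}(B_r(x_0))+(\cos\beta(x_0))_-\le 1-\alpha/2$ at boundary centres $x_0\in\{x_1=0\}\cap B_\delta$ (the $(\cos\beta)_-$ term keeps this strictly below $1$; running the same argument for the swapped varifold $\tilde V$ of Example~\ref{example.swap.region} and using $\haus^n_g(S\cap B_r)+\haus^n_g((\R^n\setminus S)\cap B_r)=\omega_nr^n+O(\delta)r^n$ handles the sign of the boundary density cleanly). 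Covering the slab $\{-\rho<x_1<0\}\cap B_r(x_0)$ by $\sim(r/\rho)^{n-1}$ balls of radius $2\rho$ centred inside it and applying the upper bound to each gives the slab estimate $\mu_{V_I}(\{-\rho<x_1<0\}\cap B_r(x_0))\le C(n,\alpha)\rho\,r^{n-1}$ for $\rho<r<\delta$.

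\textbf{Step 3 (reflection and the boundary measure — the main obstacle).} Let $R$ be reflection across $\{x_1=0\}$ and $W:=V+R_\#V$ on $B_\delta$. Testing $W$ against $Z\in C^1_c$ amounts to testing the composite $V$ against the symmetrised field $x\mapsto Z(x)+R\,Z(Rx)$, which is tangential along $\{x_1=0\}$; since on tangential fields the composite's first variation differs from $\delta_{\beta,g}V$ only by the $O(\delta)$ term $\int_S\langle\nabla^{\R^n}_g\cos\beta,\cdot\rangle$, formula \eqref{eqn:capillary.first.variation.intro} and Step~2 give $|\delta_gW(Z)|\le C(n,\alpha)\|Z\|_{C^0}$, so $W$ has bounded first variation in $B_\delta$. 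Write $\delta_gV_I=-H^{tan}\mu_{V_I}+\vec\sigma$ with $\vec\sigma$ singular, and split $\vec\sigma=\vec\sigma^{\mathrm{tan}}+\vec\sigma^{\perp}$ into $\R^n$- and $e_1$-components. The tangential part is handled directly: testing \eqref{eqn:capillary.first.variation.intro} against $\psi(x')\chi_\rho(x_1)e_j$ ($j\ge 2$, $\chi_\rho$ a cutoff across the slab) and letting $\rho\to0$, the slab estimate kills all error terms and yields $|\cos\beta(x_0)|\,|\vec\sigma^{\mathrm{tan}}|(B_r(x_0))\le C(n,\alpha)r^{n-1}$; in particular $S$ has locally finite perimeter in $\{x_1=0\}\cap\{\cos\beta\ne0\}$, so $V_B$ — and then $V_I=V-V_B$ — has finite first variation, and $|\vec\sigma^{\mathrm{tan}}|(B_r(x_0))\le C(n,\alpha)r^{n-1}$ uniformly. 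The normal part $\vec\sigma^{\perp}$ is reached by \emph{neither} tool — the capillary condition only tests tangential fields, and reflecting $W$ cancels the $e_1$-conormals so that $\vec\sigma^{\perp}$ leaves no trace in $\delta_gW$ — and this is the hard point. I would control it by a compactness/dimension-reduction argument: the tilt-excess bound of Step~1 together with the sharp density bound of Step~2 (which, since $V_I$ is integral, forbids more than ``one half-plane's worth'' of interior mass at any $\{x_1=0\}$-point) force every blow-up of $V$ at a wall point to be trivial or a capillary half-plane $V^{(\theta)}$ as in Example~\ref{example.model.capillary.half.plane}, whose boundary is an $(n-1)$-plane with $|\vec\sigma|=\haus^{n-1}\llcorner(\text{edge})$; a quantitative version of this classification then propagates $|\vec\sigma^{\perp}|(B_r(x_0))\le C(n,\alpha)r^{n-1}$ to all scales. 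The resulting bound is $\beta$-uniform precisely because on a capillary half-plane $|\vec\sigma^{\mathrm{tan}}|\sim|\cos\theta|$ and $|\vec\sigma^{\perp}|\sim|\sin\theta|$: the normal component carries the edge exactly in the regime $\theta\to\tfrac{\pi}{2}$ in which the tangential estimate above degenerates.

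\textbf{Step 4 (Ahlfors-regularity, rectifiability, packing).} Set $\sigma_{V,g}:=|\vec\sigma|$; Step~3 gives $\sigma_{V,g}(B_r(x))\le C(n,\alpha)r^{n-1}$ on $B_\delta$, and lower Ahlfors-regularity at $x\in\spt\sigma_{V,g}$ follows by contradiction from the same blow-up analysis: if $\sigma_{V,g}(B_r(x))\le\eta r^{n-1}$ for $\eta=\eta(n,\alpha)$ small then the blow-up at $x$ has vanishing edge, so by Allard's interior theorem applied to $W$ (whose density near $x$ is then forced to $0$ or a multiplicity-one plane, using the upper bound) $V$ carries no boundary near $x$, contradicting $x\in\spt\sigma_{V,g}$. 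Lower Ahlfors-regularity of $\mu_{V_I}$ is the standard density lower bound for integral varifolds with bounded first variation, using the reflection at wall points; $(n-1)$-rectifiability of $\spt\sigma_{V,g}\cap B_\delta$ follows since a.e.\ blow-up is a single $(n-1)$-plane (equivalently from the locally finite perimeter of $S$ where $\cos\beta\ne0$ and triviality where $\cos\beta=0$); and the uniform $(n-1)$-packing estimate is then immediate from the two-sided Ahlfors bounds.
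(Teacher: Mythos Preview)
Your Steps~1 and the upper-Ahlfors bounds of Step~2 are correct and match the paper. Two genuine gaps remain.

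First, the slab estimate $\mu_{V_I}(\{-\rho<x_1<0\}\cap B_r)\le C\rho\,r^{n-1}$ is not justified by your covering: the slab has an $n$-dimensional cross-section in $\{x_1=0\}$, so covering it by $2\rho$-balls takes $\sim(r/\rho)^n$ balls, not $(r/\rho)^{n-1}$, and the bound you recover is only the trivial $Cr^n$. The slab estimate is essentially equivalent to the $(n-1)$-trace bound you are trying to prove, and the examples of Remark~\ref{rem:teaser1-sharp} show it fails without exploiting the strict density gap in a way that covering cannot. In the paper the slab-type estimate appears only \emph{after} Ahlfors-regularity of $\sigma_{V,g}$ has been established, as a consequence of the Minkowski bound \eqref{eqn:cap-first-var-concl2.6}.

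Second, and more fundamentally, your compactness/blow-up classification for $\vec\sigma^\perp$ is circular or incomplete. Compactness for $V_I$ alone---needed so that $\spt V_{I,i}$ does not collapse into the wall---requires precisely the individual first-variation control you are proving; and even with the soft compactness of Lemma~\ref{lem:soft-conv} for the total $V$, classifying the limit cone still requires ruling out that $V'_I$ lies arbitrarily close to (but not inside) $\{x_1=0\}$. This is where the paper's real work sits. Its replacement for your blow-up step is a direct \emph{boundary rigidity} theorem (Theorem~\ref{thm:rigidity}): an iterative excess-decay argument, using only interior monotonicity and the Caccioppoli-type inequality of Lemma~\ref{lem:w12}, showing that any integral varifold in $\{x_1<0\}$ with density ratios $<1-\alpha$ and small tilt-excess with respect to $\R^n$ must vanish in a smaller ball. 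Testing $\delta_g V$ against $\phi(|x|)e_1$ then yields $\sigma_{V,g}(B_1)\gtrsim\int_{B_{1/2}}|\pi_V-\pi_{\R^n}|^2\,d\mu_{V_I}-o(1)$, so one obtains the dichotomy of Theorem~\ref{thm:sigma-dichotomy}: either $\sigma_{V,g}(B_r)\ge c^{-1}r^{n-1}$, or the tilt-excess is small and hence (by rigidity) $V_I\llcorner B_{\eta r}=0$. Lower Ahlfors-regularity of $\sigma_{V,g}$ follows, and \emph{then} the individual first-variation bounds, the slab estimate, and your blow-up picture all come for free.
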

	
	\begin{remark}
		It is instructive to keep in mind that when $V = V^{(\theta)}$ is the model capillary half-plane varifold defined in Example \ref{example.model.capillary.half.plane}, then
		\[
		\Theta_{V^{(\theta)}}(0, 1) + (\cos\theta)_- = (1+|\cos\theta|)/2 < 1.
		\]
		So one may interpret condition \eqref{eqn:teaser1-hyp1} as asking that the total ``unsigned'' mass of $V$ is less than $1-\alpha$, although we also note that \eqref{eqn:teaser1-hyp1} is in fact a stronger requirement than a bound on the unsigned mass of $V$.
		
		We also remark that  \eqref{eqn:teaser1-hyp1} is compatible with swapping $S$ with $\R^n \setminus S$ and $\beta$ with $\pi - \beta$ as described in Example \ref{example.swap.region}.  For, if $\cos\beta(0) < 0$ and we let $\tilde V = V_I + \cos(\pi - \beta) [\R^n \setminus S]$, then $\cos(\pi - \beta(0)) \geq 0$ and $\tilde V$ satisfies
		\[
		\Theta_{\tilde V}(0, 1) \leq \Theta_V(0, 1) + (\cos\beta(0))_- + c(n)\delta \leq 1 - \alpha + c(n)\delta,
		\]
		which can be made less than (e.g.) $1-\alpha/2$ provided that $\delta(n, \alpha)$ is sufficiently small.
	\end{remark}

	With only notational modifications, the same statement also holds for general free-boundary varifolds.
        \begin{theorem}\label{thm:teaser1.free.bdry}
            Let $V$ be a free-boundary varifold in $(B_1,g)$ in the sense of \eqref{eqn:intro.free.boundary}, and assume that $V\llcorner \{x_1<0\}$ is integral, $V\llcorner\{x_1=0\} = \cos\beta \cH^n_g\llcorner S$ with $S \subset \R^n$, and \eqref{eqn:teaser1-hyp1}, \eqref{eqn:teaser1-hyp2} hold. Then the conclusions of Theorem \ref{thm:teaser1} hold.
        \end{theorem}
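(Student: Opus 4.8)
The plan is to obtain Theorem~\ref{thm:teaser1.free.bdry} by re-running the proof of Theorem~\ref{thm:teaser1} --- that is, of the sharper Theorems~\ref{thm:cap-first-var} and~\ref{thm:refined} --- with essentially no change. The key point is that this proof does not use the full capillary identity~\eqref{eqn:capillary.first.variation.intro}, but only the following of its consequences: (i) the composite varifold $V$ has locally bounded $g$-first variation against compactly supported fields tangential to $\{x_1=0\}$, with generalized mean curvature in $L^\infty$, so that against an \emph{arbitrary} $X\in C^1_c(B_1,\R^{n+1})$ the first variation $\delta_g V(X)$ splits into an $L^\infty$ mean-curvature term plus a ``wall'' term carried by a boundary measure $\sigma_{V,g}$ supported on $\{x_1=0\}$; (ii) $V\llcorner\{x_1<0\}$ is integral; (iii) $V\llcorner\{x_1=0\}=\cos\beta\,\cH^n_g\llcorner S$ with $|\cos\beta|\le 1$; (iv) the sharp density bound~\eqref{eqn:teaser1-hyp1}; and (v) the smallness~\eqref{eqn:teaser1-hyp2}. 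Now~\eqref{eqn:intro.free.boundary} yields (i) exactly as~\eqref{eqn:capillary.first.variation.intro} does, and (ii)--(v) are among the hypotheses of Theorem~\ref{thm:teaser1.free.bdry}; this is what ``only notational modifications'' means.

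In more detail, I expect the steps to transcribe as follows. From~\eqref{eqn:intro.free.boundary} and~\eqref{eqn:teaser1-hyp2} one first records that $V$ has locally bounded first variation in $B_1\setminus\{x_1=0\}$ together with a boundary defect measure on $\{x_1=0\}$. Then, reflecting $V$ across $\{x_1=0\}$ --- which converts the free-boundary condition into interior almost-stationarity with $L^\infty$ mean curvature for the doubled varifold, in the spirit of Gr\"uter--Jost \cite{GrJo} and Kagaya--Tonegawa \cite{KaTo} --- one obtains an almost-monotonicity formula for the mass ratio $r\mapsto\Theta_V(x,r)$ at boundary points $x\in\{x_1=0\}$, with error of order $\delta$; combined with~\eqref{eqn:teaser1-hyp1} this gives $\mu_V(B_r(x))\le C(n)r^n$ on $B_\delta$. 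The lower Ahlfors bounds for $\mu_{V_I}$, and for $\sigma_{V,g}$ on $\spt \sigma_{V,g}$, come from the usual isoperimetric/first-variation comparison, while $(n-1)$-rectifiability and uniform $(n-1)$-packing of $\spt \sigma_{V,g}\cap B_\delta$ follow from a Federer-type dimension reduction applied to the reflected varifold and its tangent cones, again using only the monotone quantity and the density bound. The only places where~\eqref{eqn:capillary.first.variation.intro} is cited in the capillary proof are those that isolate the boundary term $\int_S\mdiv_{\R^n,g}(X\cos\beta)\,d\cH^n_g$; in the present setting one instead manipulates $\int_S\dive_{\R^n,g}(X)\cos\beta\,d\cH^n_g$, which is the $V\llcorner\{x_1=0\}$ contribution to $\int\dive_{V,g}(X)\,d\mu_V$ in~\eqref{eqn:intro.free.boundary}, and the sign of $\cos\beta$ is immaterial because only $|\cos\beta|\le 1$ enters the mass estimates, consistently with the swapping construction of Example~\ref{example.swap.region}.

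The main obstacle is the same one that makes Theorem~\ref{thm:teaser1} itself nontrivial, and that is flagged in the introduction: a priori only the composite $V$ has bounded first variation, whereas the pieces $V_I=V\llcorner\{x_1<0\}$ and $V_B=V\llcorner\{x_1=0\}$ need not (cf.\ Remark~\ref{rem:teaser1-sharp}), so the individual bounds must be extracted from the density hypothesis alone. This is precisely where the sharpness of~\eqref{eqn:teaser1-hyp1} is used: if it were relaxed, the triple-junction and collision configurations of Remarks~\ref{rmk:triple.junction} and~\ref{rem:example-collision} would persist and the conclusion would fail. Thus the crux is a quantitative stratification showing that a varifold satisfying~\eqref{eqn:intro.free.boundary} (or~\eqref{eqn:capillary.first.variation.intro}) with $\Theta_V(0,1)+(\cos\beta(0))_-\le 1-\alpha$ and~\eqref{eqn:teaser1-hyp2} cannot carry such a configuration, so that on a definite ball $B_\delta$ the contact set $\spt \sigma_{V,g}$ is $(n-1)$-dimensional with packing bounds and, off it, $V$ splits into an interior sheet and a boundary sheet each with controlled first variation. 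This argument never distinguishes between the capillary and the free-boundary cases, which is exactly why Theorem~\ref{thm:teaser1.free.bdry} follows.
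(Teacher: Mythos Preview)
Your first paragraph is correct and is exactly the paper's proof: the paper says only ``The proof is the same as in the capillary case, simply replace $H_{\beta,g}$ with $H^{tan}_V|_{x_1=0}$.'' The point, as you identify, is that the proof of Theorem~\ref{thm:cap-first-var} (and Theorem~\ref{thm:refined}) treats the capillary varifold through the identity \eqref{eqn:beta-g-vs-g-3}, i.e.\ as a free-boundary varifold with an extra bounded tangential mean-curvature term $H_{\beta,g}$ on $V_B$; in the genuine free-boundary setting this term is already built into $H^{tan}_{V,g}$, so nothing further is needed.

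However, your second and third paragraphs sketch a route to Theorem~\ref{thm:teaser1} that is not the paper's. The paper does \emph{not} reflect across $\{x_1=0\}$ (monotonicity is proved directly for the signed varifold, Lemma~\ref{lem:bd-mono}), does \emph{not} use Federer dimension reduction or quantitative stratification, and the lower Ahlfors bound for $\sigma_{V,g}$ is not an isoperimetric comparison. Instead the engine is a boundary rigidity result (Theorem~\ref{thm:rigidity}): if the tilt-excess of $V_I$ relative to $\R^n$ is small and $\Theta_V<1$, then $V_I$ must vanish in a smaller ball. This feeds a dichotomy (Theorem~\ref{thm:sigma-dichotomy}) --- either $\sigma_{V,g}(B_r(\xi))\ge cr^{n-1}$ or $V_I\llcorner B_{\eta r}(\xi)=0$ --- which yields the Ahlfors bounds; first-variation control of $V_I,V_B$ then follows by a cutoff argument around $\spt\sigma_{V,g}$, and rectifiability comes from the Marstrand--Mattila criterion. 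None of this is needed for your proof of Theorem~\ref{thm:teaser1.free.bdry}, but you should not present the speculative mechanism as if it were the content of Theorem~\ref{thm:teaser1}.
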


	\begin{remark}
		If $\beta\ne \tfrac{\pi}{2}$ in $B_\delta$, then Theorem \ref{thm:teaser1} implies that $S$ is a set of finite perimeter in $B_\delta \cap \{ x_1 = 0\}$.  See Remark \ref{rem:S-perimeter}.
	\end{remark}
	
	\begin{remark}\label{rem:teaser1-sharp}
		The density upper bound \eqref{eqn:teaser1-hyp1} in Theorem \ref{thm:teaser1} is sharp, as illustrated by the examples of \cite{GiLa}, \cite{stern-karpukhin}:  in \cite{GiLa}, Girouard-Lagac\'e constructed properly embedded free-boundary minimal surfaces in $B_1 \subset \R^3$, which were shown by Karpukhin-Stern \cite{stern-karpukhin} to converge as varifolds to $[S^2=\partial B_1]$, in the Hausdorff distance to $S^2$, and whose boundary measures converge as Radon measures to $2[S^2]$ (therefore $V_I$ along the sequence limits entirely to the boundary).  In particular the Ahlfors-regularity and packing estimates of the boundary measures $\sigma_{V, g}$ diverge as $\alpha \to 0$. 
	\end{remark}
	
	An important consequence of Theorem \ref{thm:teaser1} is a boundary maximum principle for capillary varifolds (Theorem \ref{thm:boundary-max}), which plays a key role in the proof of the main regularity result.
	
	\subsection{Allard type regularity theorems}

	Our second main theorem is a $C^{1,\alpha}$-regularity result for capillary varifolds near a capillary half-plane, analogous to Allard's regularity theorem.  See Theorem \ref{thm:main} for a more quantitative version.
	\begin{theorem}[Allard type regularity]\label{thm:teaser2}
		Let $g$ be a $C^1$ metric on $B_1 \subset \R^{n+1}$ (for $n \geq 1$), $\beta$ a $C^1$ function on $B_1$, $S \subset \R^n$, and $V$ a $(\beta, S)$-capillary varifold in $(B_1, g)$.  Given $\theta \in (0, \pi) \setminus \{ \pi/2 \}$ and $\eps > 0$, there are constants $\gamma(n, \theta)$, $\delta(n, \theta, \eps)$ so that if
		\begin{gather}
			\Theta_V(0, 1) + (\cos\beta(0))_- \leq 3/4 + |\cos\theta|/4  < 1 \label{eqn:teaser2-hyp1} \\
			0 \in \spt V_I, \quad \spt V_I \subset \{ x : d(x, P_{\theta}) < \delta \} \label{eqn:teaser2-hyp2} \\
			\max \{ ||H^{tan}_{V, g}||_{L^\infty(B_1)}, |g - g_{eucl}|_{C^1(B_1)}, |D\beta|_{C^0(B_1)} \} \leq \delta.  \label{eqn:teaser2-hyp3} 
		\end{gather}
		Then $\spt V_I \cap B_\eps \neq \emptyset$ and $\spt V_I \cap B_{\gamma}$ is a $C^{1, 1/2}$ hypersurface, which can be written as a graph over $P_{\theta}$ with $C^{1,1/2}$-norm $\leq \eps$, and which at any $\xi \in \spt V_I \cap B_{\gamma} \cap \{ x_1 = 0 \}$ meets $\{ x_1 = 0 \}$ at $g$-angle $\beta(\xi)$.
	\end{theorem}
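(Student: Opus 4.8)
\medskip
\noindent\textbf{Proof strategy.}
The plan is to run an Allard-type excess-decay scheme adapted to the capillary setting, with Theorem~\ref{thm:teaser1} as the essential input that makes blow-ups behave well. First, reductions. Since $\theta\neq\pi/2$ we may assume, after the wet-region swap of Example~\ref{example.swap.region} (compatible with \eqref{eqn:teaser2-hyp1} up to shrinking $\delta$, as in the remark following Theorem~\ref{thm:teaser1}), that $\cos\theta>0$, so that $V_B=\cos\beta\,\cH^n_g\llcorner S$ carries positive weight near the region of interest. By Theorem~\ref{thm:teaser1}, passing to $B_\delta$ we know that $V_I$ and $V_B$ each have bounded $g$-first variation, that $\mu_{V_I}$ and $\sigma_{V,g}$ are Ahlfors regular, that $\spt\sigma_{V,g}$ is $(n-1)$-rectifiable with uniform packing bounds, and that $S$ has finite perimeter (Remark~\ref{rem:S-perimeter}). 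Using the almost-monotonicity of $\Theta_V(\xi,r)$ together with the flatness hypothesis \eqref{eqn:teaser2-hyp2}, one shows that for $\xi\in\spt V_I\cap B_\gamma$ the small-scale density ratios are close to $(1+\cos\theta)/2$ (the density of $V^{(\theta)}$) at boundary points and to $1$ at genuinely interior points; the gap between $(3+\cos\theta)/4$ and the densities of multiplicity and collision configurations rules these out, and the density lower bound together with monotonicity gives $\spt V_I\cap B_\eps\neq\emptyset$.

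Away from $\{x_1=0\}$, $V_I$ is an integral varifold with bounded first variation, support in a $\delta$-tube around $P_\theta$, and density close to $1$; the interior Allard theorem (for $C^1$ metrics and $L^\infty$ mean curvature) identifies $\spt V_I\cap\{x_1<0\}\cap B_{2\gamma}$ with a $C^{1,1/2}$ graph over $P_\theta$ of small norm, so the new content is boundary regularity along $\{x_1=0\}$. Fix $\xi\in\spt V_I\cap\{x_1=0\}\cap B_\gamma$. The boundary maximum principle (Theorem~\ref{thm:boundary-max}), the flatness of $\spt V_I$ near $P_\theta$, and $\cos\theta\neq0$ force $\xi$ to be a genuine capillary boundary point, where $\partial^*S$ is nearby and $\spt V_I$ does not stay on one side of $P_\theta$; in particular the ``wrong angle'' ($\pi/2$) alternative mentioned after Definition~\ref{def.capillary.varifolds} is excluded. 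One then introduces a tilt/height excess $E(\xi,r)$ measuring the rescaled $L^2$-distance of $\spt V_I$ from the best-fitting capillary half-plane through $\xi$ (with edge constrained to lie on $\partial^*S$) and proves the excess-decay estimate
\[
E(\xi,\tau r)\ \le\ C\tau\, E(\xi,r)\ +\ C\tau^{-n} r^{2}\bigl(\|H^{tan}_{V,g}\|_{L^\infty}^2+|g-g_{eucl}|_{C^1}^2+|D\beta|_{C^0}^2\bigr)
\]
for a fixed small $\tau=\tau(n,\theta)$.

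The decay is obtained by compactness/contradiction: a sequence of capillary varifolds with $E\to0$, appropriately rescaled, has its $V_I$-parts converging as graphs over $P_\theta^-$ and its $S$-parts converging as sets of finite perimeter (using the packing and perimeter bounds of Theorem~\ref{thm:teaser1}) to a solution of the linearized capillary system: Laplace's equation on the $\{x_1<0\}$-side of $P_\theta$ together with a Neumann-type boundary condition obtained by linearizing the angle-$\theta$ constraint, coupled --- when $\theta\neq\pi/2$ --- to a linear equation for the displacement of the edge along $\partial^*S$. Solutions of this linear problem enjoy $C^{1,1/2}$-up-to-the-boundary estimates with the requisite decay of the renormalized excess, contradicting the assumption that the sequence does not decay. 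Iterating the excess decay from scale $\gamma$ down to $0$, simultaneously at all $\xi\in\spt V_I\cap B_\gamma$ (interior and boundary), yields a uniform $C^{0,1/2}$ modulus for the best-plane assignment $\xi\mapsto T_\xi$, which by the standard argument realizes $\spt V_I\cap B_\gamma$ as a $C^{1,1/2}$ properly embedded hypersurface-with-boundary, a graph over $P_\theta$ of $C^{1,1/2}$-norm $\le\eps$ (after shrinking $\delta$), with boundary $\spt V_I\cap B_\gamma\cap\{x_1=0\}$ meeting $\{x_1=0\}$ there at $g$-angle $\beta(\xi)$ --- now by the classical interpretation of \eqref{eqn:capillary.first.variation.intro} for a $C^1$ surface.

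\medskip
\noindent\textbf{Main obstacle.} The crux is the boundary blow-up. One must isolate the correct linearized problem and, above all, ensure that in the blow-up the interior varifold $V_I$ and the boundary set $S$ converge \emph{jointly} and non-degenerately --- precisely the phenomenon that fails without the sharp density bound \eqref{eqn:teaser2-hyp1} (cf.\ Remark~\ref{rem:teaser1-sharp}) and that Theorem~\ref{thm:teaser1} is designed to supply (both for the joint first-variation control and for the perimeter/packing compactness of the $S_j$). The hypothesis $\theta\neq\pi/2$ is used to keep $V_B$ non-degenerate, in the density-gap and maximum-principle arguments excluding the wrong-angle alternative, and in the coupling of the linearized system; the borderline $\theta=\pi/2$ is the free-boundary case handled by other methods. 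A secondary technical point is that the graphical approximation near $\{x_1=0\}$ must track both the graph of $V_I$ over the half-plane $P_\theta^-$ and the moving position of its edge along $\partial^*S$.
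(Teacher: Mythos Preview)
Your strategy is plausible but takes a genuinely different route from the paper. The paper does \emph{not} run an $L^2$ tilt/height excess decay; instead it works throughout with the $L^\infty$ oscillation $\osc_{P_\theta}(V_I,B_r)$ and follows a viscosity approach modeled on De~Silva's treatment of the one-phase Bernoulli problem. The core technical device is a \emph{partial Harnack inequality} (Theorem~\ref{thm:harnack}), proved by sliding explicit barrier surfaces $G_\theta(u_t)$ and using the boundary maximum principle (Theorem~\ref{thm:boundary-max}) to rule out boundary contact. This Harnack step gives $C^\alpha$ control of the inhomogeneous blow-up \emph{before} one knows the limit is a graph; only then does the paper identify the limit as a viscosity solution of the single scalar problem $\mathcal{L}_\theta u = \Delta u - \cos^2\theta\,\partial_1^2 u = 0$ in $\{x_1<0\}$ with Neumann data $\partial_1 u = 0$ on $\{x_1=0\}$. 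In particular there is no separate ``edge displacement'' variable or coupled system: the edge position is simply $u|_{x_1=0}$ in the slanted-graph parametrization, and the angle condition linearizes to the scalar Neumann condition.

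Your outline is closer in spirit to Wang's approach (which the paper cites), extending Simon's cylindrical-cone technique with $L^2$ excess; the paper explicitly contrasts the two methods. What your version buys is a more classical framework; what it costs is the step you pass over: the assertion that in the blow-up ``$V_I$-parts converge as graphs over $P_\theta^-$'' needs a boundary Lipschitz approximation lemma for capillary varifolds, which is not automatic and is precisely the machinery Simon/Wang supply. The paper's $L^\infty$/viscosity route sidesteps this by using the partial Harnack to manufacture graphicality of the limit directly. Also note that in the paper the boundary maximum principle is used \emph{inside} the barrier/Harnack step and in verifying the viscosity boundary condition, rather than as a standalone ``wrong-angle exclusion'' as you describe; the exclusion of free-boundary tangent cones is handled separately via a soft cone classification (Lemma~\ref{lem:tangent-cones}) and a propagation-of-closeness lemma (Lemma~\ref{lem:closeness}).
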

	
	\begin{remark}
		There is nothing special about the Holder exponent $1/2$ here -- any number $\alpha \in (0, 1)$ would work (of course with $\delta$ then depending on $\alpha$ also).
	\end{remark}

        \begin{remark}
            While we were finishing the paper, Wang \cite{WangGaoming} obtained a similar regularity result as in Theorem \ref{thm:teaser2} for all angles $\theta\in (0,\pi)$, under the additional assumption that the capillary varifold $V$ has uniformly bounded first variation on both $V_I$ and $V_B$ (the conclusion from Theorem \ref{thm:teaser1}). Wang's proof is an extension of Simon's technique for cylindrical tangent cones \cite{Simon-cylindrical}. In comparison, ours is based on an $L^\infty$-excess decay estimate using a viscosity approach, which is more ``linear.''
        \end{remark}
	
	\begin{comment}
	\begin{remark}
	Theorem \ref{thm:teaser1} implies that \eqref{eqn:teaser2-hyp2} can be replaced by the condition that $V$ is varifold close to the model varifold $V^{(\theta)}$, i.e. $\D(V, V^{(\theta)}) < \delta$ for $\D$ as in \eqref{eqn:D-def}.  See Lemma \ref{lem:D-vs-osc}.  {\color{blue} can we replace the first condition also?}
	\end{remark}\end{comment}
	
	\begin{remark}\label{rem:no_pi/2}
		Although we have established first variation control for all angles in $(0, \pi)$, currently we can only prove $C^{1,\alpha}$-regularity for angles in $ (0, \pi) \setminus \{\pi/2\}$.  When $\beta = \pi/2$ there seem to be some non-trivial subtleties.  One reason is that in the regions where $\cos\beta = 0$ (which may contain an open subset of $\{x_1=0\}$), the portion $S$ of $V$ on the container boundary could ``flip'' from lying below $V_I$ to lying above $V_I$, and thereby flip the expected angle of contact from $\beta$ to $\pi-\beta$ (w.r.t. a fixed orientation).  Another possible behavior is that $\cos\beta$ transitions from $0$ to non-zero but the surface contact angle remains $\pi/2$.  By only assuming $V$ is varifold-close to $V^{(\pi/2)}$ it is unclear how to distinguish these behaviors.  It seems that an Allard type result for $V$ near $V^{(\pi/2)}$ may require additional assumptions, and is an interesting problem to investigate in future.
	\end{remark}
	
	Another, possibly more natural, formulation of our regularity theorem is in terms of varifold distance or convergence.  Let us define $\D$ to be a metric on the space of (signed) varifolds compatible with varifold convergence, see \eqref{eqn:D-def} and the nearby discussion for more details.  Then we have
	\begin{theorem}[Allard type regularity, second version]\label{thm:teaser3}
		In Theorem \ref{thm:teaser2}, in place of \eqref{eqn:teaser2-hyp1}, \eqref{eqn:teaser2-hyp2}, \eqref{eqn:teaser2-hyp3}, one could instead assume that
		\begin{gather}
			\D(V, V^{(\theta)}) \leq \delta, \\
			\max \{ ||H^{tan}_{V, g}||_{L^\infty(B_1)}, |g - g_{eucl}|_{C^1(B_1)}, |\beta - \theta|_{C^1(B_1)} \} \leq \delta.
		\end{gather}
	\end{theorem}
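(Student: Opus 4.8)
The plan is to reduce the statement to Theorem~\ref{thm:teaser2} (equivalently, to its quantitative form Theorem~\ref{thm:main}), by showing that the two hypotheses $\D(V,V^{(\theta)})\le\delta$ and $\max\{\|H^{tan}_{V,g}\|_{L^\infty},|g-g_{eucl}|_{C^1},|\beta-\theta|_{C^1}\}\le\delta$ imply, after a harmless recentering and rescaling, each of \eqref{eqn:teaser2-hyp1}--\eqref{eqn:teaser2-hyp3}. As a preliminary normalization I may assume $\cos\theta\ge0$: if $\cos\theta<0$ I first swap the wet region (Example~\ref{example.swap.region}) and then reflect across $\{x_{n+1}=0\}$; this carries $V^{(\theta)}$ to $V^{(\pi-\theta)}$, leaves $V_I$ unchanged up to the isometry $R$, replaces $\beta$ by $\pi-\beta\circ R$ (still $C^1$-close to $\pi-\theta$), and preserves all the smallness hypotheses up to a factor $c(n)$ — the only genuine error being $|\cos\beta-\cos\theta|\le\delta$, which perturbs $\D$ by at most $c(n)\delta$. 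Since $R(P_{\pi-\theta})=P_\theta$, the conclusion of Theorem~\ref{thm:teaser2} for the reflected varifold translates back to the desired conclusion for $V$. So from now on $\theta\in(0,\pi/2)$, hence $\cos\beta(0)>0$ and $(\cos\beta(0))_-=0$ for $\delta$ small.

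The first substantive step is the density bound. Since $V^{(\theta)}$ is a cone with $\mu_{V^{(\theta)}}(\overline{B_1})=\tfrac12(1+|\cos\theta|)\omega_n$, and $\D$ metrizes varifold convergence (so mass is upper semicontinuous on compact subsets of $B_1$, with a modulus depending only on $\delta$), we get $\Theta_V(0,1)\le\tfrac12(1+|\cos\theta|)+c(n)\delta^{1/2}$; for $\delta\le\delta(n,\theta)$ this is strictly less than $\tfrac34+\tfrac{|\cos\theta|}4$, so \eqref{eqn:teaser2-hyp1} holds, and \emph{a fortiori} the weaker \eqref{eqn:teaser1-hyp1} holds with some $\alpha=\alpha(n,\theta)>0$ (while \eqref{eqn:teaser1-hyp2} is immediate). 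Therefore Theorem~\ref{thm:teaser1} applies in a fixed ball $B_{\delta_1}$, $\delta_1=\delta_1(n,\theta)$: there $V_I$ and $V_B$ \emph{individually} have bounded $g$-first variation and $\mu_{V_I}$ is Ahlfors-regular; in particular $\mu_{V_I}(B_r(x))\ge c(n,\theta)\,r^n$ whenever $x\in\spt V_I\cap B_{\delta_1/2}$ and $r$ is small.

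Next I upgrade $\D$-closeness to Hausdorff closeness of $\spt V_I$, which is cleanest by contradiction. If Theorem~\ref{thm:teaser3} failed along a sequence $V_k$ with $\D(V_k,V^{(\theta)})\to0$ and the three norms tending to $0$, then by the previous step $V_{I,k}$ and $V_{B,k}$ have locally bounded mass and first variation in $B_{\delta_1}$, so along a subsequence (using Allard compactness for the integral interior part and weak-$*$ compactness for the boundary part) $V_{I,k}\rightharpoonup V_{I,\infty}$, $V_{B,k}\rightharpoonup V_{B,\infty}$ with $V_{I,\infty}+V_{B,\infty}=V^{(\theta)}$; the density bound forces $V_{I,\infty}$ to put no mass on $\{x_1=0\}$ (an integral varifold supported there has density $\ge1>|\cos\theta|$), so by uniqueness of the interior/boundary splitting of $V^{(\theta)}$ we get $V_{I,\infty}=[P_\theta^-]$. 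Combining $V_{I,k}\rightharpoonup[P_\theta^-]$ with the uniform Ahlfors lower bound yields local Hausdorff convergence $\spt V_{I,k}\to P_\theta^-$ in $B_{\delta_1/2}$; in particular $\spt V_{I,k}$ meets $\{x_1=0\}$ at a point $\zeta_k\to0$, and $\spt V_{I,k}\cap B_{\delta_1/2}\subset\{d(\cdot,P_\theta)<o_k(1)\}$. Now recenter $V_k$ at $\zeta_k$ and rescale to unit scale at a fixed small radius: by the mass estimates above, hypotheses \eqref{eqn:teaser2-hyp1}--\eqref{eqn:teaser2-hyp3} all hold for $k$ large, with $P_\theta$ replaced by the $o_k(1)$-close parallel plane $P_\theta+\zeta_k$ and $\beta$ by the recentered angle function (close to $\theta$). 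Theorem~\ref{thm:teaser2} then shows $\spt V_{I,k}$ near the origin is a $C^{1,1/2}$ graph over $P_\theta$ with norm $\le\eps$ meeting $\{x_1=0\}$ at $g$-angle $\beta$, and is nonempty in $B_\eps$ (it contains $\zeta_k$); after relabeling $\gamma$ and $\eps$ this contradicts the failure of the conclusion.

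I expect the main obstacle to be the third step: separating the interior and boundary parts of $V$ in the limit. This is exactly the degeneracy flagged in the introduction — interior mass collapsing onto the container boundary, or the contact angle jumping — and ruling it out is precisely what Theorem~\ref{thm:teaser1} provides, so its use here is essential rather than merely convenient. A secondary technical nuisance is that $0$ need not lie on $\spt V_I$ (e.g. $V$ could be a small translate of $V^{(\theta)}$ in the $x_{n+1}$-direction), which forces the recentering at $\zeta_k$ and the bookkeeping needed to confirm that the translated plane $P_\theta+\zeta_k$ and the translated boundary data still satisfy the hypotheses of Theorem~\ref{thm:teaser2}; identifying a suitable boundary point $\zeta_k\in\spt V_I\cap\{x_1=0\}$ near $0$ itself uses the first-variation control of Theorem~\ref{thm:teaser1} (alternatively one recenters at an interior point and invokes classical Allard regularity). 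The preliminary swap-and-reflect normalization, while routine, also needs a small check that it respects the precise form of the conclusion.
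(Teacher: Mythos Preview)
Your proposal is correct and follows essentially the same route as the paper: establish the density bound from varifold closeness, use the first-variation control (Theorem~\ref{thm:cap-first-var}) to separate $V_I$ and $V_B$ in the limit and obtain Hausdorff convergence $\spt V_I\to P_\theta^-$, then invoke Theorem~\ref{thm:teaser2}. The paper has already packaged your contradiction/compactness argument into Lemma~\ref{lem:conv} (improved convergence), so its proof is a two-line citation of that lemma, whereas you unpack its content directly.
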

	
	\begin{remark}\label{rmk:triple.junction}
		The formulation of Theorem \ref{thm:teaser3} fails if we only assume $|D\beta|$ is small.  Consider the following example.  Define the rays in $\R^2$
		\begin{gather}
			l_1 = \{ x_1 = 0, x_2 \leq 0 \}, \quad l_2 = \{ -x_1 + \sqrt{3} x_2 = 0, x_1 \geq 0 \}, \\
			l_3 = \{ x_1 + \sqrt{3} x_2 = 0, x_1 \leq 0 \},
		\end{gather}
		which meet at $120^\circ$.  For any $s > 0$ define
		\begin{gather}
			V_{I, s} = \left( [l_1 - se_1] + [l_2 - se_1] + [l_3 - se_1] \right) \llcorner \{ x_1 < 0 \}, \\
			S_s = \{ x_1 = 0, x_2 < s/\sqrt{3} \}.
		\end{gather}
		Then each $V_s := V_{I, s} - (1/2)[S_s]$ is a $(2\pi/3, S_s)$-capillary varifold, and $V_s \to V^{(\pi/3)}$ as $s \to 0$, but the capillary angle remains $2\pi/3 \neq \pi/3$ for all $s$ and none of the $V_{I, s}$ are graphical over $P_{\pi/3}$.  In general examples like this can only occur when $\cos\beta(0) + 1 \approx \cos\theta$.
	\end{remark}

	We shall highlight two consequences of Theorems \ref{thm:teaser1}, \ref{thm:teaser2}, but first let us introduce a little extra notation.  Let us say $x \in \spt V_I \cap \R^n$ is regular if nearby $\spt V_I$ is a $C^{1,\alpha}$ hypersurface meeting $\R^n$ at $g$-angle $\beta$ or $\pi/2$.  We define the pointwise density function
	\[
	\Theta_{V, g}(x) := \lim_{r \to 0} \frac{\mu_V(B_r^g(x))}{\omega_n r^n}.
	\]
	In Section \ref{sec:mono} we show that $\Theta_{V, g}(x)$ always exists, is an upper-semi-continuous function of $x \in \{x_1 = 0\}$, and coincides with $\theta_{V, g}(x)$ at $|\mu_V|$-a.e. point $x \in B_1$.
	
	The first corollary recasts Theorem \ref{thm:teaser1} in terms of tangent cones.
	\begin{cor}[Flat capillary tangent cones implies regularity]\label{cor:teaser1}
		Let $g$ be a $C^1$ metric on $B_1$, $\beta$ a $C^1$ function on $B_1 \subset \R^{n+1}$, $S \subset \R^n$, and $V$ a $(\beta, S)$-capillary varifold in $(B_1, g)$ with $||H^{tan}_{V, g}||_{L^\infty(B_1)} < \infty$.  If at any $x \in \R^n$ some tangent cone of $V$ at $x$ is (up to rotation) $V^{(\beta(x))}$ and $\beta(x) \in (0, \pi) \setminus \{\pi/2\}$, then $x$ is a regular point of $V_I$.  (By Remark \ref{rem:tangent-cones} tangent cones for $V$ at $x$ always exists.)
	\end{cor}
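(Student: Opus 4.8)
The plan is to deduce the corollary from the Allard-type $\eps$-regularity theorem (Theorem \ref{thm:teaser3}) by a blow-up argument at a single small scale. First I would translate so that $x=0$, and perform a linear change of coordinates preserving the hyperplane $\{x_1=0\}$ so that $g(0)=g_{eucl}$; set $\theta:=\beta(0)\in(0,\pi)\setminus\{\pi/2\}$. A useful preliminary observation is that $0\in\spt V_I$: were this false, $V$ would agree with its boundary part $V_B$ near $0$, so every tangent cone of $V$ at $0$ would be supported in $\{x_1=0\}$, contradicting that one such tangent cone is $V^{(\theta)}$, whose interior part $[P_\theta^-]$ contains the origin in its support.

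Next I would use the hypothesis directly: there are $r_j\downarrow 0$ and an orthogonal map $O_0$ preserving $\{x_1=0\}$ with $(\eta_{0,r_j})_\#V\to O_{0\#}V^{(\theta)}$ as varifolds, where $\eta_{0,r}(y)=y/r$; equivalently, setting $\tilde V_j:=(O_0^{-1}\circ\eta_{0,r_j})_\#V$, we have $\D(\tilde V_j,V^{(\theta)})\to 0$. Since the capillary first-variation identity \eqref{eqn:capillary.first.variation.intro} is covariant under such dilations and under orthogonal maps fixing $\{x_1=0\}$, each $\tilde V_j$ is a $(\beta_j,S_j)$-capillary varifold in $(B_1,g_j)$ for the transformed data $g_j,\beta_j,S_j$, with $\|H^{tan}_{\tilde V_j,g_j}\|_{L^\infty(B_1)}\le r_j\|H^{tan}_{V,g}\|_{L^\infty(B_1)}$. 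Using that $g$ is $C^1$ with $g(0)=g_{eucl}$, that $\beta$ is $C^1$ with $\beta(0)=\theta$, and that $r_j\to 0$, one checks the routine estimate $|g_j-g_{eucl}|_{C^1(B_1)}+|\beta_j-\theta|_{C^1(B_1)}+\|H^{tan}_{\tilde V_j,g_j}\|_{L^\infty(B_1)}\to 0$.

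Now fix a small $\eps_0\in(0,1)$ and let $\delta=\delta(n,\theta,\eps_0)$, $\gamma=\gamma(n,\theta)$ be the constants of Theorem \ref{thm:teaser3}. For $j$ large all the smallness hypotheses of that theorem — including $\D(\tilde V_j,V^{(\theta)})\le\delta$ — hold, so $\spt\tilde V_{I,j}\cap B_\gamma$ is a properly embedded $C^{1,1/2}$ hypersurface, graphical over $P_\theta$ with $C^{1,1/2}$-norm $\le\eps_0$, meeting $\{x_1=0\}$ at $g_j$-angle $\beta_j$ wherever it touches it. Undoing the dilation and the rotation/normalization shows $\spt V_I$ coincides near $0$ with a $C^{1,1/2}$ hypersurface meeting $\{x_1=0\}$ at $g$-angle $\beta$; since $0\in\spt V_I$ this neighborhood is nonempty, so $0$ is a regular point of $V_I$.

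The step I expect to require the most care is the bookkeeping of the blow-up rather than any estimate: one must verify that in coordinates normalizing $g(0)=g_{eucl}$ the hypothesis genuinely identifies a tangent cone of $V$ at $0$ with a Euclidean $\theta$-capillary half-plane $O_{0\#}V^{(\theta)}$, with $O_0$ preserving the container $\{x_1=0\}$, and that the rescaled problems $(\tilde V_j,g_j,\beta_j,S_j)$ fall exactly within the scope of Theorem \ref{thm:teaser3}. Once Theorem \ref{thm:teaser3} is invoked — it already absorbs the first-variation control of Theorem \ref{thm:teaser1} and the boundary maximum principle for capillary varifolds — nothing further is needed; in particular one need not separately establish Hausdorff-closeness of $\spt V_I$ to $P_\theta$ or a density bound near $0$, as both are subsumed by $\D(\tilde V_j,V^{(\theta)})\to 0$. (Alternatively one could target Theorem \ref{thm:teaser2} directly, first using Example \ref{example.swap.region} to reduce to $\theta\in(0,\pi/2)$ so that \eqref{eqn:teaser2-hyp1} holds with a definite gap, then upgrading the varifold convergence to the Hausdorff inclusion \eqref{eqn:teaser2-hyp2} via the Ahlfors lower density bounds on $\spt V_I$ from Theorem \ref{thm:teaser1} together with the boundary maximum principle; routing through Theorem \ref{thm:teaser3} sidesteps both of these points.)
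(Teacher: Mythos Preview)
Your proof is correct and follows essentially the same blow-up strategy as the paper. The only cosmetic difference is that you invoke Theorem~\ref{thm:teaser3} (the varifold-distance formulation) directly, whereas the paper invokes Theorem~\ref{thm:teaser2} after first using Lemma~\ref{lem:conv} to upgrade the varifold convergence $V_i\to V^{(\theta)}$ to Hausdorff convergence $\spt V_{I,i}\to P_\theta^-$ and a density bound; since Theorem~\ref{thm:teaser3} is itself deduced from Theorem~\ref{thm:teaser2} via precisely that step, the two routes are the same argument packaged differently---indeed you note this alternative yourself at the end.
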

	
	\begin{remark}
		Because one can always add a multiple of $[\R^n]$ to any free-boundary varifold, a point having density $(1 + \cos\theta)/2 \equiv \Theta_{V^{(\theta)}}(0)$ does not necessarily mean a tangent cone at that point will be $V^{(\theta)}$ (c.f. Lemma \ref{lem:tangent-cones}). Another such example is half of the Simons' cone, which is a free boundary varifold with density $\sqrt{2}/2$; thus, letting $\theta \coloneqq \arccos(\sqrt{2}-1)$, it is a conical $(\theta,\emptyset)$-capillary varifold with the same density of $V^{(\theta)}$.
	\end{remark}

	In the (relatively open) set of points in $\R^n$ where we have good density control, we have good first variation and boundary control, and from ideas of \cite{demasi} at $\haus^{n-1}$-a.e. boundary point that tangent cones to $V$ looks like $V^{(\beta(x))}$, $V^{(\pi/2)}$, or $V^{(\pi/2)} + \cos\beta(x)[\R^n]$ (see Theorem \ref{thm:refined}).  From this we obtain regularity at generic boundary points.
		\begin{theorem}[Regularity at generic boundary points]\label{thm:teaser4}
		Let $g$ be a $C^1$ metric on $B_1 \subset \R^{n+1}$, $\beta$ a $C^1$ function on $B_1$, $S \subset \R^n$, and $V$ a $(\beta, S)$-capillary varifold in $(B_1, g)$ with $||H^{tan}_{V, g}||_{L^\infty(B_1)} < \infty$.  Then the regular set of $\spt V_I \cap \R^n$ is relatively open and dense in
		\[
		\spt V_I \cap \R^n \cap \{ \Theta_{V, g}(x) + (\cos\beta(x))_- < 1 \},
		\]
		(which itself is relatively open in $\spt V_I \cap \R^n$), and the regular set has $\haus^{n-1}_g$-full measure (or equivalently $\sigma_{V, g}$-full measure) in
		\[
		\spt V_I \cap \R^n \cap \Big\{ \Theta_{V, g}(x) + (\cos\beta(x))_- < \min \{ 1, 1/2 + |\cos\beta(x)| \} \Big\} \cap \{ \beta(x) \neq \pi/2 \} .
		\]
	\end{theorem}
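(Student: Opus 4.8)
The plan is to split the statement into two assertions — denseness of the regular set in $U := \spt V_I \cap \R^n \cap \{\Theta_{V,g} + (\cos\beta)_- < 1\}$, and $\haus^{n-1}_g$-fullness of the regular set in $W := \spt V_I \cap \R^n \cap \{\Theta_{V,g} + (\cos\beta)_- < \min\{1, 1/2 + |\cos\beta|\}\} \cap \{\beta \neq \pi/2\}$ — and for each to localize with Theorem \ref{thm:teaser1}, invoke the tangent-cone classification of Theorem \ref{thm:refined}, and conclude with the $\eps$-regularity results available. The openness of $U$ and $W$ is immediate: $\Theta_{V,g}$ is upper semicontinuous on $\{x_1 = 0\}$ (Section \ref{sec:mono}) and $\beta$ is continuous, so $x \mapsto \Theta_{V,g}(x) + (\cos\beta(x))_-$ is upper semicontinuous on $\{x_1 = 0\}$; hence $U$ and $W$ are relatively open in $\spt V_I \cap \R^n$ (for $W$ one also uses that $\{\beta \neq \pi/2\}$ is open), and ``regular'' is manifestly an open condition. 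So it remains to prove denseness in $U$ and full measure in $W$.

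First I would set up the local picture. Fix $x_0 \in U$. By upper semicontinuity and the monotonicity formula of Section \ref{sec:mono} there are $\alpha, r_0 > 0$ so that $\omega_n^{-1} r^{-n}\mu_V(B_r^g(x)) \le 1 - \alpha$ for $x \in \{x_1 = 0\}$ near $x_0$ and $r$ small; rescaling by $r_0^{-1}$ about $x_0$ makes the metric $C^1$-close to Euclidean and $|D\beta|$, $\|H^{tan}_{V,g}\|_{L^\infty}$ small, so \eqref{eqn:teaser1-hyp1} and \eqref{eqn:teaser1-hyp2} hold and Theorem \ref{thm:teaser1} applies: in a fixed smaller ball, $V_I$ and $V_B$ separately have finite $g$-first variation, $\mu_{V_I}$ and $\sigma_{V,g}$ are Ahlfors-regular, $\spt\sigma_{V,g}$ is $(n-1)$-rectifiable with uniform packing estimates, and (Remark \ref{rem:S-perimeter}, where $\beta \neq \pi/2$) $S$ has locally finite perimeter. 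Together with the boundary maximum principle (Theorem \ref{thm:boundary-max}) these bounds also yield compactness for sequences of such varifolds respecting the $V_I$/$V_B$ splitting, and tangent cones exist at every point of $\spt V_I \cap \{x_1 = 0\}$ (Remark \ref{rem:tangent-cones}).

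For the full-measure statement I would apply Theorem \ref{thm:refined}: at $\haus^{n-1}_g$-a.e.\ $x \in W$, every tangent cone of $V$ at $x$ is, up to a rotation fixing $\{x_1 = 0\}$, one of $V^{(\beta(x))}$, $V^{(\pi/2)}$, $V^{(\pi/2)} + \cos\beta(x)[\R^n]$. The third has $\mu_V$-density $\tfrac12 + |\cos\beta(x)|$, so if it were a tangent cone then $\Theta_{V,g}(x) + (\cos\beta(x))_- = \tfrac12 + |\cos\beta(x)| + (\cos\beta(x))_- \ge \min\{1, \tfrac12 + |\cos\beta(x)|\}$, contradicting $x \in W$; moreover the densities of $V^{(\pi/2)}$ and $V^{(\pi/2)}+\cos\beta(x)[\R^n]$ differ (since $\beta(x) \neq \pi/2$) while all tangent cones at $x$ share the density $\Theta_{V,g}(x)$, so on $W$ at such $x$ every tangent cone is $V^{(\beta(x))}$ or every tangent cone is $V^{(\pi/2)}$. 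In the first case $\beta(x) \in (0,\pi)\setminus\{\pi/2\}$ and Corollary \ref{cor:teaser1} gives that $x$ is regular. In the second case $V_I$ has bounded first variation whose boundary part is carried by $\partial^* S$, the latter has vanishing $(n-1)$-density at $x$, and a free-boundary Allard regularity theorem (cf.\ \cite{GrJo}, \cite{Gruter}, usable thanks to Theorem \ref{thm:teaser1}, treating the small-mass contribution of $\partial^* S$ as an error term) shows $\spt V_I$ is $C^{1,1/2}$ near $x$ and meets $\{x_1 = 0\}$ orthogonally, so $x$ is again regular. Hence the regular set has $\haus^{n-1}_g$-full measure in $W$; the $\sigma_{V,g}$-version is equivalent, since $\sigma_{V,g} \ll \haus^{n-1}_g$ by its Ahlfors-regularity while, conversely, the locus of $W$ on which $\sigma_{V,g}$ vanishes consists (off an $\haus^{n-1}_g$-null set) exactly of the points with tangent cone $V^{(\pi/2)}$, which were just shown to be regular.

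Finally, for denseness in $U$: supposing the regular set missed a relatively open subset of $\spt V_I \cap \R^n \cap U$, I would run a Federer--Almgren dimension-reduction argument in the rescaled picture above, using the compactness and uniform estimates of Theorem \ref{thm:teaser1}, the $\eps$-regularity Theorem \ref{thm:teaser2} (interior, and boundary cones $V^{(\theta)}$ with $\theta \neq \pi/2$), the free-boundary Allard theorem (for $V^{(\pi/2)}$-type cones, whose $V_I$-part blows up to a flat free-boundary half-plane), and the absence of genuinely singular $(\beta,S)$-capillary cones in low dimensions, to conclude that the non-regular subset of $\spt V_I \cap \R^n$ has Hausdorff dimension $\le n-2$ inside $U$; since $\spt V_I \cap \R^n$ is locally $(n-1)$-dimensional at each of its points in $U$ (boundary monotonicity and the packing estimates of Theorem \ref{thm:teaser1}), this is a contradiction. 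The step I expect to be the main obstacle is handling the contact angle $\pi/2$: Theorem \ref{thm:teaser2} is stated only for $\theta \neq \pi/2$, so at points whose tangent cone is $V^{(\pi/2)}$ (or $V^{(\pi/2)} + \cos\beta(x)[\R^n]$) one must argue separately, and it is precisely here that the \emph{separate} first-variation control of $V_I$ and $V_B$ and the finite perimeter of $S$ from Theorem \ref{thm:teaser1} are needed, in order to reduce to a free-boundary Allard statement for $V_I$ alone; a secondary technical point is to verify that the compactness and dimension-reduction machinery respects the interior/boundary decomposition.
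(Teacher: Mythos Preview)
For the full-measure assertion your density computation is off: the $\mu_V$-density of $V^{(\pi/2)}+\cos\beta(x)[\R^n]$ is $\tfrac12+\cos\beta(x)$, not $\tfrac12+|\cos\beta(x)|$, so when $\cos\beta(x)<0$ the $W$-constraint does not exclude it. The paper handles this by first swapping $S\leftrightarrow\R^n\setminus S$, $\beta\leftrightarrow\pi-\beta$ (Example~\ref{example.swap.region}) to reduce to $\cos\beta>0$ locally. More importantly, your treatment of the $V^{(\pi/2)}$ case---applying free-boundary Allard while treating the contribution of $\del^*S$ as an ``error term''---is a genuine gap: \cite{GrJo} requires $V_I$ to be free-boundary in an \emph{open} neighborhood, and points with tangent cone $V^{(\pi/2)}$ may lie in $\overline{\del^*S}\setminus\del^*S$, where capillary boundary accumulates. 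The paper's key step, absent from your proposal, is to first prove $\haus^{n-1}(\overline{\del^*S}\setminus\del^*S)=0$: by upper-semicontinuity of $\Theta_{V,g}$ and Theorem~\ref{thm:refined} one has $\Theta_{V,g}\ge(1+\cos\beta)/2$ on all of $\overline{\del^*S}$, whereas $\Theta_{V,g}(x)\in\{1/2,\,1/2+\cos\beta(x)\}$ for $\haus^{n-1}$-a.e.\ $x\in\spt V_I\cap\R^n\setminus\del^*S$; under the $W$-constraint $\Theta_{V,g}<1/2+\cos\beta$ (after the swap) these are incompatible. Once this nullity is established, $V_I$ is a genuine free-boundary varifold in the open set $B_\gamma\setminus\overline{\del^*S}$, and \cite{GrJo} applies without modification.

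For denseness, the Federer--Almgren scheme you propose is unnecessary and, as you yourself anticipate, incomplete at $\theta=\pi/2$. The paper's argument is direct: by Theorem~\ref{thm:refined}, $\haus^{n-1}$-a.e.\ $x\in\del^*S\cap\{\beta\neq\pi/2\}$ has tangent cone $V^{(\beta(x))}$ and is regular by Corollary~\ref{cor:teaser1}, while in the \emph{open} complement of $\overline{\del^*S\cap\{\beta\neq\pi/2\}}$ the varifold $V_I$ is free-boundary with bounded mean curvature and multiplicity-one half-plane tangent cones $\haus^{n-1}$-a.e., so \cite{GrJo} gives a.e.\ regularity there. Since $\spt V_I\cap\R^n$ carries positive $\haus^{n-1}$-measure in every relatively open subset (Ahlfors regularity of $\sigma_{V,g}$ from Theorem~\ref{thm:teaser1}), denseness follows immediately---no dimension reduction, cone classification in low dimensions, or $\eps$-regularity at $\pi/2$ is required.
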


	\begin{remark}\label{rem:example-collision}
	Probably the regular set has $\haus^{n-1}_g$-full measure in $\spt V_I \cap \R^n \cap \{\Theta_{V, g}(x) + (\cos\beta(x))_- < 1 \}$ also, but we encounter two issues when trying to prove this.  The first is that we cannot yet prove an Allard-type regularity when $\beta(x) = \pi/2$ but $\beta$ is not identically $\pi/2$ nearby (c.f. Remark \ref{rem:no_pi/2}).  The second is that when $\beta \neq \pi/2$ we cannot rule out there being many points whose tangent cones are $V^{(\pi/2)} + \cos\beta(x)[\R^n]$, but which themselves are limits of capillary points, i.e. we cannot rule out that $\overline{\del^* S} \setminus \del^*S$ has positive $(n-1)$-measure in $\{ \beta \neq \pi/2\}$.
	
	The following examples illustrate this kind of subtlety.  Define $\bY \subset \R^3$ by
	\begin{gather}
	\bY = \Big(\{ x_2 = \sqrt{3} x_1, x_1 > 0 \} \cup \{ x_2 = - \sqrt{3} x_1, x_1 > 0 \} \cup \{x_2 = 0, x_1 < 0 \} \Big) \times \R.
	\end{gather}
 As a $2$-varifold in $(\R^3,g_{eucl})$, $\bY$ is stationary. Now let $A \subset \R$ be a fat Cantor set, i.e. a closed set of positive $\haus^1$-measure which is nowhere dense and Let $h : \R \to \R$ be a smooth function which is positive on $\R \setminus A$ and vanishes on $A$ with small derivative. Let us define the diffeomorphism of $\R^3$ given by $\Psi(x_1,x_2,x_3)=(x_1+h(x_3),x_2,x_3)$: then $\Psi(\bY)$ is stationary in $(\R^3,\Psi_\sharp g_{eucl})$ and the varifold $V=V_I+\beta S$ defined as 
	\[
	V_I =\Psi(\bY) \cap \{x_1<0\}, \quad S = \{ (x_1, x_2, y) : x_1 = 0, |x_2| > h(y) \sqrt{3} \}.
	\]
 is a $(\beta,S)$-capillary varifold in $(\{x_1<0\},\Psi_\sharp g_{eucl})$, where $\beta$ is small perturbation of the constant $\pi/6$, depending on how small the derivative of $h$ is.
Moreover $\del^*S$ are all regular capillary points (with tangent cones being rotations of $V^{(\beta(x))}$), while $\overline{\del^*S} \setminus \del^*S = \{0^2\}\times A = \sing V_I \cap \R^n$ has positive measure, and at every point $(0, y) \in \{0^2\}\times A$ the tangent cone to $V$ is $V^{(\pi/2)} + (\sqrt{3}/2)[\R^2]$.
	
	Although the singularities of $V$ have very ``non-optimal'' density $=1/2 + \sqrt{3}/2$, these examples are somewhat devious because $V_I$ can be made to look arbitrarily close to a free-boundary half-plane $[P_{\pi/2}]$ with multiplicity-one.  We expect this pathology cannot occur when $\Theta_V(0, 1) < 1$.

    On the other hand the bound $\Theta_V(x) < 1$ is optimal, in that Theorem \ref{thm:teaser4} can fail at points where $\Theta_V(x) \ge 1$.  For an example, take any $\beta \in (0, \pi/2)$ and consider the (conical, stationary) $(\beta, \R^2)$-capillary varifold in $(\R^3\cap\{x_1<0\}, g_{eucl})$ given by
	\[
    % V = [P_{\beta}^-]+[P_{\pi-\beta}^-]
	V = [\{x_2 = (\cot\beta) x_1, x_1<0\}] + [\{ x_2 = -(\cot\beta) x_1 , x_1<0\}].
	\]
	Then $\Theta_V(0, 1) = \Theta_V(0) = 1$ and $\{ x_1 = x_2 = 0 \} = \sing V_I$.
	% Of course the bound $\Theta_V(x) < 1$ is optimal, in that Theorem \ref{thm:teaser4} can fail at points where $\Theta_V(x) > 1$.  For an example, take any $\beta \in (0, \pi/2)$ and consider the (conical, stationary) $(\beta, \R^2)$-capillary varifold in $(\R^3, g_{eucl})$ given by
	% \[
	% V = [\{x_2 = \cot\beta x_1\}] + [\{ x_2 = -\cot\beta x_1 \}] + \cos\beta [\R^2] =: V_I + \cos\beta[\R^2].
	% \]
	% Then $\Theta_V(0, 1) = \Theta_V(0) = 1/2 + \cos\beta$ and $\{ x_1 = x_2 = 0 \} = \sing V_I$.
	\end{remark}

	\subsection{Idea of Proof and plan of the paper}
    Section \ref{sec:prelim} is dedicated to introducing some notation and the definition of capillary varifolds. We also recall some basic results (basic first variation control, monotonicity, Allard's theorems on compactness and regularity) which apply to capillary varifolds after some simple adaptation of classical results for free-boundary varifolds.
    
 \vspace{3mm}
    \textbf{First variation control.}
    The crux of the paper is to establish the first variation control of Theorem \ref{thm:teaser1}.
    The key step in the proof of Theorem \ref{thm:teaser1} is the Ahlfors-regularity of the boundary measure $\sigma_{V,g}$ which is deduced from a dichotomy result (Theorem \ref{thm:sigma-dichotomy}): roughly speaking, for a capillary  varifold satisfying the assumptions of Theorem \ref{thm:teaser1}, at any point $\xi\in\{x_1=0\}$, either $V_I$ vanishes in a small ball $B_{cr}(\xi)$ or $\sigma_{V,g}(B_r(\xi))\ge cr^{n-1}$.
    The proof of this dichotomy rests on a boundary rigidity result (Theorem \ref{thm:rigidity}), which can be seen as a \enquote{baby version} of Allard's theorem: if a stationary varifold in $\{x_1 < 0 \}$ has density $< 1$, and if the excess at scale $1$ with respect to the plane $\{x_1=0\}$ is small enough, then the varifold must vanish in a smaller ball.

    We prove Theorem \ref{thm:teaser1} in Section \ref{sec:first_var_control}. In particular, Subsection \ref{sec:rigidity} is dedicated to proving the boundary rigidity result and in Subsection \ref{sec:sigma-ahlfors} we prove the dichotomy result.
    Subsection \ref{sec:conv} is dedicated to proving an improved compactness and convergence result for capillary varifolds with small enough total mass (Lemma \ref{lem:conv}), while in Subsection \ref{subsec:refined_bdry} we prove a more refined structure for the boundary measure of capillary varifolds (Theorem \ref{thm:refined}). 
    
    \vspace{3mm} 
    \textbf{Regularity results.}
    The proof of Theorem \ref{thm:teaser2} is based on an $L^\infty$-excess decay estimate using a viscosity approach modeled on \cite{savin07, DePhil-Gas-Schu, desilva:fbreg}. 
    With techniques similar to ours (although without the complications arising from the lack of first variation control), the regularity for the one-phase free boundary problem with capillarity was addressed in the recent paper \cite{Ferreri-Tortone-Velichkov-2023}.

    In Section \ref{sec:max} we take the first step towards this result by proving a boundary maximum principle (Theorem \ref{thm:boundary-max}) for capillary varifolds.
    Roughly speaking, we prove that any smooth surface touching the support of a capillary varifold at a boundary point must do so without violating the expected condition that the varifolds attaches to $\{x_1=0\}$ with angle $\beta$. 
    However, the discussion after Definition \ref{def.capillary.varifolds} shows that, in general, one should not expect that a $(\beta,S)$ capillary varifold induced by a smooth surface attaches to $\{x_1=0\}$ with angle $\beta$ (since, for instance, $[\{x_{n+1}=0,x_1\le0\}]$ is a $(\beta,\emptyset)$-capillary varifold for any $\beta\in(0,\pi)$).
    Despite this, by exploiting Theorem \ref{thm:cap-first-var}, we may prove a soft classification of capillary tangent cones (Lemma \ref{lem:tangent-cones}) that allows us to exclude any pathological behavior if the varifold is close enough to the model capillary cone $V^{(\theta)}$.

    The key part of the proof of Theorem \ref{thm:teaser2} is a weak decay of oscillations (Theorem \ref{thm:decay-r}, which we prove in Section \ref{sec:harnack}), whose proof is based on ideas developed in \cite{desilva:fbreg} for the regularity of the one-phase free boundary. In some ways, capillary minimal surfaces behave more like solutions to the one-phase Bernoulli problem than minimal surfaces with free-boundary (compare also with \cite{ChoEdeLi2024}).

    With the above result at hand, the proof of the $\eps$-regularity results stated above are extensions of already existing results (see for instance \cite{savin07,DePhil-Gas-Schu}), which we carry out in Sections \ref{sec:decay} and \ref{sec:proofs-of-main}.

	% The crux of the paper is to establish the first variation control of Theorem \ref{thm:teaser1}, which we do in Section \ref{sec:first_var_control}.
     
 %    The two main consequences of first variation control are a good compactness theorem for capillary varifolds (Section \ref{sec:conv}), and a boundary maximum principle for capillary varifolds with angle $\in (0, \pi/2)$ (Section \ref{sec:max}).
 %    Armed with these ingredients we prove an $L^\infty$-excess decay estimate (Sections \ref{sec:harnack}, \ref{sec:decay}) using a viscosity approach modeled on \cite{DePhil-Gas-Schu}, \cite{DeSilva2011}.  In some ways capillary minimal surfaces behave more like solutions to the one-phase Bernoulli problem than minimal surfaces with free-boundary (compare also CITE:OTISNICKCHAO).

	\subsection{Acknowledgements}
	
	We thank Guido De Philippis, Luca Spolaor, and Bozhidar Velichkov for helpful conversations. 
 
    L.D.M was supported by  the \textsc{STARS - StG} project ``\textsc{QuASAR} - Question About Structure And Regularity of currents" and by PRIN project 2022PJ9EFL ``Geometric Measure Theory: Structure of Singular Measures, Regularity Theory and Applications in the Calculus of Variations" and partially supported by \textsc{INDAM-GNAMPA}. N.E. was supported by NSF grant DMS-2204301. C.G. was supported by the European Research Council (\textsc{ERC}), under the European Union's Horizon 2020 research and innovation program, through the project \textsc{ERC VAREG} - Variational approach to the regularity of the free boundaries (grant agreement No. 853404) and partially supported by \textsc{INDAM-GNAMPA}. C.L. was supported by NSF
    grant DMS-2202343 and a Simons Junior Faculty Fellowship.

\section{Preliminaries}\label{sec:prelim}

% {\color{blue} EVERYTHING WORKS FOR $n = 1$ RIGHT?}
We work in $\R^{n+1}$, for $n \geq 1$. We will identify $\R^n \equiv \{0^1\}\times \R^n $ and $\R^{n-1} \equiv \{ 0^1 \} \times \R^{n-1} \times \{0^1\}$ as subspaces of $\R^{n+1}$.  Given an $n$-plane $P$ we always denote by $\pi_P$ the Euclidean-orthogonal projection to $P$, and $\pi_P^\perp$ the projection to $P^\perp$.  Given two $n$-planes $P$ and $Q$, with a small abuse of notation we let $|P-Q|\coloneqq |\pi_P-\pi_Q|$ in the operator norm. We write $\eta_{x, r}(y) := (y-x)/r$ for the translation/dilation map.

 Define
\[
P_\theta = \{ \cos\theta x_1 + \sin\theta x_{n+1} = 0 \}, \quad P^-_\theta = P_\theta \cap \{ x_1 \leq 0 \}
\]
to be the plane (and half-plane) meeting $\{ x_1 = 0 \}$ at angle $\theta$.  Write $\nu_\theta = \cos\theta e_1 + \sin\theta e_{n+1}$ for its unit normal.  We highlight $P_{\pi/2} = \{ x_{n+1} = 0 \}$.

It will be useful to define the squashed balls
\[
B^\theta_r(x) = \{ y \in \hor : (\sin \theta)^{-2} (y_1 - x_1)^2 + (y_2 - x_2)^2 + \ldots + (y_n - x_n)^2 < r^2 \},
\]
the reason being is that the (Euclidean) orthogonal projection $\pi_{\hor}(B_r(0) \cap P_\theta) = B^\theta_r(0)$.  Define the vertical cylinders $C^\theta_r(x) = B^\theta_r(x) \times \R \subset \R^{n+1}$.

Given an $n$-plane $P$ and a set $S$, define the oscillation function
\begin{equation}
% \label{eqn:def-osc}
\osc_P(S, B_r(x)) = \frac{1}{2} \sup \{ |\pi_P^\perp(y - z)| : y, z \in S \cap B_r(x) \} .
\end{equation}
If $A$ is any invertible linear map, and $z = z^T + z^\perp \in P + P^\perp$, we have
\begin{equation}
|\pi_{AP}^\perp(Az)| = |\pi_{AP}^\perp(Az^\perp)| \leq |Az^\perp| \leq |A| |\pi_P^\perp(z)| ,
\end{equation}
(where $|A|$ denotes the operator norm) and therefore we have
\begin{equation}\label{eqn:osc-A}
\osc_{AP}(AS, B_r) \leq |A|\osc_P(S, B_{|A^{-1}|r}) .
\end{equation} 

We will often endow (subset of) $\R^{n+1}$ with a $C^1$ metric $g$.  In this case we write $B_r^g(x)$ for the $g$-geodesic ball of radius $r$ centered at $x$, $\haus^k_g$ for the $k$-dimensional Hausdorff measure w.r.t. $g$, $\pi_{P, g}$ for the $g$-orthogonal projection onto a plane $P$, $\nabla^g$ for the $g$-Levi-Civita connection, etc.  However we emphasize that \emph{unless $g$ is explicitly stated} we take distances, angles, volumes, projections, etc. with respect to the Euclidean metric.
We define the {$g$-unit normal to $\{x_1=0\}$ pointing in the positive $e_1$ direction} as
\begin{equation}\label{eq:unit_normal_definition}
    \nu_g = \frac{\pi^\perp_{\R^n,g}e_1}{|\pi^\perp_{\R^n,g}e_1|}.
\end{equation}

\subsection{Slanted graphs}\label{subsection:slanted_graphs}

For geometric convenience, we will be working with (small) graphs over $P_\theta$, but which push in the $e_{n+1}$ direction and are defined as functions in $x' \equiv (x_1, \ldots, x_n) \in \hor$.  Specifically, if $u(x_1, \ldots, x_n) : U \subset \hor \to \R$ satisfies $|u|_{C^2} \leq 1$, then we will be considering the surface
\[
G_\theta(u) := \{ (x', -\cot\theta x_1 + u(x')) : x' \in U \}.
\]
It will also be convenient to consider $G_\theta(u)$ as sitting inside $\R^{n+1}$ endowed with a general metric $g$.

By direct computation, the mean curvature scalar of $G_\theta(u)$ with respect to an ambient metric $g$ and in the $e_{n+1}$ direction, as a function of $x' \in U$, is
\[
H_\theta(u, g) = \cL_\theta(u) + R_1(\theta, u) + R_{2}(\theta, u, g),
\]
where $\cL_\theta$ is the linear operator
\[
\cL_\theta(u) = \Delta u - \cos^2 \theta D_1^2 u,
\]
$R_{1}$ is a smooth function in $\theta, Du, D^2 u$ which is quadratic in $Du, D^2u$, and $R_{2}$ is a smooth function of $\theta, g, Dg, u, Du, D^2 u$ which vanishes when $g = g_{eucl}$.
In other words:
\[
R_{1} = R_{1,1} \star Du \star Du + R_{1,2} \star Du \star D^2u, \quad R_2 = R_{2, 1} \star (g - g_{eucl}) + R_{2, 2} \star Dg
\]
for smooth functions $R_{1, i}(\theta, Du, D^2u)$, $R_{2, i}(\theta, g, Dg, u, Du, D^2 u)$.

A further computation shows that the unit normal $\nu_{\theta}(u, g)$ of $G_\theta(u)$, with respect to an ambient metric $g$ and in the $e_{n+1}$ direction, satisfies
\[
\nu_{\theta}(u, g) \cdot e_1 = \cos\theta + D_1 u (\cos^2 \theta \sin\theta - 1) + \tilde R_{1}(u) + \tilde R_{2}(u, g)% = -Du + \cos\theta(1+\cos\theta \sin\theta D_1 u)e_1 + \sin\theta(1+\cos\theta\sin\theta D_1 u) e_{n+1} + \tilde R_{\theta, 1}(u) + \tilde R_{\theta, 2}(u, g)
\]
where $\tilde R_{1}$ is a smooth function of $\theta, Du$ which is quadratic in $Du$; and $\tilde R_{2}$ is a smooth function of $\theta, g, u, Du$ which vanishes when $g = g_{eucl}$:
\[
\tilde R_1 = \tilde R_{1,1} \star Du \star Du, \quad \tilde R_2 = \tilde R_{2,1} \star (g - g_{eucl}),
\]
for smooth functions $\tilde R_{1,1}(\theta, Du)$, $\tilde R_{2, 1}(\theta, g, u, Du)$.

\subsection{(Signed) varifolds in an ambient metric}

Let us recall that if $g$ is a $C^1$ metric on $U \subset \R^{n+1}$, then an \textbf{$n$-rectifiable varifold $V$ in $(U, g)$} is a positive Radon measure on the Grassmannian bundle $U \times \mathrm{Gr}_n(n+1)$ of the form
\begin{equation}\label{eqn:varifold}
V(\phi(x, S)) = \int_{M_V} \phi(x, T_x M_V) \theta_{V, g}(x) \dif\haus^n_g(x) \quad \forall \phi \in C^0_c(U \times \mathrm{Gr}_n( n+1)),
\end{equation}
where: $M_V$ is an $n$-rectifiable set in $U$, $\theta_{V, g}$ is a non-negative $\haus^n_g \llcorner M_V$-measurable function, and $\haus^n_g$ is the $n$-dimensional Hausdorff measure with respect to $g$.  
We say $V$ is \textbf{integral} if, in addition to being rectifiable, $\theta_{V, g} \in \N$ at $\mu_V$-a.e.  If $g = g_{eucl}$ we will often write $\theta_V$ in place of $\theta_{V, g_{eucl}}$.

Associated to $V$ is the mass measure $\mu_V$, which is a Radon measure on $U$ defined by $\mu_V := \pi_\sharp V$, where $\pi : U \times \mathrm{Gr}_n(n+1) \to U$ is the projection map.  Equivalently, 
\begin{equation}\label{eqn:varifold-mass}
\mu_V(\phi) = \int \phi(x) \theta_{V, g}(x) d\haus^n_g(x) \quad \forall \phi \in C^0_c(U).
\end{equation}
If $M$ is a $n$-dimensional Lipschitz submanifold of $U$, we write $[M]_g = \haus^n_g \llcorner M \oplus \delta_{T_x M}$ for the (multiplicity-one) varifold obtained by integrating $M$ against $\haus^n_g$, and when $g = g_{eucl}$ we will for shorthand write $[M] := [M]_{g_{eucl}} \equiv \haus^n \llcorner M \oplus \delta_{T_x M}$.

Given a proper $C^1$ map $f : (U, g) \to (U', g')$, the pushforward $f_\sharp V$ is defined by
\[
(f_\sharp V)(\phi(x, S)) = \int_{M_V} \phi(f(x), Df|_x(T_x M_V)) Jf|_{(x, T_xM_V)} \theta_{V,g}(x)    d\haus^n_g
\]
where $Jf|_{x, S} = \det( g'(Df|_x(e_i), Df|_x(e_j)))^{1/2}$ is the Jacobian of $f$, the $\{ e_i \}$ being a $g$-orthonormal basis of $S \subset \R^{n+1}$.  The area formula implies 
\[
(f_\sharp V)(\phi(x, S)) = \int_{f(M_V)} \phi(x, T_x f(M_V)) \theta_{V,g}(f^{-1}(x)) d \haus^n_{g'}(x).
\]
And we remark that if $f$ is an isometry, then
\[
\mu_{f_\sharp V}(A) = \mu_V(f^{-1}(A)) \equiv (f_\sharp \mu_V)(A) \quad \forall A \subset U'.
\]

Given an $n$-plane $P$ and a varifold $V$, we define
\begin{align}
% \label{eqn:def-osc}
\osc_P(V, B_r(x)) &:= \osc_P(\spt V, B_r(x)),
\end{align}
where $\spt V \subset \R^{n+1}$ is the support of the mass measure $\mu_V$.

\vspace{3mm} 

It will useful to talk about varifolds with possibly negative density on some regions.  Let us say $V$ is a \textbf{signed $n$-rectifiable varifold in $(U, g)$} if we can write $V = V_+ - V_-$ for $n$-rectifiable varifolds $V_+, V_-$ in $(U, g)$ satisfying $V_+ \perp V_-$.  Equivalently, $V$ can be thought of as a signed Radon measure on the Grassmannian bundle of the same form as \eqref{eqn:varifold}, but now $\theta_{V, g} \in L^1_{loc}( \haus^n_g \llcorner M_V)$ is an $\R$-valued function.

Note that if $V$ is a signed varifold, then the pushforward measure $\mu_V = \pi_\sharp V$ is now a \emph{signed} mass measure, having the same form as \eqref{eqn:varifold-mass}.  Let us write $|\mu_V|$ for the \emph{unsigned} or total mass measure:
\[
|\mu_V|(\phi) = \int_{M_V} \phi |\theta_{V, g}| d\haus^n_g.
\]
We can define the pushforward $f_\sharp V := f_\sharp V_+ - f_\sharp V_-$.

We say signed Radon measures $\mu_i \to \mu$ in an open set $U$ if $\mu_i(\phi) \to \mu(\phi)$ for all $\phi \in C^0_c(U)$, and correspondingly we say signed varifolds $V_i \to V$ if they converge as signed Radon measures.  Note that it it not necessarily the case that $V_{+, i} \to V_+$ and $V_{-, i} \to V_i$.

\vspace{3mm}

For $V, W$ signed $n$-varifolds in $B_1$, define the varifold metric 
\begin{equation}\label{eqn:D-def}
\D(V, W) = \sum_{\alpha \in \N} 2^{-\alpha} \frac{ |V(\phi_\alpha) - W(\phi_\alpha)|}{1+|V(\phi_\alpha) - W(\phi_\alpha)|}
\end{equation}
where $\{\phi_\alpha\}_\alpha$ is a dense subset of $C^0_c(B_1) \times \mathrm{Gr}_n(n+1)$.  Let us further arrange the $\{\phi_\alpha\}_\alpha$ to satisfy the two additional properties: for every $r = 1-1/k$ (for $k \in \N$), some subset of $\{ \phi_\alpha \}_\alpha$ forms a dense subset of $C^0_c(B_r) \times \mathrm{Gr}_n(n+1)$; and for every such $r$ there is a $\phi_\alpha \in C^0_c(B_{\sqrt{r}})$ satisfying $\phi_\alpha \equiv 1$ on $B_{r}$ and $|\phi_\alpha| \leq 1$. 

Then $\D$ is a metric on the space of signed $n$-varifolds in $B_1$ which is compatible with varifold convergence in the following sense: If $V_i, V$ are signed $n$-rectifiable varifolds with negative parts uniformly bounded by  $\Lambda$ for some $\Lambda \geq 0$, then $\D(V_i, V) \to 0$ if and only if $V_i \to V$ as varifolds in $B_1$.

To see this, let $V_i, V$ be as above, and suppose $\D(V_i, V) \to 0$.   Fix $\phi \in C^0_c(B_1) \times\mathrm{Gr}_n(n+1)$, $\eps > 0$, and choose an $r = 1-1/k$ ($k \in \N$) so that $\spt \phi \subset B_r$.  First observe that, taking $\phi_\alpha \in C^1_c(B_{\sqrt{r}})$ which $\equiv 1$ on $B_r$ we have
\[
|\mu_{V_i}|(B_r) \leq V_i(\phi_\alpha) + \Lambda \leq 4^\alpha \D(V_i, V) + V(\phi_\alpha) + \Lambda \leq 1+2\Lambda + |\mu_V|(B_{\sqrt{r}}).
\]
Now choose a $\phi_\alpha \in C^0_c(B_r)$ with $|\phi_\alpha - \phi|_{C^0} \leq \eps$, and then get for $i \gg 1$:
\begin{align}
|V_i(\phi) - V(\phi)| &\leq \eps |\mu_{V_i}|(B_r) + \eps |\mu_V|(B_r) + |V_i(\phi_\alpha) - V(\phi_\alpha)| \\
&\leq (1+3\Lambda + |\mu_V|(B_{\sqrt{r}}))\eps + 4^\alpha \D(V_i, V) \\
&\leq (2 + 3\Lambda + |\mu_V|(B_{\sqrt{r}})) \eps.
\end{align}
This shows $V_i(\phi) \to V(\phi)$.  On the other hand, if we assume $V_i \to V$, then it's easy to see $\D(V_i, V) \to 0$ also.

\vspace{3mm}

If $V$ is a signed $n$-rectifiable varifold in $(U, g)$, and $X$ is a compactly supported $C^1$ vector field in $U$, then $\{id+tX:(U,g)\to(U,g)\}_{t\in(-\eps,\eps)}$ is a family of diffeomorphisms of $(U,g)$ into itself and we may define the \textbf{$g$-first variation of $V$} as
\begin{align}
\delta_g V(X)&\coloneqq \frac{d}{dt}\Big|_{t = 0} \mu_{(id + t X)_\sharp V}(U) = \int_{M_V} \dive_{V, g}(X) \theta_{V, g}(x) d\haus^n_g(x)
\end{align}
where $\dive_{V, g}(X)|_x = \sum_i g(e_i, \nabla^g_{e_i} X)$ is the tangential $g$-divergence of $X$, with $e_i$ being a $g$-orthonormal basis of $T_x M_V$ and $\nabla^g$ being the Levi-Civita connection with respect to $g$. 
If $g = g_{eucl}$ we will often simply write $\delta V \equiv \delta_{g_{eucl}} V$.  $V$ is said to have \textbf{locally-finite $g$-first-variation} if 
\[
|\delta_g V(X)| \leq C(W) |X|_{C^0(W)} \quad \forall X \in C^1_c(W, \R^{n+1}), \forall W \subset\subset U.
\]
We say $V$ is \textbf{stationary} if $\delta_g V(X) = 0$ for all $X \in C^1_c(U, \R^{n+1})$.

In general, if $V$ has locally finite $g$-first-variation then $V$ will not also have locally-finite $g_{eucl}$-first-variation.  However, in many variational arguments it will be useful to treat $g$ as a perturbation of $g_{eucl}$, and we record the inequality
\begin{align}
&\big|\mdiv_{V, g}(X)|_x -\mdiv_{V,g_{eucl}}(X)|_x\big|\\
&\qquad= \bigg|\sum_{i, j} g^{ij} g(\nabla^g_{\tau_i} X, \tau_j) -\mdiv_{V,g_{eucl}}(X)\bigg|\nonumber \\
&\qquad\le  c(n, |g|_{C^0}) (|Dg| |X| + |g - g_{eucl}| |D_V X|) \label{eqn:div}
\end{align}
where $\{ \tau_i\}$ is any $g_{eucl}$-orthonormal basis for $T_x M_V$, $g^{ij}$ is the inverse of $g(\tau_i, \tau_j)$, and $|D_V X|^2 := \sum_i |D_{\tau_i} X|^2$.

By the Riesz representation theorem, if $V$ has locally-finite $g$-first variation in $U$ then $||\delta_g V||$ is a Radon measure in the sense that
\[
\delta_g V(X) = \int g(\nu_{V, g}, X) \dif||\delta_g V||
\]
for some $||\delta_g V||$-measurable, $g$-unit vector field $\nu_{V, g}$.  By the Radon-Nikodyn theorem, we can further decompose $\delta_g V$ by finding $H_{V, g} \in L^1_{loc}(U, \R^{n+1}; |\mu_V|)$, a Radon measure $\sigma_{V, g} \perp |\mu_V|$, and a $\sigma_{V, g}$-measurable $g$-unit vector field $\eta_{V, g}$ so that
\begin{equation}\label{eq:definition_of_H_and_sigma}
\delta_g V(X) = - \int g(H_{V, g}, X) \dif\mu_V + \int g(\eta_{V, g}, X) \dif\sigma_{V, g}.
\end{equation}
Here $H_{V, g}$ is called the (generalized) $g$-mean curvature, and $\sigma_{V, g}$ the (generalized) $g$-boundary measure, with $g$-conormal $\eta_{V, g}$.  Any stationary $V$ has $H_{V, g} = 0$ and $\sigma_{V, g} = 0$.

The first variation transforms nicely under isometries and scaling.  If $f : (U, g) \to (U', g')$ is an isometry, and $V' = f_\sharp V$, then
\begin{equation}\label{eqn:delta-isometry}
\delta_{g'} V'(X) = \delta_g V (Df^{-1} (X \circ f)) .
\end{equation}
In particular, if $\delta_g V$ is locally finite then $\delta_{g'} V'$ is also locally-finite, and $H_{V', g'}|_x = Df H_{V, g}|_{f^{-1}(x)}$, $\eta_{V', g'}|_x = Df \eta_{V, g}|_{f^{-1}(x)}$, $\sigma_{V', g'}(\phi) = \sigma_{V, g}(\phi \circ f^{-1})$.

If instead $f : (U, g) \to ( R U, g'(x) = g(x/R))$ has the form $f(x) = Rx$ (for $R > 0$ a constant), so that $f^* g' = R g$, then \eqref{eqn:delta-isometry} still holds, but now we have $\mu_{V'}(\phi) = R^n \mu_{V} (\phi \circ f^{-1})$, $H_{V', g'}|_x = R^{-1} H_{V, g}|_{x/R}$, $\eta_{V', g'}|_x = \eta_{V, g}|_{x/R}$, and $\sigma_{V', g'}(\phi) = R^{n-1} \sigma_{V, g}(\phi \circ f^{-1})$.

\vspace{3mm}

If $V$ is a signed $n$-rectifiable varifold in $(B_1, g)$ supported in $\{ x_1 \leq 0 \}$, we say \textbf{$V$ has $g$-free-boundary in $\{ x_1 = 0 \}$} if there is an $H_{V, g}^{tan} \in L^1_{loc}(B_1, \R^{n+1}; |\mu_V|)$ so that $H_{V,g}^{tan}(x)$ is tangential to $\{x_1=0\}$ for $|\mu_V|$-a.e. $x\in\{x_1=0\}$ and
\[
\delta_g V(X) = -\int g(H_{V, g}^{tan}, X) d\mu_V \quad \forall X \in C^1_c(B_1, \R^{n+1}) \text{ tangential to $\{ x_1 = 0 \}$}.
\]
Equivalently, we ask that when restricted to tangential vector fields, then $\delta_g V$ is locally-finite and $\delta_g V \ll |\mu_V|$ as Radon measures.  We say a $g$-free-boundary $V$ is \textbf{stationary} if $\delta_g V(X) = 0$ for all $X \in C^1_c(B_1, \R^{n+1})$ tangential to $\{ x_1 = 0 \}$.

By a relatively straightforward modification of \cite{GrJo} (see also \cite{demasi}), for a $g$-free-boundary varifolds $V$, with the additional assumption that $\mu_V\llcorner\{x_1>0\}$ is a positive measure, the $g$-first variation $\delta_g V$ will be locally-finite in $B_1$ for all vector fields, and will admit the decomposition
\[
\delta_g V(X) = -\int g(H_{V, g}^{tan}, X) d\mu_V - \int_{\{x_1 = 0\}} g(H_{\R^n, g}, X) d\mu_V + \int g(\nu_g, Y) d\sigma_{V, g}
\]
where now $H_{\R^n, g}$ is the mean curvature vector of $\R^n \subset (\R^{n+1}, g)$, $\sigma_{V, g}$ is supported in $\R^n$, and $\nu_g$ is defined in \eqref{eq:unit_normal_definition}.  In other words, the ``total'' mean curvature of $V$ is $H_{V, g} := H^{tan}_{V, g} + H_{\R^n, g}$, and the generalized conormal is $g$-normal to the barrier.  For the reader's convenience we provide a proof in the Appendix, Theorem \ref{thm:fb-first-var}.  Note that a stationary $g$-free-boundary varifold only has $H^{tan}_{V, g} = 0$ -- the normal mean curvature $H_{\R^n, g}$ and boundary measure $\sigma_{V, g}$ might not vanish.

In this paper we will be dealing almost exclusively with varifolds in $B_1$ and supported in $\{ x_1 \leq 0 \}$. A standard convention for us will be to write $V_I := V \llcorner \{ x_1 < 0 \}$ for the interior piece, and $V_B := V \llcorner \{ x_1 = 0 \}$ for the boundary piece, where here and in the following we use the small abuse of notation $V\llcorner U \coloneqq V\llcorner (U\times\Gr_{n}(n+1))$.  Let us also remark that when $g = g_{eucl}$, we will often drop the notational dependence on $g$, e.g. by writing $H_V$ and $\sigma_V$ in place of $H_{V, g}$, $\sigma_{V, g}$.

\subsection{Capillary varifolds}\label{subsection.capillary.varifolds}

Our primary objects of interest are varifolds which (distributionally) meet the barrier $\{x_1 = 0 \}$ at a prescribed, potentially changing angle $\beta$.  As pointed out in the introduction, a natural way to impose this condition is through the Euler-Lagrange equation of the Gauss free energy.  

\begin{definition}[Capillary varifold]
Let $g$ be a $C^1$ metric on $B_R$, $\beta$ a $C^1$ function on $B_R \cap \{x_1 = 0 \}$, and $S \subset \{ x_1 = 0 \}$. 
We say a $V$ is a $(\beta, S)$-capillary varifold in $(B_R, g)$ if $V$ is a signed rectifiable $n$-varifold in $(B_R, g)$ of the form $V = V \llcorner \{ x_1 < 0 \} + \cos\beta [S]_g =: V_I + V_B$, for $V_I$ integral in $\{ x_1 < 0 \}$, and which satisfies the variational identity\footnote{One could also allow a priori tangential mean curvature on $V_B$.  In this case our regularity theory would essentially be unchanged, and in fact we would get that a posteriori the tangential mean curvature vanishes along $\{ x_1 = 0 \}$.  However as critical points of the Gauss free energy \eqref{eqn:gauss.free.energy} one will only get a priori tangential mean curvature on $V_I$.} 
\begin{align}
\delta_{\beta, g} V(X) &:= \int \mdiv_{V, g}(X) d\mu_{V_I} + \int_S \mdiv_{\R^n, g}(X \cos\beta) d\haus^n_g \\
&= - \int g(H^{tan}_{V, g}, X) d\mu_{V_I} \quad \forall X \in C^1_c(B_R, \R^{n+1}) \text{ tangential to $\{ x_1 = 0 \}$},
\end{align}
for some $H^{tan}_{V, g} \in L^1_{loc}(\mu_{V_I})$.  We say a $(\beta, S)$-capillary varifold is stationary if $H^{tan}_{V, g} = 0$.
\end{definition}

% {\color{blue}
% DO WE WANT TO USE A DIFFERENT NOTATION?  FOR EXAMPLE?
% \[
% \delta E_{\beta, g}(V, S)[X] = ....
% \]
% }

(Note that in our definition $V$ is the \emph{total} varifold composed of both the interior piece $V_I$ and the weighted portion of the boundary $V_B = \cos\beta[S]_g$.)

For any $\theta \in \R$, the ``model'' capillary (stationary) varifold $V^{(\theta)}$ is the varifold corresponding to a half-plane $P_\theta^-$ meeting $\R^n$ at angle $\theta$:
\[
V^{(\theta)} := [P_\theta^-] + \cos\theta [x_1 = 0, x_{n+1} < 0 ]
\]

A useful fact that follows trivially from the divergence theorem is that if $V = V_I + \cos\beta[S]_g$ is a $(\beta, S)$-capillary varifold, then $V = V_I + \cos(\pi-\beta)[\R^n \setminus S]_g$ is a $(\pi - \beta, \R^n \setminus S)$-capillary varifold.

Let us also observe that
\begin{align}
\delta_{\beta, g} V(X) &= \delta_g V_I(X) + \delta_g [S]_g (\cos\beta X) \label{eqn:beta-g-vs-g} \\
&= \delta_{g} V(X) + \int_S g(X, \nabla^g_{\R^n} \sin\beta) d\haus^n_g \label{eqn:beta-g-vs-g-2} \\
&=:  \delta_{g} V(X) + \int g(X, H_{\beta, g}) d\mu_{V_B} \label{eqn:beta-g-vs-g-3}
\end{align}
for $H_{\beta, g} = \frac{\nabla^g_{\R^n} \sin\beta}{\cos\beta}$ wherever $\cos\beta \neq 0$ and $H_{\beta, g} = 0$ wherever $\cos\beta = 0$.  Since trivially
\begin{equation}\label{eqn:H_beta-bound}
||H_{\beta, g}||_{L^1(|\mu_{V_B}|)} \leq \int_S \frac{|\nabla^g_{\R^n} \sin\beta|}{|\cos\beta|} |\cos\beta|d\haus^n_g \leq c(n, |g|_{C^0(B_1)}) |D\beta|_{C^0(B_1)} < \infty, 
\end{equation}
being a capillary varifold in is equivalent to having $g$-free-boundary in $\{ x_1 = 0 \}$, with certain prescribed tangential mean curvature on the barrier $\{ x_1 = 0 \}$.  In fact when $\beta$ is constant then any capillary varifold is a free-boundary varifold with only interior tangential mean curvature.  Conversely, let us remark that if $V_I = V_I \llcorner \{ x_1 < 0 \}$ has free-boundary in $\{ x_1 = 0 \}$, then $V_I$ is a $(\beta, \emptyset)$-capillary varifold for any $\beta$.

Being $g$-free-boundary varifolds, capillary varifolds have good (total) first variation control, which we state explicitly below.  Note that $\sigma_{V, g}$ is the boundary measure for $V$ in the usual sense, since $H_{\beta, g} \in L^1(|\mu_V|)$.
\begin{theorem}\label{thm:fb-cap-first-var}
Let $g$ be a $C^1$ metric on $B_1$, $\beta$ a $C^1$ function on $B_1$, and $V$ a $(\beta, S)$-capillary varifold in $(B_1, g)$.  Then $\delta_{\beta, g} V$, $\delta_g V$ are locally locally-finite in $B_1$, and there exists a Radon measure $\sigma_{V, g} \perp |\mu_V|$ on $B_1$ supported in $\{ x_1 = 0 \}$ so that for all $Y \in C^1_c(B_1, \R^{n+1})$ we have
\begin{align}
\delta_{\beta, g} V(Y) &= - \int g(H^{tan}_{V, g}, Y) d\mu_{V_I} - \int g(H_{\R^n, g}, Y) d\mu_{V_B} + \int g(\nu_g, Y) d\sigma_{V, g}, \\
\delta_g V(Y) &= -\int g(H^{tan}_{V, g}, Y) d\mu_{V_I} - \int g(H_{\R^n, g} + H_{\beta, g}, Y) d\mu_{V_B} + \int g(\nu_g, Y) d\sigma_{V, g}, 
\end{align}
where $\nu_g$ is defined in \eqref{eq:unit_normal_definition} and $H_{\R^n, g}$ is the $g$-mean curvature vector of $\R^n \subset (\R^{n+1}, g)$.

For any $\gamma < 1$ we have the boundary measure bound
\[
\sigma_{V, g}(B_\gamma) \leq c ||H^{tan}_{V, g}||_{L^1(B_1, \mu_{V_I})} + c \mu_{V_I}(B_1),
\]
and in particular, for $Y \in C^1_c(B_\gamma, \R^{n+1})$ we have the bounds
\begin{align}
\delta_{\beta, g} V(Y) &\leq c\left(  ||H^{tan}_{V, g}||_{L^1(B_1, \mu_{V_I})} + |\mu_V|(B_1) \right) |Y|_{C^0}, \\\
\delta_g V(Y) &\leq c\left(  ||H^{tan}_{V, g}||_{L^1(B_1, \mu_{V_I})} +  |\mu_V|(B_1) +  |D\beta|_{C^0(B_1)} \right) |Y|_{C^0}.
\end{align}
In all the above equations $c = c(n, \gamma, |g|_{C^1(B_1)})$.
\end{theorem}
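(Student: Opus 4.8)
The plan is to recognize a $(\beta,S)$-capillary varifold as a $g$-free-boundary varifold carrying a prescribed, $L^1$-bounded tangential mean curvature on the barrier, and then to invoke the free-boundary first-variation decomposition of the Appendix (Theorem~\ref{thm:fb-first-var}); the two capillary identities are obtained by unpacking that decomposition along the splitting $V=V_I+V_B$. For the first step, combining the relation $\delta_{\beta,g}V(X)=\delta_g V(X)+\int g(X,H_{\beta,g})\,d\mu_{V_B}$ of \eqref{eqn:beta-g-vs-g-3} with the defining capillary identity $\delta_{\beta,g}V(X)=-\int g(H^{tan}_{V,g},X)\,d\mu_{V_I}$, valid for $X\in C^1_c(B_1,\R^{n+1})$ tangential to $\{x_1=0\}$, gives
\[
\delta_g V(X)=-\int g(H^{tan}_{V,g},X)\,d\mu_{V_I}-\int g(H_{\beta,g},X)\,d\mu_{V_B}=:-\int g(\widetilde{H},X)\,d\mu_V
\]
for all such $X$, where $\widetilde{H}$ equals $H^{tan}_{V,g}$ on $\{x_1<0\}$ and $H_{\beta,g}$ on $\{x_1=0\}$. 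Since $H_{\beta,g}$ is a rescaled tangential gradient it is tangential to $\{x_1=0\}$, and by \eqref{eqn:H_beta-bound} $\|H_{\beta,g}\|_{L^1(|\mu_{V_B}|)}\le c(n,|g|_{C^0})|D\beta|_{C^0(B_1)}<\infty$; as $H^{tan}_{V,g}\in L^1_{loc}(\mu_{V_I})$, this makes $\widetilde{H}\in L^1_{loc}(|\mu_V|)$ with $\widetilde{H}$ tangential $|\mu_V|$-a.e.\ on $\{x_1=0\}$, so $V$ has $g$-free-boundary in $\{x_1=0\}$. Since $V$ is supported in $\{x_1\le0\}$, $\mu_V\llcorner\{x_1>0\}=0$ is trivially a positive measure, so the hypothesis of Theorem~\ref{thm:fb-first-var} is met.

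Applying Theorem~\ref{thm:fb-first-var} to $V$: the $g$-first variation $\delta_g V$ is locally-finite in $B_1$ for every $Y\in C^1_c(B_1,\R^{n+1})$, and
\[
\delta_g V(Y)=-\int g(\widetilde{H},Y)\,d\mu_V-\int_{\{x_1=0\}}g(H_{\R^n,g},Y)\,d\mu_V+\int g(\nu_g,Y)\,d\sigma_{V,g}
\]
for a Radon measure $\sigma_{V,g}\perp|\mu_V|$ supported in $\{x_1=0\}$. Splitting $\mu_V=\mu_{V_I}+\mu_{V_B}$ and using $\widetilde{H}=H^{tan}_{V,g}$ on $\{x_1<0\}$, $\widetilde{H}=H_{\beta,g}$ on $\{x_1=0\}$, $\mu_V\llcorner\{x_1=0\}=\mu_{V_B}$, this becomes precisely the asserted identity for $\delta_g V$; adding back $\int g(Y,H_{\beta,g})\,d\mu_{V_B}$ via \eqref{eqn:beta-g-vs-g-3} gives the asserted identity for $\delta_{\beta,g}V$. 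Local finiteness of $\delta_{\beta,g}V$ follows since it differs from $\delta_g V$ by the $L^1(|\mu_{V_B}|)$-controlled term $\int g(\cdot,H_{\beta,g})\,d\mu_{V_B}$.

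For the boundary-measure bound, fix $\gamma<1$ and $\phi\in C^1_c(B_1)$ with $0\le\phi\le1$, $\phi\equiv1$ on $B_\gamma$, $|D\phi|\le c(\gamma)$, and test the $\delta_g V$ identity against $Y=\phi\,e_1$. By \eqref{eq:unit_normal_definition} one has $g(\nu_g,e_1)=|\pi^\perp_{\R^n,g}e_1|_g\ge c(|g|_{C^0})>0$, so $\sigma_{V,g}(B_\gamma)\le c\int g(\nu_g,Y)\,d\sigma_{V,g}$; the left-hand side $\int\mdiv_{V,g}(\phi e_1)\,d\mu_V$ is bounded by $c(n,\gamma,|g|_{C^1})|\mu_V|(B_1)$ by \eqref{eqn:div} (using $De_1=0$), and the mean-curvature terms contribute at most $c\|H^{tan}_{V,g}\|_{L^1(\mu_{V_I})}+c|Dg|\,|\mu_{V_B}|(B_1)+c\|H_{\beta,g}\|_{L^1(|\mu_{V_B}|)}$. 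Since $S\subset\R^n$, $|\mu_{V_B}|(B_1)=\int_{S\cap B_1}|\cos\beta|\,d\haus^n_g\le\haus^n_g(\R^n\cap B_1)\le c(n,|g|_{C^0})$, hence $|\mu_V|(B_1)\le\mu_{V_I}(B_1)+c(n,|g|_{C^0})$; combining with \eqref{eqn:H_beta-bound} yields $\sigma_{V,g}(B_\gamma)\le c\|H^{tan}_{V,g}\|_{L^1(B_1,\mu_{V_I})}+c\mu_{V_I}(B_1)$ with $c=c(n,\gamma,|g|_{C^1(B_1)})$. The two first-variation bounds then follow directly from the decomposition identities: for $Y\in C^1_c(B_\gamma,\R^{n+1})$ estimate the interior mean-curvature term by $\|H^{tan}_{V,g}\|_{L^1(\mu_{V_I})}|Y|_{C^0}$, the term $\int g(H_{\R^n,g},Y)\,d\mu_{V_B}$ by $c|Dg|\,|\mu_{V_B}|(B_1)|Y|_{C^0}$, the boundary term by $\sigma_{V,g}(B_\gamma)|Y|_{C^0}$, and (only for $\delta_g V$) the extra term $\int g(H_{\beta,g},Y)\,d\mu_{V_B}$ by $c|D\beta|_{C^0(B_1)}|Y|_{C^0}$, using \eqref{eqn:H_beta-bound} and $|\mu_{V_B}|(B_1)\le c(n,|g|_{C^0})$.

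The analytic content is concentrated in Theorem~\ref{thm:fb-first-var} — the Grüter–Jost-type fact that a free-boundary varifold with non-negative mass on the far side of the barrier has locally-finite \emph{total} first variation, with generalized boundary measure supported on and $g$-conormal to $\{x_1=0\}$ — which we treat as given. Granting it, the only point that requires genuine care is the integrability of $H_{\beta,g}$ against $|\mu_{V_B}|$, where the $C^1$-regularity of $\beta$ and the finiteness of $\mu_{V_B}$ on $B_1$ enter through \eqref{eqn:H_beta-bound}; everything else is bookkeeping along the $V_I$/$V_B$ splitting together with the standard cutoff test-field computation.
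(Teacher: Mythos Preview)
Your proof is correct and follows essentially the same route as the paper: recognize via \eqref{eqn:beta-g-vs-g-3} and \eqref{eqn:H_beta-bound} that a $(\beta,S)$-capillary varifold is a $g$-free-boundary varifold with $L^1$ tangential mean curvature, then invoke Theorem~\ref{thm:fb-first-var} and unpack the decomposition along $V_I/V_B$. One small slip: the hypothesis of Theorem~\ref{thm:fb-first-var} that you need to verify is that $\mu_V\llcorner\{x_1<0\}$ (not $\{x_1>0\}$) is a positive measure --- this holds because $V_I$ is integral by the definition of a capillary varifold, not because the support lies in $\{x_1\le 0\}$. Also, your direct cutoff computation for $\sigma_{V,g}(B_\gamma)$ is unnecessary, since the bound \eqref{eqn:fb-first-var-concl2} from Theorem~\ref{thm:fb-first-var} already gives it (note that the $\widetilde H$ appearing there restricts to $H^{tan}_{V,g}$ on $\{x_1<0\}$, so $\|\widetilde H\|_{L^1(\mu_{V_I})}=\|H^{tan}_{V,g}\|_{L^1(\mu_{V_I})}$).
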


\begin{proof}
Follows directly from Theorem \ref{thm:fb-first-var}, using the formulation \eqref{eqn:beta-g-vs-g-3} of $V$ as a varifold with $g$-free-boundary, and the bound \eqref{eqn:H_beta-bound}.
\end{proof}

From the identities \eqref{eqn:beta-g-vs-g} and \eqref{eqn:delta-isometry}, capillary varifolds transform nicely under isometries and scaling.  If $f : (B_1, g) \to (B_1, g')$ is an orientation-preserving isometry mapping $f(\R^n) = \R^n$, and $V$ is a $(\beta, S)$-capillary varifold in $(B_1, g)$, then $f_\sharp V$ is a $(\beta \circ f^{-1}, f(S))$-capillary varifold in $(B_1, g')$.  And as one would expect, if we write $V' = f_\sharp V$, $\beta' = \beta \circ f^{-1}$, $S' = f(S)$, then $H^{tan}_{V', g'} = Df H^{tan}_{V, g}$, $\eta_{V', g'} = Df \eta_{V, g}$, and $\sigma_{V', g'}(\phi) = \sigma_{V, g}(\phi \circ f^{-1})$.

Similarly, if instead $f(x) = Rx : (B_1, g) \to (B_R, g'(x) = g(x/R))$ is scaling, then $V'$ is a $(\beta', S')$-capillary varifold in $(B_R, g')$, but now $\mu_{V'}(\phi) = R^n \mu_V(\phi \circ f^{-1})$, $H^{tan}_{V', g'}|_x = R^{-1} H^{tan}_{V, g}|_{x/R}$, $\eta_{V', g'}|_x = \eta_{V, g}|_{x/R}$, and $\sigma_{V', g'}(\phi) = R^{n-1}\sigma_{V, g}(\phi \circ f^{-1})$.

\vspace{3mm}

Capillary varifolds have good compactness (unlike general $n$-rectifiable signed varifolds), with the only caveat that the limit may not itself be a capillary varifold.
\begin{lemma}\label{lem:soft-conv}
Let $g_i$ be a sequence of $C^1$ metrics on $B_1$, $\beta_i$ a sequence of $C^1$ functions on $B_1$, and $V_i$ a sequence of $(\beta_i, S_i)$-capillary varifolds in $(B_1, g_i)$.  Suppose that
\begin{gather}
\Lambda_i := \max \{ ||H_{V_i, g_i}||_{L^\infty(B_1)}, |g_i - g_{eucl}|_{C^1(B_1)}, |D\beta_i|_{C^0(B_1)} \} \to 0, \\
\sup_i \mu_{V_{I, i}}(B_1) < \infty.
\end{gather}

Then (after passing to a subsequence) there is a $\beta_0 \in \R$ and an $n$-rectifiable signed varifold $V$ in $(B_1, g_{eucl})$, which is stationary with free-boundary in $\{ x_1 = 0 \}$, satisfying
\begin{gather}
\theta_V(x) \geq -(\cos\beta_0)_- \text{ $|\mu_V|$-a.e. $x_1 = 0$}, \quad \text{ $V_I$ is integral}, \label{eqn:soft-conv-concl1} \\
\Theta_V(0, 1) \leq \liminf_i \Theta_{V_i}(0, 1),
\end{gather}
so that: $V_i \to V$ as (signed) varifolds in $B_1$, $\delta_{\beta_i, g} V_i \to \delta V$ as Radon measures, and $\sigma_{V_i, g_i} \to \sigma_V$ as Radon measures.  Additionally, for every $x \in B_1$ and a.e. $r \in (0, 1-|x|)$ we have $\mu_{V_i}(B_r(x)) \to \mu_V(B_r(x))$.
\end{lemma}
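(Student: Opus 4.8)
The statement is a compactness result for capillary varifolds with asymptotically vanishing first-variation data and uniformly bounded interior mass. The strategy follows the familiar pattern of Allard-type compactness adapted to the free-boundary setting, so I would proceed as follows.

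First, I would establish the basic mass and first-variation bounds that permit the application of the compactness theorem for varifolds with locally bounded first variation. By Theorem \ref{thm:fb-cap-first-var}, each $V_i$, being a $(\beta_i, S_i)$-capillary varifold, has locally-finite $g_i$-first variation, and the boundary measure satisfies $\sigma_{V_i, g_i}(B_\gamma) \leq c(\|H^{tan}_{V_i, g_i}\|_{L^1(B_1, \mu_{V_{I,i}})} + \mu_{V_{I,i}}(B_1))$. Combined with the hypotheses $\Lambda_i \to 0$ and $\sup_i \mu_{V_{I,i}}(B_1) < \infty$, this gives uniform bounds on $\mu_{V_i}$, $\|\delta_{g_i} V_i\|$, and $\sigma_{V_i, g_i}$ on compact subsets of $B_1$. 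The interior pieces $V_{I,i}$ are integral with uniformly bounded mass; the boundary pieces are $V_{B,i} = \cos\beta_i [S_i]_{g_i}$, which have density bounded by $|\cos\beta_i| \leq 1$, but whose total mass on $B_\gamma$ I must control — this follows from Ahlfors bounds or simply from the fact that $|\mu_{V_{B,i}}|(B_\gamma) \leq \mathcal{H}^n_{g_i}(S_i \cap B_\gamma) $ which is bounded by comparison with $\mathcal{H}^n(B_\gamma \cap \{x_1 = 0\})$. Passing to a subsequence, I extract $\beta_0 := \lim \beta_i(0) \in [0, \pi]$ (using that $|D\beta_i| \to 0$, so $\beta_i \to \beta_0$ locally uniformly), and weak-* limits $\mu_{V_i} \rightharpoonup \mu$ and $V_i \rightharpoonup V$ as signed varifolds (splitting $V_i = V_{+,i} - V_{-,i}$ with uniformly bounded negative parts and using that $\mathbf{D}$-compatibility from the excerpt applies).

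Second, I would identify the limit's structure. Since $g_i \to g_{eucl}$ in $C^1$, the limiting metric is Euclidean. The interior pieces $V_{I,i}$ have vanishing mean curvature in the limit (as $\|H^{tan}_{V_i,g_i}\|_{L^\infty} \to 0$), so by Allard's compactness theorem the limit $V \llcorner \{x_1 < 0\}$ is an integral varifold; on $\{x_1 = 0\}$, the limit of the $V_{B,i}$ contributes a (signed) multiple of $[\mathbb{R}^n]$ with density $\geq -(\cos\beta_0)_-$ because each $\cos\beta_i \geq -|\cos\beta_i| \to -(\cos\beta_0)_-$ $|\mu_{V_{B,i}}|$-a.e. and density lower bounds pass to limits under varifold convergence of the boundary parts. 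For the free-boundary property of $V$: the identity \eqref{eqn:beta-g-vs-g-3} writes $\delta_{\beta_i, g_i} V_i = \delta_{g_i} V_i + \int g_i(X, H_{\beta_i, g_i}) d\mu_{V_{B,i}}$, and since $\|H_{\beta_i, g_i}\|_{L^1(|\mu_{V_{B,i}}|)} \leq c|D\beta_i|_{C^0} \to 0$ by \eqref{eqn:H_beta-bound}, both $\delta_{\beta_i, g_i} V_i$ and $\delta_{g_i} V_i$ converge as Radon measures to the same limit, which is $\delta V$ tested against tangential vector fields; the tangential mean curvature term $\int g_i(H^{tan}_{V_i,g_i}, X) d\mu_{V_{I,i}}$ vanishes in the limit since $\|H^{tan}_{V_i, g_i}\|_{L^\infty} \to 0$ and masses are bounded. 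Hence $V$ is stationary with free boundary in $\{x_1 = 0\}$, and $\sigma_{V_i, g_i} \to \sigma_V$ follows from the first-variation identity of Theorem \ref{thm:fb-cap-first-var} by matching the boundary terms (using that $H_{\mathbb{R}^n, g_i} \to 0$ as well). The density bound $\Theta_V(0,1) \leq \liminf_i \Theta_{V_i}(0,1)$ follows from lower semicontinuity of mass under varifold convergence (for the total, signed mass measure this needs the uniform negative-part bound, which holds).

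Third, the pointwise statement $\mu_{V_i}(B_r(x)) \to \mu_V(B_r(x))$ for a.e. $r$ is the standard consequence: $\mu_{V_i} \rightharpoonup \mu_V$ as Radon measures gives $\mu_V(B_r(x)) \leq \liminf \mu_{V_i}(B_r(x)) \leq \limsup \mu_{V_i}(B_r(x)) \leq \mu_V(\overline{B_r(x)})$, and since $r \mapsto \mu_V(\overline{B_r(x)}) - \mu_V(B_r(x))$ is nonzero on at most countably many $r$, equality holds for a.e. $r$. \textbf{The main obstacle} I anticipate is handling the boundary parts $V_{B,i}$ carefully — specifically, ensuring that the weak limit of $\cos\beta_i[S_i]_{g_i}$ really is a constant multiple of $[\mathbb{R}^n]$ on $\{x_1 = 0\}$ rather than a more general varifold, and pinning down the correct density lower bound $-(\cos\beta_0)_-$. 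This requires knowing that in the limit $S_i$ either fills out or empties, a priori it could converge to a proper subset of $\{x_1 = 0\}$ with nontrivial boundary; but since $V$ must be a \emph{free-boundary} (not merely capillary) varifold in the limit — because $|D\beta_i| \to 0$ kills the distinction — any boundary-supported piece has no distributional boundary within $\{x_1=0\}$, forcing it to have constant density there, and that constant is $\geq -(\cos\beta_0)_-$ by the pointwise density bound. This is exactly the content the lemma isolates, and it is what makes the limit potentially fail to be capillary in the refined sense while still being a perfectly good free-boundary varifold.
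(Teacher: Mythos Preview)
Your overall plan is in the right spirit, but there is a genuine gap: you attempt to extract limits of the \emph{signed} varifolds $V_i$ directly and then argue structure of the limit, whereas Allard's rectifiability/integrality compactness (Theorem~\ref{thm:allard-compact}) is stated for positive varifolds with a uniform positive lower density bound. Concretely, the steps ``split $V_i = V_{+,i} - V_{-,i}$'' and ``density lower bounds pass to limits under varifold convergence of the boundary parts'' do not go through as written, because you have no separate first-variation control on $V_{B,i}$ (or on $V_{-,i}$) --- obtaining such control is precisely the content of Theorem~\ref{thm:cap-first-var}, which is proved \emph{later} and under additional hypotheses. Similarly, your ``standard consequence'' that $\mu_{V_i}(B_r(x)) \to \mu_V(B_r(x))$ for a.e.\ $r$ uses the open/closed set semicontinuity inequalities, which hold for \emph{positive} Radon measures, not for signed ones. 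Finally, the argument in your ``main obstacle'' paragraph is incorrect: the free-boundary condition on $V$ does \emph{not} force the boundary-supported piece to have constant density (e.g.\ $V^{(\theta)}$ itself is free-boundary with $V_B = \cos\theta[\{x_1=0,x_{n+1}<0\}]$), and in any case the lemma does not assert constant density on $\{x_1=0\}$, only the lower bound $\theta_V \geq -(\cos\beta_0)_-$.

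The paper closes all of these gaps with one device you are missing: set $W_i := V_i + (1+(\cos\beta_0)_-)[\R^n]$. Then each $W_i$ is a \emph{positive} rectifiable varifold with $\theta_{W_i,g_i} \geq 1 - \eps_i$ at $\mu_{W_i}$-a.e.\ point, $|\mu_{V_i}| \leq \mu_{W_i}$, and $\delta_{g_i} W_i = \delta_{g_i} V_i$ (since $[\R^n]$ is stationary). Now standard Allard compactness applies to $W_i$, giving a rectifiable limit $W$ with $\theta_W \geq 1$; one then defines $V := W - (1+(\cos\beta_0)_-)[\R^n]$, from which rectifiability of $V$, integrality of $V_I$, the density lower bound on $\{x_1=0\}$, the inequality $\Theta_V(0,1) \leq \liminf_i \Theta_{V_i}(0,1)$, and the a.e.-in-$r$ convergence of $\mu_{V_i}(B_r(x))$ all follow immediately from the corresponding facts for the positive measures $\mu_{W_i}, \mu_W$ (using also that $\haus^n(\partial B_r(x) \cap \R^n) = 0$). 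The convergences $\delta_{\beta_i,g_i} V_i \to \delta V$ and $\sigma_{V_i,g_i} \to \sigma_V$ are then handled essentially as you describe, testing against $\phi\,\nu_{g_i}$ and using $\Lambda_i \to 0$.
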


\begin{remark}
Note $V$ may not be a capillary varifold, i.e. $V \llcorner \{ x_1 = 0 \}$ may not have the form $\cos \beta [S]$ for some $S \subset \R^n$.  See Remark \ref{rem:teaser1-sharp}.
\end{remark}

\begin{remark}\label{rem:soft-conv-density}
Though we will not require it, we remark that by a minor modification of the proof of integrality (\cite[Chapter 8]{Sim}) one can in fact show
\[
\theta_V(x) \in (\N_0 + \cos\beta_0) \cup \N  \text{ $|\mu_V|$-a.e. $x_1 = 0$}.
\]
This was also observed by \cite{WangGaoming}.
\end{remark}

\begin{proof}[Proof of Lemma \ref{lem:soft-conv}]
Passing to a subsequence, we can assume $\cos\beta_i \to \cos\beta_0$ in $C^1(B_1)$.  Let us define the varifolds $W_i = V_i + (1+(\cos\beta_0)_-)[\R^n]$.  Then each $W_i$ is $n$-rectifiable with $\theta_{W_i, g_i}(x) \geq 1 - \eps_i$ at $\mu_{W_i}$-a.e. $x$ (for numbers $\eps_i \to 0$), and $|\mu_{V_i}| \leq \mu_{W_i}$, and we have the mass bounds
\[
\sup_i \mu_{W_i}(B_1) \leq \sup_i \mu_{V_{I, i}}(B_1) + c(n) < \infty,
\]
and from Theorem \ref{thm:fb-cap-first-var} for any $\gamma < 1$ and $X \in C^1_c(B_\gamma, \R^{n+1})$ we have the first variation control
\[
|\delta_{g_i} W_i(X)| = |\delta_{g_i} V_i(X)| \leq c(1 + \Lambda_i) (\sup_i \mu_{W_i}(B_1)) |X|_{C^0},
\]
where $c = c(n, \gamma)$.

Passing to a further subsequence we can therefore apply the standard compactness theory for rectifiable $n$-varifolds (Theorem \ref{thm:allard-compact}) to deduce $W_i \to W$ for some $n$-rectifiable varifold $W$ having $\theta_W(x) \geq 1$ for $\mu_W$-a.e. $x$.  Define $V = W - (1+(\cos\beta_0)_-)[\R^n]$, so that $V$ is a signed $n$-rectifiable varifold in $B_1$ and $V_i \to V$ as varifolds.  The lower bound $\theta_V(x) \geq -(\cos\beta_0)_-$ follows from the lower bound on $W$; integrality of $V_I$ follows from integrality of the $V_{I, i} \equiv W_{I, i}$; and we have the bound
\begin{align}
\Theta_V(0, 1) &= \Theta_W(0, 1) - 1 - (\cos\beta_0)_- \\
&\leq \liminf_i \Theta_{W_i}(0, 1) - 1 - (\cos\beta_0)_- = \liminf_i \Theta_{V_i}(0, 1).
\end{align}
Since $\mu_{W_i}(B_r(x)) \to \mu_W(B_r(x))$ for every $x$ and a.e. $r \in (1-|x|)$, and $\haus^n(\del B_r(x) \cap \R^n) = 0$ always, we get the corresponding statement for $\mu_{V_i}$ and $\mu_V$ also.

The convergence $\delta_{g_i} V_i \to \delta V$ and $\delta_{\beta_i, g_i} V_i \to \delta V$ follows from varifold convergence, the identities \eqref{eqn:div}, \eqref{eqn:beta-g-vs-g-2}, and our assumptions on $g_i, \beta_i$.  Since for any $X \in C^1_c(B_1, \R^{n+1})$ tangential to $\{ x_1 = 0 \}$ we have
\[
|\delta_{\beta_i, g_i} V_i(X)| \leq \Lambda_i \Big(\sup_i |\mu_{V_i}|(B_1)\Big) |X|_{C^0} \to 0,
\]
we have $\delta V(X) = 0$, so $V$ is stationary with free-boundary in $\{ x_1 = 0 \}$.

Lastly, we verify the convergence $\sigma_{V_i, g_i} \to \sigma_V$.  For any given metric $g$ on $B_1$, let $\nu_g$ denote the $g$-unit normal of $\{x_1=0\}\subset (B_1,g)$ pointing in the $e_1$ direction.  From Theorem \ref{thm:fb-cap-first-var} we can write
\[
\delta_{\beta_i, g_i} V_i(X) = -\int g_i(H_{V_i, g_i}, X) d\mu_{V_i} + \int g_i(\nu_{g_i}, X) d\sigma_{V_i}
\]
(where $H_{V_i, g_i} = H^{tan}_{V_i, g_i} + H_{\R^n, g_i}$), while from Theorem \ref{thm:fb-first-var} we can write
\[
\delta V(X) = \int e_1 \cdot X d\mu_V .
\]
Now for $\phi \in C^1_c(B_1)$ fixed, define the vector fields $X_i = \phi \nu_{g_i}$, $X = \phi e_1$, and note by our hypotheses that $X_i \to X$ in $C^1$.  It follows by varifold convergence $V_i \to V$ that $\delta_{g_i} V_i(X_i) \to \delta V(X)$.  We then compute
\begin{align}
\left| \int \phi d\sigma_{V_i} - \int \phi d\sigma_V \right|
&= \left| \delta_{g_i} V_i (X_i) - \delta V(X) + \int g_i(H_{V_i, g_i}, X) d\mu_{V_i} \right| \\
&\leq \big|\delta_{g_i} V_i(X_i) - \delta V(X)\big| + c(\phi, n) ||H_{V_i, g_i}||_{L^\infty(B_1)} \\
&\to 0 \qedhere
\end{align}
\end{proof}

\subsection{Monotonicity}\label{sec:mono}

Here we establish a weighted interior monotonicity for (positive) varifolds, and a monotonicity for signed varifolds centered at points in the boundary.  In both cases we require the metric to be Euclidean at the center point, which ultimately is no real restriction because we can always arrange this with a linear transformation of coordinates.

We define the Euclidean density ratio of a signed $n$-varifold $V$ at scale $B_r(x)$ to be
\[
\Theta_V(x, r) := \frac{\mu_V(B_r(x))}{\omega_n r^n},
\]
define the pointwise densities
\[
\Theta_{V, g}(x) := \lim_{r \to 0} \frac{\mu_V(B_r^g(x))}{\omega_n r^n}, \quad \Theta_V(x) := \lim_{r \to 0} \Theta_V(x, r)
\]
wherever they exist.

Note that for a signed $n$-varifold $\Theta_V$ may not be positive, and we do not in general have mass monotonicity $\mu_V(U) \leq \mu_V(U')$ for $U \subset U'$.  However we will only be dealing with signed varifolds with a ``nice'' negative part, and in this case one can treat $\Theta_V$ as ``almost positive,'' which we record in the following Remark.
\begin{remark}[Basic signed density inequalities]\label{rem:theta-change-ball}
Suppose $V$ is a signed $n$-rectifiable varifold in $(B_1, g)$ supported in $\{ x_1 \leq 0 \}$ satisfying
\begin{gather}
\theta_{V, g}(x) \geq 0 \text{ $|\mu_V|$-a.e. $x_1 < 0$}, \quad \theta_{V, g}(x) \geq -a \text{ $|\mu_V|$-a.e. $x_1 = 0$}, \\
|g - g_{eucl}|_{C^0(B_1)}\le\delta, 
\end{gather}
for some $a \geq 0$.  If $B_r(\xi) \subset B_R(x) \subset B_1$, then
\begin{align}
\Theta_V(\xi, r) &\leq (R/r)^n \Theta_V(x, R) + a \frac{\haus^n_g(\ver \cap B_R(x) \setminus B_r(\xi))}{\omega_n r^n} \\
&\leq (R/r)^n \Theta_V(x, R) + (1+c(n)\delta) a ( (R/r)^n - 1).
\end{align}
Furthermore, if $A \leq A'$, then
\begin{align}
A\Theta_V(\xi, r)
&\leq A'  \frac{\mu_V(B_r(\xi)) + a \haus^n_g(B_r(\xi) \cap \ver)}{\omega_n r^n} - A a \frac{\haus^n_g(B_r(\xi) \cap \ver)}{\omega_n r^n} \\
&\leq A' \Theta_V(\xi, r) + (1+c(n)\delta) a (A' - A) .
\end{align}
\end{remark}

We first prove a weighted monotonicity for positive varifolds away from the boundary.  This is essentially standard but is not usually written down for a general metric.
\begin{lemma}[Weighted interior monotonicity]\label{lem:mono}
There is a constant $c(n)$ so that the following holds.  Let $g$ be a $C^1$ metric on $B_1$, $h$ a non-negative $C^1$ function in $B_1$, and $V$ be an $n$-rectifiable varifold in $(B_1 \cap \{ x_1 < 0 \}, g)$ with $\Theta_V(0, 1) < \infty$, locally finite $g$-first variation and $\sigma_{V,g}=0$.  Take $\xi \in B_1 \cap \{ x_1 < 0 \}$, and suppose
\[
g(\xi) = g_{eucl}, \quad \max\{||H_{V, g}||_{L^\infty(B_1)}, |Dg|_{C^0(B_1)} \} \leq \Lambda \leq 1.
\]
Then for every $0 < \tau < \sigma < 1-|\xi|$ (if $h = 0$ on $\{ x_1  = 0\}$), or $0 < \tau < \sigma < \min\{ 1-|\xi|, |\xi_1| \}$ (otherwise), we have the monotonicity
\begin{align}
&\frac{(1+c\Lambda \tau)^{n+1}}{\tau^n} \int_{B_\tau(\xi)} h d\mu_V \leq \frac{(1+c\Lambda \sigma)^{n+1}}{\sigma^n} \int_{B_\sigma(\xi)} h d\mu_V \\
&\quad\quad - \int_{B_\sigma(\xi)\setminus B_\tau(\xi)} h \frac{|(x - \xi)^\perp|^2}{|x - \xi|^{n+2}} d\mu_V + \int_\tau^\sigma \frac{(1+c\Lambda \rho)^{n+1}}{ \rho^{n}} \int_{B_\rho(\xi)} |\nabla_V h| d\mu_V d\rho.\label{eq:weight_monotonicity}
 \end{align}
\end{lemma}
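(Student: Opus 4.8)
The plan is to run the classical first-variation computation behind the monotonicity formula, carrying the weight $h$ inside the test vector field and treating the metric $g$ as a perturbation of $g_{eucl}$. The hypothesis $g(\xi)=g_{eucl}$ is exactly what forces every ``metric error'' to carry a factor $|g-g_{eucl}|\le|Dg|_{C^0}\,|x-\xi|\le\Lambda|x-\xi|$, hence to vanish proportionally to the radius, so that it can be absorbed into the polynomial integrating factor $(1+c\Lambda\rho)^{n+1}$. First I would fix a smooth non-increasing $\phi\colon[0,\infty)\to[0,1]$ with $\phi\equiv 1$ on $[0,1/2]$ and $\phi\equiv 0$ on $[1,\infty)$, and, for $\rho$ in the admissible range, test the identity $\int\mdiv_{V,g}(X)\,d\mu_V=-\int g(H_{V,g},X)\,d\mu_V$ (valid because $\sigma_{V,g}=0$) against
\[
X=X_\rho:=h(x)\,\phi\!\left(|x-\xi|/\rho\right)(x-\xi).
\]
When $h\not\equiv 0$ on $\{x_1=0\}$ the admissible range forces $B_\sigma(\xi)\subset\{x_1<0\}$, so $X_\rho$ is compactly supported where $V$ lives and the identity applies directly; when $h$ vanishes on $\{x_1=0\}$, but $B_\rho(\xi)$ may meet it, I would first multiply $X_\rho$ by a cutoff $\chi(x_1)$ supported in $\{x_1<-\eps\}$ and let $\eps\to 0$: since $h\in C^1$ vanishes on $\{x_1=0\}$ one has $h\le C|x_1|$ near the barrier, while $\mu_V(\{x_1=0\})=0$ and $\mu_V(B_1)<\infty$, so the commutator error $\int|\chi'|\,|X_\rho|\,d\mu_V\to 0$ and the identity persists in the limit.

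The computation proper is the standard one. As $T_xM_V$ is an $n$-plane one has $\mdiv_{V,g_{eucl}}(x-\xi)=n$, and expanding the product rule gives
\[
\mdiv_{V,g_{eucl}}(X_\rho)=n\,h\,\phi+\phi\,\nabla_V h\cdot(x-\xi)+\frac{h}{\rho}\,\phi'\!\left(\tfrac{|x-\xi|}{\rho}\right)\!\left(|x-\xi|-\frac{|(x-\xi)^\perp|^2}{|x-\xi|}\right),
\]
with $\phi,\phi'$ evaluated at $|x-\xi|/\rho$. By \eqref{eqn:div}, replacing $\mdiv_{V,g}$ by $\mdiv_{V,g_{eucl}}$ costs an error $\le c\bigl(|Dg|\,|X_\rho|+|g-g_{eucl}|\,|D_V X_\rho|\bigr)$, and since $g(\xi)=g_{eucl}$ gives $|g-g_{eucl}|\le\Lambda|x-\xi|\le\Lambda\rho$ on $\spt X_\rho$, while $|X_\rho|\le h\phi\,|x-\xi|$ and $|D_V X_\rho|\lesssim\rho\phi|\nabla_V h|+h|\phi'|+h\phi$, this error integrates to $\le c\Lambda\rho\bigl(\int h\phi\,d\mu_V+\int h|\phi'|\,d\mu_V+\rho\!\int_{B_\rho(\xi)}\!|\nabla_V h|\,d\mu_V\bigr)$; likewise $\bigl|\int g(H_{V,g},X_\rho)\,d\mu_V\bigr|\le\Lambda\rho\int h\phi\,d\mu_V$ because $|X_\rho|\le h\phi|x-\xi|\le\rho h\phi$ on the support. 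Writing $I(\rho):=\int h\,\phi(|x-\xi|/\rho)\,d\mu_V$, the term $\tfrac{h}{\rho}\phi'\,|x-\xi|$ integrates to $-\rho I'(\rho)$, so after rearranging one obtains, for a.e.\ $\rho$, a differential inequality of the schematic form
\[
\frac{d}{d\rho}\bigl(\rho^{-n}I(\rho)\bigr)\ \ge\ \rho^{-n-1}\Bigl(G_\perp(\rho)-\rho\!\int_{B_\rho(\xi)}\!|\nabla_V h|\,d\mu_V\Bigr)-c\Lambda\rho^{-n}\Bigl(I(\rho)+\!\int h|\phi'|\,d\mu_V+\rho\!\int_{B_\rho(\xi)}\!|\nabla_V h|\,d\mu_V\Bigr),
\]
where $G_\perp(\rho):=\tfrac1\rho\int h\,|\phi'(|x-\xi|/\rho)|\,\tfrac{|(x-\xi)^\perp|^2}{|x-\xi|}\,d\mu_V\ge 0$ because $\phi'\le 0$.

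It remains to convert this into the sharp statement. Multiplying through by $(1+c\Lambda\rho)^{n+1}$ and choosing $c=c(n)$ so large that its logarithmic derivative dominates the $\Lambda$-coefficients on the right — the same device that produces the factor $e^{\Lambda\rho}$ in the classical Allard/Simon monotonicity (cf.\ \cite{Sim}) — one finds that
\[
\rho\ \longmapsto\ (1+c\Lambda\rho)^{n+1}\rho^{-n}I(\rho)-\int_\rho^\sigma(1+c\Lambda s)^{n+1}s^{-n-1}G_\perp(s)\,ds+\int_\rho^\sigma\frac{(1+c\Lambda s)^{n+1}}{s^n}\!\int_{B_s(\xi)}\!|\nabla_V h|\,d\mu_V\,ds
\]
is non-decreasing on the admissible interval; evaluating at $\rho=\tau$ and $\rho=\sigma$ gives \eqref{eq:weight_monotonicity} for the smoothed quantity $I$. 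Finally I would let $\phi\nearrow\mathbf 1_{[0,1)}$: for $\rho$ fixed, $I(\rho)\to\int_{B_\rho(\xi)}h\,d\mu_V$ by monotone convergence, and, performing the $s$-integration, $\int_\tau^\sigma(1+c\Lambda s)^{n+1}s^{-n-1}G_\perp(s)\,ds\to\int_{B_\sigma(\xi)\setminus B_\tau(\xi)}h\,\tfrac{|(x-\xi)^\perp|^2}{|x-\xi|^{n+2}}\,(1+c\Lambda|x-\xi|)^{n+1}\,d\mu_V\ \ge\ \int_{B_\sigma(\xi)\setminus B_\tau(\xi)}h\,\tfrac{|(x-\xi)^\perp|^2}{|x-\xi|^{n+2}}\,d\mu_V$; this yields \eqref{eq:weight_monotonicity}. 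The convergence is legitimate for every pair $\tau<\sigma$ outside the countable set of radii $r$ with $\mu_V(\partial B_r(\xi))>0$, and extends to all $\tau<\sigma$ by the monotonicity just proved. The main obstacle is purely the bookkeeping of the error terms: one must verify that each of them genuinely carries the factor $\Lambda\rho$ (this is where $g(\xi)=g_{eucl}$ is essential) and, above all, that the $\phi'$-concentrated errors — which blow up pointwise as $\phi\nearrow\mathbf 1$ — remain controlled after the integration in $\rho$, exactly as in the flat, unweighted case; the interior support condition $\spt V\subset\{x_1<0\}$ together with the boundary approximation above is what disposes of the one genuinely boundary-flavoured point, namely that admissible test fields need not be compactly supported away from $\{x_1=0\}$.
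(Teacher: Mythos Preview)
Your proposal is correct and follows essentially the same route as the paper: the same test field $X_\rho=h\,\phi(|x-\xi|/\rho)(x-\xi)$, the same use of \eqref{eqn:div} together with $g(\xi)=g_{eucl}$ to make every metric error carry a factor $\Lambda\rho$, the same differential inequality for $I(\rho)$ with integrating factor $(1+c\Lambda\rho)^{n+1}\rho^{-n}$, and the same passage $\phi\nearrow\mathbf 1_{[0,1)}$. Your explicit cutoff-in-$x_1$ argument for the case $h|_{\{x_1=0\}}=0$ is exactly the content of the paper's Remark~\ref{rem:testing_with_vanish_vf}, and your concern about the $|\phi'|$-concentrated error is resolved (as in the paper) by recognizing that $\int h|\phi'|\tfrac{|x-\xi|}{\rho}\,d\mu_V=\rho I'(\rho)$, so this term is absorbed into the coefficient $(1+c\Lambda\rho)$ of $\rho I'$ rather than surviving to the limit.
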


\begin{proof}
Let $\phi$ be any smooth non increasing function which $\equiv 1$ on $(-\infty, 1/2]$ and $\equiv 0$ on $[1, \infty)$.  We wish to plug the vector field $X = h(x) \phi(|x - \xi|/\rho) (x - \xi)$ into the first variation. We observe that if $h=0$ on $\{x_1=0\}$ then $X=0$ on $\{x_1=0\}$ also, while if $h \neq 0$ on $\{ x_1 = 0 \}$ then $X=0$ on $\{x_1=0\}$ by our restriction on $\sigma$.  Either way the assumption on $V$ (and Remark \ref{rem:testing_with_vanish_vf}) implies $\delta_g V(X) = - \int g(H_{V,g}, X) d\mu_V$.
\begin{align}
\int \dive_{V, g_{eucl}}(X) d\mu_V
&\leq \int \dive_{V, g}(X) d\mu_V + c |Dg| |X| + c |g - g_{eucl}| |D_VX| d\mu_V \\
&\leq c\Lambda \int \phi h |x-\xi| d\mu_V \\
&\quad + c \int |Dg| |x - \xi| \left( |\phi'| \frac{|x - \xi|}{\rho} h + \phi |\nabla_V h| |x - \xi| + \phi h \right) d\mu_V \\
&\leq c\Lambda \rho \int \left(|\phi'| \frac{|x - \xi|}{\rho} h + \phi |\nabla_V h| \rho + \phi \right) d\mu_V
\end{align}
for $c = c(n, |g|_{C^0}) = c(n)$.

Now
\[
\dive_{V, g_{eucl}}(X) = \phi' \frac{|x - \xi|}{\rho} h  - \phi' \frac{|(x - \xi)^\perp|^2}{\rho |x - \xi|} h + \phi h n + \phi \nabla_V h \cdot x ,
\]
and so we deduce the inequality
\begin{align}
\int \left(\phi' \frac{|x - \xi|}{\rho} h + n \phi h\right) \dif\mu_V &\leq \int \phi' \frac{|(x - \xi)^\perp|^2}{\rho |x - \xi|} h \dif\mu_V + \rho \int \phi h \dif ||\delta_g V|| \\
&\quad + c \Lambda \rho \int \left(|\phi'| \frac{|x - \xi|}{\rho} h + \phi h\right) \dif\mu_V \\
&\quad + (1+ c\Lambda \rho) \rho \int \phi |\nabla_V h| \dif\mu_V
\end{align}
for $c = c(n)$.  

Therefore if we define
\[
I(\rho) = \int h \phi\left(\frac{|x - \xi|}{\rho}\right) \dif\mu_V, \quad J(\rho) = \int h \phi\left(\frac{|x - \xi|}{\rho}\right) \frac{|(x - \xi)^\perp|^2}{|x - \xi|^2} \dif\mu_V,
\]
and recalling that $\phi' \leq 0$, we have the differential inequality
\[
-(1+c\Lambda \rho) \rho I' + (n-c\Lambda \rho) I \leq -\rho J' + (1+c\Lambda\rho)\rho \int \phi |\nabla_V h| \dif\mu_V
\]
or equivalently
\[
I'(\rho) - \frac{n-c\Lambda \rho}{1+c\Lambda \rho} \frac{1}{\rho} I(\rho) \geq \frac{1}{1+c\Lambda \rho} J' - \int \phi |\nabla_V h| \dif\mu_V.
\]
Here again $c = c(n)$.  One can easily check the integrating factor of the left-hand side is
\[
\frac{(1+c \Lambda \rho)^{n+1}}{\rho^n},
\]
therefore (recalling that $J' \geq 0$) we get
\[
\frac{\mathrm{d}}{\mathrm{d}\rho} \big( (1+c\Lambda \rho)^{n+1} \rho^{-n} I(\rho)\big) \geq \rho^{-n} J'(\rho) - (1+c\Lambda\rho)^{n+1} \rho^{-n} \int \phi |\nabla_V h| \dif\mu_V.
\]
Integrating from $0 < \tau < \sigma < 1-|\xi|$ and taking $\phi \to 1_{(-\infty, 1)}$ gives the required inequality.
\end{proof}

\begin{remark}\label{rem:testing_with_vanish_vf}
In the above result, and in Lemma \ref{lem:w12}, we use the following fact: if $V$ is an $n$-rectifiable varifold in $(B_1 \cap \{ x_1 < 0 \}, g)$ with locally-finite $g$-first-variation, $\sigma_{V, g} = 0$, and $\mu_V(B_1) + ||H_{V, g}||_{L^1(B_1)} < \infty$, then
\begin{equation}\label{eqn:test-vanish}
\int \mdiv_{V, g}(Y) d\mu_V = - \int g(H_{V, g}, Y) d\mu_V
\end{equation}
for all $Y \in C^1_c(B_1, \R^{n+1})$ which vanish on $\{ x_1 = 0 \}$ (as opposed to only $Y\in C^1_c(B_1 \cap \{ x_1 < 0 \}, \R^{n+1})$.  To see this, replace $Y$ with $\gamma(|x_1|/\eps) Y$ in the first variation identity \eqref{eq:definition_of_H_and_sigma}, for $\gamma$ a smooth cutoff which $\equiv 0$ on $(-\infty, 1]$ and $\equiv 1$ on $[2, \infty)$ and $|\gamma'| \leq 2$.  Then noting that
\[
|\gamma'(x_1/\eps)|/\eps |Y(x)| \leq (2/\eps) |Y(x) - Y(\pi_{\R^n}(x))| \leq 2|DY|_{C^0(B_1)},
\]
we can take $\eps \to 0$ and obtain \eqref{eqn:test-vanish} by the dominated convergence theorem.
\end{remark}

We next prove a signed monotonicity at points centered in the boundary.  The below Lemma (and Corollary \ref{cor:usc}) are essentially intrinsic versions of the monotonicity due to \cite{KaTo}.
\begin{lemma}[Signed boundary monotonicity]\label{lem:bd-mono}
There is a constant $c(n)$ so that the following holds.  Let $g$ be a $C^1$ metric on $B_1$, $\beta$ a $C^1$ function on $B_1$.  Let $V$ either be a $(\beta, S)$-capillary varifold in $(B_1, g)$, or let $V$ to be a signed $n$-rectifiable varifold supported in $\{ x_1 \leq 0\}$ with $g$-free-boundary in $\{ x_1 = 0 \}$ satisfying
\[
\theta_{V, g}(x) \geq 0 \text{ for $|\mu_V|$-a.e. $x_1 < 0$}, \quad \theta_{V, g}(x) \geq -1 \text{ for $|\mu_V|$-a.e. $x_1 = 0$}.
\]
Either way, take $\xi \in B_1 \cap \{ x_1 =  0\}$ and suppose that
\begin{gather}
g(\xi) = g_{eucl}, \quad \max \{ ||H_{V, g}^{tan}||_{L^\infty(B_1)}, |Dg|_{C^0(B_1)}, |D\beta|_{C^0(B_1)} \} \leq \Lambda \leq 1.
\end{gather}
Then for every $0 < \tau < \sigma < 1-|\xi|$ we have the monotonicity
\begin{align}
\frac{(1+c\Lambda \tau)^{n+1}}{\tau^n} \mu_V(B_\tau(\xi)) &\leq \frac{(1+c\Lambda \sigma)^{n+1}}{\sigma^n} \mu_V(B_\sigma(\xi)) \\
&\quad\quad - \int_{B_\sigma(\xi)\setminus B_\tau(\xi)} \frac{|(x - \xi)^\perp|^2}{|x - \xi|^{n+2}} d\mu_{V_I}(x) + c\Lambda (\sigma - \tau).
\end{align}
(recall here $\mu_V$ is the \emph{signed} mass measure)
\end{lemma}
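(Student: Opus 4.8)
\medskip

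The plan is to reduce the signed boundary monotonicity to a \emph{one-sided, weighted interior monotonicity for a genuinely positive varifold}, obtained by ``filling in'' the barrier: I would set $W := V + [\R^n]_g$. The density hypotheses — which hold automatically in the capillary case, since there $\theta_{V,g} = \cos\beta \ge -1$ on $S$ — guarantee that $W$ is a \emph{positive} $n$-rectifiable varifold in $(B_1, g)$ with $\theta_{W,g} \ge 0$, and (by Theorem~\ref{thm:fb-cap-first-var}, and Theorem~\ref{thm:fb-first-var} in the free-boundary case) it has locally finite $g$-first variation. The crucial point is that against a vector field $X$ \emph{tangential to} $\{x_1 = 0\}$ there is neither a boundary term nor a contribution from $[\R^n]_g$: indeed $\int_{\R^n}\dive_{\R^n,g}(X)\,d\haus^n_g = 0$ by the divergence theorem on the flat plane $\R^n$ (which has empty boundary in $B_1$), so $\delta_g W(X) = \delta_g V(X)$, while $\delta_g V(X) = -\int g(H^{tan}_{V,g}, X)\,d\mu_{V_I} - \int g(H_{\beta,g}, X)\,d\mu_{V_B}$ in the capillary case (by the defining identity and \eqref{eqn:beta-g-vs-g-3}) or $\delta_g V(X) = -\int g(H^{tan}_{V,g}, X)\,d\mu_V$ in the free-boundary case. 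The only potentially unbounded term, the capillary one, is controlled through $\cos\beta\, H_{\beta,g} = \nabla^g_{\R^n}\sin\beta$, so that $|\int g(H_{\beta,g}, X)\,d\mu_{V_B}| = |\int_S g(\nabla^g_{\R^n}\sin\beta, X)\,d\haus^n_g| \le c(n,|g|_{C^1})\,\Lambda\int_{\R^n}|X|\,d\haus^n_g$.

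\medskip

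Next I would run the usual radial monotonicity computation on $W$ centered at $\xi$, with test field $X_\rho(x) = \phi(|x - \xi|/\rho)(x-\xi)$, $\phi$ a fixed non-increasing cutoff with $\phi \equiv 1$ near $0$ and $\phi \equiv 0$ on $[1, \infty)$. Since $\xi_1 = 0$, the first component of $X_\rho$ vanishes on $\{x_1 = 0\}$, so $X_\rho$ is tangential and admissible. Computing the \emph{Euclidean} tangential divergence of $X_\rho$ against $\mu_W$ — and using that on the flat piece $W\llcorner\R^n$ the vector $x - \xi$ lies in the tangent plane $\R^n$, so $|(x-\xi)^\perp| = 0$ there and no perpendicular term appears — gives
\[
\int \dive_{W, g_{eucl}}(X_\rho)\,d\mu_W = n I(\rho) - \rho I'(\rho) + \rho \tilde J'(\rho),
\]
with $I(\rho) = \int \phi(|x-\xi|/\rho)\,d\mu_W$ and $\tilde J(\rho) = \int_{V_I}\phi(|x-\xi|/\rho)\,\frac{|(x-\xi)^\perp|^2}{|x-\xi|^2}\,d\mu_{V_I}$, both nondecreasing in $\rho$. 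Passing to the $g$-divergence via \eqref{eqn:div}, and using $g(\xi) = g_{eucl}$, $|Dg| \le \Lambda$ (whence $|g - g_{eucl}| \le \Lambda|x-\xi|$ on $B_\rho(\xi)$ and $\haus^n_g(B_\rho(\xi)\cap\R^n) \le c\rho^n$) together with the bounds above, the first-variation identity for $W$ turns into the differential inequality
\[
I'(\rho) - \frac{n - c\Lambda\rho}{(1+c\Lambda\rho)\rho}\,I(\rho) \ge \frac{\tilde J'(\rho)}{1+c\Lambda\rho} - c\Lambda\rho^n.
\]
Multiplying by the integrating factor $(1+c\Lambda\rho)^{n+1}\rho^{-n}$ exactly as in Lemma~\ref{lem:mono}, using $(1+c\Lambda\rho)^n \ge 1$ on the $\tilde J'$ term and $\le c$ on the error term, integrating from $\tau$ to $\sigma$, and finally letting $\phi \uparrow 1_{(-\infty, 1)}$ — with the standard substitution $u = |x-\xi|/\rho$ and Fubini to identify $\int_\tau^\sigma \rho^{-n}\tilde J'(\rho)\,d\rho \to \int_{B_\sigma(\xi)\setminus B_\tau(\xi)}\frac{|(x-\xi)^\perp|^2}{|x-\xi|^{n+2}}\,d\mu_{V_I}$ — yields the asserted monotonicity with $\mu_V$ replaced by $\mu_W$.

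\medskip

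Finally I would transfer back to the signed measure $\mu_V$. Since $\mu_W(B_\rho(\xi)) = \mu_V(B_\rho(\xi)) + \haus^n_g(B_\rho(\xi)\cap\R^n)$, it suffices to observe that $G(\rho) := (1+c\Lambda\rho)^{n+1}\rho^{-n}\haus^n_g(B_\rho(\xi)\cap\R^n)$ has $|G'(\rho)| \le c\Lambda$ on $(0,1)$: indeed $g(\xi) = g_{eucl}$ and $|Dg| \le \Lambda$ force $\haus^n_g(B_\rho(\xi)\cap\R^n) = \omega_n\rho^n(1 + O(\Lambda\rho))$ with $\frac{d}{d\rho}\haus^n_g(B_\rho(\xi)\cap\R^n) = n\omega_n\rho^{n-1}(1+O(\Lambda\rho))$, so the apparent $\rho^{-1}$ terms in $G'$ cancel. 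Hence $|G(\sigma)-G(\tau)| \le c\Lambda(\sigma-\tau)$ is absorbed into the error $c\Lambda(\sigma-\tau)$, producing exactly the stated inequality (the passage from a.e.\ $\tau,\sigma$ to all $\tau,\sigma$ being handled by approximating $\tau$ from above and $\sigma$ from below, using monotonicity of $\rho \mapsto \mu_W(B_\rho(\xi))$ and of the perpendicular integral).

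\medskip

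I expect the only genuinely delicate part to be the bookkeeping of the $O(\Lambda)$ errors so that everything collapses into the single term $c\Lambda(\sigma-\tau)$ — in particular the cancellation in $G'(\rho)$ above, and the handling of the possibly unbounded capillary mean curvature $H_{\beta,g}$ via $\cos\beta\,H_{\beta,g} = \nabla^g_{\R^n}\sin\beta$. The two ideas that make the argument work are simple: (i) adding the flat piece $[\R^n]_g$ both makes the varifold positive (so an interior-type monotonicity applies) and removes the boundary measure from the picture, since the capillary/free-boundary first variation has no boundary term against tangential fields and the flat barrier contributes nothing there either; and (ii) the Euclidean radial field centered at a boundary point is automatically tangential to $\{x_1=0\}$.
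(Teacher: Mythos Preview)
Your proof is correct, but it takes a genuinely different route from the paper. The paper works \emph{directly} with the signed measure $\mu_V$: it plugs the tangential radial field $X=\phi(|x-\xi|/\rho)(x-\xi)$ into $\delta_g V$, and handles the fact that error terms involve $|\mu_V|$ rather than $\mu_V$ via the elementary inequality
\[
|\mu_V|(B_\rho(\xi)) \le \mu_V(B_\rho(\xi)) + c(n)\rho^n,
\]
which follows immediately from $\theta_{V,g}\ge 0$ in $\{x_1<0\}$ and $|\theta_{V,g}|\le \theta_{V,g}+2$ on $\{x_1=0\}$. This converts every occurrence of $|\mu_V|$ into $\mu_V$ plus a $\rho^n$ error, yielding the same differential inequality for $I(\rho)=\int\phi\,d\mu_V$ with an extra $c\Lambda\rho^{n+1}$ term, and the result follows by the usual integrating-factor step.

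Your approach instead adds the full barrier $[\R^n]_g$ to produce a genuinely positive varifold $W$, runs the standard (unsigned) monotonicity on $W$, and then subtracts the explicit plane contribution $G(\rho)$ at the end. This is essentially the same ``add a plane'' trick the paper uses elsewhere for compactness (Lemma~\ref{lem:soft-conv}). What you gain is conceptual cleanliness: the monotonicity step is literally the unsigned one, and all the signed bookkeeping is isolated in the final $G'(\rho)=O(\Lambda)$ cancellation. What you pay is that cancellation itself, which requires a short computation (and the $C^1$ control of $g$ at $\xi$) to see that the $\rho^{-1}$ terms in $G'$ drop out. The paper's route avoids this by never introducing $G$, at the cost of carrying the signed/unsigned discrepancy through the differential inequality. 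Both arguments hinge on exactly the same two observations you single out: the radial field centered on the barrier is tangential, and the capillary boundary term is controlled via $\cos\beta\,H_{\beta,g}=\nabla^g_{\R^n}\sin\beta$.
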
 

\begin{proof}
The proof is similar to Lemma \ref{lem:mono}.  A key observation is that, since $\theta_{V, g} \geq 0$ $|\mu_V|$-a.e. in $\{x_1 < 0\}$ and $|\theta_{V, g}| \leq \theta_{V, g} + 2$ $|\mu_V|$-a.e. in $\{x_1 = 0\}$, we have the inequality
\begin{equation}\label{eqn:bd-mono}
|\mu_V|(B_\rho(\xi)) \leq \mu_V(B_\rho(\xi)) + c(n) \rho^n
\end{equation}
for every $0 < \rho < 1-|\xi|$.

Take $\phi$ a smooth decreasing function which $\equiv 1$ on $(-\infty, 1/2]$ and $\equiv 0$ on $[1, \infty)$, and note the vector field $X = \phi(|x - \xi|/\rho) (x - \xi)$ lies tangential to $\{ x_1 = 0 \}$.  Therefore, as in Lemma \ref{lem:mono}, we can use \eqref{eqn:bd-mono} and \eqref{eqn:beta-g-vs-g-2} to compute
\begin{align}
\int \dive_{V, g_{eucl}}(X) \dif\mu_V
&\leq c \int (|H_{V, g}^{tan}| + |Dg|) \phi |x-\xi| \dif|\mu_V| \\
&\quad + c \int |Dg| |x - \xi| \left( |\phi'| \frac{|x - \xi|}{\rho} + \phi  \right) \dif|\mu_V|\\
&\quad + \int_{\R^n} |D\beta| \phi |x - \xi| d\haus^n \\
&\leq c\Lambda \rho \int \left(|\phi'| \frac{|x - \xi|}{\rho} + \phi \right) \dif\mu_V + c\Lambda \rho^{1+n}
\end{align}
for $c = c(n, |g|_{C^0}) = c(n)$ (since $\Lambda \leq 1$).

By the same computations as in Lemma \ref{lem:mono} we define
\[
I(\rho) = \int \phi(|x - \xi|/\rho) \dif\mu_V, \quad J(\rho) = \int \phi(|x - \xi|/\rho) \frac{|(x - \xi)^\perp|^2}{|x - \xi|^2} \dif\mu_V,
\]
then we get
\[
-(1+c\Lambda \rho) \rho I' + (n-c\Lambda \rho) I \leq -\rho J' + c \Lambda \rho^{1+n} .
\]
Note that since $\pi_V^\perp(x - \xi) = 0$ $|\mu_V|$-a.e. in $\{x_1 = 0\}$, we have $J' \geq 0$.  Therefore, using the same integrating factor as before and recalling that $\Lambda \leq 1$, we get
\[
\frac{\mathrm{d}}{\mathrm{d}\rho} ( (1+c\Lambda \rho)^{n+1} \rho^{-n} I(\rho)) \geq \rho^{-n} J'(\rho) - c \Lambda
\]
for $c = c(n)$.  Now integrate from $\tau$ to $\sigma$ and take $\phi \to 1_{(-\infty, 1)}$.
\end{proof}

\begin{remark}[Existence of $\Theta_{V, g}(x)$]\label{rem:thetag-vs-theta}
If the assumptions of one of the two above Lemmas hold, then we have that $\Theta_{V, g}(\xi)$ exists and coincides with $\lim_{r \to 0} \Theta_V(\xi, r)$.  To see this, note that since $g(\xi) = g_{eucl}$ we have
\[
B_{(1-c(n)r)r}(\xi) \subset B^g_r(\xi) \subset B_{(1+c(n)r)r}(\xi)
\]
as $r \to 0$, and hence by Remark \ref{rem:theta-change-ball}
\[
\Theta_V(\xi, (1-cr)r) - c r \leq \frac{\mu_V(B^g_r(\xi))}{\omega_n r^n} \leq \Theta_V(\xi, (1+cr)r) + c r.
\]
\end{remark}

\begin{remark}[Existence of capillary tangent cones]\label{rem:tangent-cones}
If $V$ is a $(\beta, S)$-capillary varifold in $(B_1, g)$ and $g(0) = g_{eucl}$, then it follows from the compactness Lemma \ref{lem:soft-conv} and monotonicity of Lemma \ref{lem:bd-mono} that for any sequence $r_i \to 0$, there is a subsequence $r_{i'}$ so that the rescaled varifolds $(\eta_{0, r_{i'}})_\sharp V$ converge to a stationary $n$-rectifiable signed varifold cone $V'$ with free-boundary in $\{ x_1 = 0 \}$, satisfying $\Theta_{V'}(0, 1) = \Theta_{V, g}(0)$.
\end{remark}

We can restate the monotonicities of Lemma \ref{lem:mono}, \ref{lem:bd-mono} in a general metric (not necessarily Euclidean at $\xi$) as follows.
\begin{prop}[Weak monotonicity at general points]\label{prop:mono-weak}
There are constants $c(n) \ll C(n)$ so that the following holds. Let $g$ be a $C^1$ metric on $B_1$, and $\beta$ a $C^1$ function. Assume one of the two following cases:
\begin{description}
    \item[(interior case)] $V$ is as in Lemma \ref{lem:mono}, $\xi\in B_1\cap\{x_1<0\}$ and
    \begin{gather}
    \max \{ ||H_{V, g}||_{L^\infty(B_1)}, |g - g_{eucl}|_{C^1(B_1)} \} \leq \delta \leq 1 ;
    \end{gather}
    \item[(boundary case)] $V$ is as in Lemma \ref{lem:bd-mono}, $\xi\in B_1\cap\{x_1=0\}$ and
    \begin{gather}
    \max \{ ||H^{tan}_{V, g}||_{L^\infty(B_1)}, |g - g_{eucl}|_{C^1(B_1)}, |D\beta|_{C^0(B_1)} \} \leq \delta \leq 1 .
    \end{gather}
\end{description}
If we write $\eps = |g(\xi) - g_{eucl}|$, then we have
\begin{equation}\label{eqn:mono-weak-concl}
\max\{\Theta_{V,g}(\xi), \Theta_V(\xi, r)\} \leq (1+c \delta s + c \eps) \Theta_V(\xi, s) + C (\delta s + \eps) 
\end{equation}
for every $0 < (1+c\eps)r < s < (1-c\eps)\min \{ 1 - |\xi|, |\xi_1| \}$ (in the interior case) or $0 < (1+c\eps)r < s < (1-c\eps)(1-|\xi|)$ (in the boundary case).
\end{prop}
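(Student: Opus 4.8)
The plan is to reduce to the situation $g(\xi) = g_{eucl}$ covered by Lemmas \ref{lem:mono} and \ref{lem:bd-mono} by composing with a linear change of coordinates centred at $\xi$, and then to bound the distortions this introduces in terms of $\eps := |g(\xi) - g_{eucl}|$. We may assume $\eps$ is below a dimensional threshold, since otherwise the stated range of radii $(1+c\eps)r < s < (1-c\eps)\min\{\cdots\}$ is empty and there is nothing to prove.

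First I would fix an invertible linear map $M$ with $\det M > 0$ and $|M - \mathrm{Id}| \le c(n)\eps$ such that $g(\xi)(M^{-1}\cdot, M^{-1}\cdot) = g_{eucl}$: in the interior case one can take $M = g(\xi)^{-1/2}$, and in the boundary case one runs Gram--Schmidt adapted to the splitting $\R e_1 \oplus \R^n$ so that, in addition, $M$ (hence $\Phi(x) := \xi + M(x-\xi)$, since $\xi_1 = 0$) preserves $\{x_1=0\}$. Set $\tilde V = \Phi_\sharp V$ and $\tilde g = (\Phi^{-1})^*g$, so that $\Phi$ is a $(g,\tilde g)$-isometry fixing $\xi$ with $\tilde g(\xi) = g_{eucl}$. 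By the transformation rules for capillary (resp. free-boundary) varifolds recorded after Theorem \ref{thm:fb-cap-first-var}, $\tilde V$ is again a $(\tilde\beta, \tilde S)$-capillary (resp. free-boundary) varifold satisfying the density hypotheses of the relevant lemma, and since isometries preserve densities the conditions $\theta_{V,g}\ge 0$ on $\{x_1<0\}$, $\theta_{V,g}\ge -1$ on $\{x_1=0\}$ survive. A direct estimate gives $|D\tilde g|_{C^0} + \|H^{tan}_{\tilde V,\tilde g}\|_{L^\infty} + |D\tilde\beta|_{C^0} \le c(n)\delta$ and $|\tilde g - g_{eucl}|_{C^0}\le c(n)(\delta+\eps)$, so (shrinking $\delta$ so that the first quantity is $\le 1$) Lemma \ref{lem:mono} with $h\equiv 1$, or Lemma \ref{lem:bd-mono}, applies to $\tilde V$ with $\Lambda = c(n)\delta$. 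Moreover $\mu_{\tilde V}(A) = \mu_V(\Phi^{-1}(A))$, and since $B_{(1-c\eps)\rho}(\xi) \subset \Phi^{-1}(B_\rho(\xi)) \subset B_{(1+c\eps)\rho}(\xi)$, splitting the measure over $\{x_1<0\}$ (where it is positive) and $\{x_1=0\}$ (where its total variation on a thin $n$-dimensional annulus is $O(\eps\rho^n)$ because $\theta_{V,g}\ge -1$) — exactly as in Remark \ref{rem:theta-change-ball} — produces the two-sided comparison
\[
(1-c\eps)^n\,\Theta_V(\xi, (1-c\eps)\rho) - c\eps \;\le\; \Theta_{\tilde V}(\xi, \rho) \;\le\; (1+c\eps)^n\,\Theta_V(\xi, (1+c\eps)\rho) + c\eps
\]
for all small $\rho$, with the additive $c\eps$ terms absent in the interior case.

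Next I would apply the chosen monotonicity lemma to $\tilde V$, divide by $\omega_n$, discard the non-negative monotone term, and use $(1 + c\Lambda\sigma')^{n+1} \le 1 + c\Lambda\sigma'$ (valid since $\Lambda\sigma'\le 1$) to obtain $\Theta_{\tilde V}(\xi,\tau') \le (1 + c\delta\sigma')\,\Theta_{\tilde V}(\xi,\sigma') + c\delta\sigma'$ for $0<\tau'<\sigma'<(\text{reach})$; letting $\tau'\to 0$ and invoking Remark \ref{rem:thetag-vs-theta} for $\tilde V$ (legitimate as $\tilde g(\xi)=g_{eucl}$) gives the same bound with $\Theta_{\tilde V,\tilde g}(\xi)$, and $\Theta_{V,g}(\xi) = \Theta_{\tilde V,\tilde g}(\xi)$ since $\Phi$ is an isometry fixing $\xi$. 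Finally I would substitute the Step-1 comparison on both sides, taking $\tau'$ and $\sigma'$ to be $(1\pm c\eps)$-multiples of $r$ and $s$ respectively — which is precisely why the admissible range is stated as $(1+c\eps)r < s < (1-c\eps)\min\{\cdots\}$ — and absorb the $(1\pm c\eps)^{\pm n}$ Jacobian-type factors into multiplicative/additive errors of size $O(\eps)$, using once more the uniform lower bound $\Theta_{\tilde V}(\xi,\cdot)\ge -c(n)$ (which, since $\Lambda,\eps$ are small, makes $(1+a)\Theta$ and $\Theta$ interchangeable up to an additive $c(n)a$ for $a>0$, both ways). Relabelling $c,C$ then yields \eqref{eqn:mono-weak-concl}.

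I expect the only real obstacle to be the bookkeeping: checking that each radius shift (Euclidean balls versus images of balls under $\Phi$), each factor $(1\pm c\eps)^{\pm n}$, and the passage between signed and total mass are genuinely $O(\delta s + \eps)$ with precisely the multiplicative-plus-additive split \eqref{eqn:mono-weak-concl} demands, and, in the boundary case, that $M$ can simultaneously normalise $g(\xi)$ and preserve $\{x_1=0\}$. An alternative route that avoids the pushforward entirely is to re-run the proofs of Lemmas \ref{lem:mono}/\ref{lem:bd-mono} verbatim, replacing the Euclidean radial test field $|x-\xi|$ by its $g(\xi)$-counterpart and allowing extra error terms of size $\eps$ coming from $|g(x)-g(\xi)| \le \eps + c|Dg|\,|x-\xi|$; this is arguably cleaner but the estimates to verify are essentially the same.
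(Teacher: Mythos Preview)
Your proposal is correct and follows essentially the same route as the paper: an affine change of coordinates $\Phi$ centred at $\xi$ (chosen via Gram--Schmidt so that $(\Phi^{-1})^*g|_\xi = g_{eucl}$ and, in the boundary case, $\Phi$ preserves $\{x_1=0\}$), followed by an application of Lemma~\ref{lem:mono} or \ref{lem:bd-mono} to the transformed varifold, and finally a transfer back using the ball inclusions $B_{(1\pm c\eps)\rho}(\xi)$ together with Remark~\ref{rem:theta-change-ball}. One cosmetic slip: the defining relation $g(\xi)(M^{-1}\cdot,M^{-1}\cdot)=g_{eucl}$ forces $M^TM=g(\xi)$, so the symmetric choice is $M=g(\xi)^{1/2}$, not $g(\xi)^{-1/2}$.
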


\begin{proof}
By a standard Gram-Schmidt process (combined with a possible rotation) we can choose an affine map $L$ satisfying
\begin{equation}\label{eqn:mono-weak-0}
L\xi = \xi, \quad |DL - \operatorname{Id}| \leq c(n) \eps, \quad (L^* g)(\xi) = g_{eucl}, \quad L\ver = se_1 + \ver
\end{equation}
where $s = 0$ if $\xi_1 = 0$ and $|s| \leq c(n)\eps$ if $\xi_1 < 0$.  Define $V' = (L^{-1})_\sharp V$, $g' = L^* g$, $\beta' = \beta \circ L$.

If we let $R = (1-c(n)\eps)(1-|\xi|)$ for suitable $c(n)$, then in the interior case $V'$ is a $n$-rectifiable varifold in $(B_{R}(\xi) \cap \{ x_1 < s \}, g')$ satisfying
\[
\max \{ ||H_{V', g'}||_{L^\infty(B_R(\xi))}, |g' - g_{eucl}|_{C^1(B_R(\xi))} \} \leq c(n)\delta ;
\]
while, in the boundary case, $V'$ is either a $(\beta', L^{-1}(B))$-capillary varifold in $(B_R, g')$, or $V'$ is a signed $n$-rectifiable varifold supported in $\{ x_1 \leq 0\}$ with $g$-free-boundary in $\{ x_1 = 0\}$, which either way satisfies
\begin{gather}
\theta_{V', g'}(x) \geq 1 \text{ $|\mu_{V'}|$-a.e. $x_1 < 0$}, \quad \theta_{V', g'}(x) \geq -1 \text{ $|\mu_{V'}|$-a.e. $x_1 = 0$}, \\
\max \{ ||H_{V', g'}^{tan}||_{L^\infty(B_R(\xi))}, |g' - g_{eucl}|_{C^1(B_R(\xi))} , |D\beta'|_{C^0(B_R(\xi))} \} \leq c(n)\delta .
\end{gather}
(Recall $\eps \leq \delta$).

Now since $\mu_{V'}(B_r(\xi)) = \mu_V(L B_r(\xi))$, we can apply Lemma \ref{lem:mono} or \ref{lem:bd-mono} to $V'$ to obtain
\begin{equation}\label{eqn:mono-weak-1}
r \mapsto (1+c\delta r)^{n+1} r^{-n} \mu_V(LB_r(\xi)) + c \delta r \quad \text{ is increasing }  
\end{equation}
for $0 < r < (1-c\eps) \min\{ 1-|\xi|, |\xi_1| \}$ (in the interior case) or $0 < r < (1-c\eps) (1-|\xi|)$ (in the boundary case), and $c = c(n)$.

By our assumptions and \eqref{eqn:mono-weak-0} we can find $r < (1+c_1\eps)r < (1-c_1\eps) s < s$, for some $c_1(n)$, so that
\[
B_r(\xi) \subset L B_{(1+c_1\eps)r}(\xi) \subset B_{(1+c_1^2 \eps)r}(\xi) \subset B_{(1-c_1^2\eps)s}(\xi) \subset LB_{(1-c_1\eps)s}(\xi) \subset B_s(\xi).
\]
We can then apply Remark \ref{rem:theta-change-ball} and monotonicity \eqref{eqn:mono-weak-1} to get the required inequality \eqref{eqn:mono-weak-concl}.
\end{proof}

\begin{cor}[Properties of $\Theta_{V, g}(x)$]\label{cor:usc}
Under the same hypotheses as Proposition \ref{prop:mono-weak}, the pointwise density function $\Theta_{V, g}(x)$ exists for all $x$, coincides with $\theta_{V,g}(x)$ at $|\mu_V|$-a.e. $x$, and is upper semi-continuous on $\{ x_1 < 0 \}$ and (separately) on $\{ x_1 = 0 \}$.
\end{cor}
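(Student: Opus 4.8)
\emph{Approach.} I would reduce to the case $g = g_{eucl}$ at the base point by a linear change of coordinates, and then read off all three assertions from the monotonicity formulae (Lemmas \ref{lem:mono}, \ref{lem:bd-mono}, Proposition \ref{prop:mono-weak}) together with classical differentiation theory. Given a point $x$, let $L = L_x$ be the affine map furnished by the Gram--Schmidt construction \eqref{eqn:mono-weak-0}: $Lx = x$, $(L^*g)(x) = g_{eucl}$, $L$ is $c(n)\,|g(x)-g_{eucl}|$-close to the identity, and $L\{x_1=0\} = \{x_1=0\}$ when $x_1 = 0$. Set $\tilde V = (L^{-1})_\sharp V$, $\tilde g = L^*g$, $\tilde\beta = \beta\circ L$; these satisfy the structural hypotheses with $\tilde g(x) = g_{eucl}$, and since $L : (U,\tilde g)\to(LU,g)$ is an isometry it carries $\tilde g$-balls to $g$-balls and preserves mass, so $\mu_{\tilde V}(B_r^{\tilde g}(y)) = \mu_V(B_r^g(Ly))$ and hence $\Theta_{\tilde V,\tilde g}(y) = \Theta_{V,g}(Ly)$ whenever either side exists. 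Existence of $\Theta_{V,g}(x)$ follows at once: the hypotheses of Remark \ref{rem:thetag-vs-theta} hold for $\tilde V$ at $x$ (since $\tilde g(x) = g_{eucl}$), so $\Theta_{\tilde V,\tilde g}(x) = \lim_{r\to 0}\Theta_{\tilde V}(x,r)$ exists; this is also essentially contained in the proof of Proposition \ref{prop:mono-weak}.

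\emph{Coincidence with $\theta_{V,g}$.} Split $\mu_V = \mu_V\llcorner\{x_1<0\} + \mu_V\llcorner\{x_1=0\}$, where the first summand is the (nonnegative) mass measure of an integral varifold carried by the open set $\{x_1<0\}$ and the second equals $\theta_{V,g}\,\haus^n_g\llcorner(M_V\cap\{x_1=0\})$. At $(\mu_V\llcorner\{x_1<0\})$-a.e.\ $x$, small $g$-balls about $x$ lie in $\{x_1<0\}$, so the standard density theorem for integral varifolds in a $C^1$ metric (cf.\ \cite{Sim}) gives $\Theta_{V,g}(x) = \theta_{V,g}(x)$. At $\haus^n_g\llcorner(M_V\cap\{x_1=0\})$-a.e.\ $x$, the set $M_V\cap\{x_1=0\}$ --- a locally $\haus^n_g$-finite subset of the hyperplane $\{x_1=0\}$ --- has $\haus^n_g$-density $1$ at $x$, and $x$ is a Lebesgue point of $\theta_{V,g}$, so the boundary part of $\mu_V$ contributes $\theta_{V,g}(x)$ to $\Theta_{V,g}(x)$; meanwhile $\mu_V\llcorner\{x_1<0\}$, being concentrated off $\{x_1=0\}$, is mutually singular with $\haus^n\llcorner\{x_1=0\}$, so Besicovitch differentiation forces $(\mu_V\llcorner\{x_1<0\})(B_r^g(x))/(\omega_n r^n)\to 0$ at $\haus^n$-, hence $\haus^n_g$-, a.e.\ point of $\{x_1=0\}$. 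The two cases together exhaust $|\mu_V|$-a.e.\ point, since $|\mu_V| = \mu_V\llcorner\{x_1<0\} + |\theta_{V,g}|\,\haus^n_g\llcorner(M_V\cap\{x_1=0\})$.

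\emph{Upper semicontinuity.} Fix $\xi$ and a sequence $\xi_j\to\xi$ lying in the same stratum ($\{x_1<0\}$, resp.\ $\{x_1=0\}$) as $\xi$. Performing the reduction at $\xi$ --- which, being the single linear map $L_\xi$, keeps all the $\xi_j$ in the same stratum --- we may assume $g(\xi) = g_{eucl}$, so that $\eps_j := |g(\xi_j)-g_{eucl}|\to 0$ by continuity of $g$. Choosing $s$ among the co-countably many values with $|\mu_V|(\partial B_s(\xi)) = 0$, Proposition \ref{prop:mono-weak} at $\xi_j$ gives
\[
\Theta_{V,g}(\xi_j) \le (1 + c\delta s + c\eps_j)\,\Theta_V(\xi_j, s) + C(\delta s + \eps_j).
\]
Letting $j\to\infty$ (using $\mu_V(B_s(\xi_j))\to\mu_V(B_s(\xi))$ and $\eps_j\to 0$), then $s\to 0$ along good values (using that $\Theta_V(\xi,s)\to\Theta_{V,g}(\xi)$, which holds because $g(\xi) = g_{eucl}$), yields $\limsup_j\Theta_{V,g}(\xi_j)\le\Theta_{V,g}(\xi)$, i.e.\ upper semicontinuity on each stratum.

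\emph{Main obstacle.} The genuinely delicate point is that the error $|g(\xi)-g_{eucl}|$ in the weak monotonicity \eqref{eqn:mono-weak-concl} does not decay with the radius, so upper semicontinuity cannot be obtained without first normalizing $g$ at the base point; one must then check that this normalization --- a linear map close to the identity --- both respects the interior/boundary dichotomy (it does, since $L_\xi$ fixes $\{x_1=0\}$ exactly at boundary points) and preserves the hypotheses of Proposition \ref{prop:mono-weak}. Everything else is a routine combination of the monotonicity formulae with classical differentiation theory for rectifiable measures.
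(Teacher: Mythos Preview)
Your proof is correct and follows the paper's overall strategy: normalize $g(\xi)=g_{eucl}$ via the affine map \eqref{eqn:mono-weak-0}, read off existence from Remark~\ref{rem:thetag-vs-theta}, and get upper semicontinuity from \eqref{eqn:mono-weak-concl}. Your passage $\Theta_V(\xi_j,s)\to\Theta_V(\xi,s)$ via continuity of the signed measure on balls with $|\mu_V|(\partial B_s(\xi))=0$ is a minor variant of the paper's device, which uses the inclusion $B_{r-|\xi_j-\xi|}(\xi_j)\subset B_r(\xi)$ together with Remark~\ref{rem:theta-change-ball}; both are fine.

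The one genuinely different ingredient is your argument for $\Theta_{V,g}=\theta_{V,g}$ a.e.: you split $\mu_V$ into interior and boundary parts, invoke the standard density theorem on each, and use Besicovitch differentiation (via $\mu_{V_I}\perp\haus^n\llcorner\{x_1=0\}$) to kill the interior contribution at a.e.\ boundary point. The paper instead gives a single uniform computation via the area formula: writing $\mu_V(A)=\int_{A\cap M_V}\theta_{V,g}F_g\,d\haus^n$ with $F_g=(\det g(e_i,e_j))^{1/2}$, at any Lebesgue point of $\theta_{V,g}$ where $M_V$ has an approximate tangent plane one gets $\Theta_{V,g}(x)=\theta_{V,g}(x)\,F_g(x)\,\omega_n^{-1}\haus^n(\{y\in T_xM_V:g|_x(y,y)<1\})=\theta_{V,g}(x)$, the two Jacobian factors cancelling. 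This avoids the interior/boundary split and the singularity argument; your route is perhaps more elementary but requires more bookkeeping.
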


\begin{remark}
One obvious consequence of the above Corollary is:  if $\theta_{V, g}(x) \geq \theta_0$ at $|\mu_V|$-a.e. $x \in U$, for $U = \{ x_1 < 0 \}$ or  $U = \{ x_1 = 0 \}$, then $\Theta_{V, g}(x) \geq \theta_0$ for all $x \in U$.
\end{remark}

\begin{proof}[Proof of Corollary \ref{cor:usc}]
Fix a $\xi \in B_1$.  Since the existence and upper semicontinuity of $\Theta_{V, g}$ is local and coordinate-invariant, there is no loss in choosing an affine linear change of coordinates like in the proof of Proposition \ref{prop:mono-weak}, and thereby assume $g(\xi) = g_{eucl}$.  The existence of $\Theta_{V, g}(\xi)$ then follow from Remark \ref{rem:thetag-vs-theta}.

We prove $\Theta_{V, g}$ is upper-semi-continuous on $\{x_1 = 0 \}$; the case when $\{ x_1 < 0 \}$ is essentially verbatim and is in fact easier, since then the varifold is unsigned.  So $\xi_1 = 0$.  Take $\xi_i \in \ver$ converging to $\xi$, write $\eps_i = |g(\xi_i) - g_{eucl}|$, and for any $r$ write $r_i = r - |\xi_i - \xi|$.  Of course $\eps_i \to 0$ and $r_i \to r$ by our assumptions.

Then for $i \gg 1$, we can use monotonicity \eqref{eqn:mono-weak-concl} and Remark \ref{rem:theta-change-ball} to compute
\begin{align}
\Theta_{V, g}(\xi_i) &\leq (1+ c \delta r_i + c \eps_i)\Theta_V(\xi_i, r_i) + C a (\delta r_i + \eps_i) \\
&\leq (1+c\delta r_i + c \eps_i) ( (r^n/r_i^n) \Theta_V(\xi, r) + c(n) ( r^n/r_i^n - 1)) + C a (\delta r_i + \eps_i).
\end{align}
for $c(n), C(n)$ as in Proposition \ref{prop:mono-weak}.  We deduce that
\[
\limsup_i \Theta_{V, g}(\xi_i) \leq (1+c \delta r) \Theta_V(\xi, r) + C a \delta r.
\]
Now by Remark \ref{rem:thetag-vs-theta} the limit of the right-hand side as $r \to 0$ is $\Theta_{V, g}(\xi)$.

To see that $\Theta_{V, g} = \theta_{V, g}$ at $|\mu_V|$-a.e., let us write by the area formula (and using the notation of the Preliminaries section), 
\[
\mu_V(A) = \int_{A \cap M_V} \theta_{V, g}(x) F_g(x) d\haus^n(x),
\]
where $F_g(x) = (\det g(e_i, e_j))^{1/2}$, for $e_i$ a $g_{eucl}$-orthonormal basis of $T_x M_V$.  Then at $|\mu_V|$-a.e. $x$ (being a Lebesgue point for $\theta_{V, g}$ w.r.t. $\haus^n \llcorner M_V$ and also where $M$ has an approximate tangent plane) we have
\begin{align}
\Theta_{V, g}(x) &= \theta_{V, g}(x) F_g(x) \omega_n^{-1} \haus^n(\{ y \in T_x M_V : g|_x(y, y) < 1 \})  \\
&= \theta_{V, g}(x)
\end{align}
where the second equality follows again by the area formula.
\end{proof}

\subsection{Compactness and Allard regularity}

We will make use of Allard's varifold compactness and regularity theorems.  These are typically stated (and proven) in Euclidean metrics, but they can be adapted to general metrics with fairly little trouble.  We omit the proofs here.
\begin{theorem}[Allard's compactness]\label{thm:allard-compact}
Let $a > 0$, $g_i$ be a sequence of $C^1$ metrics on $B_1$ converging to $g_{eucl}$ in $C^1$, and $V_i$ a sequence of $n$-rectifiable varifolds in $(B_1, g_i)$ satisfying
\begin{gather}
\sup_i \left( \mu_{V_i}(B_r) + ||\delta_{g_i} V_i||(B_r) \right) < \infty  \quad \forall r < 1, \\
\theta_{V_i, g_i}(x) \geq a > 0 \text{ $\mu_{V_i}$-a.e. $x \in B_1$}.
\end{gather}
Then after passing to a subsequence we can find an $n$-rectifiable varifold $V$ in $(B_1, g_{eucl})$ so that $V_i \to V$ as varifolds, $\delta_{g_i} V_i \to \delta V$ as Radon measures, and $\theta_{V}(x) \geq a$ at $\mu_V$-a.e. $x \in B_1$.  Moreover, if the $V_i$ are integral, then $V$ is integral also.
\end{theorem}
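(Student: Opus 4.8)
The plan is to deduce this from the classical Euclidean versions of Allard's compactness, rectifiability and integral-compactness theorems (\cite{allard1972}; see also \cite[Ch.~8]{Sim}), treating the metrics $g_i$ as a $C^1$-small perturbation of $g_{eucl}$. The one structural subtlety --- already flagged in the discussion preceding \eqref{eqn:div} --- is that $g_i$-locally-finite first variation of $V_i$ does \emph{not} imply $g_{eucl}$-locally-finite first variation of $V_i$, so one cannot feed the $V_i$ directly into the Euclidean theorems. Instead one first extracts a weak-$*$ limit $V$ using the mass bound alone, and then verifies each conclusion for $V$, which \emph{will} have $g_{eucl}$-locally-finite first variation.

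\textbf{Weak-$*$ limit and first-variation convergence.} By the uniform mass bound and weak-$*$ compactness of Radon measures on the Grassmannian bundle $B_1\times\Gr_n(n+1)$ (varifold convergence is a statement about these measures and involves no metric), a subsequence satisfies $V_i\to V$ as varifolds and $\mu_{V_i}\to\mu_V$ as Radon measures. Fix $r<1$ and $X\in C^1_c(B_r,\R^{n+1})$. Writing $\delta_{g_i}V_i(X)=V_i\big((x,S)\mapsto\dive_{S,g_i}(X)\big)$, and using that $g_i\to g_{eucl}$ in $C^1$ forces the function $(x,S)\mapsto\dive_{S,g_i}(X)$ to converge uniformly on $\overline{B_r}\times\Gr_n(n+1)$ to $(x,S)\mapsto\dive_{S,g_{eucl}}(X)$ (cf.\ \eqref{eqn:div}), varifold convergence yields $\delta_{g_i}V_i(X)\to V\big((x,S)\mapsto\dive_{S,g_{eucl}}(X)\big)=\delta_{g_{eucl}}V(X)=:\delta V(X)$. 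Since $|\delta_{g_i}V_i(X)|\le\big(\sup_i\|\delta_{g_i}V_i\|(B_r)\big)|X|_{C^0}<\infty$, we obtain $|\delta V(X)|\le C(r)|X|_{C^0}$, so $V$ has $g_{eucl}$-locally-finite first variation; approximating $C^0_c$ fields by $C^1_c$ ones (using the same uniform bound) then upgrades this to $\delta_{g_i}V_i\to\delta V$ as Radon measures.

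\textbf{Rectifiability, density, and integrality.} Each $V_i$ is rectifiable with Euclidean multiplicity $\theta_{V_i,g_{eucl}}=\theta_{V_i,g_i}F_{g_i}$, where the area factor $F_{g_i}=(\det g_i(e_\alpha,e_\beta))^{1/2}$ (with $\{e_\alpha\}$ a $g_{eucl}$-orthonormal basis of the tangent plane) converges to $1$ uniformly; hence $\theta_{V_i,g_{eucl}}\ge a(1-\eps_i)$ with $\eps_i\to0$. The standard lower density bound for rectifiable varifolds with locally bounded first variation (valid for a $C^1$ metric, with constants uniform as $g_i\to g_{eucl}$, and proved via the monotonicity identity) then gives, for $i$ large, $\mu_{V_i}(B_\rho(y))\ge c(n)\,a\,\rho^n$ for all $y\in\spt\mu_{V_i}$ and $\rho$ small; passing $i\to\infty$ (using $\mu_{V_i}\to\mu_V$ and $\mu_V(B_\rho(x))>0$ for $x\in\spt\mu_V$) yields $\Theta_V(x)>0$ for every $x\in\spt\mu_V$, so Allard's rectifiability theorem applies and $V$ is $n$-rectifiable. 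The sharp bound $\theta_{V,g_{eucl}}(x)\ge a$ for $\mu_V$-a.e.\ $x$ (obtained by first getting $\ge a'$ for each $a'<a$, then letting $a'\uparrow a$) and, when the $V_i$ are integral, $\theta_{V,g_{eucl}}(x)\in\N$ for $\mu_V$-a.e.\ $x$, follow by running the classical arguments of \cite{allard1972} (see also \cite[Ch.~8]{Sim}) for the limit $V$: both reduce to a blow-up analysis in which the metrics $g_i$ converge to $g_{eucl}$ and the deviation of $F_{g_i}$ from $1$ disappears.

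I expect the integrality statement to be the only genuine obstacle, since it is the single place where the ambient metric interacts with the \emph{definition} of the varifold class: $g_i$-integrality gives multiplicities only within $O(\eps_i)$ of $\N$, so the Euclidean integral-compactness theorem does not apply verbatim, and one must check that the excess-decay / blow-up argument proving integer density tolerates these $O(\eps_i)$ errors --- which it does after a suitable diagonal choice of scales. The remaining points (mass and first-variation passage, rectifiability, the density lower bound) are routine once \eqref{eqn:div} and the classical theorems are in hand.
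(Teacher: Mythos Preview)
Your proposal is correct and follows exactly the approach the paper intends: the paper's own proof is the single sentence ``Straightforward modification of Allard's compactness theorem, see e.g.\ \cite[Chapter 8]{Sim},'' and what you have written is precisely a sketch of that modification, leaning on \eqref{eqn:div} for the first-variation convergence and on the classical rectifiability/integrality arguments for the rest.
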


\begin{proof}
Straightforward modification of Allard's compactness theorem, see e.g. \cite[Chapter 8]{Sim}.
\end{proof}

\begin{theorem}[Allard's regularity]\label{thm:allard}
There are constants $\delta(n), c(n)$ so that if $V$ is a rectifiable $n$-varifold in $(B_1, g)$ with locally finite $g$-first variation such that $\nu_{V,g}=0$ and which satisfies:
\begin{gather}
\max\{ \osc_{\R^n}(V, B_1), ||H_{V, g}||_{L^\infty(B_1)}, |g - g_{eucl}|_{C^1(B_1)} \} \leq \delta, \label{eqn:allard-hyp1} \\
\theta_{V, g}(x) \in \N \text{ $\mu_V$-a.e. $x \in B_1$}, \quad \Theta_V(0, 1) \leq 3/2, \label{eqn:allard-hyp2}
\end{gather}
then
\begin{gather}
\spt V \cap B_{1/2} = \graph_{\R^n}(u), \label{eqn:allard-concl1} \\
|u|_{C^{1,1/2}} \leq c \max\{ \osc_{\R^n}(V, B_1),  ||H_{V, g}||_{L^\infty(B_1)},  |Dg|_{C^0(B_1)} \}. \label{eqn:allard-concl2}
\end{gather}
\end{theorem}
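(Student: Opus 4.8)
The proof is a modification of Allard's regularity theorem \cite{allard1972} (in the cylindrical-excess formulation, cf. \cite[Chapter 8]{Sim}); the plan is to reduce to a multiplicity-one varifold of small excess over $\R^n$ and then run the standard iteration, bookkeeping the metric perturbation. We may assume $0 \in \spt V$ and $\spt V \cap B_{1/2} \neq \emptyset$, the remaining case being vacuous. Since $\sigma_{V,g} = 0$ we have $\delta_g V(X) = -\int g(H_{V,g}, X)\, d\mu_V$, so by \eqref{eqn:div} the Euclidean first variation satisfies
\[
\big| \delta_{g_{eucl}} V(X) \big| \;\le\; c(n)\,\delta \int \big( |X| + |D_V X| \big)\, d\mu_V \qquad \forall X \in C^1_c(B_1, \R^{n+1}).
\]

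Applying the classical interior monotonicity formula, adapted to a $C^1$ metric exactly as in Lemma \ref{lem:mono} (with $h \equiv 1$ and $\Lambda = \delta$), gives, up to multiplicative and additive errors $O(\delta)$, the usual mass bounds and almost-monotonicity of density ratios; in particular $\Theta_{V,g}(x)$ exists and is upper semicontinuous (as in Corollary \ref{cor:usc}), $\Theta_{V,g}(0) = 1$ by integrality (since $0 \in \spt V$ forces $\Theta_{V,g}(0) \in [1, 3/2 + c\delta] \cap \N$), and $\mu_V(B_\rho) \le (3/2 + c(n)\delta)\,\omega_n \rho^n < 2\omega_n\rho^n$ for $\rho$ up to a fixed fraction of $1$. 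Next, the smallness of $\osc_{\R^n}(V, B_1)$ means $\spt V \cap B_{3/4}$ lies in a slab $\{ |x_1 - c| \le c(n)\delta \}$ with $|c| \le c(n)\delta$. A varifold Caccioppoli inequality — obtained by testing the first variation with cutoff multiples of $\pi_{\R^n}^\perp(x)$ and absorbing the metric error via \eqref{eqn:div} — converts this small height deviation into a small tilt/cylindrical excess of $V$ over $\{ x_1 = 0 \}$ in $B_{1/2}$, bounded by $c(n)\big( \osc_{\R^n}(V, B_1)^2 + ||H_{V, g}||_{L^\infty(B_1)}^2 + |g - g_{eucl}|_{C^1(B_1)}^2 \big)$, hence as small as we wish by taking $\delta$ small.

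Together with the mass bound $\mu_V(B_\rho) < 2\omega_n\rho^n$, this is exactly the setting of the cylindrical-excess form of Allard's theorem: a Lipschitz approximation of $\spt V$ off a set whose measure is controlled by the excess, the density bound $3/2$ ensuring the good part is a single-valued graph of small Lipschitz constant; a harmonic (blow-up) approximation, using that this graph function solves an elliptic PDE differing from $\Delta u = 0$ by terms controlled by $||H_{V,g}||_{L^\infty(B_1)} + |g - g_{eucl}|_{C^1(B_1)}$; and a tilt-excess decay of the form $E(V, B_r, P_r) \lesssim r^{2\alpha}\big( E(V, B_{1/2}, \{x_1 = 0\}) + ||H_{V,g}||_{L^\infty(B_1)}^2 + |Dg|_{C^0(B_1)}^2 \big)$, obtained by iterating over dyadic scales while re-optimizing the reference plane $P_r$. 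Geometric decay of the excess yields Hölder continuity of the approximate tangent plane of any exponent $\alpha < 1$, with seminorm controlled by the right side of \eqref{eqn:allard-concl2}; taking $\alpha = 1/2$ and integrating produces \eqref{eqn:allard-concl1}, with the $C^0$ part of $|u|$ controlled by $\osc_{\R^n}(V, B_1)$ (as $0 \in \spt V$ and $|c|\le c(n)\delta$) and the $C^{0,1/2}$ seminorm of $Du$ by the excess decay, which is \eqref{eqn:allard-concl2}.

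The one place the argument genuinely departs from the Euclidean case is the metric perturbation: via \eqref{eqn:div} the non-flat metric contributes to the first variation a zeroth-order term — playing exactly the role of an extra bounded mean curvature, and so harmless — together with a first-order term $\int |g - g_{eucl}|\,|D_V X|\, d\mu_V$, which is not of mean-curvature type but has coefficient $O(\delta)$ and is therefore absorbed at each stage (Caccioppoli, deviation-from-harmonic, excess decay) by the same Cauchy--Schwarz-and-absorb estimates used to handle the excess itself, leaving the structure of the proof unchanged. This bookkeeping, rather than any single step, is the part that requires care; we omit the otherwise routine details.
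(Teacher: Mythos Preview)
Your outline correctly captures what the paper itself calls the ``straightforward adaptation'' of the Euclidean proof, and would yield \eqref{eqn:allard-concl1} together with the estimate $|u|_{C^{1,1/2}} \leq c\max\{\osc_{\R^n}(V,B_1), \|H_{V,g}\|_{L^\infty}, |g - g_{eucl}|_{C^1}\}$. The gap is in your last paragraph: absorbing the first-order term $|g-g_{eucl}|\,|D_V X|$ via Cauchy--Schwarz leaves, at each stage, a residual of size $|g-g_{eucl}|_{C^0}^2$ (times mass), and this residual does \emph{not} improve under rescaling --- unlike $\|H\|$ and $|Dg|$, which pick up a factor of $r$. Concretely, take $g$ constant but $\neq g_{eucl}$, so $|Dg|=0$: then at every rescaled step the almost-harmonic defect still contains a term $\sim |g-g_{eucl}|_{C^0}$, and your excess-decay iteration stalls once $E(r)\sim |g-g_{eucl}|_{C^0}^2$, whereas \eqref{eqn:allard-concl2} asserts $|u|_{C^{1,1/2}}\le c\max\{\osc,\|H\|\}$ with no $|g-g_{eucl}|_{C^0}$ dependence whatsoever.

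The paper's proof is devoted almost entirely to this one point (the rest being deferred to \cite{Sim}), and its fix is geometric rather than analytic: choose a linear map $L$ with $(L^*g)(0)=g_{eucl}$, $L\R^n=\R^n$, $|L-\operatorname{Id}|\le c|g(0)-g_{eucl}|$. In the new coordinates $|\tilde g-g_{eucl}|_{C^0(B_1)}\le |D\tilde g|_{C^0(B_1)}$ by the mean value theorem, so the straightforward adaptation (your argument) now produces a $C^{1,1/2}$ graph $\tilde u$ with norm $\le c\max\{\osc,\|H\|,|Dg|\}$; the remaining work is to verify that $L(\graph_{\R^n}(\tilde u))$ is again a graph over $\R^n$ with comparable $C^{1,1/2}$ norm, which the paper does by writing the reparametrization explicitly. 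An alternative repair within your framework would be to compare, at each step of the iteration, to the constant-coefficient operator with coefficients frozen at the center point (rather than to the Euclidean Laplacian), since then the coefficient error is $|g-g(x)|\le r|Dg|$ and does scale correctly --- but that is a different mechanism from the ``absorb by smallness'' you invoke.
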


\begin{proof}
The proof is a straightforward adaption of the proof in Euclidean metrics (as in e.g. \cite[Chapter 5]{Sim}). The only slightly subtle aspect here is that the estimate \eqref{eqn:allard-concl2} depends on $|Dg|_{C^0}$ rather than $|g - g_{eucl}|_{C^1}$.  This is essentially a direct consequence of the fact that if $u : \R^n\cap B_1 \to \R$ is $C^{1, \alpha}$ with $|u|_{C^{1,\alpha}} \leq \Lambda$, and $L$ is a linear function satisfying $|L - \operatorname{Id}| \leq \eps$ for $\eps(n)$ small, then $L(\graph_{\R^n}(u)) \cap B_{1-c(n)\eps} = \graph_{L \R^n}(\tilde u) \cap B_{1-c(n)\eps}$ for $\tilde u$ a $C^{1,\alpha}$ function satisfying $|u|_{C^{1,\alpha}} \leq c(n, \alpha)\Lambda$.

We elaborate.  First note that \eqref{eqn:allard-concl2} certainly holds true if $g(0) = g_{eucl}$, since in this case $|g - g_{eucl}|_{C^1(B_1)} \leq 2 |Dg|_{C^0(B_1)}$.  Now for a general metric $g$ with $|g(0) - g_{eucl}| \leq \delta$, choose a linear map $L$ satisfying
\[
|L - \operatorname{Id}| \leq c(n)\delta, \quad (L^* g)(0) = g_{eucl}, \quad L\R^n = \R^n.
\]
Let $\tilde g = L^* g$, $\tilde V = (L^{-1})_\sharp V$, so that as matrices $\tilde g|_x = L^T g|_{Lx} L$ and as sets $\spt \tilde V = L^{-1} \spt V$.  From \eqref{eqn:osc-A}, ensuring $\delta(n)$ is small, $\tilde V \subset (B_{1/2}, \tilde g)$ satisfies
\begin{align}
&\max\{ \osc_{\R^n}(\tilde V, B_{1/2}) , ||H_{\tilde V, \tilde g}||_{L^\infty(B_{1/2})}, |\tilde g - g_{eucl}|_{C^1(B_{1/2})} \}  \\
&\leq c(n) \max \{ \osc_{\R^n}(V, B_1), ||H_{V, g}||_{L^\infty(B_1)}, |Dg|_{C^0(B_1)} \}, \label{eqn:allard-1}
\end{align}
(having used that $|\tilde g - g_{eucl}|_{C^0(B_{1/2})} \leq |D\tilde g|_{C^0(B_{1/2})}$), and
\[
\theta_{\tilde V, \tilde g}(x) \in \N \text{ $\mu_{\tilde V}$-a.e. $x \in B_{1/2}$}, \quad \Theta_{\tilde V}(0, 1) \leq 3/2.
\]
The bound on $\Theta_{\tilde V}(0, 1/2)$ can be verified as follows: as $\delta \to 0$, by compactness of integral varifolds we have $\Theta_{V}(0, 3/4) \to 1$, so ensuring $\delta$ is sufficiently small we can compute
\[
\Theta_{\tilde V}(0, 1/2) \leq (1+c(n)\delta) \Theta_V(0, 1+c(n)\delta) \leq (1+c\delta)\Theta_V(0, 3/4) \leq 3/2.
\]

We can then apply Allard and \eqref{eqn:allard-1} to obtain a $C^{1,1/2}$ function $\tilde u : B_{1/4}^n \to \R$ so that
\begin{gather}
\spt \tilde V \cap B_{1/4} = \graph_{\R^n}(\tilde u), \\
|\tilde u|_{C^{1,1/2}} \leq c(n)\max \{ \osc_{\R^n}(V, B_1), ||H_{V, g}||_{L^\infty(B_1)}, |Dg|_{C^0(B_1)} \} .
\end{gather}
Now we claim that
\begin{equation}\label{eqn:allard-2}
\spt V \cap B_{1/8} = (L \graph_{\R^n}(\tilde u)) \cap B_{1/8} = \graph_{\R^n}(u) \cap B_{1/8}
\end{equation}
for $u : B_{1/8}^n \to \R$ a $C^{1,1/2}$ function satisfying $|\tilde u|_{C^{1,1/2}} \leq c(n) |u|_{C^{1,1/2}}$.

Of course the first equality in \eqref{eqn:allard-2} is trivial.  Towards the second equality, define $F(x) = \pi_{\R^n}(Lx + u(x) Le_{n+1})$.  Then $F$ is a $C^{1,1/2}$ function $B_{1/4}^n \to \R^n$ satisfying
\[
|F - id| + |DF - Id| + [DF]_{1/2} \leq c(n) \delta,
\]
and so (ensuring $\delta(n)$ is small) there is a $C^{1,1/2}$ inverse $F^{-1} : F(B_{1/4}^n) \supset B_{1/8}^n \to \R^n$ satisfying the same estimates.  Since $L$ is linear and $L\R^n = \R^n$ we can then write
\[
L(x + u(x) e_{n+1}) = F(x) + \pi^\perp_{\R^n}(L e_{n+1}) u(x) =: y + \tilde u(y) e_{n+1}
\]
for $y = F(x)$ and $\tilde u(y) = (e_{n+1} \cdot L e_{n+1}) u(F^{-1}(y))$.  One can easily verify that the function $\tilde u$ is defined on $B_{1/8}^n$, and satisfies the estimates
\[
|\tilde u|_{C^{1, 1/2}(B_{1/8}^n)} \leq c(n) |u|_{C^{1, 1/2}(B_{1/4}^n)},
\]
which proves our claim.  By applying our claim in various smaller balls we get the conclusion of the Theorem.
\end{proof}

\section{Capillary first variation control}\label{sec:first_var_control}

Our main result of this section is the following Theorem \ref{thm:cap-first-var}, bounding individually the first variation of the interior and boundary pieces $V_I, V_B$ of $V$.  The key to proving first variation control is establishing $(n-1)$-dimensional Ahlfors-regularity of the boundary measure $\sigma_{V, g}$ (Theorem \ref{thm:sigma-dichotomy}).  In turn, the key to proving (lower-)Ahlfors-regularity of $\sigma_{V, g}$ is a boundary rigidity result (Theorem \ref{thm:rigidity}), which says that whenever the tilt-excess of $V$ with respect to $\ver$ is small, then $\spt V \subset \ver$.  This last result can be viewed as a kind of baby Allard theorem in and of itself.

Using Theorem \ref{thm:cap-first-var}, we later prove in Subsection \ref{sec:conv} an improved compactness result (Lemma \ref{lem:conv}) which we will use later in the proof of the regularity results.

\begin{theorem}[First variation control]\label{thm:cap-first-var}
Given $\alpha > 0$, there are $\delta(n, \alpha)$, $\gamma(n, \alpha)$, $c(n, \alpha)$ positive so that the following holds.  Let $g$ be a $C^1$ metric on $B_1$, $\beta$ a $C^1$ function on $B_1$, $S \subset \R^n$, and $V$ a $(\beta, S)$-capillary varifold in $(B_1, g)$. Suppose that
\begin{gather}\label{eqn:cap-first-var-hyp}
\Theta_V(0, 1) + (\cos\beta(0))_- \leq 1- \alpha, \\
\max \{ ||H^{tan}_{V, g}||_{L^\infty(B_1)}, |g - g_{eucl}|_{C^1(B_1)}, |D\beta|_{C^0(B_1)} \} \leq \delta.
\end{gather}
Then:
\begin{enumerate}
\item \label{lab:cap-1} $V_I := V \llcorner \{ x_1 < 0 \}$ and $V_B := V \llcorner \{ x_1 = 0\}$ both have locally-finite $g$-first variation in $B_\gamma$, and we have the inequalities
\begin{gather}
|\delta_g V_I| \leq c \delta \mu_{V_I} + c \sigma_{V, g} \leq c^2, \label{eqn:cap-first-var-concl1} \\
|\delta_g V_B| \leq c \delta |\mu_{V_B}| + c |H_{\beta, g}| |\mu_{V_B}| + c \sigma_{V, g} \leq c \delta \haus^n_g \llcorner S + c \sigma_{V, g} \leq c^2, \label{eqn:cap-first-var-concl1.2} 
\end{gather}
which imply
$x \mapsto \cos\beta(x) 1_{S}(x)$ is a function of locally-bounded variation (w.r.t. either $g$ or $g_{eucl}$) in $B_\gamma \cap \R^n$, with total variation $\leq c^2$,
and
\begin{equation}\label{eqn:cap-first-var-concl1.5}
\int \phi \dif |\delta_g V_I|  \leq c \int \big(|\nabla_V \phi| + \phi\big) \dif\mu_{V_I} \quad \forall \phi \in C^1_c(B_\gamma) \text{ non-negative};
\end{equation}

\item \label{lab:cap-2} we have the Ahlfors-regularity
\begin{equation}\label{eqn:cap-first-var-concl2}
c^{-1} r^{n-1} \leq \sigma_{V, g}(B_r(x)) \leq c r^{n-1} \quad \forall x \in \spt \sigma_{V, g} \cap B_\gamma,\,\, 0 < r < 1/4,
\end{equation}
and
\begin{equation}\label{eqn:cap-first-var-concl2.5}
c^{-1} \haus^{n-1}_g \llcorner \spt \sigma_{V, g} \leq \sigma_{V, g} \leq c \haus^{n-1}_g \llcorner \spt\sigma_{V,g} \text{ on } B_\gamma, 
\end{equation}
and for any set $W \subset B_\gamma$ we have the Minkowski estimate
\begin{equation}\label{eqn:cap-first-var-concl2.6}
r^{-2} |B_r(\spt \sigma_{V, g} \cap W)| \leq c \sigma_{V, g}(B_r(W) \cap B_{1/2}) \leq c^2 \quad \forall 0 < r <1/4;
\end{equation}

\item \label{lab:cap-3} we have the Ahlfors-regularity
\begin{equation}\label{eqn:cap-first-var-concl3}
c^{-1} r^n \leq \mu_{V_I}(B_r(x)) \leq c r^n \quad \forall x \in \spt V_I \cap B_\gamma,\,\, 0 < r < 1/4;
\end{equation}

\item \label{lab:cap-4} if $\sigma_{V_I, g}, \sigma_{V_B, g}$ are the generalized $g$-boundary measures associated to $V_I, V_B$, and $\eta_{V_I, g}, \eta_{V_B, g}$ the generalized $g$-conormals, and $\nu_g$ is the outward $g$-unit normal of $\ver \subset (\R^{n+1}, g)$, then in $B_\gamma$ we can write
\begin{equation}\label{eqn:cap-first-var-concl4}
\nu_g d\sigma_{V, g} = \eta_{V_I, g} d\sigma_{V_I, g} + \eta_{V_B, g} d\sigma_{V_B, g}, \quad \sigma_{V_I, g}, \sigma_{V_B, g} \leq c \sigma_{V, g}, 
\end{equation}
with $g( \eta_{V_I, g}, \nu_g) \geq 1/c$ for $\sigma_{V_I, g}$-a.e. and $g(\eta_{V_B, g}, \nu_g) = 0$ for $\sigma_{V_B, g}$-a.e. ;

%\item \label{lab:cap-5} there is a $\theta_B \in \mathrm{BV}(B_\gamma \cap \ver, g) \cap C^{0,1}_{loc}(B_\gamma \cap \ver \setminus \spt \sigma_{V, g})$ so that $\mu_{V_B} = \theta_B \haus^n \llcorner \ver$ and $\nabla^g_{\ver} \theta_B = H_{V, g}^{tan} \theta_B$ at $|\mu_{V_B}|$-a.e.;

\item \label{lab:cap-5} we have
\begin{align}\label{eqn:cap-first-var-concl6}
\spt \sigma_{V, g} \cap B_\gamma = \spt V_I \cap \{ x_1 = 0 \} \cap B_\gamma = \spt \sigma_{V_I, g} \cap B_\gamma 
\end{align}
and
\begin{equation}\label{eqn:cap-first-var-concl6.5}
\del(\spt V_B) \cap B_\gamma \subset \spt \sigma_{V, g} \cup \del  \{ \cos\beta \neq 0 \} ,
\end{equation}
where $\del(\spt V_B)$ and $\del\{\cos\beta\neq0\}$ are the topological boundaries of $\spt V_B$ and $\{\cos\beta\neq0\}$ inside $\ver$, respectively.
\end{enumerate}
\end{theorem}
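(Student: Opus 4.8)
\emph{Strategy.} Everything reduces to the $(n-1)$-dimensional Ahlfors-regularity of $\sigma_{V,g}$; once that is in hand, parts~\eqref{lab:cap-1}--\eqref{lab:cap-5} follow by monotonicity, Theorem~\ref{thm:fb-cap-first-var}, Federer's criterion, and a careful bookkeeping of the conormals. The soft part is the upper bounds: by Theorem~\ref{thm:fb-cap-first-var} and the signed monotonicity (Lemma~\ref{lem:bd-mono}, Proposition~\ref{prop:mono-weak}), hypothesis~\eqref{eqn:cap-first-var-hyp} gives $\Theta_{V_I}(\cdot)\le 1-\alpha/2$ and $\mu_{V_I}(B_r(x))\le c r^n$ on $B_\gamma$, while testing the first variation of $V$ with radial fields cut off at scale $r$ gives $\sigma_{V,g}(B_r(x))\le c r^{n-1}$; these are the upper halves of \eqref{eqn:cap-first-var-concl2} and \eqref{eqn:cap-first-var-concl3}.

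\emph{Rigidity and the dichotomy.} The crux is a ``baby Allard'' \textbf{boundary rigidity}: if $W=W\llcorner\{x_1<0\}$ is a varifold in $B_1\cap\{x_1<0\}$ with small tangential mean curvature and metric error, with $\theta_W\ge1$ a.e.\ but $\Theta_{W}(x)<1$ at every $x\in\spt W$, and with tilt-excess relative to $\{x_1=0\}$ at scale $1$ less than $\eps(n)$, then $W\llcorner B_{1/2}=0$. One proves this by a Lipschitz/harmonic approximation: off a set of small mass, $\spt W\cap B_{1/2}$ is a small-Lipschitz graph over part of $\{x_1=0\}$ lying in $\{x_1<0\}$, but an interior point of such a graph has integer density $\ge1$, contradicting $\Theta_W<1$; hence the graphical part is empty and an iteration kills the rest. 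From rigidity one extracts the \textbf{dichotomy}: there is $c(n,\alpha)$ so that for $\xi\in\{x_1=0\}\cap B_\gamma$ and $0<r<1/4$, if $\sigma_{V,g}(B_r(\xi))<cr^{n-1}$ then $\mu_{V_I}(B_{cr}(\xi))=0$ --- indeed smallness of $\sigma_{V,g}(B_r(\xi))$ forces the rescaled $V_I$ to be close in excess to $\{x_1=0\}$ with density $<1$ (via the monotonicity formula and the density bound, using that a half-plane meeting $\{x_1=0\}$ at a nonzero angle has nonzero boundary measure), so rigidity applies. Feeding $\xi\in\spt V_I\cap\{x_1=0\}$ into the contrapositive gives the lower bound in \eqref{eqn:cap-first-var-concl2} and $\spt V_I\cap\{x_1=0\}\subseteq\spt\sigma_{V,g}$; conversely, if $\mu_{V_I}$ vanishes near $\xi$ then there the capillary identity makes $\delta_gV=\delta_gV_B$ absolutely continuous w.r.t.\ $|\mu_{V_B}|$ (density $H_{\R^n,g}+H_{\beta,g}\in L^1$), so $\sigma_{V,g}=0$ there; hence $\spt\sigma_{V,g}\cap B_\gamma=\spt V_I\cap\{x_1=0\}\cap B_\gamma$. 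The remaining statements \eqref{eqn:cap-first-var-concl2.5}, \eqref{eqn:cap-first-var-concl2.6} and the lower bound in \eqref{eqn:cap-first-var-concl3} are standard Vitali/covering consequences of the two-sided bounds on $\sigma_{V,g}$ together with the interior monotonicity of $V_I$.

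\emph{Splitting the first variation.} Since $\spt\sigma_{V,g}\cap B_\gamma$ has finite $\haus^{n-1}$-measure, I show $x\mapsto\cos\beta(x)1_S(x)\in BV(B_\gamma\cap\R^n)$: on the open set $\{\cos\beta\ne0\}$, any $\xi\notin\spt V_I$ satisfies $\int_S\mdiv_{\R^n,g}(X)d\haus^n_g=0$ for tangential $X$ near $\xi$ (rescale the capillary identity by $1/\cos\beta$), forcing $1_S$ to be locally a.e.\ constant near $\xi$; hence $\partial^*S\cap\{\cos\beta\ne0\}\subseteq\spt V_I\cap\{x_1=0\}=\spt\sigma_{V,g}$ has finite $\haus^{n-1}$-measure, so by Federer's criterion $1_S$ is $BV_{loc}$ on $\{\cos\beta\ne0\}$ and extends by $0$ across $\partial\{\cos\beta\ne0\}$ with no jump (as $\cos\beta\to0$ there). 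It follows that $V_B=\cos\beta[S]_g$ has locally finite $g$-first variation (the tangential part of $\delta_gV_B$ is bounded by the $BV$-norm, the normal part by $c\delta$ since $\nu_g$ is $g$-normal to $\R^n$), hence so does $\delta_gV_I=\delta_gV-\delta_gV_B$ by Theorem~\ref{thm:fb-cap-first-var}; splitting over $\{x_1<0\}$ (where only $H^{tan}$ enters) and $\{x_1=0\}$ yields \eqref{eqn:cap-first-var-concl1}, \eqref{eqn:cap-first-var-concl1.2}. Matching the two decompositions of $\delta_gV\llcorner\{x_1=0\}$ gives $\nu_g\,d\sigma_{V,g}=\eta_{V_I,g}\,d\sigma_{V_I,g}+\eta_{V_B,g}\,d\sigma_{V_B,g}$; as $\sigma_{V_B,g}$ sees only tangential fields, $g(\eta_{V_B,g},\nu_g)=0$, so $\sigma_{V,g}=g(\eta_{V_I,g},\nu_g)\sigma_{V_I,g}$ with $g(\eta_{V_I,g},\nu_g)\in[0,1]$. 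Since $\sigma_{V_B,g}$ is carried by $\partial^*S\cap\{\cos\beta\ne0\}\subseteq\spt\sigma_{V,g}$, Ahlfors-regularity gives $\sigma_{V_B,g}\le c\sigma_{V,g}$, whence $\sigma_{V_I,g}\le\sigma_{V,g}+\sigma_{V_B,g}\le c\sigma_{V,g}$ and therefore $g(\eta_{V_I,g},\nu_g)=d\sigma_{V,g}/d\sigma_{V_I,g}\in[1/c,1]$; this is \eqref{eqn:cap-first-var-concl4}, and combined with $\sigma_{V,g}\le\sigma_{V_I,g}$ it gives $\spt\sigma_{V_I,g}=\spt\sigma_{V,g}=\spt V_I\cap\{x_1=0\}$, i.e.\ \eqref{eqn:cap-first-var-concl6}; testing $\delta_gV_I$ with $\phi e_1$ and using $g(\eta_{V_I,g},\nu_g)\ge1/c$ gives \eqref{eqn:cap-first-var-concl1.5}; and \eqref{eqn:cap-first-var-concl6.5} holds because a point of $\partial(\spt V_B)$ with $\cos\beta\ne0$ near which $V_I=0$ would force $S$ to be locally trivial, contradicting its being a boundary point.

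\emph{Main obstacle.} The genuinely hard step is the boundary rigidity lemma and the extraction of the dichotomy from it. Unlike in Allard's theorem, $V$ is not a multiplicity-one graph: the density bound $\Theta<1$ is exactly what prevents a nonzero flat blow-up, but one must run the Lipschitz approximation and iteration on a free-boundary varifold whose interior and boundary parts are not yet separately controlled --- breaking this circularity is the whole point of the section. Everything after the Ahlfors-regularity of $\sigma_{V,g}$ --- the $BV$ bound on $\cos\beta1_S$, the conormal identities, the covering estimates --- is essentially bookkeeping.
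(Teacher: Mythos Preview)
Your overall architecture is right --- boundary rigidity $\Rightarrow$ dichotomy $\Rightarrow$ Ahlfors-regularity of $\sigma_{V,g}$ $\Rightarrow$ everything else --- and once Ahlfors-regularity is in hand your handling of Items~\ref{lab:cap-2}--\ref{lab:cap-5} is fine. Your Federer-criterion route to the BV bound and hence to Item~\ref{lab:cap-1} is genuinely different from the paper's (which instead writes $\delta_gV_B(X)=\delta_gV_B((1-\phi_\tau)X)+\delta_gV_B(\phi_\tau X)$ for a cutoff $\phi_\tau$ supported in $B_\tau(\spt\sigma_{V,g})$ and uses the Minkowski estimate to control $\tau^{-1}|\mu_{V_B}|(B_\tau(\spt\sigma_{V,g}))\le c\sigma_{V,g}$ before letting $\tau\to0$), but both work. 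However, each of your two key lemmas hides a missing ingredient.

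\emph{Rigidity.} Your argument ``an interior point of the Lipschitz graph has density $\ge1$, contradicting $\Theta_W<1$'' does not close: the hypothesis $\Theta_V(x,r)\le 1-\alpha$ holds only for $x\in\{x_1=0\}$, and there is no a priori tension between $\Theta_V(\xi)\ge1$ at an \emph{interior} graph point $\xi$ and density ratio $\le1-\alpha$ at \emph{boundary} points --- one needs a mechanism to transfer mass estimates across the barrier. The paper's mechanism is a \emph{weighted} interior monotonicity (Lemma~\ref{lem:mono}, with weight $h(x_1)$ vanishing on $\{x_1=0\}$ so no boundary term enters), which yields for interior $\xi$ (Lemma~\ref{lem:lower-tilt})
\[
1\le\Theta_V(\xi)\;\le\;(1+c\beta)\,\Theta_V(0,1)\;+\;C\!\int_{B_1}|\pi_V-\pi_{\R^n}|^2\,d\mu_V.
\]
Applied at scale $r\sim|\xi_1|/\beta$ centered at $\pi_{\R^n}(\xi)$, this forces tilt-excess $\gtrsim1$ at that scale around every $\xi\in\spt V_I$; a Vitali covering then gives $\mu_{V_I}(B_{\beta^2})\le cE(0,1)$, and iterating with the Caccioppoli inequality $E\lesssim\int x_1^2\,d\mu_V+\delta$ (Lemma~\ref{lem:w12}) drives $E\to0$ at all scales. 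No Lipschitz approximation is used.

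\emph{Dichotomy.} You say small $\sigma_{V,g}(B_r)$ forces small tilt-excess ``using that a half-plane at nonzero angle has nonzero boundary measure'', but that fact only helps after passing to a limit, and separate compactness of the $V_{I,i}$ is precisely what the theorem is establishing --- this is the circularity you yourself flag, and your sketch does not actually break it. The paper's non-circular bridge is a quantitative \emph{trace identity} (Lemma~\ref{lem:sigma-lower}): testing $\delta_g V$ with $\phi(|x|)e_1$ and estimating $\mdiv_V(e_1)$ gives
\[
\sigma_{V,g}(B_1)\;\ge\;c^{-1}\!\int_{B_{1/2}}|\pi_V-\pi_{\R^n}|^2\,d\mu_{V_I}\;-\;\Big(\int_{B_1\setminus B_{1/2}}\tfrac{|\pi_V^\perp x|^2}{|x|^{n+2}}\,d\mu_{V_I}\Big)^{1/2}-c\delta.
\]
The subtracted term is a density-drop from monotonicity; a pigeonhole on the monotone quantity (Theorem~\ref{thm:sigma-dichotomy}) produces a scale $r'\in[r/K,r]$ where it is $\le\delta$, and at that scale small $\sigma$ genuinely implies small tilt-excess, so rigidity applies to the \emph{fixed} $V$ --- no limit needed.
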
 

\begin{remark}\label{rem:S-perimeter}
If $|\cos\beta| \geq b > 0$ on some open set $U \subset B_\gamma \cap \R^n$, then Theorem \ref{thm:cap-first-var} implies $S$ is a set of finite perimeter in $U$, with respect to either the metric $g$ or $g_{eucl}$.  This is because
\begin{align}
\int_S \mdiv_{g_{eucl}}(X) d\haus^n 
&= \int_S \mdiv_g(X/\sqrt{g}) d\haus^n_g \\
%\int_S \mdiv_{g_{eucl}}(X/\cos\beta) \cos\beta d\haus^n - \int_S  X \cdot D (\sin\beta)/\cos\beta d\haus^n \\
%&= \int_S \mdiv_{g} \left( \frac{X}{\sqrt{g} \cos\beta} \right) \cos\beta d\haus^n_g - \int_S X \cdot D(\sin\beta)/\cos\beta d\haus^n \\
&= \delta_g V_B\left(\frac{X}{\sqrt{g} \cos\beta}\right) - \int_S X \cdot D(\sin\beta)/\cos\beta d\haus^n \\
&\leq c(n, \alpha, b)|X|_{C^0}
\end{align}
for any $X \in C^1_c(U, \R^n)$, where $\sqrt{g} = \sqrt{\det(g_{ij})}$ is the volume form of $g|_{\R^n}$.  So we also have both the $g$- and $g_{eucl}$-perimeters of $S$ in $U$ are bounded by $c(n, \alpha, b)$.
\end{remark}

%\begin{remark}
%REDO THIS REMARK If $V = V^{(\theta)}$, one can take $a = \max \{ 0, -\cos\theta\}$, and then
%\[
%\Theta_{V^{(\theta)}}(0, 1) + a = (1+\cos\theta)/2 + \max \{ 0, -\cos\theta \} = (1+|\cos\theta|)/2.
%\]
%\end{remark}

\begin{remark}\label{rem:sigma-lower}
The hypotheses of Theorem \ref{thm:cap-first-var} imply
\begin{equation}\label{eqn:sigma-lower-rem}
|\mu_V|(B_1) \leq \omega_n \Theta_V(0, 1) + ( (\cos\beta(0))_- + \delta)\haus^n_g(B_1 \cap \ver) \leq \omega_n(1-\alpha/2)
\end{equation}
taking $\delta(n, \alpha)$ small.  However we note that a condition like $\Theta_V(0, 1) + (\cos\beta(0))_- \leq 1- \alpha$ is in general stronger than an inequality like \eqref{eqn:sigma-lower-rem}.
\end{remark}

\begin{remark}
The constant $c$ necessarily $\to \infty$ as $\alpha \to 0$, and Theorem \ref{thm:cap-first-var} can fail if $\alpha = 0$.  See Remark \ref{rem:teaser1-sharp}.  %For example, consider the free-boundary minimal surfaces in $B_1 \subset \R^3$ of \cite[Corollary 1.4]{stern-karpukhin}, which converge as varifolds to $[S^2]$ and whose boundary measures converges to $2[S^2]$.
\end{remark}

%\begin{remark}
%If $\theta_B \geq b > 0$ at $|\mu_{V_B}|$-a.e. $x \in B_\gamma$, then the set $\{ \theta_B > 0 \}$ has locally-finite perimeter in $B_\gamma$.
%\end{remark}

%\begin{remark}
%Using the results in \cite{demasi}, point \ref{lab:cap-2} of Theorem \ref{thm:cap-first-var} implies that $\spt \sigma_{V, g}$ and $\sigma_{V, g}$ are $(n-1)$-rectifiable in $B_\gamma$.
%\end{remark}

\begin{proof}[Proof of Theorem \ref{thm:cap-first-var} given Sections \ref{sec:rigidity}, \ref{sec:sigma-ahlfors}]
Let us first note that by taking $\delta(\alpha)$ small, we can find an $a \in [0, 1]$ so that
\[
\inf_{B_1} \cos\beta(x) \geq -a, \quad \Theta_V(0, 1) + a \leq 1 - \alpha/2.
\]

We start with \hyperref[lab:cap-2]{Item 2}.  Ensure $\gamma$ is no larger than the $\gamma(n, \alpha/2)$ from Theorem \ref{thm:sigma-dichotomy}, then from this same theorem we have Ahlfors-regularity \eqref{eqn:cap-first-var-concl2}.  By standard covering techniques, \eqref{eqn:cap-first-var-concl2} implies
\[
\sigma_{V, g}(A) /c(n, \alpha) \leq \haus^{n-1}_g(\spt\sigma_{V,g} \cap A) \leq c(n, \alpha)\sigma_{V, g}(A)
\]
for all Borel $A \subset B_\gamma$, which implies \eqref{eqn:cap-first-var-concl2.5}.

Given $W \subset B_\gamma$, choose a maximal $r/2$-net $\{x_i\}_i$ in $W \cap \spt \sigma_{V, g}$, so that the balls $\{ B_r(x_i)\}_i$ cover $W \cap \spt \sigma_{V, g}$ and the $r/2$-balls are disjoint.  For $r \leq 1/4$, we can compute using \eqref{eqn:cap-first-var-concl2} and Lemma \ref{lem:sigma-lower}:
\begin{align}
r^{-2} |B_r(\spt \sigma_{V, g} \cap W)| 
&\leq r^{-2} \sum_i |B_{2r}(x_i)| \\
&\leq c \sum_i (r/2)^{n-1} \\
&\leq c \sum_i \sigma_{V, g}(B_{r/2}(x_i)) \\
&\leq c \sigma_{V, g}(B_r(W) \cap B_{1/2}) \\
&\leq c^2,
\end{align}
which gives \eqref{eqn:cap-first-var-concl2.6}.

To continue we shall also require the first equality of \eqref{eqn:cap-first-var-concl6}, i.e. that
\begin{equation}\label{eqn:cap-first-var-0.5}
\sigma_{V, g} \cap B_\gamma = \spt V_I \cap \R^n \cap B_\gamma.
\end{equation}
The inclusion $\spt \sigma_{V, g} \subset \spt V_I \cap \R^n$ follows directly from Remark \ref{rem:no-V_I}.  On the other hand, if $\xi \in B_\gamma \setminus \spt \sigma_{V, g}$, then by Theorem \ref{thm:sigma-dichotomy} we must have $V_I \llcorner B_r(\xi) = 0$ for some $r > 0$, and hence $\spt V_I \cap \ver \subset \spt \sigma_{V, g}$ in $B_\gamma$.

We can now prove \hyperref[lab:cap-1]{Item 1}.  By \eqref{eqn:cap-first-var-0.5} and Theorem \ref{thm:fb-cap-first-var} we have that
\[
\delta_g V_B(X) = -\int g(H_{\beta, g} + H_{\R^n, g} , X) d\mu_{V_B} \quad \forall X \in C^1_c(B_\gamma \setminus  \spt\sigma_{V, g}).
\]

Fix an open set $W \subset B_\gamma $, and $0 < \tau < 1/4$.  By e.g. mollifying the Euclidean distance function we can find a $\phi_\tau \in C^1_c(\R^{n+1}, \R)$ satisfying
\begin{gather}
\phi_\tau \equiv 1 \text{ on } B_{\tau/2}(\spt \sigma_{V, g} \cap W), \quad \phi_\tau \equiv 0 \text{ outside } B_\tau(\spt \sigma_{V, g} \cap W), \\
|D\phi_\tau| \leq c(n)/\tau.
\end{gather}
By the monotonicity of Lemma \ref{lem:bd-mono}, for every $\xi \in B_\gamma \cap \ver$ we have $\Theta_V(\xi, r) \leq c(n)$, and hence
\begin{equation}\label{eqn:cap-first-var-1}
|\mu_V|(B_r(\xi)) \leq \omega_n \Theta_V(\xi, r) r^n + c(n) a r^n \leq c(n) r^n.
\end{equation}
Therefore, by choosing a maximal $\tau/2$-net $\{ x_i \}_i$ in $\spt \sigma_{V, g} \cap W$, we can bound
\begin{align}
\tau^{-1} |\mu_{V_B}|(B_\tau(\spt \sigma_{V, g} \cap W)) &\leq \tau^{-1} \sum_i c(n) \tau^n \\
&\leq c(n) \tau^{-2} |B_\tau(\spt \sigma_{V, g} \cap W)| \\
&\leq c(n, \alpha) \sigma_{V, g}(B_\tau(W)). \label{eqn:cap-first-var-2}
\end{align}

Take $X \in C^1_c(W, \R^{n+1})$, and then decompose
\begin{align}
\delta_g V_B(X) 
&= \delta_g V_B( (1-\phi_\tau) X) + \delta_g V_B (\phi_\tau X) \\
&= -\int  (1-\phi_\tau) g(H_{\beta, g} + H_{R^n, g}, X) d\mu_{V_B} + \int \phi_\tau \mdiv_{\ver, g}(X) d\mu_{V_B} \\
&\quad \quad + \int g(\nabla^g_V \phi_\tau, X) d\mu_{V_B}.
\end{align}
Using \eqref{eqn:cap-first-var-2} we can estimate the last term by
\begin{align}
\int g(\nabla^g_V \phi_\tau, X) d\mu_{V_B}
&\leq c(n) \int |D\phi_\tau| |X| d|\mu_{V_B}| \\
&\leq c(n) |X|_{C^0} \tau^{-1} |\mu_{V_B}|(B_\tau(\spt \sigma_{V, g} \cap W)) \\
&\leq c(n, \alpha) |X|_{C^0} \sigma_{V, g}(B_\tau(W)).
\end{align}
Therefore, taking $\tau \to 0$ and noting that $\haus^n(\spt \sigma_{V, g} \cap B_\gamma) = 0$,we get
\begin{align}
\delta_g V_B(X) &\leq c(n) |X|_{C^0} ||H_{\beta, g} + H_{\R^n, g}||_{L^1(W; |\mu_{V_B}|)} + c(n, \alpha) |X|_{C^0} \sigma(\overline{W})
\end{align}

So $V_B$ has locally-finite $g$-first variation in $B_\gamma$, and for every open $W \subset B_\gamma$ we have $|\delta_g V_B|(W) \leq c(n) \delta |\mu_{V_B}|(W) + c(n, \alpha) \sigma_{V, g}(\overline{W})$.  Since $|\delta_g V_B|$, $|\mu_{V_B}|$, $|H_{\beta, g}\mu_{V_B}| \leq c \delta \haus^n \llcorner S$, $\sigma_{V, g}$ are Radon measures in $B_\gamma$, we deduce
\begin{align}
|\delta_g V_B|(W) &\leq c(n)\int_W (\delta + |H_{\beta, g}|) d|\mu_{V_B}| + c(n, \alpha) \sigma_{V, g}(W) \\
&\leq c(n) \delta \haus^n_g(S \cap W) + c(n, \alpha) \sigma_{V, g}(W) \quad \text{ for all Borel $W \subset B_\gamma$.}.
\end{align}
The fact that $x\mapsto\cos\beta(x)1_S$ is a function of locally bounded $g$-variation in $B_\gamma\cap\R^n$ is just a restatement of the bound $|\delta_g V_B| \leq c^2$.  The equivalence between $g$- and $g_{eucl}$-variation follows from the identity
\[
\int_S \mdiv_{g_{eucl}}(X) \cos\beta d\haus^n = \int_S \mdiv_g(X/\sqrt{g}) \cos\beta d\haus^n_g = \delta_g V_B(X/\sqrt{g})
\]
where $\sqrt{g}$ is the volume element of $g|_{\R^n}.$

Similarly, if $X \in C^1_c(B_\gamma \setminus \spt \sigma_{V, g})$ then from \eqref{eqn:cap-first-var-0.5} we have
\[
\delta_g V_I(X) = - \int g(H^{tan}_{V, g}, X) d\mu_{V_I},
\]
and therefore a verbatim argument (with $H^{tan}_{V, g}$ in place of $H_{\beta, g} + H_{\R^n, g}$) shows that $\delta_g V_I$ is locally-finite in $B_\gamma$, with instead the inequality
\[
|\delta_g V_I| \leq c(n) \delta \mu_{V_I} + c(n, \alpha)\sigma_{V, g}.
\]
\eqref{eqn:cap-first-var-concl1.5} then follows from the above and Lemma \ref{lem:sigma-upper}.  This proves Item 1.

\hyperref[lab:cap-3]{Item 3}: Take $x \in \spt V_I \cap B_\gamma$, and let $\xi = \pi_{\ver}(x) \in B_\gamma \cap \ver$.  For any $0 < r < 1/4$ we break into two cases: if $r \geq |x_1|$, we use \eqref{eqn:cap-first-var-1} to estimate
\[
\mu_{V_I}(B_r(x)) \leq |\mu_V|(B_{r+|x_1|}(\xi)) \leq c (r+|x_1|)^n \leq c r^n;
\]
while if $r < |x_1|$ we use the interior monotonicity of Lemma \ref{lem:mono} with \eqref{eqn:cap-first-var-1} to estimate
\[
\mu_{V_I}(B_r(x)) \leq c r^n |x_1|^{-n} \mu_{V_I}(B_{|x_1|}(x)) \leq c r^n |x_1|^{-n} |\mu_V|(B_{2|x_1|}(\xi)) \leq c r^n.
\]
where $c = c(n)$.

For the lower bound, let $\eta(n, \alpha/2)$ be the constant from Theorem \ref{thm:sigma-dichotomy}, and again break into two cases.  If $|x_1| \leq \eta r/4$, then since $V_I \llcorner B_{\eta r/3} \neq 0$ we can estimate using Theorem \ref{thm:sigma-dichotomy} and Lemma \ref{lem:sigma-upper}
\[
\mu_{V_I}(B_r(x)) \geq \mu_{V_I}(B_{2r/3}(\xi)) \geq r \sigma_{V, g}(B_{r/3}(\xi))/c(n) \geq r^n/c(n, \alpha).
\]
Conversely, if $|x_1| > \eta r/3$, then since $V_I$ is integral we have $\Theta_{V_I, g}(x) \geq 1$, and so by interior monotonicity \eqref{eqn:mono-weak-concl} we can estimate
\[
\mu_{V_I}(B_r(x)) \geq \mu_{V_I}(B_{\min\{r, |x_1|\}}(x)) \geq (\min\{ r, |x_1| \})^n/c(n) \geq \eta^n r^n/c(n).
\]
This proves Item 3.

\hyperref[lab:cap-4]{Item 4}: It follows from inequalities \eqref{eqn:cap-first-var-concl1}, \eqref{eqn:cap-first-var-concl1.2} that $\sigma_{V_I, g}, \sigma_{V_B, g}  \leq c \sigma_{V, g}$.  The first equality of \eqref{eqn:cap-first-var-concl4} then follows from the relation $\delta_g V = \delta_g V_I + \delta_g V_B$.  The orthogonality $g(\nu_{V_B, g}, \nu_g) = 0$ follows from the fact that if $X || \nu_g$, then
\[
\delta_g V_B(X) = - \int g(H_{\ver, g}, X) d\mu_{V_B} 
\]
and so $\delta_g V_B \ll |\mu_{V_B}|$ on the space of $X || \nu_g$.  To see the lower bound $g(\eta_{V_I, g}, \nu_g) \geq 1/c$, first note that from \eqref{eqn:cap-first-var-concl4} and the previous sentence we have $d \sigma_{V, g} = g(\eta_{V_I, g}, \nu_g) d\sigma_{V_I, g}$, and then use the Radon-Nikodyn theorem to deduce that for $\sigma_{V_I, g}$-a.e. $x \in B_\gamma$ we have
\begin{align}
g(\eta_{V_I, g}, \nu_g)|_x
&= \lim_{r \to 0} \frac{1}{\sigma_{V_I, g}\Big(\overline{B_r(x)}\Big)} \int_{\overline{B_r(x)}} g(\eta_{V_I}, \nu_g) d\mu_{V_I, g} \\
&=\lim_{r \to 0} \frac{\sigma_{V, g}\Big(\overline{B_r(x)}\Big) }{\sigma_{V_I, g}\Big(\overline{B_r(x)}\Big)} \\
&\geq 1/c.
\end{align}

\hyperref[lab:cap-5]{Item 5}: We already showed the first equality in \eqref{eqn:cap-first-var-concl6}.  The second inequality in \eqref{eqn:cap-first-var-concl6} follows from \eqref{eqn:cap-first-var-concl4}, which in fact implies
\[
c^{-1}\sigma_{V, g} \leq \sigma_{V_I, g} \leq c \sigma_{V, g}.
\]
To see \eqref{eqn:cap-first-var-concl6.5}, note that if $x \not \in \spt \sigma_{V, g} \cup \del \{ \cos\beta \neq 0 \}$, then in some ball $B_r(x) \cap \ver$ we have $\cos\beta \equiv 0$ or $\cos\beta > 0$ or $\cos\beta < 0$.  In the first case $\spt V_B \cap B_r(x) = \emptyset$, while in the second or third cases we have $\spt V_B \supset B_r(x) \cap \ver$, and hence in any of the three cases $x \not \in \del(\spt V_B)$
\end{proof}

\subsection{Boundary rigidity}\label{sec:rigidity}

In this section we consider (positive) $n$-rectifiable varifolds $V \equiv V_I$ that are only defined in the \emph{open} half-space $\{ x_1 < 0 \}$, with finite mass, locally finite $g$-first variation in $\{x_1<0\}$, bounded generalized mean curvature $H_{V,g}$ and zero generalized boundary $\sigma_{V,g}$ (see \eqref{eq:definition_of_H_and_sigma} and the discussion preceding it).  In particular the results in this section are true for $V$ without any kind of boundary condition.  Our key result is Theorem \ref{thm:rigidity}, which can be viewed as a baby Allard-type theorem saying that if the tilt-excess is sufficiently small, then the interior varifold $V_I$ must vanish in some small ball.
In other words, if $V$ is free-boundary in $\{ x_1 \leq 0 \}$ and the tilt excess (w.r.t. $\{x_1 = 0 \}$) is very small, then in a small ball $V$ must be supported in $\{ x_1 = 0 \}$.

\begin{lemma}[Tilt- vs $L^2$-excess]\label{lem:w12}
Let $g$ be a $C^1$ metric on $B_1$, and $V \equiv V_I$ an $n$-rectifiable varifold in $(B_1 \cap \{ x_1 < 0 \} ,g )$ with $\Theta_V(0, 1) < \infty$, locally finite first variation and $\sigma_{V,g}=0$.  Then for any $\beta \in (0, 1)$ we have the bound
\begin{align}
\int_{B_\beta} |\pi_V - \pi_{\ver}|^2 d\mu_V &\leq c \int_{B_1 \setminus B_\beta} |x_1|^2 d\mu_V \\
&\qquad+ c \max\{ ||H_{V, g}||_{L^\infty(B_1)}, |g - g_{eucl}|_{C^1(B_1)}\}\Theta_V(0, 1).
\end{align}
for $c = c(n, \beta, |g|_{C^0(B_1)})$.
\end{lemma}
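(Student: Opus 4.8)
The plan is to reduce the estimate to a Caccioppoli-type (reverse Poincaré) inequality for the \emph{tilt} $|\pi_V(e_1)|$, i.e.\ the length of the tangential part of $e_1$, and then close it by testing the first variation with a vector field that vanishes on $\{x_1=0\}$. First I would record the elementary identity: since $\pi_{\ver}=\mathrm{Id}-e_1\otimes e_1$ and $\pi_V=\mathrm{Id}-\nu_V\otimes\nu_V$ for a $g_{eucl}$-unit normal $\nu_V$ to $T_xM_V$, one has at $\mu_V$-a.e.\ point $|\pi_V-\pi_{\ver}|^2\le 2\big(1-(\nu_V\cdot e_1)^2\big)=2|\pi_V(e_1)|^2$, so it suffices to bound $\int_{B_\beta}|\pi_V(e_1)|^2\,d\mu_V$. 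I would also dispose of the case $\Lambda:=\max\{\|H_{V,g}\|_{L^\infty(B_1)},|g-g_{eucl}|_{C^1(B_1)}\}\ge 1$, in which the left-hand side is trivially $\le 2\mu_V(B_1)=2\omega_n\Theta_V(0,1)\le c\Lambda\Theta_V(0,1)$; so from now on $\Lambda\le 1$.

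Next, fix $\zeta\in C_c^\infty(B_1)$ with $\zeta\equiv 1$ on $B_\beta$, $0\le\zeta\le 1$, $|\nabla\zeta|\le c(\beta)$, and test with $X:=\zeta^2 x_1 e_1$. The point requiring care is that $V$ is defined only on the \emph{open} half-space; but $X$ vanishes on $\{x_1=0\}$, so Remark~\ref{rem:testing_with_vanish_vf} applies (its hypotheses hold since $\sigma_{V,g}=0$ and $\mu_V(B_1)+\|H_{V,g}\|_{L^1(B_1)}\le\omega_n(1+\Lambda)\Theta_V(0,1)<\infty$) and gives $\int\mathrm{div}_{V,g}(X)\,d\mu_V=-\int g(H_{V,g},X)\,d\mu_V$. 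A direct computation yields
\[
\mathrm{div}_{V,g_{eucl}}(X)=\zeta^2|\pi_V(e_1)|^2+2\zeta x_1\,\pi_V(e_1)\cdot\pi_V(\nabla\zeta),
\]
while \eqref{eqn:div} bounds $|\mathrm{div}_{V,g_{eucl}}(X)-\mathrm{div}_{V,g}(X)|$ by $c\big(|Dg|\,|X|+|g-g_{eucl}|\,|D_V X|\big)$, where $|X|\le|x_1|\le 1$ and, from the same computation, $|D_V X|\le c\big(\zeta|\pi_V(e_1)|+\zeta|x_1|\,|\nabla\zeta|\big)$.

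Finally I would move $\int\zeta^2|\pi_V(e_1)|^2\,d\mu_V$ to the left and absorb the errors. The cross term $2\zeta x_1\,\pi_V(e_1)\cdot\pi_V(\nabla\zeta)$ is split by Young's inequality into a small multiple of $\int\zeta^2|\pi_V(e_1)|^2\,d\mu_V$ plus $c(\beta)\int_{B_1\setminus B_\beta}|x_1|^2\,d\mu_V$ (as $\nabla\zeta$ is supported in $B_1\setminus B_\beta$); the term $\int|g(H_{V,g},X)|\,d\mu_V$ is at most $c\Lambda\mu_V(B_1)$; and the $\Lambda$-errors from \eqref{eqn:div}, using the bound on $|D_V X|$ together with $\Lambda\le 1$ and $|x_1|\le\tfrac12(1+|x_1|^2)$, are bounded after another application of Young by $\tfrac14\int\zeta^2|\pi_V(e_1)|^2\,d\mu_V+c\Lambda\mu_V(B_1)+c(\beta)\int_{B_1\setminus B_\beta}|x_1|^2\,d\mu_V$. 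Absorbing the $\int\zeta^2|\pi_V(e_1)|^2\,d\mu_V$ terms into the left-hand side and recalling $\mu_V(B_1)=\omega_n\Theta_V(0,1)$ and $\zeta\equiv 1$ on $B_\beta$ gives exactly the claimed inequality. I do not expect a genuine obstacle: this is a standard reverse-Poincaré argument, and the one thing to get right is choosing a test field vanishing on the barrier $\{x_1=0\}$ so that Remark~\ref{rem:testing_with_vanish_vf} legitimately converts the first-variation term into a bounded mean-curvature contribution; the passage between the $g$- and $g_{eucl}$-divergences via \eqref{eqn:div} is the most tedious but purely mechanical step.
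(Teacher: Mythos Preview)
Your proposal is correct and follows essentially the same route as the paper: both test the first variation with $X=\zeta^2 x_1 e_1$ (the paper writes $\phi(|x|)^2 x_1 e_1$), invoke Remark~\ref{rem:testing_with_vanish_vf} to justify this despite $V$ living only in the open half-space, compare $g$- and $g_{eucl}$-divergences via \eqref{eqn:div}, and absorb the cross term by Young's inequality. The paper's treatment of the metric error is slightly cruder (it simply bounds $|X|+|D_VX|\le c(n,\beta)$ pointwise to get a $c\Lambda$ error), and it does not separate out the case $\Lambda\ge1$, but these are cosmetic differences.
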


\begin{proof}
Choose $\phi \in C^1_c(B_1)$ which $\equiv 1$ on $B_\beta$, and write
$$\Lambda = \max \{ ||H_{V, g}||_{L^\infty(B_1)}, |g - g_{eucl}|_{C^1(B_1)} \}.$$
For the vector field $X = \phi(|x|)^2 x_1 e_1$ we have at $|\mu_V|$-a.e. $x$:
\begin{align}
\dive_{V, g_{eucl}}(X) &= 2\phi \phi' x_1 \frac{x \cdot \pi_V(e_1)}{|x|} + \phi^2 |\pi_V(e_1)|^2  \\
&\geq \frac{1}{2}\phi^2 |\pi_V(e_1)|^2 - 2 (\phi')^2 x_1^2 \\
&= \frac{1}{4}\phi^2 |\pi_V - \pi_{\ver}|^2 - 2(\phi')^2 x_1^2.
\end{align}
On the other hand, recalling \eqref{eqn:div}, we have
\[
\dive_{V, g_{eucl}}(X) \leq \mathrm{div}_{V, g}(X) + c \Lambda 
\]
for $c = c(n, \beta, |g|_{C^0(B_1)})$.

Therefore, noting that $X \equiv 0$ on $\{ x_1 = 0 \}$ and using Remark \ref{rem:testing_with_vanish_vf}, we can plug $X$ into the first variation
% \footnote{Strictly speaking we plug in an approimation $X_i$ to $X$ which is compactly supported in $\{ x_1 < 0 \}$ and then take $X_i \to X$ by the dominated convergence theorem, c.f. the proof of Theorem \ref{thm:fb-first-var}.}
to obtain
\begin{align}
\frac{1}{4} \int \phi^2 |\pi_V - \pi_{\ver}|^2 d\mu_V
&\leq \int -g(H_{V, g} , X) d\mu_V + 2\int (\phi')^2 x_1^2 d\mu_V +  c\Lambda \mu_V(B_1)\\
&\leq c \int_{B_1 \setminus B_\beta} x_1^2 d\mu_V + c \Lambda \mu_V(B_1). \label{eqn:w12-1}
\end{align}
The Lemma then follows by our choice of $\phi$.
\end{proof}

\begin{lemma}[Lower tilt excess control]\label{lem:lower-tilt}
Given $\beta \in (0, 1)$, there are constants $c(n), \delta(n, \beta), C(n, \beta)$ so that the following holds.  Let $g$ be a $C^1$ metric on $B_1$, and let $V \equiv V_I$ be an $n$-rectifiable varifold in $(B_1 \cap \{ x_1 < 0 \} , g)$ with locally finite first variation, $\sigma_{V,g}=0$ and satisfying
\[
\max \{ ||H_{V, g}||_{L^\infty(B_1)}, |g - g_{eucl}|_{C^1(B_1)} \} \leq \delta.
\]
Then for any $\xi \in B_\beta \cap \{ x_1 < -\beta/4\}$ we have the inequality
\[
\Theta_{V, g}(\xi) \leq (1+ c(n) \beta) \Theta_V(0, 1) + C(n, \beta) \int_{B_1} |\pi_V - \pi_{\ver}|^2 d\mu_V.
\]
\end{lemma}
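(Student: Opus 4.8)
The plan is to deduce this from the weighted interior monotonicity of Lemma~\ref{lem:mono} applied at $\xi$, with the weight $h(x)=-x_1$. This $h$ is $C^1$, non-negative on $\{x_1\le 0\}\supset\spt\mu_V$, vanishes on $\{x_1=0\}$, has $|\nabla h|\equiv 1$, and — crucially — satisfies $|\nabla_V h|=|\pi_V(e_1)|=|(\pi_V-\pi_\ver)(e_1)|\le |\pi_V-\pi_\ver|$ since $e_1\perp\ver$ forces $\pi_\ver(e_1)=0$. First I would make the preliminary reduction to $g(\xi)=g_{eucl}$ via a Gram--Schmidt affine change of coordinates as in the proof of Proposition~\ref{prop:mono-weak}: because $|g-g_{eucl}|_{C^1}\le\delta$, this perturbs $\Theta_V(0,1)$, the excess $\int_{B_1}|\pi_V-\pi_\ver|^2\,d\mu_V$, and the point $\xi$ (hence $|\xi|$, $|\xi_1|$) by amounts controlled by $c(n)\delta$, all of which will be absorbed at the end.

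With $\sigma:=1-|\xi|>1-\beta$ (this is the ``$h=0$ on $\{x_1=0\}$'' case of Lemma~\ref{lem:mono}, so $\sigma$ may be taken this large), applying the monotonicity on $(\tau,\sigma)$ and letting $\tau\to0$ — using continuity of $-x_1$, $-\xi_1=|\xi_1|\ge\beta/4>0$, and existence of $\Theta_{V,g}(\xi)$ from Remark~\ref{rem:thetag-vs-theta} — the left-hand side tends to $|\xi_1|\,\omega_n\Theta_{V,g}(\xi)$, and one obtains
\[
|\xi_1|\,\omega_n\Theta_{V,g}(\xi)\;\le\;\frac{(1+c\delta)}{\sigma^n}\int_{B_\sigma(\xi)}(-x_1)\,d\mu_V\;-\;D_\xi\;+\;\int_0^\sigma\frac{(1+c\delta\rho)^{n+1}}{\rho^n}\int_{B_\rho(\xi)}|\pi_V-\pi_\ver|\,d\mu_V\,d\rho,
\]
where $D_\xi:=\int_{B_\sigma(\xi)}(-x_1)\,\dfrac{|\pi_V^\perp(x-\xi)|^2}{|x-\xi|^{n+2}}\,d\mu_V\ge 0$ is the (weighted) monotonicity drop. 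For the leading term I would split $-x_1=|\xi_1|+(\xi_1-x_1)$; since $B_\sigma(\xi)\subset B_1$ and $-x_1\ge0$, the first piece gives $\sigma^{-n}|\xi_1|\,\mu_V(B_1)=(1-|\xi|)^{-n}|\xi_1|\,\omega_n\Theta_V(0,1)$, and $(1-|\xi|)^{-n}\le 1+c(n)\beta$ because $|\xi|<\beta$. Dividing through by $|\xi_1|\,\omega_n$ this produces exactly the main term $(1+c(n)\beta)\Theta_V(0,1)$, and it remains to absorb $D_\xi$, the gradient term, and the correction $\sigma^{-n}|\xi_1|^{-1}\int_{B_\sigma(\xi)}|x_1-\xi_1|\,d\mu_V$ into $c(n)\beta\,\Theta_V(0,1)+C(n,\beta)\int_{B_1}|\pi_V-\pi_\ver|^2\,d\mu_V$.

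For the absorption I would use the pointwise identity $|x_1-\xi_1|=|\pi_\ver^\perp(x-\xi)|\le |\pi_V^\perp(x-\xi)|+|\pi_V-\pi_\ver|\,|x-\xi|$. The ``tilt'' contribution $\sigma\int_{B_1}|\pi_V-\pi_\ver|\,d\mu_V\le\sigma\,(\omega_n\Theta_V(0,1))^{1/2}\big(\int_{B_1}|\pi_V-\pi_\ver|^2d\mu_V\big)^{1/2}$, and likewise the gradient term (after splitting the $\rho$-integral and using $\int_{B_\rho(\xi)}|\pi_V-\pi_\ver|\le\min\{\mu_V(B_\rho(\xi)),\int_{B_1}|\pi_V-\pi_\ver|\}$ together with Hölder), are handled by a weighted Young inequality $ab\le\lambda a^2+\tfrac1{4\lambda}b^2$ with $\lambda=\lambda(n,\beta)$ chosen so the $\Theta_V(0,1)$-coefficient is $\le c(n)\beta$, leaving only $C(n,\beta)$ times the $L^2$-excess. \emph{The main obstacle} is the remaining term $\int_{B_\sigma(\xi)}|\pi_V^\perp(x-\xi)|\,d\mu_V$, equivalently the first moment $\int_{B_\sigma(\xi)}(\xi_1-x_1)\,d\mu_V$: this cannot be bounded by the $L^2$-excess naively, since the height of $\spt V$ over $\{x_1=0\}$ is genuinely \emph{not} controlled by the tilt-excess (a flat disc $\{x_1=-a\}\cap B_1$ at arbitrary height $a$ has zero tilt-excess). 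What rescues the estimate is that the first moment \emph{vanishes for every plane through $\xi$} — by oddness over the symmetric disc $B_\sigma(\xi)\cap P$, and here the choice of the ball $B_\sigma(\xi)$ centered at $\xi$ rather than $B_1$ is essential — so it is second order in the deviation of $V$ from such a plane. Concretely I would keep $D_\xi$ and estimate $\int_{B_\sigma(\xi)}|\pi_V^\perp(x-\xi)|\,d\mu_V$ by Cauchy--Schwarz against the drop measure on the region $B_{|\xi_1|/2}(\xi)$ where $-x_1\ge|\xi_1|/2$, namely $\int_{B_{|\xi_1|/2}(\xi)}|\pi_V^\perp(x-\xi)|\,d\mu_V\le D_\xi^{1/2}\big(\tfrac{2}{|\xi_1|}\int_{B_{|\xi_1|/2}(\xi)}|x-\xi|^{n+2}\,d\mu_V\big)^{1/2}$, while on the outer region I would again use the decomposition $|\pi_V^\perp(x-\xi)|\le|x_1-\xi_1|+|\pi_V-\pi_\ver|\,|x-\xi|$ and a centered Caccioppoli-type bound $\int_{B_\sigma(\xi)}|x_1-\xi_1|^2\,d\mu_V\le C\sigma^{n+2}\big(\int_{B_1}|\pi_V-\pi_\ver|^2d\mu_V+\delta\,\Theta_V(0,1)\big)$ (proved by testing the first variation with $\phi^2(x-\xi)$ and $\phi^2(x_1-\xi_1)e_1$). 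Feeding these back, all drop contributions take the form $\varepsilon D_\xi+C_\varepsilon\int_{B_1}|\pi_V-\pi_\ver|^2\,d\mu_V+c(n)\beta\,\Theta_V(0,1)$ for $\varepsilon=\varepsilon(n,\beta)$ small; absorbing $\varepsilon D_\xi$ into the $-D_\xi$ on the right-hand side and collecting all the $c(n)\delta$- and $c(n)\beta$-multiples of $\Theta_V(0,1)$ into the leading coefficient (enlarging $c(n)$ as needed) yields the claimed inequality.
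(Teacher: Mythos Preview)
There is a genuine gap: the ``centered Caccioppoli-type bound'' $\int_{B_\sigma(\xi)}|x_1-\xi_1|^2\,d\mu_V\le C\sigma^{n+2}(E+\delta\,\Theta_V(0,1))$ is false. Take $g=g_{eucl}$, $H\equiv0$, and $V=[\{x_1=\xi_1\}]+[\{x_1=-a\}]$ for some $0<a<1$ with $a\ne|\xi_1|$; then $E=0=\delta$, but the left side is $\ge(a-|\xi_1|)^2\,\haus^n(\{x_1=-a\}\cap B_\sigma(\xi))>0$. Your own remark that a flat disc at arbitrary height has zero tilt-excess is exactly this phenomenon, and centering at $\xi$ does not help once there is a second sheet. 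The proposed test field $\phi^2(x_1-\xi_1)e_1$ also fails to vanish on $\{x_1=0\}$, so Remark~\ref{rem:testing_with_vanish_vf} does not apply to it. Without this bound, your outer-region step degenerates to the circular pair $|x_1-\xi_1|\le|\pi_V^\perp(x-\xi)|+|\pi_V-\pi_{\ver}|\,|x-\xi|$ and $|\pi_V^\perp(x-\xi)|\le|x_1-\xi_1|+|\pi_V-\pi_{\ver}|\,|x-\xi|$. A second issue is the gradient term $\int_0^\sigma\rho^{-n}\int_{B_\rho(\xi)}|\pi_V-\pi_{\ver}|\,d\mu_V\,d\rho$: any split you make leaves a small-$\rho$ contribution bounded only through $\Theta_V(\xi,\rho)$ for $\rho$ small, and by interior monotonicity this is comparable to $\Theta_{V,g}(\xi)$ itself --- the quantity you are estimating --- with no a priori control by $\Theta_V(0,1)$ (for $n\ge2$, the H\"older route $\rho^{-n}(\mu_V(B_\rho))^{1/2}E^{1/2}$ also leaves a divergent $\int_0\rho^{-n/2}\,d\rho$).

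The paper's proof avoids both problems by a different choice of weight. Instead of $h=-x_1$ it uses $h_2(x)=f(x_1/\beta)$ with $f$ a fixed cutoff that is $\equiv1$ on $\{|t|\ge1/8\}$ and $\equiv0$ on $\{|t|\le1/16\}$. Since $|\xi_1|>\beta/4$, one has $h_2\equiv1$ on $B_{\beta/8}(\xi)$, so the \emph{unweighted} interior monotonicity ($h\equiv1$, hence no gradient term) on $(0,\beta/8)$ chains exactly with the $h_2$-weighted monotonicity on $(\beta/8,1-2\beta)$; on that outer range $\rho^{-n}\le(\beta/8)^{-n}=C(n,\beta)$, so the gradient term is directly $\le C(n,\beta)\int_{B_1}|\nabla_V x_1|\,d\mu_V$. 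And because $h_2\le1$ everywhere, the leading term at scale $1-2\beta$ is simply $\le(1-2\beta)^{-n}\mu_V(B_1)=(1+c(n)\beta)\,\omega_n\Theta_V(0,1)$, with no ``correction'' to absorb and no need for any Caccioppoli or drop-absorption argument. A single Cauchy inequality $a\le\beta+\beta^{-1}a^2$ applied to $\int_{B_1}|\pi_V-\pi_{\ver}|\,d\mu_V$ finishes.
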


\begin{proof}
Without loss of generality, we can assume $\delta \leq \beta$.  Choose an affine linear map $L$ satisfying
\begin{equation}\label{eqn:lower-tilt-1}
L(\xi) = \xi, \quad |DL - Id| \leq c(n) \delta, \quad L(\ver) = s e_1 + \ver, \quad (L^* g)(\xi) = g_{eucl},
\end{equation}
for some $|s| \leq c(n)\delta$.  Provided $\delta(n, \beta)$ is small, we have $L^{-1} B_1 \supset B_{1-\beta}$.  Let $V' = (L^{-1})_\sharp V$, and $g' = L^* g$, and then $V'$ is an $n$-rectifiable varifold in $(B_{1-\beta} \cap \{ x_1 < s \}, g')$ satisfying
\[
||H_{V', g'}||_{L^\infty(B_{1-\beta})} \leq \delta, \quad |g' - g_{eucl}|_{C^1(B_{1-\beta})} \leq c(n) \delta,
\]
and of course $g'(\xi) = g_{eucl}$ by construction.

We can use Lemma \ref{lem:mono} first with $h =h_2\coloneqq 1$ (and $\tau = 0, \sigma = \beta/8$) and then with $h = h_2\coloneqq f(x_1/\beta)$ (with $\tau = \beta/8, \sigma = 1-2\beta$), for $f$ a smooth function chosen so that 
\[
f(t) = 1 \text{ for } |t| \geq 1/8, \quad f(t) = 0 \text{ for } |t| < 1/16, \quad |f'(t)| \leq 100,
\]
to obtain the inequalities
\begin{align}
\omega_n \Theta_{V', g'}(\xi)
&\leq \frac{(1+c\delta \beta/8)^{n+1}}{(\beta/8)^n} \mu_{V'}(B_{\beta/8}(\xi)) \\
&= \frac{(1+c\delta \beta/8)^{n+1}}{(\beta/8)^n} \int_{B_{\beta/8}(\xi)} h_2 \dif\mu_{V'} \\
&\leq \frac{(1+c \delta)^{n+1}}{(1-2\beta)^n} \int_{B_{1-2\beta}(\xi)} h_2 \dif\mu_{V'} \\
&\quad+ \int_{\beta/8}^{1-2\beta} \frac{(1+c\delta)^{n+1}}{(\beta/8)^n} \int_{B_\rho(\xi)} |\nabla_{V'} h_2| \dif\mu_{V'} \dif\rho \\
&\leq (1+c \beta) \mu_{V'}(B_{1-\beta}) + C(n,\beta) \int_{B_{1-\beta}} |\nabla_{V'} x_1| \dif\mu_{V'}, \label{eqn:lower-tilt-2}
\end{align}
for $c = c(n)$ (recall we assume $\delta \leq \beta$).  Here we also assume $\delta(n, \beta)$ is small enough so that $\spt f(x_1/\beta) \subset \{ x_1 < s \}$.

Noting that (from \eqref{eqn:lower-tilt-1})
\[
|\nabla_{V'} x_1|(x) = |\pi_{V'}(e_1)|(x) \leq |\pi_{V'} - \pi_{\ver}|(x) \leq |\pi_V - \pi_{\ver}|(Lx) + c(n) \delta,
\]
and recalling that $L^{-1}B_1 \supset B_{1-\beta}$, we can continue our estimate \eqref{eqn:lower-tilt-2} to get
\begin{align}
\omega_n \Theta_{V, g}(\xi)
&\leq (1+2c\beta) \mu_{V}(B_1) + C(n, \beta) \int_{B_1} |\pi_V - \pi_{\ver}| d\mu_V \\
&\leq (1+3c\beta) \mu_V(B_1) + C(n, \beta) \int_{B_1} |\pi_V - \pi_{\ver}|^2 d\mu_V,
\end{align}
having used the Cauchy inequality $a \leq \beta^{-1} a^2 + \beta$.
\end{proof}

\begin{theorem}[Boundary rigidity]\label{thm:rigidity}
Given $\alpha \in (0, 1)$, there are $\eta(n, \alpha) \ll \gamma(n, \alpha)$ so that the following holds.  Let $g$ be a $C^1$ metric on $B_1$, and $V \equiv V_I$ an $n$-rectifiable varifold in $(B_1 \cap \{ x_1 < 0 \} , g )$ with locally finite first variation and $\sigma_{V,g}=0$, satisfying
\begin{gather}
\theta_{V, g}(x) \geq 1 \text{ $\mu_V$-a.e. $x$}, \quad \Theta_V(x, r) \leq 1-\alpha \text{ $\forall x \in \ver \cap B_{1/2}, r \in (0, 1/2)$},  \label{eqn:rigidity-hyp1} \\
\int_{B_1} |\pi_V - \pi_{\ver}|^2 d\mu_V \leq \eta, \label{eqn:rigidity-hyp2} \\
\max \{ ||H_{V, g}||_{L^\infty(B_1)}, |g - g_{eucl}|_{C^1(B_1)} \} \leq \eta. \label{eqn:rigidity-hyp3}
\end{gather}
Then $V \llcorner B_{\gamma} = 0$.
\end{theorem}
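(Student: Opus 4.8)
The proof combines a quantitative ``confinement'' step, based on Lemmas~\ref{lem:w12} and~\ref{lem:lower-tilt}, with a graphical argument near the wall $\{x_1=0\}$. First, a harmless reduction: replacing $V$ by $(\eta_{0,1/2})_\sharp V$ and rescaling the metric accordingly (this costs only a factor $2$ in the final radius), we may assume in addition that $\Theta_V(0,1)\le 1-\alpha<\infty$; then $V$ satisfies the hypotheses of Lemma~\ref{lem:mono} and Corollary~\ref{cor:usc}, so $\Theta_{V,g}$ exists everywhere, equals $\theta_{V,g}\ge 1$ $\mu_V$-a.e., and hence $\Theta_{V,g}(x)\ge 1$ at every $x\in \spt V\cap\{x_1<0\}$. \emph{Step 1 (confinement to a thin slab).} Fix $\beta=\beta(n,\alpha)\in(0,1)$ so small that $c(n)\beta<\alpha/4$, with $c(n)$ the constant of Lemma~\ref{lem:lower-tilt}, so that $(1+c(n)\beta)(1-\alpha)\le 1-\tfrac34\alpha$. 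Given $p\in \spt V\cap B_\gamma$, set $d:=|p_1|>0$, $\xi:=\pi_\ver(p)\in \ver\cap B_\gamma$ and $\lambda:=\bigl(1-\tfrac1{100}\bigr)\tfrac{4d}{\beta}$. Choosing $\gamma=\gamma(n,\alpha)$ small enough that $\lambda<\tfrac12$ and $B_\lambda(\xi)\subset B_1$, the rescaled varifold $\tilde V:=(\eta_{\xi,\lambda})_\sharp V$ satisfies all hypotheses of Lemma~\ref{lem:lower-tilt} on $B_1$ (with $\eta$ playing the role of $\delta$, for $\eta$ small depending on $n,\alpha$), and $\tilde p:=(p-\xi)/\lambda\in B_\beta\cap\{x_1<-\beta/4\}$. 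Since density is scale-invariant, $\Theta_{\tilde V,\tilde g}(\tilde p)=\Theta_{V,g}(p)\ge 1$, while $\Theta_{\tilde V}(0,1)=\Theta_V(\xi,\lambda)\le 1-\alpha$ and the tilt excess rescales as $\int_{B_1}|\pi_{\tilde V}-\pi_\ver|^2\,d\mu_{\tilde V}=\lambda^{-n}\int_{B_\lambda(\xi)}|\pi_V-\pi_\ver|^2\,d\mu_V\le\lambda^{-n}\eta$. Plugging into Lemma~\ref{lem:lower-tilt},
\[
1\ \le\ (1+c(n)\beta)\,\Theta_V(\xi,\lambda)+C(n,\beta)\,\lambda^{-n}\eta\ \le\ 1-\tfrac34\alpha+C(n,\beta)\Bigl(\tfrac{\beta}{4d}\Bigr)^{n}\eta,
\]
so $d\le \rho_*:=C(n,\alpha)\,\eta^{1/n}$. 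Thus $\spt V\cap B_\gamma\subset\{-\rho_*\le x_1<0\}$, with $\rho_*\to 0$ as $\eta\to0$.

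\emph{Step 2 (thin slab $\Rightarrow$ emptiness).} We show $V\llcorner B_{\gamma/2}=0$; renaming $\gamma$, this is the theorem. If not, pick $p\in \spt V\cap B_{\gamma/2}$ and $\xi:=\pi_\ver(p)\in \ver\cap B_{\gamma/2}$. Since $\spt V\cap B_\gamma$ lies within $\rho_*$ of $\ver$, for every $\rho_*<r<\tfrac12\gamma$ the boundary density bound at $\xi$ gives
\[
\mu_V\bigl(\{x:|\pi_\ver(x)-\xi|<r\}\bigr)=\mu_V\bigl(\{x:|\pi_\ver(x)-\xi|<r,\ |x_1|\le\rho_*\}\bigr)\le\mu_V\bigl(B_{r+\rho_*}(\xi)\bigr)\le (1-\alpha)\,\omega_n(r+\rho_*)^n.
\]
On the other hand, the thin-slab confinement makes $V$ genuinely graphical over the fixed-size disk $B^n_{\gamma/2}(\xi)\subset\ver$: the height oscillation of $\spt V$ there is $\le\rho_*\ll\gamma$, the rescaled tilt excess is $\le \eta/\gamma^n\ll1$, $\theta_{V,g}\ge1$, $\|\delta_g V\|$ is small (using $\sigma_{V,g}=0$ and Remark~\ref{rem:testing_with_vanish_vf}), and there is no interior boundary part. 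Hence Allard's Lipschitz (graphical) approximation applies over this disk and realizes $V$, away from an exceptional set of $\haus^n$-measure $o_\eta(1)\gamma^n$, as a finite union of nearly-flat Lipschitz graphs over $B^n_{\gamma/2}(\xi)$ whose multiplicities sum to an a.e. constant $m\ge1$ — the constancy and the bound $m\ge1$ because $\sigma_{V,g}=0$ forces each graphical sheet to extend across the entire disk and $\spt V\cap B_{\gamma/2}\ne\emptyset$. Therefore $\mu_V(\{x:|\pi_\ver(x)-\xi|<\gamma/2\})\ge (1-o_\eta(1))\,m\,\omega_n(\gamma/2)^n\ge (1-o_\eta(1))\,\omega_n(\gamma/2)^n$, and comparing with the previous display at $r=\gamma/2$ yields $1-o_\eta(1)\le (1-\alpha)(1+o_\eta(1))$, which is impossible once $\eta=\eta(n,\alpha)$ is small. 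This proves the theorem.

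The main obstacle is Step 2. One cannot directly invoke Allard's $\varepsilon$-regularity theorem (Theorem~\ref{thm:allard}) at a point of $\spt V$ near $\{x_1=0\}$: at scales small enough to fit inside $\{x_1<0\}$ the surface does not appear flat (the tilt excess at those scales is not small relative to the scale), while at scales large enough to witness flatness the ball necessarily crosses $\{x_1=0\}$, where $V$ carries no a priori boundary information. The confinement of Step~1 is exactly what breaks this impasse: it renders all relevant quantities small on a macroscopic \emph{cylinder} over a disk in $\ver$, so that the cylindrical form of Allard's Lipschitz approximation applies; and $\sigma_{V,g}=0$ then forces the graphical pieces to be full graphs of constant total multiplicity $\ge1$, which makes the mass of $V$ over that cylinder too large to coexist with the boundary density bound $\Theta_V<1-\alpha$. (Only $\theta_{V,g}\ge1$, not integrality, is needed for the Lipschitz approximation; if $V$ is in addition integral — as $V_I$ always is in the capillary setting — one may instead bypass the approximation and apply Theorem~\ref{thm:allard} itself after the confinement step.)
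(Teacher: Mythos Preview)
Your Step~1 is correct and matches the paper's confinement argument. The gap is in Step~2: you cannot apply Allard's Lipschitz approximation (nor Theorem~\ref{thm:allard}) over the cylinder $B^n_{\gamma/2}(\xi)\times\R_{x_1}$, because that cylinder crosses $\{x_1=0\}$ and you have \emph{no} first-variation control there. The hypotheses only give $\sigma_{V,g}=0$ and $\|H_{V,g}\|_{L^\infty}\le\eta$ in the open set $\{x_1<0\}$; viewed as a varifold in the full cylinder, $V$ may have arbitrarily large (even non-Radon) first variation concentrated on $\{x_1=0\}$. Your appeal to Remark~\ref{rem:testing_with_vanish_vf} only licenses testing against vector fields that vanish on $\{x_1=0\}$, which is precisely what Allard's approximation does \emph{not} do. Relatedly, the claim that ``$\sigma_{V,g}=0$ forces each graphical sheet to extend across the entire disk'' is unjustified: a sheet can terminate by having its graph function $x_1=f(x')$ tend to $0$ on a subset of the disk, and such a boundary lies on $\{x_1=0\}$ and contributes nothing to $\sigma_{V,g}$ in $\{x_1<0\}$. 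So neither the constancy of $m$ nor the lower bound $m\ge1$ over the full disk follows.

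The paper's proof never crosses $\{x_1=0\}$. After the same confinement, it runs a Vitali covering (each $\xi\in\spt V\cap B_{\beta^2}$ contributes $B_{r(\xi)}(\xi)$ with $r(\xi)=4|\xi_1|/\beta$, and your Step~1 inequality reads $r(\xi)^n\le c\int_{B_{r(\xi)}(\xi)}|\pi_V-\pi_\ver|^2\,d\mu_V$) to convert the pointwise confinement into a \emph{mass} bound $\Theta_V(x,\beta^2 R)\le cE(x,R)$. Then Lemma~\ref{lem:w12}---whose test field $\phi^2 x_1 e_1$ vanishes on $\{x_1=0\}$, so it is legitimate here via Remark~\ref{rem:testing_with_vanish_vf}---feeds back: the slab confinement bounds $\int x_1^2\,d\mu_V$, yielding the self-improving estimate $E(x,\beta^2 R/2)\le cE(x,R)^{1+2/n}+c\eta$. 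Iterating drives $E$ below $\eta^{1/4}$ at \emph{all} scales, and then reapplying the confinement at the scale $R\sim|\xi_1|$ for any putative $\xi\in\spt V\cap B_{\beta^2/4}$ gives $\beta^2/2\le c\eta^{1/(4n)}$, a contradiction. The essential point is that Lemmas~\ref{lem:w12} and~\ref{lem:lower-tilt} are engineered to use only vector fields vanishing on the wall, so the whole argument stays in the open half-space---exactly where your Allard step would have to leave it.
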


%{\color{blue}
%OR DO WE WNAT TO STRUCTURE THIS THEORME WITH THE $\gamma'$?
%
%need upper bounds in $B_{4\gamma'}$, get conclusion on $B_{\gamma'}$...}

\begin{proof}
For notation convenience let us define $E(x, r)$ to be the tilt-excess of $V$ in $B_r(x)$ w.r.t. $\ver$:
\[
E(x, r) = r^{-n} \int_{B_r(x)} |\pi_V - \pi_{\ver}|^2 d\mu_V.
\]
We will take $\gamma = \beta^2/4$ for $\beta(n, \alpha)$ to be chosen below, but small enough so that $100 \beta < 1-\beta^2$.

Fix a $\xi \in B_{\beta^2} \cap \spt V \cap \{ x_1 < 0 \}$, let $x = \pi_{\ver}(\xi)$ and define $r = 2|\xi_1|/\beta$.  Then by our assumptions we can apply Lemma \ref{lem:lower-tilt} to the rescaled varifold $V_{x, r}$ (being an $n$-rectifiable varifold in $(B_1, g_{x, r})$) defined by
\begin{equation}\label{eqn:rigidity-1}
g_{x, r}(y) = g(x + ry), \quad V_{x, r} = (\eta_{x, r})_\sharp V, \quad \eta_{x, r} : (B_r(x), g) \to (B_1, g_{x, r})
\end{equation}
to obtain the inequalities
\begin{align}
1 &\leq \Theta_{V, g}(\xi) \leq (1+c \beta)\Theta_V(x, r) + C E(x, r) \\
&\leq (1+c(n)\beta)(1-\alpha) + C(n, \beta)E(x, r).
\end{align}
Therefore, if we ensure $\beta(n, \alpha)$ is sufficiently small can obtain
\begin{equation}\label{eqn:rigidity-2}
1 \leq c E(x, r) \leq c(n, \beta) E(\xi, 4|\xi_1|/\beta).
\end{equation}

One immediate consequence of \eqref{eqn:rigidity-2} is that
\begin{equation}\label{eqn:rigidity-3}
|\xi_1| \leq c(n, \beta) E(0, 1)^{1/n}.
\end{equation}
Another immediate consequence is that, if we define $r(\xi) = 4|\xi_1|/\beta$, then we have
\begin{gather}
r(\xi)^n \leq c(n, \beta) \int_{B_{r(\xi)}(\xi)} |\pi_V - \pi_{\ver}|^2 d\mu_V.
\end{gather}
Moreover, by \eqref{eqn:rigidity-hyp1}, it holds
\begin{gather}
\mu_V(B_{5r(\xi)}(\xi)) \leq \mu_V(B_{10r(\xi)}(x)) \leq (1-\alpha) \omega_n 10^n r(\xi)^n.
\end{gather}
(Recall that $x \in B_{\beta^2}$ and $10r(\xi) \leq 40 \beta < \frac{1 - \beta^2}{2}$.)

Now from the set $\{ B_{r(\xi)}(\xi) : \xi \in \spt V \cap B_{\beta^2} \cap \{ x_1 < 0 \} \}$ choose a Vitali subcollection $\{ B_{r_i}(\xi_i) \}_i$ so that the $r_i$-balls are disjoint and the $5r_i$-balls cover the aforementioned set.  Then we can estimate
\begin{align}
\mu_V(B_{\beta^2}) 
&\leq \sum_i \mu_V(B_{5r_i}(\xi_i)) \\
&\leq \sum_i c(n) r_i^n \\
&\leq \sum_i c(n, \beta)\int_{B_{r_i}(\xi_i)} |\pi_V - \pi_{\ver}|^2 d\mu_V \\
&\leq c(n, \beta) E(0, 1).
\end{align}

For any $x \in B_{\beta^2/2} \cap \ver$ and $R \leq 1/2$, the rescaled varifold $V_{x, R}$ and metric $g_{x, R}$ (as in \eqref{eqn:rigidity-1}) also satisfy \eqref{eqn:rigidity-hyp1}, and so applying the above reasoning at scale $B_{R}(x)$ we get
\begin{gather}
\spt V \cap B_{\beta^2 R}(x) \subset \{ R^{-1} |x_1| < c E(x, R)^{1/n} \},  \label{eqn:rigidity-4} \\
\text{ and } \quad \Theta_V(x, \beta^2 R) \leq c E(x, R) . \label{eqn:rigidity-5}
\end{gather}
for $c = c(n, \beta)$.

We claim that, provided $\eta(n, \beta)$ is chosen sufficiently small, we have
\begin{equation}\label{eqn:rigidity-6}
E(x, R) \leq \eta^{1/4} \quad \forall x \in B_{\beta^2/2} \cap \ver, R \leq 1/2.
\end{equation}
By our assumption \eqref{eqn:rigidity-hyp2} we clearly have
\begin{equation}
    E(x,1/2)= {2^n} \int_{B_{1/2}(x)} |\pi_V - \pi_{\ver}|^2 d\mu_V
    \leq
    {2^n} \int_{B_{1}} |\pi_V - \pi_{\ver}|^2 d\mu_V
    \leq
    2^n\eta
    \leq
    \eta^{1/4}
\end{equation}
for $x$ as above, provided that $\eta(n)$ was chosen sufficiently small.  Now for any $x, R$ as in \eqref{eqn:rigidity-6}, we compute using Lemma \ref{lem:w12}, \eqref{eqn:rigidity-4}, \eqref{eqn:rigidity-5}, \eqref{eqn:rigidity-hyp3}:
\begin{align}
E(x, \beta^2 R/2)
&\leq c(n, \beta) R^{-n-2} \int_{B_{\beta^2R}(x)} |x_1|^2 d\mu_V + c(n) \eta \Theta_V(x, \beta^2 R) \\
%&\leq c(n, \beta) E(x, R)^{2/n} R^{-n} \mu_V(B_{\beta^2 R}(x)) + c(n) \delta \\
&\leq c(n, \beta) E(x, R)^{1+2/n} + c(n) \eta.
\end{align}
Therefore if $R_i = (\beta^2/2)^i (1/2)$, taking $\eta(n, \beta)$ small we can prove by induction that
\[
E(x, R_{i+1}) \leq c(n, \beta) \eta^{1/n} E(x, R_i) + c(n) \eta \leq \eta^{1/2} \quad \forall i = 0, 1, \ldots.
\]
Since $E(x, R) \leq 2^n \beta^{-2n} E(x, R_i)$ for some $i$, we deduce \eqref{eqn:rigidity-6} holds, proving our claim.

Now suppose, towards a contradiction, there is a $\xi \in \spt V \cap B_{\beta^2/4} \cap \{ x_1 < 0 \}$.  Let $x = \pi_{\ver}(\xi)$ and $R = 2|\xi_1|/\beta^2 \leq 1/2$, so that $\xi \in B_{\beta^2 R}(x)$.  Then from \eqref{eqn:rigidity-4}, \eqref{eqn:rigidity-6} we get
\[
\beta^2/2 = R^{-1} |\xi_1| \leq c(n, \beta) E(x, R)^{1/n} \leq c(n, \beta) \eta^{1/4n}
\]
which is a contradiction for $\eta(n, \beta)$ chosen sufficiently small.  This proves our theorem.
\end{proof}

\subsection{Ahlfors-regularity of boundary measure} \label{sec:sigma-ahlfors}

Here we establish upper and lower mass control on the boundary measure $\sigma_{V, g}$ of varifolds satisfying the assumptions of Theorem \ref{thm:cap-first-var}.  The upper bound is a trivial trace-type inequality.  The lower bound uses a trace identity coupled with the boundary rigidity of Theorem \ref{thm:rigidity}, to get a dichotomy which says: either $\sigma_{V, g}(B_1)$ has a definite lower bound, or $V_I \llcorner B_\eta = 0$ for some small (a priori) $\eta$.  One catch is that this dichotomy strictly speaking requires $V$ to look very close to conical, i.e. has a small density drop.  By the monotonicity formula $V$ has small density drop at ``most'' scales and so at every scale we can obtain a slightly weaker dichotomy of the same flavor.

\begin{lemma}[Lower bounds on $\sigma_{V,g}$ for nearly-conical $V$]\label{lem:sigma-lower}
Given $\alpha > 0$,
% $a \in [0, 1]$,
there are constants $\delta(n, \alpha) \ll \eta(n, \alpha)$ so that the following holds.  Let $g$ be a $C^1$ metric on $B_1$, $\beta$ a $C^1$ function, and $V$ a $(\beta, S)$-capillary varifold in in $(B_1, g)$.  Suppose
\begin{gather} 
% \inf_{B_1} \cos\beta(x) \geq -a, \label{eqn:sigma-lower-hyp1} \\
\Theta_V(0, 1) + \big(\inf_{B_1}\cos\beta(x)\big)_- \leq 1 - \alpha, \quad \int_{B_1 \setminus B_{1/2}} \frac{|\pi_V^\perp(x)|^2}{|x|^{n+2}} d\mu_V \leq \delta, \label{eqn:sigma-lower-hyp2} \\
\max \{ ||H_{V, g}^{tan}||_{L^\infty(B_1)}, |g - g_{eucl}|_{C^1(B_1)}, |D\beta|_{C^0(B_1)} \} \leq \delta. \label{eqn:sigma-lower-hyp3}
\end{gather}

Then we have the dichotomy: either $\sigma_{V, g}(B_1) \geq \eta$, and/or $V_I \llcorner B_\eta = 0$.  In particular, if $0 \in \spt \sigma_{V, g}$ then $\sigma_{V, g}(B_1) \geq \eta$.
\end{lemma}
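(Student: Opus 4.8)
The plan is to prove the dichotomy directly: assuming $\sigma_{V,g}(B_1) < \eta$ for a small $\eta = \eta(n,\alpha)$ to be fixed, I will deduce $V_I \llcorner B_\eta = 0$; the ``in particular'' clause then follows since any point of $\spt\sigma_{V,g}$ lies in $\overline{\spt V_I}$. After an affine change of coordinates as in the proof of Proposition \ref{prop:mono-weak} I may assume $g(0) = g_{eucl}$, and from the hypotheses and the smallness of $\delta$ I record that $\mu_{V_I}(B_1) \le \omega_n(1-\alpha/2)$ and that $V_I$, viewed as a varifold in the open half-ball $B_1 \cap \{x_1 < 0\}$ with $\sigma_{V_I,g} = 0$ and $\|H_{V_I,g}\|_{L^\infty} \le \delta$ there, is an admissible competitor for Lemma \ref{lem:w12} and Theorem \ref{thm:rigidity}.

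By Lemma \ref{lem:w12} applied to $V_I$ (with $\beta = 1/2$), the tilt-excess satisfies
\[
\int_{B_{1/2}} |\pi_{V_I} - \pi_{\ver}|^2 \dif\mu_{V_I} \le c\int_{B_1 \setminus B_{1/2}} |x_1|^2 \dif\mu_{V_I} + c\delta\,\Theta_{V_I}(0,1),
\]
so the task reduces to making the annular height integral $\int_{B_1 \setminus B_{1/2}} |x_1|^2 \dif\mu_{V_I}$ small and to establishing the uniform density-ratio bound $\Theta_{V_I}(x,r) \le 1-\alpha/4$ for $x \in \ver \cap B_{1/2}$, $r \in (0,1/2)$, needed to run Theorem \ref{thm:rigidity}.

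Both follow from the near-conical hypothesis. By the monotonicity of Lemma \ref{lem:bd-mono} together with the compactness Lemma \ref{lem:soft-conv}, taking $\delta$ small forces $V$ to be varifold-close, on $B_{3/4}$ and (using the near-conical bound) on $B_1 \setminus B_{1/2}$, to a stationary free-boundary varifold cone $C$ with
\[
\Theta_{C_I}(0) \le \Theta_C(0) + (\cos\beta_0)_- \le \Theta_V(0,1) + \big(\inf_{B_1}\cos\beta\big)_- + c\delta < 1 - \alpha/2
\]
and with $\sigma_{C,g}(B_{3/4})$ small. The key structural point --- here is where a trace identity enters, via the fact that the generalized boundary measure of a free-boundary cone is carried by its edge along $\{x_1=0\}$ with the (integer $\ge 1$) multiplicity of the cone --- is that such a $C$ must have $C_I = 0$. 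Indeed, if the free-boundary minimal cone $C_I$ met $\{x_1 = 0\}$ along a non-trivial $(n-1)$-dimensional edge, that edge would be a minimal $(n-1)$-cone, hence of $\haus^{n-1}$-density $\ge 1$, and would contribute at least a dimensional constant to $\sigma_{C,g}(B_{3/4})$, contradicting its smallness; whereas if $C_I$ were non-trivial but touched $\{x_1 = 0\}$ only at the origin it would be a genuine stationary integral $n$-cone, forcing $\Theta_{C_I}(0) \ge 1$ and contradicting the density bound above. Consequently $C_I = 0$; by the uniform mass control near $\{x_1=0\}$ coming from boundary monotonicity this makes $\mu_{V_I}(B_r)$ small for every $r < 1$, hence the annular height integral and $\int_{B_1}|\pi_{V_I}-\pi_{\ver}|^2\dif\mu_{V_I} \le 2\mu_{V_I}(B_1)$ small; and since for a cone the boundary-centred density ratios never exceed $\Theta_C(0)$, closeness to $C$ plus Lemma \ref{lem:bd-mono} gives the uniform density-ratio bound.

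With the tilt-excess of $V_I$ over $B_1$ below $\eta_{rig}$, the density-ratio bound in hand, $V_I$ integral, and $\|H^{tan}_{V,g}\|_{L^\infty}, |g-g_{eucl}|_{C^1} \le \delta \le \eta_{rig}$, Theorem \ref{thm:rigidity} (with parameter $\alpha/4$) applies to $V_I$ and yields $V_I \llcorner B_{\gamma_{rig}} = 0$; setting $\eta := \gamma_{rig}(n,\alpha/4)$ and choosing $\delta \ll \eta$ small enough for all the above finishes the proof. The main obstacle is the classification step: quantifying ``near-conical and $\sigma_{V,g}(B_1)$ small implies $V$ close to a cone with $C_I = 0$'', which couples Lemma \ref{lem:soft-conv} with the structure of low-density stationary free-boundary minimal cones and must be done without invoking $\sigma_{V_I,g}$ --- finiteness of the first variation of $V_I$ up to $\{x_1 = 0\}$ being exactly what Theorem \ref{thm:cap-first-var} will later extract from this lemma --- so one argues throughout with the total varifold $V$; a secondary point is tracking the signed $(\cos\beta)_-$ contribution of $V_B$ so that the genuine density hypothesis, and not a weaker unsigned-mass bound, is what is used.
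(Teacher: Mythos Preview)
Your approach is genuinely different from the paper's, and the difference is instructive. The paper proves the dichotomy by a \emph{direct trace identity}: testing $\delta_g V$ with the vector field $X = \phi(|x|)e_1$ yields, after a short computation using that $\pi_V^\perp(x) = 0$ on $\R^n$ and that $x_1\phi' \ge 0$ on $\spt V$,
\[
\sigma_{V,g}(B_1) \;\gtrsim\; \int_{B_{1/2}} |\pi_{V_I} - \pi_{\R^n}|^2 \,d\mu_{V_I} \;-\; \Big(\int_{B_1\setminus B_{1/2}} \tfrac{|\pi_V^\perp(x)|^2}{|x|^{n+2}}\,d\mu_{V_I}\Big)^{1/2} \;-\; c\delta,
\]
where the middle term is $\le \delta^{1/2}$ by the near-conical hypothesis and the tilt-excess bound follows from \eqref{eqn:w12-1}. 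Combined with Theorem \ref{thm:rigidity} and the density-ratio bound (your first paragraph), this gives the dichotomy immediately, with no compactness step.

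Your compactness route could in principle work, but the classification step has a real gap. Your two-case argument for ``$\sigma_C$ small and density $<1$ forces $C_I = 0$'' is not justified: in Case 1, the assertion that the edge $\spt C_I \cap \R^n$ is a ``minimal $(n-1)$-cone of density $\ge 1$'' has no basis, and more importantly the contribution of a piece of $\spt C_I \cap \R^n$ to $\sigma_C$ scales like $\sin(\text{contact angle})$, which is not a priori bounded below by a dimensional constant. What actually prevents the angle from degenerating is precisely the combination of the density bound $\Theta_C(0) + (\cos\beta_0)_- \le 1-\alpha$ with the lower bound $\theta_{C_B} \ge -(\cos\beta_0)_-$ from Lemma \ref{lem:soft-conv} and the stationarity constraint --- but extracting this requires a quantitative relation between $\sigma_C$ and the tilt of $C_I$, which is exactly the trace identity above applied to $C$. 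In Case 2, showing that $C_I$ extends to a stationary varifold in all of $\R^{n+1}$ requires knowing $\delta C_I$ is locally finite near $\R^n$, which is circular; one can instead argue directly from $\spt C_I \subset \{x_1 \le -c|x|\}$ and the conical structure, but you have not done this. A secondary issue: $C_I = 0$ does not give $\mu_{V_{I,i}}(B_r) \to 0$ (mass could concentrate near $\R^n$), though it \emph{does} give tilt-excess $\to 0$ via varifold convergence of the Grassmannian integrand $\phi(x)|\pi_P - \pi_{\R^n}|^2$, which vanishes on $\R^n$; this is what you actually need.

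In short: once you have the trace identity you can apply it directly to $V$ and skip the compactness; without it, your classification is unproved.
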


\begin{remark}
The dichotomies of Lemma \ref{lem:sigma-lower} and Theorem \ref{thm:sigma-dichotomy} are not necessarily exclusive -- both options could occur (but at least one has to).
\end{remark}

\begin{proof}[Proof of Lemma \ref{lem:sigma-lower}]
We claim that, provided $\gamma(n, \alpha)$, $\delta(n, \alpha)$ are chosen sufficiently small, we have
\[
\Theta_{V_I}(\xi, r) \leq 1-\alpha/2 \quad \forall \xi \in B_{\gamma} \cap \ver, \quad \forall 0 < r < 1/2.
\]

To see this, pick $\xi \in B_\gamma \cap \ver$, and then with $R = (1-c(n)\delta)(1-\gamma)$ for a suitable $c(n)$, we can use monotonicity \eqref{eqn:mono-weak-concl} and Remark \ref{rem:theta-change-ball} to compute for $\gamma(n, \alpha), \delta(n, \alpha)$ small
\begin{align}
\Theta_{V_I}(\xi, r) 
&\leq \Theta_V(\xi, r) + (1+c\delta) a \\
&\leq (1+c \delta) \Theta_V(\xi, R) + (1+c\delta) a \\
&\leq (1+c\delta + c\gamma)(\Theta_V(0, 1) + a) + (c\delta + c\gamma) a \\
&\leq 1 - \alpha/2,
\end{align}
where in the above $a\coloneqq(\inf_{B_1}\cos\beta)_-$ and $c = c(n, \alpha)$.  This proves our claim.

We now proceed with the proof of the Lemma.  Take $\phi(t)$ any smooth decreasing function which $\equiv 1$ on $(-\infty, 1/2]$ and $\equiv 0$ on $[1, \infty)$ and $|\phi'| \leq 4$.  Then by testing the first $g$-variation with the vector field $X = \phi(|x|) e_1$ we can compute
\begin{align}
2\sigma_{V, g}(\phi)
&\geq \int \phi(|x|) g(e_1, \nu_g) \dif\sigma_{V, g} \\
&\geq \int \dive_{V, g_{eucl}}(\phi(|x|) e_1) \dif\mu_V - c \delta |\mu_V|(B_1) \\
&= \int \frac{e_1 \cdot \pi_V(x)}{|x|} \phi' \dif\mu_V - c\delta |\mu_V|(B_1) \\
&\geq \int \frac{x_1}{|x|} \phi' \dif\mu_{V_I} - \int_{B_1 \setminus B_{1/2}} \frac{|\pi_V^\perp(x)|}{|x|^{1+n/2}} \dif\mu_{V_I} - c\delta |\mu_V|(B_1)  \\
&\geq \frac{1}{c} \int x_1^2 \phi'^2 \dif\mu_{V_I} - \left( \int_{B_1\setminus B_{1/2}} \frac{|\pi^\perp_V(x)|^2}{|x|^{n+2}} \dif\mu_{V_I} \right)^{1/2} - c \delta|\mu_V|(B_1) \\
&\geq \frac{1}{c} \int_{B_{1/2}} |\pi_V - \pi_{\ver}|^2 \dif\mu_{V_I} - \left( \int_{B_1 \setminus B_{1/2}} \frac{|\pi^\perp_V(x)|^2}{|x|^{n+2}} \dif\mu_{V_I} \right)^{1/2} - c \delta
\end{align}
where $c = c(n)$.
Let us elaborate.
In the first line we chose $\delta(n)$ small enough so that $g(\nu_g, e_1) \geq 1/2$.
In the second line we used \eqref{eqn:div} and \eqref{eqn:sigma-lower-hyp3}.
In the fourth line we used that  $\pi^\perp_V(x) = 0$ for $|\mu_V|$-a.e. $x_1 = 0$, and $\spt \phi'(|x|) \subset B_1 \setminus B_{1/2}$.  In the fifth line we used that $x_1 \phi' \geq 0$ on $\spt V$ and Holder's inequality ($\mu_{V_I}$ being a non-negative Radon measure).  In the last line we used \eqref{eqn:w12-1} from (the proof of) Lemma \ref{lem:w12}, and Remark \ref{rem:sigma-lower}.

Let $\eta_1, \gamma_1$ be the constants from Theorem \ref{thm:rigidity}, with $\alpha/2$ in place of $\alpha$.  From our Claim we can apply Theorem \ref{thm:rigidity} at scale $B_\gamma$, and therefore if $V_I \llcorner B_{\gamma_1 \gamma} \neq 0$ we have the lower bound $\int_{B_{\gamma}} |\pi_V - \pi_{\ver}|^2 \dif\mu_{V_I} \geq \gamma^n \eta_1$.  Ensuring $\delta(n, \gamma, \eta_1) \equiv \delta(n, \alpha)$ is small we then get the lower bound
\[
\sigma_{V, g}(B_1) \geq \frac{\gamma^n \eta_1}{4c(n)}.
\]
This gives us our dichotomy, and the last assertion follows from Remark \ref{rem:no-V_I}
\end{proof}

\begin{lemma}[Upper bounds on $\sigma_{V,g}$]\label{lem:sigma-upper}
Let $g$ be a $C^1$ metric on $B_1$, $\beta$ a $C^1$ function, and $V$ a $(\beta, S)$-capillary varifold in $(B_1, g)$ satisfying
\[
\max \{ ||H_{V, g}^{tan}||_{L^\infty(B_1)}, |g - g_{eucl}|_{C^1(B_1)} , |D\beta|_{C^0(B_1)}\} \leq \Lambda < \infty.
\]
Then for any $\phi \in C^1_c(B_1)$ non-negative we have
\[
\sigma_{V, g}(\phi) \leq \int\big( |\nabla^g_V \phi| + c(n) \Lambda \phi \big)\dif\mu_{V_I} .
\]
In particular, we have $\sigma_{V, g}(B_{1/2}) \leq c(n)(1+\Lambda) \mu_{V_I}(B_1)$.
\end{lemma}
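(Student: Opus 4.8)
The plan is to plug the vector field $Y = \phi\,\nu_g$ into the first variation formula for $\delta_g V$ provided by Theorem \ref{thm:fb-cap-first-var}, where $\nu_g$ is the $g$-unit normal to $\R^n$ from \eqref{eq:unit_normal_definition}. Since $g$ is $C^1$, the field $\nu_g$ is a genuine $C^1$ vector field on $B_1$ (it is an algebraic expression in the entries of $g$), so $Y \in C^1_c(B_1,\R^{n+1})$ and the substitution is legitimate. Choosing the conormal direction $\nu_g$ (rather than, say, $e_1$) is the crucial point: it makes the boundary term in Theorem \ref{thm:fb-cap-first-var} equal to $\int\phi\,d\sigma_{V,g}$ exactly (because $g(\nu_g,\nu_g)=1$), and — as I explain next — it forces all contributions of the boundary piece $\mu_{V_B}=\cos\beta\,\haus^n_g\llcorner S$ to cancel, which matters because a priori $\mu_{V_B}$ is a signed measure on $S$ unrelated to $\mu_{V_I}$.

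Concretely, I would split the left-hand side as $\int\mdiv_{V,g}(\phi\nu_g)\,d\mu_V=\int\mdiv_{V_I,g}(\phi\nu_g)\,d\mu_{V_I}+\int_S\mdiv_{\R^n,g}(\phi\nu_g)\,\cos\beta\,d\haus^n_g$. On $\R^n$ the tangent plane of $V_B$ is $\R^n$, and since $\nu_g$ is $g$-orthogonal to $\R^n$ one gets $\mdiv_{\R^n,g}(\phi\nu_g)=\phi\,\mdiv_{\R^n,g}(\nu_g)=-\phi\,g(H_{\R^n,g},\nu_g)$, the last equality being the first variation of $[\R^n]_g$ tested against $\psi\nu_g$. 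Comparing with the right-hand side of Theorem \ref{thm:fb-cap-first-var}, the term $-\int g(H_{\R^n,g},\phi\nu_g)\,d\mu_{V_B}$ exactly cancels this, while the term involving $H_{\beta,g}$ vanishes because $H_{\beta,g}=\nabla^g_{\R^n}\sin\beta/\cos\beta$ (see \eqref{eqn:beta-g-vs-g-3}) is tangential to $\R^n$ where $\cos\beta\neq0$ and is zero elsewhere, so $g(H_{\beta,g},\nu_g)=0$ $|\mu_{V_B}|$-a.e. After these cancellations I am left with the identity
\[
\sigma_{V,g}(\phi)=\int\mdiv_{V_I,g}(\phi\nu_g)\,d\mu_{V_I}+\int\phi\,g(H^{tan}_{V,g},\nu_g)\,d\mu_{V_I}.
\]

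To conclude I would expand $\mdiv_{V_I,g}(\phi\nu_g)=g(\nabla^g_V\phi,\nu_g)+\phi\,\mdiv_{V_I,g}(\nu_g)$ and estimate pointwise $\mu_{V_I}$-a.e., using $|\nu_g|_g=1$, $|\mdiv_{V_I,g}(\nu_g)|\le c(n)|\nabla^g\nu_g|\le c(n)\Lambda$ (a routine bound, since $\nu_g$ is built from $g$ and $Dg$ and $|g-g_{eucl}|_{C^1}\le\Lambda$), and $|g(H^{tan}_{V,g},\nu_g)|\le\|H^{tan}_{V,g}\|_{L^\infty}\le\Lambda$; since $\phi\ge0$ this yields $\sigma_{V,g}(\phi)\le\int(|\nabla^g_V\phi|+c(n)\Lambda\phi)\,d\mu_{V_I}$. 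The ``in particular'' statement then follows by taking $\phi$ a standard cutoff equal to $1$ on $B_{1/2}$, supported in $B_1$, with $|D\phi|\le c(n)$, so that $|\nabla^g_V\phi|\le c(n)$ and $\sigma_{V,g}(B_{1/2})\le\sigma_{V,g}(\phi)\le c(n)(1+\Lambda)\mu_{V_I}(B_1)$.

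I do not expect a serious obstacle here; the one genuinely important observation is the cancellation of the $\mu_{V_B}$-terms, which dictates the choice of the test field $\nu_g$, together with checking that $\nu_g\in C^1$ so that it is an admissible test field. The constants $c(n)$ also absorb the harmless dependence on $|g|_{C^0}$, which is itself bounded in terms of $1+\Lambda$.
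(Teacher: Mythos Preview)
Your proposal is correct and follows essentially the same route as the paper: test the first variation with $Y=\phi\,\nu_g$, observe that the $\mu_{V_B}$-contributions cancel (because $\mdiv_{\R^n,g}(\nu_g)=-g(H_{\R^n,g},\nu_g)$ and $H_{\beta,g}$ is tangential to $\R^n$), and estimate the remaining $\mu_{V_I}$-integrand pointwise. The paper's write-up is slightly terser, packaging the cancellation by using the total mean curvature $H_{V,g}$ directly rather than invoking the $\delta_g V$ formula from Theorem~\ref{thm:fb-cap-first-var}, but the substance is identical.
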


\begin{proof}
Let $\nu_g = g^{1j} e_j/\sqrt{g^{11}}$ be a choice of unit normal to $\ver \subset (\R^{n+1}, g)$ pointing in the positive $e_1$ direction.  Recalling that for $|\mu_V|$-a.e. $x \in \ver$,  we have $T_x V = \ver$, and (from Theorem \ref{thm:cap-first-var}) $g(H_{V, g}, \nu_g) = g(H_{\ver, g}, \nu_g) = -\mdiv_{\ver, g}(\nu_g)$, we can compute
\begin{align}
\sigma_{V, g}(\phi)
&= \int \big(\mdiv_{V, g}(\phi \nu_g) + \phi g(H_{V, g}, \nu_g)\big) \dif\mu_V \\
&= \int \big(\mdiv_{V, g}(\phi \nu_g) + \phi g(H_{V, g}^{tan}, \nu_g)\big) \dif\mu_{V_I} \\
&\quad + \int \phi\big(\mdiv_{\ver, g}(\nu_g) + g(H_{\ver, g}, \nu_g)\big) \dif\mu_{V_B} \\
&= \int \big(\mdiv_{V, g}(\phi \nu_g) + \phi g(H_{V, g}^{tan}, \nu_g)\big)  \dif\mu_{V_I} \\
&\leq \int \big(|\nabla^g_V \phi| + c(n) \Lambda \phi\big) \dif \mu_{V_I}. \qedhere
\end{align}
\end{proof}

\begin{theorem}[Mass dichotomy for $\sigma_{V, g}$ and $V_I$]\label{thm:sigma-dichotomy}
Given $\alpha > 0$,
% $a \in [0, 1]$,
there are $\eta(n, \alpha) \ll \delta(n, \alpha) \ll \gamma(n, \alpha) < 1/2$, $c(n, \alpha)$ so that the following holds.  Let $g$ be a $C^1$ metric on $B_1$, $\beta$ a $C^1$ function, and $V$ a $(\beta, S)$-capillary varifold in $(B_1, g)$.  Suppose
\begin{gather}
% \inf_{B_1} \cos\beta(x) \geq -a,
\Theta_V(0, 1) + (\inf_{B_1}\cos\beta(x))_-\leq 1-\alpha, \\
\max \{ ||H^{tan}_{V, g}||_{L^\infty(B_1)}, |g - g_{eucl}|_{C^1(B_1)}, |D\beta|_{C^0(B_1)} \} \leq \delta.
\end{gather}

Then for every $\xi \in B_\gamma \cap \ver$ and every $0 < r < 1/4$, we have the dichotomy:
\begin{align}
&\text{either} \quad r^{n-1}/c \leq \sigma_{V, g}(B_{r}(\xi)) \leq c r^{n-1}, \label{eqn:sigma-dichotomy-concl1} \\
&\text{and/or} \quad V_I \llcorner B_{\eta r}(\xi) = 0; \label{eqn:sigma-dichotomy-concl2}
\end{align}
the upper bound in \eqref{eqn:sigma-dichotomy-concl1} will always hold.  In particular, the bounds \eqref{eqn:sigma-dichotomy-concl1} hold whenever $\xi \in \spt \sigma_{V, g} \cap B_\gamma$.
\end{theorem}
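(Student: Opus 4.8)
The plan is to derive the dichotomy at scale $B_r(\xi)$ from the near-conical dichotomy of Lemma~\ref{lem:sigma-lower}, applied at a well-chosen intermediate scale $\rho$ at which $V$ looks almost conical; the (always-valid) upper bound will come directly from the trace inequality of Lemma~\ref{lem:sigma-upper}. Set $a:=(\inf_{B_1}\cos\beta)_-\ge0$ and fix $\xi\in B_\gamma\cap\ver$. By an affine change of coordinates as in the proof of Proposition~\ref{prop:mono-weak} I would first reduce to the case $g(\xi)=g_{eucl}$: this fixes $\ver$ (since $\xi_1=0$) and distorts Euclidean balls by a factor $1\pm c(n)\delta$, which is absorbed into constants. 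Then, exactly as in the ``Claim'' inside the proof of Lemma~\ref{lem:sigma-lower} — combining Remark~\ref{rem:theta-change-ball} to pass from $\Theta_V(0,1)$ to $\Theta_V(\xi,\cdot)$ with Proposition~\ref{prop:mono-weak} to compare scales — I get, for $\gamma(n,\alpha),\delta(n,\alpha)$ small,
\[
\Theta_V(\xi,s)+a\le 1-\alpha/2\qquad\text{for all }0<s<1/2,
\]
and hence the crude bound $|\mu_V|(B_s(\xi))\le c(n)s^n$ for all such $s$.

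For the \emph{upper bound} (which I claim always holds): plugging into Lemma~\ref{lem:sigma-upper} a cutoff $\phi\in C^1_c(B_{2r}(\xi))$ with $\phi\equiv1$ on $B_r(\xi)$ and $|D\phi|\le c(n)/r$ gives, for every $0<r<1/4$, $\sigma_{V,g}(B_r(\xi))\le\big(c(n)/r+c(n)\delta\big)\mu_{V_I}(B_{2r}(\xi))\le c(n)r^{n-1}$ by the crude mass bound. For the \emph{lower bound}: letting $\tau\to0$ in the boundary monotonicity of Lemma~\ref{lem:bd-mono} (valid since $g(\xi)=g_{eucl}$) and bounding the boundary terms by the density/mass bounds shows the conical defect is controlled,
\[
\int_{B_{1/2}(\xi)}\frac{|\pi_V^\perp(x-\xi)|^2}{|x-\xi|^{n+2}}\,d\mu_{V_I}(x)\le C_1(n).
\]

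Now let $\delta'(n,\alpha/2)\ll\eta'(n,\alpha/2)$ be the constants of Lemma~\ref{lem:sigma-lower} with $\alpha/2$ in place of $\alpha$, pick an integer $J=J(n,\alpha)$ with $J\delta'>C_1(n)$, and set $\hat\eta:=2^{-J}$. For $0<r<1/4$ the $J$ disjoint annuli $B_{r/2^{j}}(\xi)\setminus B_{r/2^{j+1}}(\xi)$, $j=0,\dots,J-1$, all lie in $B_{1/2}(\xi)$, so by pigeonhole some scale $\rho:=r/2^{j^*}\in[2\hat\eta r,\,r]$ has conical defect $\le C_1/J\le\delta'$. Applying Lemma~\ref{lem:sigma-lower} to the rescaled capillary varifold $V_{\xi,\rho}:=(\eta_{\xi,\rho})_\sharp V$ in $(B_1,g_{\xi,\rho})$ — the density-plus-$(\cos\beta)_-$ hypothesis follows from the displayed bound, the conical hypothesis from the choice of $\rho$ (note $\pi_V^\perp(x-\xi)=0$ on $\ver$, so the $\mu_V$- and $\mu_{V_I}$-integrals agree), and $\max\{\|H^{tan}\|_{L^\infty},|g-g_{eucl}|_{C^1},|D\beta|_{C^0}\}$ rescales to $\le 2\delta\le\delta'$ — yields: either $\sigma_{V,g}(B_\rho(\xi))\ge\eta'\rho^{n-1}$, or $V_I\llcorner B_{\eta'\rho}(\xi)=0$. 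Setting $\eta:=\eta'\hat\eta$, in the first case $\sigma_{V,g}(B_r(\xi))\ge\sigma_{V,g}(B_\rho(\xi))\ge\eta'\hat\eta^{\,n-1}r^{n-1}$, which with the upper bound gives \eqref{eqn:sigma-dichotomy-concl1}; in the second case $B_{\eta r}(\xi)\subset B_{\eta'\rho}(\xi)$, so \eqref{eqn:sigma-dichotomy-concl2} holds. Taking $c$ large enough to absorb $\eta',\hat\eta$ and the coordinate-change distortions finishes the dichotomy. The ``in particular'' then follows from $\spt\sigma_{V,g}\subset\spt V_I\cap\ver$ (Remark~\ref{rem:no-V_I}): if $\xi\in\spt\sigma_{V,g}$ then $V_I\llcorner B_{\eta r}(\xi)\ne0$, ruling out \eqref{eqn:sigma-dichotomy-concl2}.

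The main obstacle is the bookkeeping around the pigeonhole step: $\eta$ can only be chosen \emph{after} one knows how many dyadic scales $J$ (equivalently, how small $\hat\eta$) may be needed to locate an almost-conical scale, and $J$ in turn is governed by the threshold $\delta'$ of Lemma~\ref{lem:sigma-lower}; so the chain of dependencies $\eta\ll\delta\ll\gamma$ must be set up in exactly the right order. A secondary care point is ensuring that the affine change of coordinates — needed because the conical telescoping in Lemma~\ref{lem:bd-mono} is genuinely Euclidean — preserves $\ver$ and distorts balls only by factors $1\pm c(n)\delta$, so that all these distortions are harmless.
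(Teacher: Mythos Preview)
Your proposal is correct and follows essentially the same route as the paper: an affine change to make $g(\xi)=g_{eucl}$, the upper bound via the trace inequality (Lemma~\ref{lem:sigma-upper}) plus the crude mass bound, and for the lower bound a pigeonhole on the boundary monotonicity to locate an almost-conical dyadic scale at which Lemma~\ref{lem:sigma-lower} applies. The paper packages the pigeonhole through the increasing function $h(r)=(1+c_0\delta r)^{n+1}\Theta_{V'}(\xi,r)+\omega_n^{-1}c_0\delta r$ (finding $r'\in[r/K,r]$ with $h(r')-h(r'/2)\le\delta$, $K=2^{2/\delta}$) rather than summing the annular conical defects directly, but this is only a cosmetic difference.
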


\begin{proof}
Fix $\xi \in B_\gamma \cap \ver$.  Choose an affine linear map $L$ satisfying
\begin{equation}\label{eqn:sigma-ahlfors-0.1}
L(\xi) = \xi, \quad |DL- \operatorname{Id}| \leq c(n)\delta, \quad L(\ver) = \ver, \quad (L^* g)(\xi) = g_{eucl} .
\end{equation}
Define $V' = (L^{-1})_\sharp V$, $g' = L^* g$, $\beta' = \beta \circ L$.  Then with $R = 1-c(n)\delta$ for some suitable constant $c(n)$, $V'$ is a $(\beta', L^{-1}(B))$-capillary varifold in $(B_R, g')$.  Provided $\delta(n, \alpha)$ is chosen sufficiently small, we can use Remark \ref{rem:theta-change-ball} to show $(V', g')$ satisfies
\begin{gather}
\inf_{B_R} \cos\beta'(x) \geq -a, \label{eqn:sigma-ahlfors-.2} \\
\Theta_{V'}(0, R) + a \leq (1-c(n)\delta)^{-n} (\Theta_V(0, 1) + c(n) \delta a) + a \leq 1 - 3\alpha/4, \label{eqn:sigma-lower-.3} \\
\max \{ ||H_{V', g'}||_{L^\infty(B_R)}, |g' - g_{eucl}|_{C^1(B_R)}, |D\beta'|_{C^0(B_R)} \} \leq c(n)\delta, \label{eqn:sigma-ahlfors-.4}
\end{gather}
where in the above and in the following $a\coloneqq(\inf_{B_1}\cos\beta)_-$.

Let $c_0(n)$ be the constant from Lemma \ref{lem:bd-mono}, and define
\[
h(r) = (1+c_0 \delta r)^{n+1}\Theta_{V'}(\xi, r) + \omega_n^{-1} c_0 \delta r.
\]
Then $h(r)$ is increasing with $r$, and
\begin{equation}\label{eqn:sigma-ahlfors-1}
h(r) - h(s) \geq \frac{1}{\omega_n} \int_{B_r(\xi) \setminus B_s(\xi)} \frac{|\pi_V^\perp(x - \xi)|^2}{|x - \xi|^{n+2}} \dif\mu_{V_I'}(x), 
\end{equation}
and (using Remark \ref{rem:theta-change-ball} and \eqref{eqn:sigma-lower-.3})
\begin{equation}\label{eqn:sigma-ahlfors-2}
h(R - \gamma) - h(0_+) \leq (1+ c\delta + c\gamma)\Theta_V(0, R) + c\delta + c\gamma + a \leq 1,
\end{equation}
for $c = c(n)$.

By a similar computation to Lemma \ref{lem:sigma-lower}, we can estimate for any $r \leq R-\gamma$:
\begin{equation}\label{eqn:sigma-ahlfors-3}
\Theta_V(\xi, r) + a \leq h(r) + (1+c\delta) a \leq h(R - \gamma) + (1+c\delta) a \leq 1 - \alpha/2
\end{equation}
and (hence)
\begin{equation}\label{eqn:sigma-ahlfors-4}
|\mu_V|(B_r(\xi)) \leq \omega_n (\Theta_V(\xi, r) + (1+c\delta )a)r^n \leq \omega_n r^n, 
\end{equation}
where again $c = c(n)$.  Of course in \eqref{eqn:sigma-ahlfors-2}, \eqref{eqn:sigma-ahlfors-3}, \eqref{eqn:sigma-ahlfors-4} we take $\delta(n, \alpha), \gamma(n, \alpha)$ small.

%(provided $\delta(n, \alpha), \gamma(n, \alpha)$ is small). %(1+c_0 \delta r)^{n+1} (\Theta_V(\xi, r) + a) \leq h(r) + (1+c(n)\delta) a \leq 1-\alpha/2

Let $K = 2^{2/\delta}$.  Then for every $r \leq (R-\gamma)/2$, by monotonicity of $h(r)$ and \eqref{eqn:sigma-ahlfors-2} we can find $r' \in [r/K, r]$ so that $h(r') - h(r'/2) \leq \delta$.  From \eqref{eqn:sigma-ahlfors-1}, \eqref{eqn:sigma-ahlfors-3}, \eqref{eqn:sigma-ahlfors-4}, we can therefore apply Lemma \ref{lem:sigma-lower} at scale $B_{r'}(\xi)$ and Lemma \ref{lem:sigma-upper} at scale $B_{2r}(\xi)$ to get:
\begin{align}
&\text{either} \quad r^{n-1}/c(n, \alpha) \leq \sigma_{V', g'}(B_{r}(\xi)) \leq c(n, \alpha) r^{n-1} \\
&\text{and/or} \quad V_I' \llcorner B_{\delta r/K}(\xi) = 0.
\end{align}
(Of course the upper bounds will always hold.)  Our dichotomy then follows by recalling $\sigma_{V, g}(B_r(\xi)) = \sigma_{V', g'}(L B_r(\xi))$ and the bounds \eqref{eqn:sigma-ahlfors-0.1} for $L$.  The very last assertion follows from Remark \ref{rem:no-V_I}.
\end{proof}

\subsection{Improved convergence}\label{sec:conv}

A key consequence Theorem \ref{thm:cap-first-var} is an improved varifold convergence theorem, which effectively states that the interior and boundary pieces converge nicely and independently from each other.

\begin{lemma}[Improved compactness/convergence]\label{lem:conv}
For any $\alpha > 0$, there is a $\gamma(n, \alpha) > 0$ with the following property.  Let $g_i$ be $C^1$ metrics on $B_1$, $\beta_i$ be $C^1$ functions, and $V_i$ be $(\beta_i, S_i)$-capillary varifolds in $(B_1, g_i)$ satisfying
\begin{gather}
\Theta_{V_i}(0, 1) + (\cos\beta_i(0))_- \leq 1 - \alpha,  \label{eqn:conv-hyp1} \\
\max \{ ||H^{tan}_{V_i, g_i}||_{L^\infty(B_1)}, |g_i - g_{eucl}|_{C^1(B_1)}, |D\beta_i|_{C^0(B_1)} \} \to 0. \label{eqn:conv-hyp2}
\end{gather}
Then after passing to a subsequence, there is a $n$-rectifiable varifold $V$ in $(B_1, g_{eucl})$ which is supported in $\{x_1 \leq 0 \}$ and is stationary (i.e. $H_V\equiv0$) with free-boundary in $\{ x_1 = 0 \}$, so that $V_i \to V$ as varifolds in $B_1$.

\emph{Additionally}, $V$ is a $(\beta_0, S_0)$-capillary varifold in $(B_\gamma, g_{eucl})$, for $\beta_0 \in \R$ chosen so that $\cos\beta_i \to \cos\beta_0$, and in $B_\gamma$ we have the following improved convergence:
\begin{enumerate}
\item[A.] $V_{I, i} \to V_I$ and $V_{B, i} \to V_B$ as varifolds; \label{item:conv-a}
\item[B.] $\spt V_{I, i} \to \spt V_I$ in the Hausdorff distance; \label{item:conv-b}
\item[C.] $\sigma_{V_i,g_i} \to \sigma_{V}$ as Radon measures, and $\spt \sigma_{V_i,g_i} \to \spt \sigma_{V}$ in the Hausdorff distance. \label{item:conv-c}
\end{enumerate}
Moreover, we highlight two special cases:
\begin{enumerate}
\item\label{item:limit_is_cone}if $V = V^{(\beta_0)}$, then conclusions A-B-C above in fact hold in any $B_r \Subset B_1$;

\item \label{item:extra_outside_spt_sigma} if $\cos\beta_0 \neq 0$, then for any $\eps > 0$ and $i \gg 1$ (depending on $\eps$), on every connected component $S'$ of $\ver \cap B_{\gamma-\eps} \setminus B_\eps(\spt \sigma_{V})$ we have either
\begin{equation}
\mu_{V_i}  \llcorner S' = \cos\beta_i [S']_{g_i} \quad \text{or}\quad \mu_{V_i} \llcorner S' = 0.
\end{equation}
\end{enumerate}
\end{lemma}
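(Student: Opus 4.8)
The plan is to reduce everything to the two earlier structural results: the soft compactness of Lemma~\ref{lem:soft-conv} (which has no sharp density hypothesis) and the first variation control of Theorem~\ref{thm:cap-first-var} (which does). First I would pass to a subsequence and invoke Lemma~\ref{lem:soft-conv}: this produces $\beta_0$ with $\cos\beta_i\to\cos\beta_0$ (uniformly on $B_1$, since also $|D\beta_i|_{C^0}\to0$) and an $n$-rectifiable $V$ supported in $\{x_1\le0\}$, stationary with free boundary in $\ver$, with $V_I$ integral and $\theta_V\ge-(\cos\beta_0)_-$ $|\mu_V|$-a.e.\ on $\ver$, such that $V_i\to V$ as varifolds, $\sigma_{V_i,g_i}\to\sigma_V$ as Radon measures, and $\mu_{V_i}(B_r(x))\to\mu_V(B_r(x))$ for every $x$ and a.e.\ $r$ — this is exactly conclusion~1 and the measure part of~C. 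Since $\delta_i:=\max\{\|H^{tan}_{V_i,g_i}\|_{L^\infty},|g_i-g_{eucl}|_{C^1},|D\beta_i|_{C^0}\}\to0$, for $i\gg1$ each $V_i$ satisfies the hypotheses of Theorem~\ref{thm:cap-first-var}; fixing $\gamma=\gamma(n,\alpha)$ small enough I get for all $i\gg1$ that $V_{I,i}$ and $V_{B,i}$ have locally finite $g_i$-first variation in $B_\gamma$ with uniform bounds, $\mu_{V_{I,i}}$ resp.\ $\sigma_{V_i,g_i}$ are uniformly $n$- resp.\ $(n-1)$-Ahlfors regular in $B_\gamma$, and $\cos\beta_i\,1_{S_i}$ has uniformly bounded variation in $B_\gamma\cap\ver$. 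I expect the real obstacle to be the next step (conclusion~A): ruling out that interior mass of $V_{I,i}$ escapes onto the barrier $\ver$ in the limit — precisely the degeneracy that hypothesis~\eqref{eqn:conv-hyp1} is designed to prevent (cf.\ Remark~\ref{rem:teaser1-sharp}).

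For conclusion~A: the $V_{I,i}$ are integral with $\theta_{V_{I,i}}\ge1$, uniformly bounded mass, and uniformly bounded $g_i$-first variation in $B_\gamma$, so Allard's compactness (Theorem~\ref{thm:allard-compact}) shows that every subsequence has a further subsequence with $V_{I,i}\to W$, $W$ integral in $(B_\gamma,g_{eucl})$, $\theta_W\ge1$. Restricting varifold convergence to the open set $\{x_1<0\}$ (where $V_{I,i}=V_i$ and $V=V_I$) gives $W\llcorner\{x_1<0\}=V_I$. Meanwhile, exactly as in the proof of Lemma~\ref{lem:sigma-lower} (the monotonicity of Proposition~\ref{prop:mono-weak} together with~\eqref{eqn:conv-hyp1}), for $\gamma$ small and $i\gg1$ one has the uniform bound $\Theta_{V_{I,i}}(\xi,r)\le1-\alpha/2$ for all $\xi\in B_\gamma\cap\ver$, $0<r<1/2$, which passes to $\Theta_W(\xi,r)\le1-\alpha/2<1$. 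Since $W$ is integral and $\ver$ is an $n$-plane, $W\llcorner\ver$ having positive mass would force a point of $\ver\cap B_\gamma$ with $\Theta_W(\xi)\ge1$, a contradiction; hence $W\llcorner\ver=0$ and $W=V_I$ in $B_\gamma$. As $V_I$ is independent of the subsequence, $V_{I,i}\to V_I$ in $B_\gamma$ along the full sequence, and therefore $V_{B,i}=V_i-V_{I,i}\to V-V_I=V_B$. To see $V$ is $(\beta_0,S_0)$-capillary in $B_\gamma$: if $\cos\beta_0\ne0$ then $1_{S_i}=(\cos\beta_i)^{-1}(\cos\beta_i\,1_{S_i})$ has uniformly bounded variation in $B_\gamma\cap\ver$ for $i\gg1$, so (subsequence) $1_{S_i}\to1_{S_0}$ in $L^1_{loc}$ for a set $S_0$ of locally finite perimeter, giving $V_B=\cos\beta_0[S_0]_{g_{eucl}}$; if $\cos\beta_0=0$ then $|\mu_{V_{B,i}}|\le|\cos\beta_i|\haus^n_{g_i}\llcorner S_i\to0$, so $V_B=0$. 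Either way $V\llcorner\ver$ has the required form, and since $\beta_0$ is constant, $\delta_{\beta_0}V(X)=\delta V(X)=0$ for tangential $X$, so $V$ is a stationary $(\beta_0,S_0)$-capillary varifold in $(B_\gamma,g_{eucl})$.

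Conclusions~B and the support part of~C are a standard consequence of the uniform Ahlfors regularity. If $x_i\in\spt V_{I,i}$ with $x_i\to x\in B_\gamma$, the uniform lower bound $\mu_{V_{I,i}}(B_\rho(x_i))\ge c^{-1}\rho^n$ (from~\eqref{eqn:cap-first-var-concl3}) gives $\mu_{V_I}(\overline{B_\rho(x)})\ge c^{-1}\rho^n>0$ for all small $\rho$, so $x\in\spt V_I$; conversely, if $x\in\spt V_I\cap B_\gamma$ then $\mu_{V_I}(B_\rho(x))>0$, and lower semicontinuity forces $\spt V_{I,i}\cap B_\rho(x)\ne\emptyset$ for $i\gg1$ — together this gives Hausdorff convergence $\spt V_{I,i}\to\spt V_I$ on compact subsets of $B_\gamma$. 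The same argument with $\sigma_{V_i,g_i}$, $\sigma_V$ and the lower bound in~\eqref{eqn:cap-first-var-concl2} gives $\spt\sigma_{V_i,g_i}\to\spt\sigma_V$ in the Hausdorff distance on compact subsets of $B_\gamma$.

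For the special case~\eqref{item:limit_is_cone}, suppose $V=V^{(\beta_0)}$. Being a cone with vertex density $(1+\cos\beta_0)/2$, Lemma~\ref{lem:bd-mono} (with $\Lambda=0$) gives $\Theta_{V^{(\beta_0)}}(\xi,s)\le(1+\cos\beta_0)/2$ for every $\xi\in\ver$, $s>0$, so $\Theta_{V^{(\beta_0)}}(\xi,s)+(\cos\beta_0)_-\le(1+|\cos\beta_0|)/2<1$ (and even more easily at interior points). Fix $r<1$ and $\xi\in\overline{B_r}$. By Lemma~\ref{lem:soft-conv} there is $s=s_\xi\in(0,1-|\xi|)$ with $\mu_{V_i}(B_{s}(\xi))\to\mu_{V^{(\beta_0)}}(B_{s}(\xi))$, hence $\Theta_{V_i}(\xi,s)+(\cos\beta_i(\xi))_-\to\Theta_{V^{(\beta_0)}}(\xi,s)+(\cos\beta_0)_-<1$; so for $i$ large the rescaled sequence $(\eta_{\xi,s_\xi})_\sharp V_i$ — still capillary, converging to $(\eta_{\xi,s_\xi})_\sharp V^{(\beta_0)}$, and satisfying~\eqref{eqn:conv-hyp1}, \eqref{eqn:conv-hyp2} with $\alpha$ replaced by a fixed $\alpha'=\alpha'(\beta_0)>0$ — falls under the main part of the Lemma just proved, yielding A--C for $V_i$ in $B_{\gamma(n,\alpha')s_\xi}(\xi)$ after rescaling back. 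Covering $\overline{B_r}$ by finitely many such balls (centered at boundary and interior points) gives A--C in $B_r$. For the special case~\eqref{item:extra_outside_spt_sigma}, assume $\cos\beta_0\ne0$ and fix $\eps>0$. For $i\gg1$, $|\cos\beta_i|\ge|\cos\beta_0|/2>0$ on $B_1$, so $\{\cos\beta_i\ne0\}=B_1$ and $\del\{\cos\beta_i\ne0\}\cap B_\gamma=\emptyset$; hence by~\eqref{eqn:cap-first-var-concl6.5}, $\del(\spt V_{B,i})\cap B_\gamma\subset\spt\sigma_{V_i,g_i}$. By~C, for $i\gg1$ (depending on $\eps$) we have $\spt\sigma_{V_i,g_i}\cap\overline{B_{\gamma-\eps}}\subset B_\eps(\spt\sigma_V)$, so $\del(\spt V_{B,i})$ is disjoint from every connected component $S'$ of $\ver\cap B_{\gamma-\eps}\setminus B_\eps(\spt\sigma_V)$. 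Then $S'\cap\spt V_{B,i}$ is relatively clopen in the connected open set $S'$, hence $=\emptyset$ (so $\haus^n_{g_i}(S_i\cap S')=0$ and $\mu_{V_i}\llcorner S'=0$) or $=S'$; in the latter case, using that $S_i$ has locally finite perimeter in $B_\gamma\cap\ver$ (Remark~\ref{rem:S-perimeter}) and $\del^*S_i\subset\del(\spt V_{B,i})$ (points of density $1/2$ lie in $\spt(\haus^n_{g_i}\llcorner S_i)$ but not its interior) so that $\del^*S_i$ misses $S'$, the set $S_i$ has zero perimeter in the connected open $S'$, forcing $\haus^n_{g_i}(S'\setminus S_i)=0$ and $\mu_{V_i}\llcorner S'=\cos\beta_i[S']_{g_i}$.
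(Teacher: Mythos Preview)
Your proof is correct and follows essentially the same route as the paper: Lemma~\ref{lem:soft-conv} for the soft limit and $\sigma_{V_i,g_i}\to\sigma_V$, Theorem~\ref{thm:cap-first-var} for uniform first-variation and Ahlfors bounds in $B_\gamma$, a density contradiction (integrality of $\lim V_{I,i}$ versus $\Theta<1$ on $\ver$) to prevent interior mass from collapsing onto the barrier, BV compactness for $S_i$ when $\cos\beta_0\neq0$, and the uniform Ahlfors lower bounds for Hausdorff convergence of supports; the two special cases are argued the same way. One minor point to tighten: your parenthetical justification of $\del^*S_i\subset\del(\spt V_{B,i})$ in special case~2 is not a general property of finite-perimeter sets (a dense union of tiny balls is a counterexample), though it does hold here since $\overline{\del^*S_i}\subset\spt\sigma_{V_i,g_i}$ is $\haus^n$-null---and in fact all you (and the paper, which asserts the same inclusion) need is $\del^*S_i\cap S'=\emptyset$, which follows directly from $\overline{\del^*S_i}\subset\spt\sigma_{V_i,g_i}$ (a consequence of \eqref{eqn:cap-first-var-concl4}) together with conclusion~C.
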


\begin{remark}
Conclusion A (and the fact that $V \llcorner B_\gamma$ is a capillary varifold) could be also deduced from \eqref{eqn:soft-conv-concl1}, the monotonicity Lemma \ref{lem:bd-mono}, (and Remark \ref{rem:soft-conv-density}), while convergence $\sigma_{V_i, g_i} \to \sigma_V$ is true in the more general setting of Lemma \ref{lem:soft-conv}.  The real extra information here is the Hausdorff convergence $\spt V_{I,i} \to \spt V_I$ and $\spt \sigma_{V_i, g_i} \to \spt \sigma_V$.
\end{remark}

\begin{proof}
Without loss of generality we can choose $\beta_0 \in \R$ so that $\cos\beta_i \to \cos\beta_0$ in $C^1(B_1)$.  The existence of $V$ and convergence $V_i \to V$, $\sigma_{V_i, g_i} \to \sigma_V$ is from Lemma \ref{lem:soft-conv}.  By our assumptions and from Lemma \ref{lem:soft-conv} we also have $\Theta_V(0, 1) + (\cos\beta_0)_- \leq 1 - \alpha$.

From Theorem \ref{thm:cap-first-var}, the first variations of $V_{I, i}$ and $V_{B, i}$ are uniformly bounded in $B_\gamma$.  Therefore, passing to a further subsequence, we can assume there are varifolds $V', V''$ in $B_\gamma$ so that in $B_\gamma$ we have $V_{I, i} \to V'$, $V_{B, i} \to V''$, and (hence) $V = V' + V''$.  Since each $V_{I, i}$ is integral, then $V'$ is integral also.

If $\cos\beta_0 = 0$ then $\mu_{V_{B, i}}(B_1) \to 0$ so $V'' = 0 \equiv \cos\beta_0[S_0]$ for any choice of $S_0$. If $\cos\beta_0 \neq 0$, then by Theorem \ref{thm:cap-first-var} the $S_i$ are sets of uniformly bounded perimeter in $\R^n \cap B_\gamma$, and hence we can find a set of finite perimeter $S_0 \subset \R^n \cap B_\gamma$ so that (passing to a further subsequence) $S_i \to S_0$ in $L^1(\R^n \cap B_\gamma)$.  In particular since $\cos\beta_i \to \cos\beta_0$ in $C^1(B_1)$ we deduce $V'' = \cos\beta_0 [S_0]$.

By monotonicity Lemma \ref{lem:bd-mono} and Remark \ref{rem:theta-change-ball}, after shrinking $\gamma(n, \alpha)$ as necessary, for any $x \in B_\gamma \cap \R^n$ we have
\[
\Theta_V(x) + (\cos\beta_0)_- \leq (1+c(n)\gamma) \Theta_V(0, 1) + (\cos\beta_0)_- + c(n)\gamma \leq 1 - \alpha/2 .
\]
On the other hand, since $V'$ is integral and $V'' = \cos\beta_0[S_0]$ we have $\Theta_V(x) \geq 1 - (\cos\beta_0)_-$ for $\mu_{V'}$-a.e. $x \in \R^n \cap B_\gamma$.  Therefore we must have $\mu_{V'}(\R^n \cap B_\gamma) = 0$, which implies $V_I = V'$, $V_B = V''$, and that $V$ is a $(\beta_0, S_0)$-capillary varifold in $B_\gamma$.

The Hausdorff convergence $\spt V_{I, i} \to \spt V_I$ and $\spt \sigma_{V_i,g} \to \spt \sigma_V$ in $B_\gamma$ follows directly from Radon measure convergence and the lower Ahlfors estimates of Theorem \ref{thm:cap-first-var}.  This proves the first part of the Lemma.

If $V = V^{(\beta_0)}$, we observe by the monotonicity formula, the conical nature of $V^{(\beta_0)}$, and our assumption \eqref{eqn:conv-hyp1}, that for any fixed $r < 1$, we have
\[
\Theta_{V}(x, (1-r)/2) + (\cos\beta_0)_- \leq \Theta_{V}(0, 1) + (\cos\beta_0)_- = (1+|\cos\beta_0|)/2 < 1
\]
Hence $\Theta_{V_i}(x, (1-r)/2) \leq 3/4 + |\cos\beta_0|/4 < 1$ for $x \in B_r$ and $i \gg 1$.  We can then apply the first part of the Lemma in every ball $B_{\gamma (1-r)/2}(x)$, $x \in B_{r}$, to deduce special case \ref{item:limit_is_cone}.

To see special case \ref{item:extra_outside_spt_sigma}, note that since $\{ \cos\beta_i = 0 \} = \emptyset$ for $i \gg 1$, and since $\spt \sigma_{V_i, g_i} \equiv \spt V_{I, i} \cap \R^n \to \spt \sigma_V$ in $B_\gamma$, we have
\[
\overline{\del^*S_i} \cap B_\gamma \subset \del\spt V_{B, i} \cap B_\gamma \subset \spt\sigma_{V_i, g_i} \subset B_\eps(\spt \sigma_V) 
\]
for all $i$ large.
\end{proof}

We also make explicit one other special case, phrased in terms of varifold closeness rather than convergence.
\begin{lemma}\label{lem:D-to-M}
Given $\theta \in (0, \pi)$ and $\eps > 0$, there is a $\delta(n, \theta, \eps) > 0$ so that the following holds. Let $g$ be a $C^1$ metric on $B_1$, $\beta$ a $C^1$ function on $B_1$, and $V$ be a $(\beta, S)$-capillary varifold in $(B_1, g)$ satisfying:
\begin{gather}
		\D(V,V^{(\theta)})<\delta,\\
		%\theta_{V,g}\ge 1 \text{ $\mu_V$-a.e. $x_1 < 0$,}\quad \theta_{V,g}=\cos\beta(x) \text{ $\mu_V$-a.e. $x_1 = 0$},\\
		\max\{\|H^{tan}_{V,g}\|_{L^\infty(B_1)}, |g-g_{eucl}|_{C^1(B_1)}, |\beta-\theta|_{C^1(B_1)}\}\le \delta.
	\end{gather}
	Then
	\begin{gather}\label{eqn:lemma.D.to.M}
		\mu_V \llcorner B_{1-\eps} \cap \{ x_1 = 0, x_{n+1} < -\eps\} = \haus^n_g \llcorner \cos\beta , \\
		\mu_V \llcorner B_{1-\eps} \cap  \{ x_1 = 0, x_{n+1} > \eps\} = 0.
	\end{gather}
\end{lemma}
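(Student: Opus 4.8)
\emph{Proof sketch.} The plan is to argue by contradiction and compactness, ultimately invoking Lemma \ref{lem:conv}. Suppose the statement fails for some $\theta \in (0,\pi)$ and $\eps > 0$. Then there are $C^1$ metrics $g_i$, $C^1$ functions $\beta_i$, sets $S_i$, and $(\beta_i, S_i)$-capillary varifolds $V_i$ in $(B_1, g_i)$ with $\D(V_i, V^{(\theta)}) \to 0$ and $\max\{\|H^{tan}_{V_i,g_i}\|_{L^\infty(B_1)}, |g_i - g_{eucl}|_{C^1(B_1)}, |\beta_i - \theta|_{C^1(B_1)}\} \to 0$, but such that, after passing to a subsequence, the same one of the two conclusions in \eqref{eqn:lemma.D.to.M} fails for every $i$. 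Since the negative parts of the $V_i$ are uniformly bounded, $\D(V_i, V^{(\theta)}) \to 0$ forces $V_i \to V^{(\theta)}$ as signed varifolds; passing to a further subsequence so that $|\mu_{V_i}| \to \lambda$ for a Radon measure $\lambda$, we get $\mu_{V_i}(B_r) \to \mu_{V^{(\theta)}}(B_r)$ for all but countably many $r < 1$. Picking such an $r_0 > 1 - \eps/4$ and using $\beta_i(0) \to \theta$, we obtain $\Theta_{V_i}(0, r_0) + (\cos\beta_i(0))_- \to (1 + \cos\theta)/2 + (\cos\theta)_- = (1 + |\cos\theta|)/2 < 1$. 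Rescaling by $\eta_{0, r_0}$ (which leaves the cone $V^{(\theta)}$ fixed) we may therefore apply Lemma \ref{lem:conv} with $\alpha = \alpha_0 := (1 - |\cos\theta|)/4$; since the varifold limit is $V^{(\theta)}$, the constant $\beta_0$ produced is $\theta$, so we are in special case \ref{item:limit_is_cone}, and conclusions A, B, C of Lemma \ref{lem:conv} hold in every $B_s \Subset B_1$, i.e., after undoing the rescaling, in $B_{1-\eps/2}$.

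Next I record the geometry of $V^{(\theta)}$. A routine application of the divergence theorem gives $\spt\sigma_{V^{(\theta)}} = \{x_1 = x_{n+1} = 0\} =: \Sigma$. Assume for now $\cos\theta \neq 0$. I claim there is $\eps' = \eps'(n,\theta,\eps) \in (0,\eps/4)$ so that the open sets
\[
U^- = \{\, |x_1| < \eps',\ -1 + \eps/2 < x_{n+1} < -\eps/2 \,\} \cap B_{1-\eps/2}, \qquad U^+ = \{\, |x_1| < \eps',\ \eps/2 < x_{n+1} < 1 - \eps/2 \,\} \cap B_{1-\eps/2}
\]
satisfy: $U^- \supset B_{1-\eps} \cap \{x_1 = 0, x_{n+1} < -\eps\}$, $U^+ \supset B_{1-\eps} \cap \{x_1 = 0, x_{n+1} > \eps\}$; $\overline{U^\pm} \cap (P_\theta^- \cup \Sigma) = \emptyset$; and $U^\pm \cap \R^n$ is connected. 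Indeed, on $P_\theta^-$ one has $\cos\theta\, x_1 = -\sin\theta\, x_{n+1}$, so $|x_{n+1}| \geq \eps/2$ forces $|x_1| \geq |\tan\theta|\,\eps/2$; moreover $P_\theta^- \subset \{x_1 \leq 0\}$ gives $P_\theta^- \cap \{x_{n+1} > 0\} = \emptyset$ when $\theta > \pi/2$ and $P_\theta^- \cap \{x_{n+1} < 0\} = \emptyset$ when $\theta < \pi/2$; and $\Sigma \subset \{x_{n+1} = 0\}$ is avoided since $|x_{n+1}| \geq \eps/2$ on $U^\pm$. Thus any $\eps' < \min\{|\tan\theta|\,\eps/2,\, \eps/4\}$ works, and consequently $V^{(\theta)} \llcorner U^- = \cos\theta\,[\R^n] \llcorner U^-$ while $V^{(\theta)} \llcorner U^+ = 0$; in particular $\mu_{V_B^{(\theta)}}(U^-) = \cos\theta\,\haus^n(U^- \cap \R^n) \neq 0$ and $\mu_{V_B^{(\theta)}}(U^+) = 0$.

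Now I conclude. Fix a point $x_0 \in U^- \cap \R^n$. Since $x_0 \notin B_{\eps'}(\Sigma) = B_{\eps'}(\spt\sigma_{V^{(\theta)}})$ and $\cos\beta_0 = \cos\theta \neq 0$, applying Lemma \ref{lem:conv} (first part and special case \ref{item:extra_outside_spt_sigma}) in a small ball about $x_0$ gives, for $i \gg 1$, that on the connected component of that ball's intersection with $\R^n \setminus B_{\eps'}(\Sigma)$ containing $x_0$, $\mu_{V_i}$ equals either $\cos\beta_i[\,\cdot\,]_{g_i}$ or $0$. Covering $U^- \cap \R^n$ by such balls and using its connectedness, we get that for $i \gg 1$ either $\haus^n_{g_i}\big(S_i \,\triangle\, (U^- \cap \R^n)\big) = 0$ or $\haus^n_{g_i}(S_i \cap U^- \cap \R^n) = 0$; the latter is ruled out because $\mu_{V_{B,i}}(U^-) \to \mu_{V_B^{(\theta)}}(U^-) \neq 0$ by conclusion A. Hence $S_i \supseteq U^- \cap \R^n$ up to $\haus^n$-null sets. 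Running the identical argument on $U^+$ (where now the limiting mass is $0$, so each component is forced into the alternative $\mu_{V_i} = 0$), we get $\haus^n_{g_i}(S_i \cap U^+ \cap \R^n) = 0$ for $i \gg 1$. Since $\mu_{V_{I,i}}$ vanishes on $\R^n$, it follows that for $i \gg 1$,
\[
\mu_{V_i} \llcorner \big(B_{1-\eps} \cap \{x_1 = 0, x_{n+1} < -\eps\}\big) = \cos\beta_i\,\haus^n_{g_i} \llcorner \big(B_{1-\eps} \cap \{x_1 = 0, x_{n+1} < -\eps\}\big),
\]
\[
\mu_{V_i} \llcorner \big(B_{1-\eps} \cap \{x_1 = 0, x_{n+1} > \eps\}\big) = 0,
\]
i.e., both conclusions of the lemma hold for $V_i$ — contradicting our choice of the $V_i$, and completing the proof.

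The main obstacle is the geometric bookkeeping of the second paragraph: one must choose the slabs $U^\pm$ simultaneously large enough to be genuine $\R^{n+1}$-neighborhoods of the relevant flat pieces of $\{x_1 = 0\}$, thin enough in $x_1$ to avoid $P_\theta^-$ (which, when $\theta > \pi/2$, genuinely protrudes into $\{x_{n+1} < 0\}$), away from the edge $\Sigma = \spt\sigma_{V^{(\theta)}}$, and connected within $\R^n$ so that the all-or-nothing dichotomy of Lemma \ref{lem:conv}\,\ref{item:extra_outside_spt_sigma} upgrades the (merely $L^1$- and varifold-level) convergence of $S_i$ to exact set containment. This last point is exactly where the hypothesis $\cos\theta \neq 0$ enters; the borderline case $\theta = \pi/2$ (where $\cos\beta_0 = 0$ makes special case \ref{item:extra_outside_spt_sigma} vacuous) is not covered by this argument and would require a separate treatment.
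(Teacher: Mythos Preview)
Your argument is correct and takes the same route as the paper: contradiction, pass to a sequence $V_i\to V^{(\theta)}$, use $\beta_i\to\theta$ to get the density bound $\Theta_{V_i}(0,r_0)+(\cos\beta_i(0))_-<1$, then invoke Lemma~\ref{lem:conv} (special cases \ref{item:limit_is_cone} and \ref{item:extra_outside_spt_sigma}) to read off the structure of $\mu_{V_i}\llcorner\{x_1=0\}$ away from $\spt\sigma_{V^{(\theta)}}$. The paper's proof is a two-line sketch of exactly this; you have simply filled in the geometric bookkeeping (the slabs $U^\pm$ and the connectedness argument) and been explicit that the case $\theta=\pi/2$ is not covered---which the paper does not flag, though the lemma is only used for $\theta\ne\pi/2$.
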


\begin{proof}
Proof by contradiction: if we had a sequence $V_i \to V^{(\theta)}$ with $\beta_i \to \theta$ in $C^1(B_1)$, then for $i \gg 1$ we would have $\Theta_V(0, 1-\eps/2) + (\cos\beta_i(0))_- < 3/4 + \cos\theta/4 < 1$ (note we are \emph{assuming} $\beta \to \theta$).  But then we can use Lemma \ref{lem:conv} to can conclude $V_i$ satisfies \eqref{eqn:lemma.D.to.M} for $i \gg 1$.
\end{proof}

%{\color{blue} question: can we get the second assertion below without assuming density bounds when angle is $> \pi/2$? }

Lastly, we will require the following Lemma, which says that the varifold distance to $V^{(\theta)}$ is (implicitly) comparable to the oscillation distance to $P_\theta$, under suitable assumptions.
\begin{lemma}[$L^\infty$-vs-varifold distance]\label{lem:D-vs-osc}
Given $\theta_0 > 0$ and $\eps > 0$ there is a $\delta(n, \eps, \theta_0)$ so that the following holds.  Let $\theta \in (\theta_0, \pi/2-\theta_0) \cup (\pi/2+\theta_0, \pi - \theta_0)$, $g$ be a $C^1$ metric on $B_1$, $\beta$ a $C^1$ function, and $V$ an $(\beta, S)$-capillary varifold in $(B_1, g)$ satisfying
\begin{gather}
%\theta_{V, g}(x) \in \N \text{ $\mu_V$-a.e. $x_1 < 0$}, \quad \theta_{V, g}(x) = \cos\beta(x) \text{ $\mu_V$-a.e. $x_1 = 0$}, \label{eqn:D-vs-osc.hyp1}\\
0 \in \spt V_I , \quad \Theta_V(0, 1) + (\cos\beta(0))_- \leq 3/4 + |\cos\theta_0|/4, \label{eqn:D-vs-osc.hyp2} \\
\max\{ ||H^{tan}_{V, g}||_{B_1}, |g - g_{eucl}|_{C^1(B_1)}, |D\beta|_{C^0(B_1)} \} \leq \delta. \label{eqn:D-vs-osc.hyp3}
\end{gather}
Then
\begin{align}\label{eqn:D-vs-osc.concl1}
\osc_{P_{\theta}}(V_I, B_1) \leq \delta \implies \D(V, V^{(\theta)}) \leq \eps,
\end{align}
and conversely
\begin{equation}\label{eqn:D-vs-osc.concl2}
\D(V, V^{(\theta)}) \leq \delta \implies \osc_{P_{\theta}}(V_I, B_{3/4}) \leq \eps.
\end{equation}
In either scenario we can also conclude that $|\beta - \theta|_{C^0(B_1)} \leq \eps$ (after translating $\beta$ by a multiple of $\pi$ if necessary).
\end{lemma}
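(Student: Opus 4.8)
\emph{Proof plan.} Both implications, and the final assertion on $\beta$, will be proved by contradiction together with the compactness Lemma \ref{lem:conv}. Suppose the statement fails for some $\theta_0,\eps$ and every $\delta=1/i$: then there are $\theta_i\in(\theta_0,\pi/2-\theta_0)\cup(\pi/2+\theta_0,\pi-\theta_0)$, metrics $g_i$, functions $\beta_i$, and $(\beta_i,S_i)$-capillary varifolds $V_i$ in $(B_1,g_i)$ with $0\in\spt V_{I,i}$, with $\Theta_{V_i}(0,1)+(\cos\beta_i(0))_-\le 3/4+|\cos\theta_0|/4=:1-2\alpha$ (so $\alpha=\alpha(\theta_0)>0$), and with $\max\{\|H^{tan}_{V_i,g_i}\|_{L^\infty},|g_i-g_{eucl}|_{C^1},|D\beta_i|_{C^0}\}\to 0$, for which the relevant conclusion is violated by $\eps$ at stage $i$. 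Passing to a subsequence I would arrange $\theta_i\to\theta_\infty\in[\theta_0,\pi/2-\theta_0]\cup[\pi/2+\theta_0,\pi-\theta_0]$ (so $\theta_\infty\ne\pi/2$) and $\cos\beta_i\to\cos\beta_\infty$ for a constant $\beta_\infty$ (in $C^0$, since $|D\beta_i|\to0$). Since $1-2\alpha<1$, Lemma \ref{lem:conv} applies and, up to a further subsequence, $V_i\to V$ as varifolds, where $V$ is $n$-rectifiable, stationary with free boundary in $\{x_1=0\}$, and is $(\beta_\infty,S_\infty)$-capillary in $B_\gamma$ for $\gamma=\gamma(n,\alpha)$ and a finite-perimeter set $S_\infty$, with $V_{I,i}\to V_I$, $V_{B,i}\to V_B$ and $\spt V_{I,i}\to\spt V_I$ in local Hausdorff distance on $B_\gamma$; lower semicontinuity gives $\Theta_V(0,1)+(\cos\beta_\infty)_-\le 1-2\alpha$.

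For \eqref{eqn:D-vs-osc.concl1} (oscillation small $\Rightarrow$ $\D$ small) I assume $\osc_{P_{\theta_i}}(V_{I,i},B_1)\to0$ but $\D(V_i,V^{(\theta_i)})\ge\eps$; since $V^{(\theta_i)}\to V^{(\theta_\infty)}$ this forces $V\ne V^{(\theta_\infty)}$, and I derive a contradiction by showing $V=V^{(\theta_\infty)}$. From $0\in\spt V_{I,i}$ and vanishing oscillation, $\spt V_{I,i}\cap B_1\subset B_{o(1)}(P_{\theta_i})$, hence in the limit $\spt V_I\subset P_{\theta_\infty}^-$ while $0\in\spt V_I$ by Hausdorff convergence. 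Since $V_I$ is integral, stationary and supported in the relatively connected half-hyperplane $P_{\theta_\infty}^-$, the constancy theorem gives $V_I=m[P_{\theta_\infty}^-]$, $m\in\N$ (with $m=0$ excluded by $0\in\spt V_I$); if $m\ge2$ then $\Theta_{V_I}(0,1)=m/2\ge1$ and, using $\Theta_{V_B}(0,1)=\cos\beta_\infty\,|S_\infty\cap B_1|/\omega_n\ge-(\cos\beta_\infty)_-$, we would get $\Theta_V(0,1)+(\cos\beta_\infty)_-\ge1>1-2\alpha$, a contradiction; so $V_I=[P_{\theta_\infty}^-]$. Testing the free-boundary first variation of $V=V_I+V_B$ with fields tangential to $\{x_1=0\}$ and using that $\delta[P_{\theta_\infty}^-]$ is a boundary term along the edge $L=\{x_1=x_{n+1}=0\}$ with conormal $\sin\theta_\infty e_1-\cos\theta_\infty e_{n+1}$, one finds $V_B=\cos\beta_\infty[S_\infty]$ solves a distributional identity forcing $\partial^*S_\infty\subset L$ up to $\haus^{n-1}$-null sets; since $\cos\theta_\infty\ne0$ the only surviving possibility consistent with distance to $V^{(\theta_\infty)}$ is $S_\infty=\{x_{n+1}<0\}$ with $\cos\beta_\infty=\cos\theta_\infty$, i.e. $V=V^{(\theta_\infty)}$, which is the contradiction.

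For \eqref{eqn:D-vs-osc.concl2} ($\D$ small $\Rightarrow$ oscillation small) I assume $\D(V_i,V^{(\theta_i)})\to0$ but $\osc_{P_{\theta_i}}(V_{I,i},B_{3/4})\ge\eps$. Since $\D$ metrizes varifold convergence and $V^{(\theta_i)}\to V^{(\theta_\infty)}$, $V_i\to V^{(\theta_\infty)}$; as $V^{(\theta_\infty)}$ is conical with $\Theta_{V^{(\theta_\infty)}}(x,s)+(\cos\theta_\infty)_-\le(1+|\cos\theta_\infty|)/2<1$ for all $x,s$, the density hypothesis persists at all interior scales, so special case (1) of Lemma \ref{lem:conv} applies in balls $B_{c(n)(1-r)}(x)$ for $x\in B_r$, $r=3/4$, giving $\spt V_{I,i}\to\spt[P_{\theta_\infty}^-]$ in Hausdorff distance on $B_{3/4}$; hence $\osc_{P_{\theta_i}}(V_{I,i},B_{3/4})\to\osc_{P_{\theta_\infty}}(P_{\theta_\infty}^-,B_{3/4})=0$, a contradiction. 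Finally, for the $\beta$-estimate: in either scenario the limit is $V^{(\theta_\infty)}$, so $\cos\beta_i[S_i]\to\cos\theta_\infty[\{x_1=0,x_{n+1}<0\}]$ in $B_\gamma$; since this set has positive $\haus^n$-measure and $\beta_i$ is nearly constant (as $|D\beta_i|\to0$), $\cos\beta_i(0)\to\cos\theta_\infty$, and the side on which $\spt V_{B,i}$ accumulates relative to the edge pins down $\beta_i\to\theta_\infty\bmod\pi$; combined with $|\beta_i-\beta_i(0)|_{C^0}=o(1)$ this yields $|\beta_i-\theta_i|_{C^0(B_1)}\to0$ modulo $\pi$.

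The main obstacle I expect is the last step of \eqref{eqn:D-vs-osc.concl1}: ruling out that the varifold limit is the reflected ("swapped") model $\tilde V^{(\theta_\infty)}:=[P_{\theta_\infty}^-]-\cos\theta_\infty[\{x_1=0,x_{n+1}>0\}]$, which has the same interior part $[P_{\theta_\infty}^-]$ but contact angle $\pi-\theta_\infty$ and satisfies exactly the same density and oscillation bounds. Excluding it requires tracking on which side of the edge the wet region $S_i$ sits along the sequence — using the conormal structure from Theorem \ref{thm:cap-first-var}, the equality $\spt\sigma_{V_i,g_i}=\spt V_{I,i}\cap\R^n$, and the hypothesis $0\in\spt V_I$ — so that the limiting wet region lands on the side compatible with angle $\theta_\infty$; this is also exactly what makes the $\beta$-estimate work without an extra $\pi$-ambiguity.
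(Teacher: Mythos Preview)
Your argument tracks the paper's exactly: contradiction, compactness (the paper first invokes Lemma~\ref{lem:soft-conv} and then Lemma~\ref{lem:conv}, you go straight to Lemma~\ref{lem:conv}, but the effect is the same), constancy to get $V_I=[P_{\theta_\infty}^-]$ with multiplicity one from the density bound, then constancy on $\R^n\setminus\{x_1=x_{n+1}=0\}$ together with stationarity to pin down $V_B$. Your treatment of \eqref{eqn:D-vs-osc.concl2} and of the $\beta$-closeness is essentially the paper's.

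The obstacle you isolate for \eqref{eqn:D-vs-osc.concl1} is genuine, and the paper does not actually dispose of it either: after reaching $k_\pm\in\{0,\cos\theta'\}$ it simply writes ``Stationarity implies we must have $k_-=\cos\theta=\cos\theta'$ and $k_+=0$,'' whereas stationarity only yields $k_--k_+=\cos\theta_\infty$, which is equally solved by $k_-=0$, $k_+=\cos\theta'=-\cos\theta_\infty$ (your swapped model). Your phrase ``the only surviving possibility consistent with distance to $V^{(\theta_\infty)}$'' is circular inside a contradiction argument, and your proposed fix via tracking the side of $S_i$ through Theorem~\ref{thm:cap-first-var} cannot close the gap from the stated hypotheses. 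Concretely, for $\theta\in(\theta_0,\pi/2-\theta_0)$ the \emph{constant} example
\[
V=[P_\theta^-]+\cos(\pi-\theta)\,[\{x_1=0,\,x_{n+1}>0\}],\qquad \beta\equiv\pi-\theta,\qquad g=g_{eucl},
\]
is a stationary $(\pi-\theta,\{x_{n+1}>0\})$-capillary varifold with $0\in\spt V_I$, $\osc_{P_\theta}(V_I,B_1)=0$, $H^{tan}=0$, $D\beta=0$, and
\[
\Theta_V(0,1)+(\cos\beta(0))_-=\tfrac{1-\cos\theta}{2}+\cos\theta=\tfrac{1+\cos\theta}{2}\le \tfrac34+\tfrac{\cos\theta_0}{4},
\]
yet $V-V^{(\theta)}=-\cos\theta\,[\R^n]$, so $\D(V,V^{(\theta)})$ is bounded away from zero and $|\beta-\theta|=\pi-2\theta$ is not small modulo $\pi$. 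Thus the swap cannot be excluded without an added orientation hypothesis (e.g.\ $\cos\beta(0)\cdot\cos\theta>0$, or $|\beta-\theta|$ small); in the paper's subsequent uses of the lemma this extra information is either present explicitly (Theorem~\ref{thm:boundary-max}, Lemma~\ref{lem:closeness}, Theorems~\ref{thm:harnack-step} and~\ref{thm:decay}) or only swap-insensitive consequences such as Hausdorff convergence of $\spt V_I$ and density bounds are actually used.
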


\begin{proof}
Suppose, towards a contradiction, there are $\delta_i \to 0$, $\theta_i \to \theta \in [\theta_0, \pi/2-\theta_0] \cup [\pi/2+\theta_0, \pi - \theta_0]$, and $V_i$, $g_i$, $\beta_i$ satisfying \eqref{eqn:D-vs-osc.hyp2}, \eqref{eqn:D-vs-osc.hyp3}, and
\begin{gather}\label{eqn:D-vs-osc-1}
\osc_{P_{\theta_i}}(V_{I, i}, B_1) \leq \delta_i,
\end{gather}
but for which $\inf_i \D(V_i, V^{(\theta_i)}) > 0$.  Note that since $0 \in \spt V_{I, i}$, \eqref{eqn:D-vs-osc-1} implies $\spt V_{I, i} \cap B_1 \subset B_{\delta_i}(P_{\theta_i})$.

We can assume $\cos\beta_i \to \cos\theta'$ in $C^1(B_1)$ for some $\theta' \in [0, \pi]$.  By Lemma \ref{lem:soft-conv} we can also assume the $V_i \to V$ for some $n$-rectifiable $V$ in $(B_1, g_{eucl})$ which is supported in $\{ x_1 \leq 0 \}$, and is stationary with free-boundary in $\{ x_1 = 0 \}$.  $V_I$ is integral and by our contradiction hypothesis \eqref{eqn:D-vs-osc-1} $\spt V_I \subset P_\theta$, and therefore the constancy theorem implies $V_I = k[P_\theta^-]$ for some $k \in \{0, 1, 2, \ldots \}$.

By Lemma \ref{lem:conv} we can assume $\spt V_{I, i} \to \spt V_I$ in the Hausdorff distance $B_\gamma$, and since $0 \in \spt V_{I, i}$ we have $0 \in \spt V_i$ also, so $k \geq 1$.  On the other hand, since $\theta_V(x) \geq -(\cos\theta')_-$ at $|\mu_V|$-a.e. $x_1 = 0$, the upper bound $\Theta_V(0, 1) + (\cos\theta')_- < 1$ implies $k \leq 1$.  So in fact $V_I = [P_\theta^-]$.

From the above paragraph and the constancy theorem we have that $\theta_{V}$ is locally-constant on $\ver \setminus \{ x_1 = x_{n+1} = 0\}$.  On the other hand, Lemma \ref{lem:conv} implies $\theta_{V}(x) = \cos\theta'$ at $|\mu_V|$-a.e. $x \in \ver \setminus \{x_1 = x_{n+1} = 0\}$.  So
\[
V = [P_\theta^-] + k_-[\{ x_1 = 0, x_{n+1} < 0] + k_+ [x_1 = 0, x_{n+1} > 0]
\]
for $k_\pm \in \{0, \cos\theta'\}$.  Stationarity implies we must have $k_- = \cos\theta = \cos\theta'$ and $k_+ = 0$.  So $V = V^{(\theta)}$, and hence $\D(V_i, V^{(\theta)}) \to 0$, which is a contradiction.  This proves \eqref{eqn:D-vs-osc.concl1}.

The converse \eqref{eqn:D-vs-osc.concl2} follows directly from Lemma \ref{lem:conv}:  if $V_i \to V^{(\theta)}$ as varifolds in $B_1$, then in $B_{3/4}$ we have $\spt V_{I, i} \to P_\theta^-$, and $V_{B, i} \to \cos\theta[x_1 = 0, x_{n+1} < 0]$.  These imply $\osc_{P_\theta}(V_{I, i}, B_{3/4}) \to 0$ and 
\[
\cos\beta_i [S]_{g_i} \to \cos\theta[S], \quad S \supset B_{1/2} \cap \R^n \cap \{ x_{n+1} < -1/4 \},
\]

from which we get $\cos\beta_i \to \cos\theta$.
\end{proof}

\begin{comment}
GENERAL PRINCIPLE: if $V$ is a $(\beta, B)$-capillary varifold in $(B_1, g)$, then $V - \cos\beta[\R^n]_{g} \equiv V_I - \cos\beta[\R^n \setminus B]_{g}$ is a $(\pi - \beta, \R^n \setminus B)$-capillary varifold in $(B_1, g)$.

Take $V_i \to V^{(\theta)}$.  Suppose $\cos\beta_i \to \cos \theta'$.

Recall that $V_i - \cos\beta_i [\R^n]_{g_i} = V_{I, i} - \cos\beta_i[\R^n \setminus B]_{g_i}$ is a $(\pi-\beta_i, \R^n \setminus B)$-capillary varifold in $(B_1, g)$.  Then
\begin{align}
V_i - \cos\beta_i[\R^n]_{g_i} &\to V^{(\theta)} - \cos\theta'[\R^n] \\
&= [P_\theta^-] + (\cos\theta - \cos\theta')[x_1 = 0, x_{n+1} < 0 ] - \cos\theta' [x_1 = 0, x_{n+1} > 0]
\end{align}
is a stationary free-boundary varifold in $B_1$, 
\end{comment}

\subsection{Refined boundary structure}\label{subsec:refined_bdry}

Though we will not require the results of this section for our maximum principle or Allard-type regularity, the following ``refined'' boundary structure will be useful in the almost-everywhere regularity of Theorem \ref{thm:teaser3}.  The ideas behind Theorem \ref{thm:refined} build on those in \cite{demasi}, who proved $(n-1)$-rectifiability of the boundary measure for free-boundary varifolds.

\begin{theorem}\label{thm:refined}
Given $\alpha > 0$, there are $\delta(n, \alpha)$, $\gamma(n, \alpha)$ so that the following holds.  Let $g$ be a $C^1$ metric on $B_1$, $\beta$ a $C^1$ function on $B_1$, $S \subset \R^n$, and $V$ a $(\beta, S)$-capillary varifold in $(B_1, g)$.  Suppose that
\begin{gather}
\Theta_V(0, 1) + (\cos\beta(0))_- \leq 1-\alpha, \\
\max \{ ||H^{tan}_{V, g}||_{L^\infty(B_1)}, |g - g_{eucl}|_{C^1(B_1)}, |D\beta|_{C^0(B_1)} \} \leq \delta.
\end{gather}
Define $BS := B_\gamma \cap \del^*S \cap \{ \beta \neq \pi/2\}$ (c.f. Remark \ref{rem:S-perimeter}).

Then in $B_\gamma$ it holds that: $\spt\sigma_{V, g} = \spt\sigma_{V_I, g}$ is $(n-1)$-rectifiable; for $\sigma_{V, g}$-a.e. $x$, every tangent cone of $V$ at $x$ is some rotation of $V^{(\beta(x))}$ (if $x \in BS$) or either $V^{(\pi/2)}$ or $V^{(\pi/2)} + \cos\beta(x)[\R^n]$ (if $x \in \spt\sigma_{V,g} \setminus BS$); and in $B_\gamma$ we have
\begin{equation}\label{eqn:refined-concl1}
\eta_{V_I, g} d\sigma_{V_I, g} = \eta_{V_I, g} d\haus^{n-1}_g \llcorner BS + \nu_g d\haus^{n-1}_g \llcorner (\spt \sigma_{V, g} \setminus BS), 
\end{equation}
such that $g(\eta_{V_I} , \nu_g) = \cos\beta(x)$ for $\haus^{n-1}_g$-a.e. $x \in BS$.  Note that a direct consequence of \eqref{eqn:refined-concl1} and \eqref{eqn:cap-first-var-concl4} is that, in $B_\gamma$:
\begin{equation}\label{eqn:refined-concl2}
d\sigma_{V, g} = \cos\beta(x) d\haus^{n-1}_g \llcorner BS + d\haus^{n-1}_g \llcorner (\spt \sigma_{V,g} \setminus BS).
\end{equation}
\end{theorem}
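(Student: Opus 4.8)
The plan is to determine all tangent cones of $V$ at $\sigma_{V,g}$-almost every boundary point by a Federer-type dimension reduction, using the improved compactness of Lemma~\ref{lem:conv} in place of the usual Allard compactness, and then to read off the fine structure of $\sigma_{V,g}$ and the refinement of the decomposition \eqref{eqn:cap-first-var-concl4} from the resulting list of model cones.

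\textbf{Step 1 (blow-up set-up).} First I would shrink $\gamma=\gamma(n,\alpha)$ so that, by the monotonicity of Lemma~\ref{lem:bd-mono} and Corollary~\ref{cor:usc}, every $\xi\in B_\gamma\cap\R^n$ satisfies $\Theta_{V,g}(\xi)+(\cos\beta(\xi))_-\le 1-\alpha/2$. Then for $\xi\in B_\gamma$ and $r$ small the rescalings $(\eta_{\xi,r})_\sharp V$, read in a linear chart where $g(\xi)=g_{eucl}$, satisfy the hypotheses of Lemma~\ref{lem:conv} with $\alpha/2$ in place of $\alpha$ and $\beta_i:=\beta(\xi+r_i\,\cdot)\to\beta(\xi)$. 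Hence at $\sigma_{V,g}$-a.e.\ $\xi$ (in particular at any $\xi\in\spt\sigma_{V,g}$) any tangent cone $C$ of $V$ at $\xi$ is, by Lemma~\ref{lem:conv} together with the conical forcing term in Lemma~\ref{lem:bd-mono}, a stationary $(\beta(\xi),S_\xi)$-capillary cone with $\Theta_C(0,1)<1$, for which the improved convergence A--C of Lemma~\ref{lem:conv} holds; in particular $\spt\sigma_{(\eta_{\xi,r_i})_\sharp V}\to\spt\sigma_C$ in the Hausdorff distance, so $0\in\spt\sigma_C$. Being a cone, $C$ extends to a globally defined stationary free-boundary cone on $\R^{n+1}$ which is $(\beta(\xi),S_\xi)$-capillary with constant angle.

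\textbf{Step 2 (classification of capillary cones with density $<1$, and dimension reduction).} Let $C=C_I+\cos\beta_0[S_0]_g$ be any stationary $(\beta_0,S_0)$-capillary cone on $\R^{n+1}$ with $\Theta_C(0,1)<1$ and $0\in\spt\sigma_C$, and let $\Sigma$ be its spine. A translation in the $e_1$-direction would force $C=0$, so $\Sigma\subset\R^n$; and $\dim\Sigma\le n-1$ since $0\in\spt\sigma_C$. If $\dim\Sigma=n-1$ then $C=C'\times\Sigma$ with $C'$ a $1$-dimensional stationary capillary cone in $\Sigma^\perp\cong\R^2$; there $\Theta_{C'}(0,1)<1$ forces $C_I'$ to be either $0$ or a single multiplicity-one ray $[P_\theta^-]$, and $0\in\spt\sigma_{C'}$ excludes $C_I'=0$, while $[S_0']$ is a half-line, all of, or none of the barrier axis. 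Computing the capillary first variation at the vertex (as for the model in Example~\ref{example.model.capillary.half.plane}) then forces $\theta=\beta_0$ when $S_0'$ is a half-line and $\cos\beta_0\neq0$ (so $C'=V^{(\beta_0)}$, up to the orientation of $S_0'$), and $\theta=\pi/2$ otherwise (so $C'=V^{(\pi/2)}$ if $S_0'=\emptyset$ or $\cos\beta_0=0$, and $C'=V^{(\pi/2)}+\cos\beta_0[\R^n]$ if $S_0'=\R^n$). This class of cones is closed under taking tangent cones at an arbitrary point (again via Lemma~\ref{lem:conv}, which preserves the density bound and $0\in\spt\sigma$) and under rescaling, so Federer dimension reduction (see e.g.\ \cite{Sim}) shows that the set of $\xi\in\spt\sigma_{V,g}\cap B_\gamma$ admitting a tangent cone with $\dim(\spine)\le n-2$ has Hausdorff dimension $\le n-2$, hence is $\haus^{n-1}$-null and therefore $\sigma_{V,g}$-null by the Ahlfors equivalence \eqref{eqn:cap-first-var-concl2.5}. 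Thus at $\sigma_{V,g}$-a.e.\ $\xi$ every tangent cone of $V$ is (up to rotation) one of the listed models.

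\textbf{Step 3 (rectifiability, identification of the case, and the boundary decomposition).} At $\sigma_{V,g}$-a.e.\ $\xi$ every tangent cone $C$ has $\spt\sigma_C=\Sigma$ an $(n-1)$-plane; together with Ahlfors-regularity and \eqref{eqn:cap-first-var-concl2.5} this yields an approximate tangent $(n-1)$-plane for $\spt\sigma_{V,g}$ at $\sigma_{V,g}$-a.e.\ $\xi$, hence $(n-1)$-rectifiability of $\spt\sigma_{V,g}=\spt\sigma_{V_I,g}$ (alternatively this is the theorem of \cite{demasi} applied to $V$ viewed as a free-boundary varifold through \eqref{eqn:beta-g-vs-g-3}, after the first-variation bounds of Theorem~\ref{thm:cap-first-var}). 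Since by Theorem~\ref{thm:cap-first-var} the function $\cos\beta\,1_S$ has locally bounded variation on $\{\beta\neq\pi/2\}$ and $\sigma_{V,g}\sim\haus^{n-1}_g\llcorner\spt\sigma_{V,g}$, at $\sigma_{V,g}$-a.e.\ $\xi$ the density of $S$ relative to $\R^n$ is $0$, $\tfrac12$, or $1$ (or $\beta(\xi)=\pi/2$): if it is $\tfrac12$ and $\beta(\xi)\neq\pi/2$, i.e.\ $\xi\in BS$, then $S_\xi=\{\,y\cdot\nu_S(\xi)<0\,\}$ and Step~2 gives the tangent cone $V^{(\beta(\xi))}$ rotated so that $\Sigma$ is the approximate tangent plane of $\partial^*S$ at $\xi$; otherwise $S_\xi\in\{\emptyset,\R^n\}$ (or $\cos\beta(\xi)=0$) and Step~2 gives $V^{(\pi/2)}$ or $V^{(\pi/2)}+\cos\beta(\xi)[\R^n]$. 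For the measure identity one refines \eqref{eqn:cap-first-var-concl4}: because $\beta$ is $C^1$, the singular part $\sigma_{V_B,g}$ of $\delta_g(\cos\beta[S]_g)$ is concentrated on $\partial^*S$, equal to $|\cos\beta|\haus^{n-1}_g\llcorner\partial^*S$ with conormal $\pm\nu_S$, so $\sigma_{V_B,g}$ vanishes on $\{\beta=\pi/2\}$ and on $\spt\sigma_{V,g}\setminus\partial^*S$. Rearranging \eqref{eqn:cap-first-var-concl4} as $\eta_{V_I,g}\,d\sigma_{V_I,g}=\nu_g\,d\sigma_{V,g}-\eta_{V_B,g}\,d\sigma_{V_B,g}$ and evaluating at a point of $BS$, where the tangent cone is $V^{(\beta(\xi))}$ (so $\sigma_{V_I,g}$ has $\haus^{n-1}_g$-density $1$ and all conormals take their model values), gives the first term on the right of \eqref{eqn:refined-concl1} together with $g(\eta_{V_I,g},\nu_g)=\cos\beta(\xi)$ read off from the interior half-plane of $V^{(\beta(\xi))}$; off $BS$ one has $\sigma_{V_B,g}=0$, so $\eta_{V_I,g}\,d\sigma_{V_I,g}=\nu_g\,d\sigma_{V,g}$, and the models $V^{(\pi/2)}$ and $V^{(\pi/2)}+\cos\beta(\xi)[\R^n]$ both give $\sigma_{V_I,g}$-density $1$ and $\eta_{V_I,g}=\nu_g$, yielding the second term of \eqref{eqn:refined-concl1}. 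Pairing \eqref{eqn:refined-concl1} with $\nu_g$ produces \eqref{eqn:refined-concl2}.

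The technical heart, and the main obstacle, is Step~2: carrying out the Federer dimension reduction for the class of \emph{signed} capillary cones requires precisely the compactness and closure properties packaged into Lemma~\ref{lem:conv} (closure of the class under blow-up at any boundary point, persistence of the density bound and of $0\in\spt\sigma$ in the limit, and lower semicontinuity of the spine dimension), together with the base-case classification above; the other delicate points are the rectifiability input of Step~3 and the identification of $\sigma_{V_B,g}$ with a multiple of $\haus^{n-1}_g\llcorner\partial^*S$, both of which follow closely the analysis of \cite{demasi}.
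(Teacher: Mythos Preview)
Your overall architecture (blow up via Lemma~\ref{lem:conv}, classify the limiting capillary cones, then read off rectifiability and the boundary decomposition) matches the paper's, but Step~2 contains a genuine gap.  The claim that Federer dimension reduction bounds the Hausdorff dimension of
\[
\tilde S^{n-2}:=\{\xi\in\spt\sigma_{V,g}:\text{\emph{some} tangent cone at }\xi\text{ has spine dimension}\le n-2\}
\]
by $n-2$ is not what Federer/Almgren stratification gives.  The standard stratification bounds only $S^{n-2}=\{\xi:\text{\emph{no} tangent cone is }(n-1)\text{-symmetric}\}$, and $S^{n-2}\subsetneq\tilde S^{n-2}$ in general (tangent cones need not be unique, and having one $(n-1)$-symmetric tangent cone does not preclude other, less symmetric ones).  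The obstruction is that ``admitting a low-spine tangent cone'' is not upper-semicontinuous in the way ``being singular'' is, so the usual density/persistence step in Federer's argument fails for $\tilde S^{n-2}$.  Since the statement of Theorem~\ref{thm:refined} asserts that \emph{every} tangent cone at $\sigma_{V,g}$-a.e.\ $\xi$ is one of the models (and your Step~3 implicitly uses this along the particular blow-up sequences needed to identify $\eta_{V_I,g}$ and the $\sigma_{V_I,g}$-density), this is a real gap, not a cosmetic one.

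The paper closes this gap by a Lebesgue-point argument instead of Federer reduction: at $\sigma_{V,g}$-a.e.\ $\xi$ (a Lebesgue point of $\Theta_{V,g}|_{\spt\sigma_{V,g}}$ with respect to $\sigma_{V,g}$), \emph{any} tangent cone $V'$ must satisfy $\Theta_{V'}(y)=\Theta_{V'}(0)$ for all $y\in\spt\sigma_{V'}$, because a strict drop would produce a set of definite $\sigma$-measure (via Ahlfors regularity) where the density is uniformly below $\Theta_{V,g}(\xi)$, contradicting the Lebesgue-point property.  Constancy of density on $\spt\sigma_{V'}$ then forces $(n-1)$ directions of translation invariance, after which your one-dimensional classification goes through and De Giorgi's theorem on $\partial^*S$ distinguishes the $BS$ case from the others.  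The rest of your Step~3 is essentially correct in outline; note however that the identification $\sigma_{V_I,g}=\haus^{n-1}_g\llcorner\spt\sigma_{V,g}$ (i.e.\ density $1$) also uses a second Lebesgue-point argument, this time for $\eta_{V_I,g}$, to upgrade the soft bound $\lambda\ge\sigma_{V'_I}$ for any subsequential limit $\lambda$ of the rescaled $\sigma_{V_I}$ to equality.
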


\begin{remark}
We think it likely that Theorem \ref{thm:refined} holds with $\spt \sigma_{V_B}$ in place of $BS$.  The issue is ruling out the possibility that $(\overline{\del^*S} \setminus \del^* S) \cap \{ \beta \neq \pi/2 \}$ has positive $\haus^{n-1}_g$-measure.  See Remark \ref{rem:example-collision} for more discussion.
\end{remark}

\begin{proof}[Proof of Theorem \ref{thm:refined}]
\begin{comment} We first remark that it suffices to assume $\cos\beta(0) > 0$.  Otherwise, if $\cos\beta(0) \leq 0$, then define $\tilde S = \R^n \setminus S$, $\tilde \beta(x) = \pi - \beta(x)$, and $\tilde V = V_I + \cos\tilde\beta [\R^n \setminus S]$, then $\tilde V$ is a $(\tilde \beta, \tilde S)$-capillary varifold with $\cos\tilde \beta(0) > 0$ satisfying EQREF and also
\[
\Theta_{\tilde V}(0, 1) \leq 1 - \alpha + c(n)\delta \leq 1 - \alpha/2,
\]
for $\delta(n, \alpha)$ sufficiently small. Since $\sigma_{V_B} = \sigma_{\tilde V_B}$ and $V_I = \tilde V_I$, it will suffice to prove our result with $\tilde V$ in place of $V$. \end{comment}

Ensuring $\gamma(n, \alpha)$, $\delta(n, \alpha)$ are sufficiently small, by monotonicity \eqref{eqn:mono-weak-concl} and Remark \ref{rem:theta-change-ball} we can assume
\begin{equation}\label{eqn:refined-1}
\Theta_V(x, r) + (\cos\beta(x))_- \leq 1 - \alpha/4 \quad \forall x \in B_\gamma \cap \R^n, \quad \forall r \in (0, 1-|x|).
\end{equation}
Ensure also that $\gamma$ is smaller than the $\gamma(n, \alpha/2)$ from Theorem \ref{thm:cap-first-var}.  Let us recall that for some $c = c(n, \alpha)$ we have from Theorem \ref{thm:cap-first-var}.
\begin{equation}\label{eqn:refined-2}
c^{-1} \haus^{n-1}_g \leq \sigma_{V,g} \leq c \haus^{n-1}_g, \quad c^{-1} \sigma_{V_I,g} \leq \sigma_{V,g} \leq c \sigma_{V_I,g} \quad \text{ on } B_\gamma.
\end{equation}

%Rectifiability of $\spt \sigma_{V, g} \cap B_\gamma$ follows from CITE and the bounds EQREF.

%Depending on how large $\beta(0)$ is we will work slightly differently.  Let us call say $\beta(0) \geq 2\delta$ is Case 1, and say $2\delta > \beta(0) > 0$ is Case 2.  In Case 1 we have $\beta(x) \geq \delta > 0$ for all $x$, while in Case 2 we have $2\delta > \beta(x) > -\delta$ for all $x$.

Choose an $x \in \spt V_I \cap \R^n \cap B_\gamma \equiv \spt \sigma_V \cap B_\gamma$, and suppose $g(x) = g_{eucl}$.  From \eqref{eqn:refined-1}, Lemma \ref{lem:conv}, and monotonicity Lemma \ref{lem:bd-mono}, for any $r_i \to 0$, up to a subsequence the dilations $V_i := (\eta_{x, r_i})_\sharp V \to V'$ (where each $V_i$ lives in the metric $g_i = g \circ \eta_{x, r_i}^{-1}$) for $V'$ a conical, $(\beta' \equiv \beta(x), S')$-capillary varifold in $\R^{n+1}$ satisfying
\begin{equation}\label{eqn:refined-3}
\Theta_{V'}(0, 1) = \Theta_{V,g}(x) \leq 1 - \alpha/4 - (\cos\beta')_-.
\end{equation}
Moreover, we have $V_{I, i} \to V_I'$, $V_{B, i} \to V_B'$, and $\sigma_{V_i, g_i} \to \sigma_{V'}$.

Since $\Theta_{V,g}$ is upper-semi-continuous on $B_1 \cap \R^n$, for $\sigma_{V, g}$-a.e. $x$ we can suppose $x$ is a Lebesgue point of $\Theta_{V, g}|_{\spt \sigma_{V, g} \cap B_1}$ w.r.t. $\sigma_{V, g}$.  Fix such an $x$.  Then we claim we must have $\Theta_{V'}(y) = \Theta_{V'}(0)$ for all $y \in \spt \sigma_{V'}$.  Otherwise, if $y \in \spt \sigma_{V'}$ satisfies $\Theta_{V'}(y) < \Theta_{V'}(0)$, then for some $\eps > 0$ we have $\Theta_{V_i, g_i}(z) < \Theta_{V'}(0) - \eps$ for all $z \in B_\eps(y)$ and all $i \gg 1$.  From Theorem \ref{thm:cap-first-var} with $R = |y| + 1$ we have
\[
\eps^{n-1}/c(n,\alpha) \leq \sigma_{V_i, g_i}(B_\eps(y)) \leq \sigma_{V_i, g_i}(\overline{B_R}) \leq c(n, \alpha) R^{n-1}
\]
for all $i \gg 1$.  We then deduce the contradiction
\begin{align}
\fint_{\overline{B_{Rr_i}(x)}} |\Theta_{V, g} - \Theta_{V, g}(x)| d\sigma_{V, g}
&= \fint_{\overline{B_R}} |\Theta_{V_i, g_i} - \Theta_{V_i, g_i}(0)| d\sigma_{V_i, g_i} \\
&\geq \frac{\sigma_{V_i, g_i}(\overline{B_\eps(y))}}{\sigma_{V_i, g_i}(\overline{B_R})} \\
&\geq 1/c(n, \alpha, \eps, y) .
\end{align}
This proves our claim.

Since $\spt\sigma_{V'} \neq \emptyset$ and $\haus^{n-1}(\spt\sigma_{V'} \cap B_1) > 0$, we deduce by monotonicity that $V'$ has at least $(n-1)$-dimensions of translational symmetry.  Using \eqref{eqn:refined-3}, that $V'_B = \cos\beta' [S'] \equiv \cos\beta(x) [S']$, and that $V'_I$ is integral, we deduce that (up to rotation)
\begin{gather}
V' = V^{(\beta')} \quad \text{ or } \quad V' = V^{(\pi/2)} \quad \text{ or } \quad V' = V^{(\pi/2)} + \cos\beta'[\R^n].
\end{gather}
Note that the nature of $V'$ (though not, a priori, the rotation) is uniquely determined by $\Theta_{V,g}(x)$ being one of $1/2, (1+\cos\beta(x))/2, 1/2 + \cos\beta(x)$.

Observe also that we have $\sigma_{V_I'} = \haus^{n-1} \llcorner \R^{n-1}$ and either $\sigma_{V'_B} = 0$ or $\sigma_{V_B'} = \cos\beta' \haus^{n-1} \llcorner \R^{n-1}$.  So at $\sigma_{V, g}$-a.e. $x$, every tangent measure of $\sigma_{V, g}$ is a multiple and rotation of $\haus^{n-1} \llcorner \R^{n-1}$.  Along with the Ahlfors-regularity \eqref{eqn:cap-first-var-concl2}, it follows from the Marstrand–Mattila rectifiability criterion (\cite[Theorem 5.1]{demasi}) that $\sigma_{V, g}$ is $(n-1)$-rectifiable, and hence (by \eqref{eqn:refined-2}) $\spt \sigma_{V, g}$ is $(n-1)$-rectifiable. %{\color{blue} COME BACK TO ME}

\begin{comment}Recall that $S$ is a set of $g$-locally-finite perimeter in $B_\gamma \cap \R^n \cap \{ \beta \neq \pi/2 \}$, having reduced boundary $\del^* S$, and that $\{ \beta \neq \pi/2 \} \cap \spt \sigma_{V_B} = \{ \beta \neq \pi/2 \} \cap \overline{\del^* S}$ in $B_\gamma$.  We claim
\begin{equation}\label{eqn:refined-4}
\haus^{n-1}( B_\gamma \cap \{ \beta \neq \pi/2 \} \cap \overline{\del^* S} \setminus \del^*S) = 0.
\end{equation}
{\color{blue} PROVE THIS CLAIM.  OK CRAP ACTUALLY THIS CLAIM COULD POTENTIALLY BE FALSE...  maybe the correct assumption in the a.e. regularity is $\Theta_{V, g}(x) + (\cos\beta(x))_- < \min\{1, 1/2 + \cos\beta(x)\}$}\end{comment}

By standard density estimates (\cite[Chapter 1]{Sim}), for $\haus^{n-1}_g$-a.e. $x \in B_\gamma \cap \{ \beta \neq \pi/2 \} \setminus \del^* S$ we have
\[
\mu_{V'_B}(B_1) \leq \limsup_{r \to 0} \frac{\haus^{n-1}_g(\del^*S \cap B_r(x))}{r^{n-1}} = 0,
\]
which implies $V' = V^{(\pi/2)}$ or $V' = V^{(\pi/2)} + \cos\beta' [\R^n]$.   On the other hand, from De Giorgi's structure theorem, for $\haus^{n-1}_g$-a.e. $x \in B_\gamma \cap \{ \beta \neq \pi/2 \} \cap \del^* S$ we have
\[
\mu_{V'_B}(B_1) = \lim_{r \to 0} \frac{\cos\beta(x) \haus^n_g(S \cap B_r(x))}{r^n} = \cos\beta(x) \omega_n/2,
\]
which implies $V' = V^{(\beta(x))}$.  Of course we have $V' = V^{(\pi/2)}$ or $V' = V^{(\pi/2)} + \cos\beta(x)[\R^n]$ for $\sigma_{V, g}$-a.e. $x \in B_\gamma \cap \{ \beta = \pi/2 \}$.

For $\sigma_{V_I, g}$-a.e. $x \in B_1$ (and hence by \eqref{eqn:refined-2} for $\sigma_{V, g}$-a.e. $x \in B_1$), we have $\eta_{V_I, g}(x) = \lim_{r \to 0} \fint_{\overline{B_r(x)}} \eta_{V_I, g} d\sigma_{V_I, g}$.  Fix an $x$ satisfying both this and being a Lebesgue point for $\Theta_{V, g}|_{\spt \sigma_{V, g}}$ w.r.t. $\sigma_{V, g}$.  Taking the tangent cone as above, we can assume the rescaled measures $\sigma_{V_{I, i}, g_i} \to \lambda$ as Radon measures for some $\lambda \geq \sigma_{V_I'}$.  We then compute for a.e. $R$:
\begin{align}
\lambda(\overline{B_R}) 
&= \lim_{i \to \infty} \sigma_{V_{I, i}, g_i}(\overline{B_R}) \\
&= \lim_{i \to \infty} \int_{\overline{B_R}} \eta_{V_I, g}(x) \cdot \eta_{V_{I, i}, g_i}(y) d\sigma_{V_{I, i}, g_i}(y) \\
&= \int_{\overline{B_R}} \eta_{V_I, g}(x) \cdot \eta_{V_I'}(y) d\sigma_{V_I'}(y) \\
&\leq \sigma_{V_I'}(\overline{B_R}).
\end{align}
Therefore $\sigma_{V_{I, i}, g_i} \to \sigma_{V_I'}$ as Radon measures, and $\eta_{V_I'} \equiv \eta_{V_I, g}(x)$.  In particular, we have $\lim_{r \to 0} r^{-n+1} \sigma_{V_I}(B_r(x)) = \omega_{n-1}$ for $\sigma_V$-a.e. $x \in B_\gamma$.

From our characterization of tangent cone we deduce that $\eta_{V_I, g}(x) = \nu_g$ for $\sigma_{V, g}$-a.e. $x \in B_\gamma \setminus (\{ \beta \neq \pi/2 \} \cap \del^* S)$, and $g(\eta_{V_I, g}(x) , \nu_g) = \cos\beta(x)$ for $\sigma_{V, g}$-a.e. $x \in B_\gamma \cap \{ \beta \neq \pi/2 \} \cap \del^* S$.  %\eqref{eqn:refined-4} implies we can replace $\{ \beta \neq \pi/2 \} \cap \del^* S$ with $\spt \sigma_{V_B}$ in the previous sentence.
Finally, using that $\spt \sigma_{V, g}$ is rectifiable we deduce that
\[
\lim_{r \to 0} \frac{\sigma_{V_I, g}(\overline{B_r(x)})}{\haus^{n-1}_g(\spt \sigma_{V} \cap \overline{B_r(x)})} = \lim_{r \to 0} \frac{\omega_{n-1} r^{n-1}}{\haus^{n-1}_g(\spt \sigma_{V, g} \cap \overline{B_r(x)})} \frac{\sigma_{V_I, g}(\overline{B_r(x)})}{\omega_{n-1} r^{n-1}} = 1
\]
for $\sigma_{V, g}$-a.e. $x \in B_\gamma$.  The characterization \eqref{eqn:refined-concl1} now follows by the Radon-Nikodyn theorem.
\end{proof}

\section{Boundary maximum principle}\label{sec:max}
Our goal of this section is to establish the following boundary maximum principle (Theorem \ref{thm:boundary-max}).  Our strategy is to first prove the maximum principle at the level of the tangent cone (Lemma \ref{lem:touching}), by showing that any blow-up cannot be contained in a wedge of too small or large an angle.  To ensure the tangent cone has genuine capillary boundary data (as opposed to secretly being free-boundary), we employ an argument (Lemma \ref{lem:closeness}) which shows that closeness to the model cone $V^{(\theta)}$ propagates to all scales on the boundary \emph{provided} one allows $V^{(\theta)}$ to rotate with the scale.  The argument is very soft and only uses monotonicity and a fairly basic classification of cones (Lemma \ref{lem:tangent-cones}).

In Sections \ref{sec:max}, \ref{sec:harnack}, \ref{sec:decay} \textbf{we will always be working with capillary angles $\in (0, \pi/2)$}, and in particular our capillary varifolds will always have positive density. Otherwise, if $\beta>\pi/2$ locally and $V=V_I + \cos\beta [S]_g$, then we work with the swapped varifold $\tilde V=V_I + \cos(\pi-\beta)[\R^n\setminus S]_g$ instead.

Before stating Theorem \ref{thm:boundary-max}, we recall the notations $G_\theta(u)$ and $\nu_\theta(u,g)$ introduced in Subsection \ref{subsection:slanted_graphs} and we introduce the following (straightforward) terminology: we say that $G_\theta(u)$ lies above (resp. below) $\spt \mu_{V_I}$ in some set $E$ if $\spt \mu_{V_I}\cap E\subset\{x\colon x_{n+1}\le -x_1\cot \theta + u(x')\mbox{ (resp. $\ge$)}\}$ and that it touches $\spt \mu_{V_I}$ at $z$ if $z\in\spt \mu_{V_I}\cap G_\theta(u)$.
%{\color{blue} SIDENOTE: in this whole section we could assume $|D\beta|_{C^0}$ is small instead of $|\beta - \theta|_{C^1}$ being small}

\begin{theorem}[Boundary maximum principle]\label{thm:boundary-max}
Given $\theta \in (0, \pi/2)$, there is a $\delta(n, \theta)$ so that the following holds.  Let $g$ be a $C^1$ metric on $B_1$, $\beta$ be a $C^1$ function on $B_1$, and $V$ a $(\beta, S)$-capillary varifold in $(B_1, g)$ satisfying
\begin{gather}
\D(V, V^{(\theta)}) \leq \delta, \label{eqn:mp-hyp1}\\
%\theta_{V, g}(x) \in \N \text{ $\mu_V$-a.e. $x_1 < 0$}, \quad \theta_{V, g}(x) = \cos\beta(x) \text{ $\mu_V$-a.e. $x_1 = 0$}, \label{eqn:mp-hyp2}\\
\max \{ ||H^{tan}_{V, g}||_{L^\infty(B_1)}, |g - g_{eucl}|_{C^1(B_1)}, |\beta - \theta|_{C^1(B_1)} \} \leq \delta. \label{eqn:mp-hyp3}
\end{gather}
Let $u : B_{1/2}^\theta \to \R$ be a $C^1$ function, and $z\in \spt \mu_{V_I}\cap \{x_1=0\} \cap B_{3/4}$. Denote by $G_\theta(u)$ the slanted graph of $u$ and $\nu_g$ the $g$-outward unit normal vector field along $\{x_1=0\}$.
\begin{enumerate}
	\item If $G_\theta(u)$ lies above $\spt \mu_{V_I}$ in $C_{1/2}^\theta \cap B_{3/4}$ and touches $\spt \mu_{V_I}$ at $z$, then
	\begin{equation}
		g(\nu_\theta(u, g)|_z, \nu_{g}|_z ) \geq \cos\beta(z).
	\end{equation}
	\item If $G_\theta(u)$ lies below $\spt \mu_{V_I}$ in $C_{1/2}^\theta \cap B_{3/4}$ and touches $\spt \mu_{V_I}$ at $z$, then
	\begin{equation}
		g( \nu_\theta(u, g)|_z, \nu_{g}|_z ) \leq \cos\beta(z).
	\end{equation}
\end{enumerate}
\end{theorem}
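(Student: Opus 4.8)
The plan is to reduce the theorem, via a blow-up at $z$, to a purely linear ``cone version'' of the maximum principle, using the hypothesis $\D(V,V^{(\theta)})\le\delta$ only to guarantee that this blow-up is an honest capillary half-plane. It suffices to treat case~(1), case~(2) being obtained by the mirror-symmetric argument with ``above/below'' and the relevant inequalities reversed. After an affine change of coordinates that fixes $\{x_1=0\}$ as the barrier and is an isometry at $z$ — which changes $(g,\beta,V,G_\theta(u))$ in a controlled way while preserving the scalar $g(\nu_\theta(u,g)|_z,\nu_g|_z)$ — I may assume $z=0$ and $g(0)=g_{eucl}$, so that $\nu_g|_z=e_1$, $G:=G_\theta(u)$ is a $C^1$ graph over $\hor$ near $0$ with $u(0)=0$, and $\nu_\theta(u,g)|_z$ is the Euclidean unit normal, chosen in the $e_{n+1}$-halfspace, of the tangent plane $T$ of $G$ at $0$. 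Writing $T=\{x_{n+1}=L(x')\}$ with $L$ linear, $L(x')=b_1x_1+\cdots+b_nx_n$, one has $g(\nu_\theta(u,g)|_z,\nu_g|_z)=-b_1/\sqrt{1+|b|^2}$.

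I would first isolate the cone version (this should be Lemma \ref{lem:touching}): if $P$ is an $n$-dimensional half-plane with $\overline P\subset\{x_1\le0\}$, meeting $\{x_1=0\}$ at an angle $\beta'\in(0,\pi/2)$, with edge through $0$, and whose unit generator $\omega$ (the direction in $P$ with $\omega_1<0$) satisfies $\omega\cdot e_{n+1}>0$, and if $P\subset\{x_{n+1}\le L(x')\}$, then $-b_1/\sqrt{1+|b|^2}\ge\cos\beta'$. This is elementary: since the edge $\partial P$ is a linear subspace of $\{x_{n+1}\le L(x')\}$ it must lie in $T$, hence $\partial P=T\cap\{x_1=0\}$; therefore $\omega=-\sin\beta'\,e_1+\cos\beta'\,\hat w$ with $\hat w=N^{-1}(0,-b_2,\dots,-b_n,1)$, $N:=\sqrt{1+b_2^2+\cdots+b_n^2}$ (the hypothesis $\omega\cdot e_{n+1}>0$ fixing the sign of $\hat w$). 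The requirement $\omega\in\{x_{n+1}\le L(x')\}$ then reduces, after a short computation, to $-b_1\ge N\cot\beta'$; since $s\mapsto s/\sqrt{s^2+N^2}$ is increasing on $s>0$ and $\sqrt{1+|b|^2}=\sqrt{b_1^2+N^2}$, this gives $-b_1/\sqrt{1+|b|^2}\ge N\cot\beta'/\sqrt{N^2\cot^2\beta'+N^2}=\cos\beta'$. (In case~(2) one instead has $P\subset\{x_{n+1}\ge L(x')\}$, which gives $-b_1\le N\cot\beta'$, hence $\le\cos\beta'$.)

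Next I would blow up $V$ at $0$: by the signed boundary monotonicity (Lemma \ref{lem:bd-mono}) and the improved convergence (Lemma \ref{lem:conv}), along a subsequence $r_i\downarrow0$ the rescalings $(\eta_{0,r_i})_\sharp V$ converge to a conical $(\beta(0),S')$-capillary varifold $V'$ with $\Theta_{V'}(0)=\Theta_{V,g}(0)$ and $\spt\mu_{V_{I,i}}\to\spt\mu_{V_I'}$ locally in the Hausdorff sense. Because $\D(V,V^{(\theta)})\le\delta$, a soft propagation-of-closeness argument (Lemma \ref{lem:closeness}) — using only monotonicity and a basic classification of cones — shows that each such rescaling, hence $V'$, is $\D$-close to a rotation of $V^{(\theta)}$ fixing $e_1$; with $\theta$ bounded away from $\pi/2$ this forces $\Theta_{V,g}(0)<1$, $\beta(0)$ close to $\theta$, $(\cos\beta(0))_-=0$, and, via the soft classification of capillary cones (Lemma \ref{lem:tangent-cones}), $V_I'=R(P_{\beta(0)}^-)$ for a rotation $R$ fixing $e_1$ with $R(P_{\beta(0)}^-)$ Hausdorff-close to $P_\theta^-$, so in particular its generator points into $\{x_{n+1}>0\}$. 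Finally, ``$G$ lies above $\spt\mu_{V_I}$'' passes to the limit — using $u(0)=0$, $u\in C^1$, and the Hausdorff convergence of supports — to give $P:=R(P_{\beta(0)}^-)=\spt\mu_{V_I'}\subset\{x_{n+1}\le L(x')\}\cap\{x_1\le0\}$, where $T=\{x_{n+1}=L(x')\}$ is exactly the tangent plane of $G$ at $0$. The cone version with $\beta'=\beta(0)$ now yields $g(\nu_\theta(u,g)|_z,\nu_g|_z)=-b_1/\sqrt{1+|b|^2}\ge\cos\beta(0)=\cos\beta(z)$, which is the assertion; case~(2) is identical with the reversed inequality.

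I expect the main obstacle to be the blow-up step, and precisely the fact that closeness to the model cone at scale $1$ — all that is assumed — must be propagated to closeness at every smaller scale along the boundary. This is what rules out the degeneracies discussed after Definition \ref{def.capillary.varifolds} — the blow-up secretly being a multiplicity-one free-boundary half-plane, or the wetting region $S$ having ``flipped'' from below $V_I$ to above it — and thereby pins the blow-up down as a capillary half-plane of angle exactly $\beta(z)$ with the correct upward orientation. It rests on the first variation control of Theorem \ref{thm:cap-first-var} (through Lemma \ref{lem:conv}) and on the monotonicity formula, and it is here that the hypothesis $\theta\ne\pi/2$ is used in an essential way.
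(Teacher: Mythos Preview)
Your overall blow-up strategy --- pass to a tangent cone at $z$, use Lemma \ref{lem:closeness} to propagate $\D$-closeness to all scales, pass the graph constraint to the limit, and then invoke a cone-level statement --- is the same as the paper's. The gap is in your identification of the blow-up as a single half-plane. Your ``cone version'' is the elementary linear-algebra fact that one half-plane $P$ of angle $\beta'$ contained in the half-space below $T=\{x_{n+1}=L(x')\}$ forces $-b_1/\sqrt{1+|b|^2}\ge\cos\beta'$; to use it you must first know $V_I'=R(P_{\beta(0)}^-)$, and for this you invoke Lemma \ref{lem:tangent-cones}. But Lemma \ref{lem:tangent-cones} requires the \emph{sharp} density bound $\Theta_{V'}(0)\le(1+\cos\beta(z))/2$: its proof pins the density from below at points of $\partial^*S'\setminus\{0\}$ via the inductive hypothesis, and then uses equality in monotonicity to split off $(n-1)$ directions. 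From $\D$-closeness and monotonicity you only obtain $\Theta_{V'}(0)\le(1+\cos\theta)/2+\Psi(\delta)$, and there is no mechanism at this stage to absorb the $\Psi(\delta)$ error. (Corollary \ref{cor:density-gap} is exactly the rigidity statement you would need, but its proof passes through the regularity Theorem \ref{thm:main} and hence circularly through Theorem \ref{thm:boundary-max} itself.)

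The paper's Lemma \ref{lem:touching} is accordingly much stronger than your cone version: it applies to \emph{any} conical stationary $(\theta,S)$-capillary varifold $\D$-close to a rotation of $V^{(\theta)}$, without first identifying it as a half-plane, and shows directly that $\spt\mu_V\subset\overline{\Omega^{\theta'}}$ forces $\theta'\le\theta$. The proof is by induction on $n$: one blows up at points of $\spt\sigma_V\cap\{x_1=x_{n+1}=0\}\setminus\{0\}$ (splitting off a line), applies the inductive hypothesis to conclude that no such points exist, hence $\spt\mu_{V_I}\cap\{x_1=x_{n+1}=0\}\subset\{0\}$; then, since $V$ is a cone, $\spt\mu_{V_I}$ touches $P_{\theta'}^-$ from one side at some interior point, so the Solomon--White strong maximum principle forces $P_{\theta'}^-\subset\spt\mu_{V_I}$, contradicting the previous step. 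In the paper's proof of Theorem \ref{thm:boundary-max} (Case 1) there is also a short extra argument showing $\spt\mu_{V_B}$ lies below the graph as well, so that the full $\spt\mu_V$ --- not just $\spt\mu_{V_I}$ --- is contained in the wedge, which is what Lemma \ref{lem:touching} requires.
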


\begin{lemma}[Classification of tangent cones]\label{lem:tangent-cones}
	Let $\theta \in (0, \pi/2)$ and let $V$ be a conical, stationary $(\theta, S)$-capillary varifold in $\R^{n+1}$ satisfying
	\begin{gather}
	\Theta_V(0) \leq (1+\cos\theta)/2,  \label{eqn:cones-hyp1} \\
	%\theta_V(x) \geq 1 \text{ $\mu_V$-a.e. $x \in \{ x_1 < 0 \}$}, \quad \theta_V(x) = \cos \theta \text{ $\mu_V$-a.e. $x \in \{ x_1 =  0 \}$}, \label{eqn:cones-hyp2} \\
	0 < \mu_V(B_1 \cap \ver) < \cos\theta \omega_n, \label{eqn:cones-hyp3}
	\end{gather}
	then $V = q(V^{(\theta)})$ for some $q \in O(\ver)$.
\end{lemma}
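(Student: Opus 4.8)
The plan is a spine dimension-reduction: first use the first-variation machinery to produce an $(n-1)$-dimensional translational symmetry of $V$, then reduce to an explicit computation with a $1$-dimensional capillary cone in a $2$-plane. Since $\theta\in(0,\pi/2)$ we have $\cos\theta\in(0,1)$, so setting $\alpha:=(1-\cos\theta)/2>0$ the density hypothesis reads $\Theta_V(0,1)=\Theta_V(0)\le 1-\alpha$, $(\cos\theta)_-=0$, while (as $H^{tan}_{V,g}=0$, $g=g_{eucl}$, $\beta\equiv\theta$) every smallness condition in Theorems~\ref{thm:cap-first-var} and \ref{thm:refined} is trivially met; since $V$ is a cone, all of their conclusions hold at every scale. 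In particular $S$ is a cone in $\R^n$ of locally finite perimeter (Remark~\ref{rem:S-perimeter}, using $\cos\theta>0$), $\sigma_{V,g}$ is $(n-1)$-Ahlfors regular, and $\spt\sigma_{V,g}=\spt V_I\cap\R^n$. Hypothesis \eqref{eqn:cones-hyp3} says $S$ is \emph{nontrivial}, neither $\haus^n$-null nor co-null, so $\haus^{n-1}(\del^*S)>0$; and $V_I\ne 0$, since otherwise $V=\cos\theta[S]$ would force $\int_{\R^n}\mathbf 1_S\,\dive X=0$ for all $X\in C^1_c(\R^n,\R^n)$, i.e.\ $S$ null or co-null.

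Set $\Sigma:=\{\xi\in\R^n:\Theta_{V,g}(\xi)\ge\Theta_V(0)\}$, which is closed by Corollary~\ref{cor:usc}. By conicality the boundary monotonicity (Lemma~\ref{lem:bd-mono}, with the $\Lambda$-corrections absent) holds on all of $(0,\infty)$; together with $\lim_{\sigma\to\infty}\mu_V(B_\sigma(\xi))/(\omega_n\sigma^n)=\Theta_V(0)$ it gives $\Theta_V(\xi)\le\Theta_V(0)$ for every $\xi\in\R^n$, with equality forcing the monotonicity gap to vanish over $\R^{n+1}$, hence $(x-\xi)^\perp=0$ for $\mu_{V_I}$-a.e.\ $x$, i.e.\ $V_I$ is $0$-homogeneous about $\xi$; being also a cone at $0$, $V_I$ is then invariant under all translations $\tau_{t\xi}$. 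As $\tau_\xi$ is an isometry preserving $\{x_1=0\}$, $\tau_\xi V=V_I+\cos\theta[S+\xi]$ is again a stationary $(\theta,S+\xi)$-capillary varifold, so $\cos\theta([S]-[S+\xi])$ satisfies $\int_{\R^n}(\mathbf 1_S-\mathbf 1_{S+\xi})\dive X=0$ for all $X\in C^1_c(\R^n,\R^n)$, whence $\mathbf 1_S-\mathbf 1_{S+\xi}$ is a.e.\ constant and (by nontriviality) $\mathbf 1_S=\mathbf 1_{S+\xi}$; thus $\tau_\xi V=V$. It follows that $\Sigma$ is a linear subspace and $V$ is $\Sigma$-translation-invariant. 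Now for $n\ge 2$, Theorem~\ref{thm:refined} (with $BS=\del^*S$, since $\beta=\theta\ne\pi/2$) gives that $\haus^{n-1}$-a.e.\ $\xi_0\in\del^*S$ has every tangent cone of $V$ a rotation of $V^{(\theta)}$, so $\Theta_V(\xi_0)=(1+\cos\theta)/2$; combined with $\Theta_V(\xi_0)\le\Theta_V(0)\le(1+\cos\theta)/2$ this forces $\Theta_V(0)=(1+\cos\theta)/2$ and $\xi_0\in\Sigma$. Hence $\haus^{n-1}$-a.e.\ point of $\del^*S$ lies in the subspace $\Sigma$; since $\haus^{n-1}(\del^*S)>0$ this gives $\dim\Sigma\ge n-1$, while $\dim\Sigma=n$ is excluded (it would make $S$ trivial). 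So $\dim\Sigma=n-1$, which also holds trivially when $n=1$.

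Let $L:=\Sigma$, an $(n-1)$-plane inside $\{x_1=0\}$, so $L^\perp$ is a $2$-plane containing $e_1$. By $L$-invariance, $V=L\times V'$ for a conical, stationary $(\theta,S')$-capillary $1$-varifold $V'$ in $L^\perp$, where $S'=S\cap L^\perp$ is a nontrivial cone in the line $\ell_0:=L^\perp\cap\{x_1=0\}$ and $\Theta_{V'}(0)=\Theta_V(0)=(1+\cos\theta)/2$. By the structure of $1$-varifolds, $V'_I=\sum_j m_j[r_j]$ is a finite sum of multiplicity-$m_j$ rays $r_j=\R_{\ge 0}\tau_j$ with $\tau_j\in S^1\cap L^\perp$ and $\tau_j\cdot e_1<0$ (a ray in $\{x_1=0\}$ carries no $\mu_{V_I}$-mass), and $S'$, having $\haus^1(S'\cap B_1)\in(0,2)$, is exactly one of the two rays of $\ell_0$. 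After a reflection in $O(\R^n)$ fixing $L$ we may take coordinates with $\ell_0=\vecspan(e_{n+1})$, $S'=\{x_{n+1}\le 0\}\cap L^\perp$, and $L=\vecspan(e_2,\dots,e_n)$. Then $(1+\cos\theta)/2=\Theta_{V'}(0)=\tfrac12\sum_j m_j+\tfrac12\cos\theta$ forces $\sum_j m_j=1$, so (as $V_I\ne 0$) there is a single ray $r=\R_{\ge 0}\tau$ of multiplicity $1$. Testing the vanishing first variation $\delta_{\theta,g_{eucl}}V'$ with cutoffs of the position field yields $X(0)\cdot(-\tau+\cos\theta\, e_{n+1})=0$ for all $X$ tangential to $\{x_1=0\}$, hence $\tau\cdot e_{n+1}=\cos\theta$; with $|\tau|=1$ and $\tau\cdot e_1<0$ this gives $\tau=-\sin\theta\, e_1+\cos\theta\, e_{n+1}$, i.e.\ $r=P_\theta^-\cap L^\perp$. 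Therefore, in these coordinates, $V=L\times V'=[P_\theta^-]+\cos\theta[\{x_1=0,\ x_{n+1}\le 0\}]=V^{(\theta)}$, and in general $V=q(V^{(\theta)})$ for the rotation $q\in O(\R^n)$ carrying these coordinates to the standard ones.

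The delicate point is the second step: upgrading a vanishing monotonicity gap to \emph{full} translation invariance of $V$ --- which for the set $S$ requires the separate stationarity argument for $\cos\theta([S]-[S+\xi])$ --- together with the use of Theorem~\ref{thm:refined} to locate a reduced-boundary point of $S$ off the vertex at which the density already equals $(1+\cos\theta)/2$. Once $\dim\Sigma=n-1$ is known, the splitting and the $1$-dimensional computation are routine. (For $n=1$ one can instead note directly that $\del^*S=\{0\}$, so Theorem~\ref{thm:refined} applied at the vertex identifies the tangent cone --- which is $V$ itself --- with a rotation of $V^{(\theta)}$.)
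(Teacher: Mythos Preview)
Your proof is correct and follows the same overall arc as the paper's: show that $\Theta_V$ attains the maximal value $(1+\cos\theta)/2$ on a subset of $\R^n$ of positive $\haus^{n-1}$-measure, deduce $(n-1)$ dimensions of translational symmetry, and finish with an explicit $1$-dimensional classification. The difference lies in how the density at points of $\del^*S$ is obtained. The paper argues by induction on $n$: at $x\in\del^*S\setminus\{0\}$ it blows up, uses De~Giorgi's theorem together with Lemma~\ref{lem:conv} to see that the tangent cone $V'$ satisfies $V'_B=\cos\theta[\text{half-space}]$ and splits off a line $V'=[\R]\times V''$, and then applies the inductive hypothesis to the $(n-1)$-dimensional cone $V''$ to conclude $\Theta_V(x)=(1+\cos\theta)/2$. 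You sidestep the induction by invoking Theorem~\ref{thm:refined} directly, which (via its own internal Lebesgue-point and splitting argument) already asserts that the tangent cone at $\haus^{n-1}$-a.e.\ point of $\del^*S$ is a rotation of $V^{(\theta)}$; this is logically legitimate since Theorem~\ref{thm:refined} is proved in Section~\ref{subsec:refined_bdry} without reference to Lemma~\ref{lem:tangent-cones}. Your route is shorter but imports heavier machinery; the paper's is more self-contained, using only Theorem~\ref{thm:cap-first-var} and Lemma~\ref{lem:conv}. On the other hand, you spell out carefully why translation-invariance of $V_I$ upgrades to translation-invariance of the full $V$ (via the stationarity of $\cos\theta([S]-[S+\xi])$ and the constancy theorem in $\R^n$), a point the paper absorbs into the phrase ``standard splitting''.
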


\begin{remark}
This Lemma fails without the assumption \eqref{eqn:cones-hyp3}.  For a counterexample, take $S = \emptyset$ and $V$ to be half the Simons' cone. Then \eqref{eqn:cones-hyp1} holds for some $\theta\in(0,\pi/2)$ and $\mu_V(B_1\cap \R^n)=0$.
\end{remark}

\begin{proof}[Proof of Lemma \ref{lem:tangent-cones}]
	We prove this by induction on $n$.  If $n = 1$ then $V$ consists of finitely-many rays extending from the origin and the conclusion follows directly by our assumptions.  Suppose $n > 1$ and that the Theorem holds for $n-1$.
	
	By Theorem \ref{thm:cap-first-var} $S$ is a set of locally-finite perimeter, and $\del^*S \subset \spt \sigma_V$, and (by \eqref{eqn:cones-hyp3}) $\del^* S \neq \emptyset$.  Therefore for every $x \in \del^* S$, after a rotation as necessary the rescaled sets $(S - x)/r$ converge in $L^1_{loc}(\ver)$ to $\{ x_1 = 0, x_{n+1} < 0 \}$.
    % \textcolor{carlo}{Doesn't $(S-x)/r$ converge to a half plane at \textit{any }$x\in\partial^*S$? (Maggi's book, thm 15.5) } {\color{blue} oops yes you are right}
	Then by Lemma \ref{lem:conv} and our upper density assumption \eqref{eqn:cones-hyp1} any tangent cone $V'$ to $V$ at $x$ is a $(\theta, S')$-capillary varifold which satisfies $V' \llcorner \{ x_1 = 0 \} = \cos\theta \haus^n \llcorner \{x_1 = 0 , x_{n+1} < 0 \}$.  Moreover, if $x \neq 0$, then $V'$ must have a line of translational-invariance, and in particular we can write $V' = [\R]\times V''$ for $V''$ satisfying the same hypotheses \eqref{eqn:cones-hyp1}, \eqref{eqn:cones-hyp3} except with $n-1$ in place of $n$.  By inductive hypothesis $\Theta_{V''}(x) = (1+\cos\theta)/2$.
	
	By upper semicontinuity, we deduce that $\Theta_V \geq (1+\cos\theta)/2$ on $\overline{\del^* S}$, but since $\haus^{n-1}(\del^* S) > 0$ we have by standard splitting that $V$ has $(n-1)$-dimensions of translational-invariance, and hence $V$ has the required form.
\end{proof}

\begin{lemma}[Propagation of closeness to all scales]\label{lem:closeness}
Given $\theta\in(0,\pi/2)$, $\eps > 0$, and $\sigma \in (0, 1)$, there is $\delta(n, \theta, \eps, \sigma) > 0$ so that the following holds.  Let $g$ be a $C^1$ metric on $B_1$, $\beta$ a $C^1$ function on $B_1$, and let $V$ be a $(\beta, S)$-capillary varifold in $(B_1, g)$ supported in $\{ x_1 \leq 0 \}$ satisfying
\begin{gather}
\D(V, V^{(\theta)}) \leq \delta, \label{eqn:propogation.1}\\
%\theta_{V, g}(x) \geq 1 \text{ $\mu_V$-a.e. $x \in B_1$}, \quad \theta_{V, g}(x) = \cos\beta(x) \text{ $\mu_V$-a.e. $x \in \{x_1=0\}$},\label{eqn:propogation.2}\\
\max\{ ||H^{tan}_{V, g}||_{L^\infty(B_1)} , |g - g_{eucl}|_{C^1(B_1)}, |\beta - \theta|_{C^1(B_1)}| \}\leq \delta. \label{eqn:propogation.3}
\end{gather}
Then for all $x \in B_\sigma \cap \spt \mu_{V_I} \cap \{ x_1 = 0 \}$ and $r \in (0, 1-\sigma)$, we have
\begin{gather}\label{eqn:closeness-concl}
\inf_{q \in O(\{ x_1 = 0 \})} \D( (V - x)/r, q(V^{(\theta)})) \leq \eps,
\end{gather}
and the same inequality \eqref{eqn:closeness-concl} holds with $V^{(\beta(x))}$ in place of $V^{(\theta)}$.
\end{lemma}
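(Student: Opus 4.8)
The plan is to argue by contradiction and compactness. Suppose the statement fails: then for some $\theta,\eps,\sigma$ there are $\delta_i\to 0$, metrics $g_i$, functions $\beta_i$, $(\beta_i,S_i)$-capillary varifolds $V_i$ in $(B_1,g_i)$ satisfying \eqref{eqn:propogation.1}--\eqref{eqn:propogation.3} with $\delta=\delta_i$, and points $x_i\in B_\sigma\cap\spt\mu_{V_{I,i}}\cap\{x_1=0\}$, radii $r_i\in(0,1-\sigma)$, with $\inf_{q\in O(\{x_1=0\})}\D((\eta_{x_i,r_i})_\sharp V_i,q(V^{(\theta)}))>\eps$. (Since $\beta_i(x_i)\to\theta$ and $O(\{x_1=0\})$ is compact, $\sup_q\D(q(V^{(\beta_i(x_i))}),q(V^{(\theta)}))\to 0$, so it is enough to rule this out; the $V^{(\beta(x))}$-version of the conclusion will follow.) First I would record the macroscopic picture: $\D(V_i,V^{(\theta)})\le\delta_i\to 0$ gives $V_i\to V^{(\theta)}$ as varifolds, and we may assume $\beta_i\to\theta$ in $C^1(B_1)$; using the a.e.-radius mass convergence of Lemma~\ref{lem:soft-conv} together with Proposition~\ref{prop:mono-weak} one gets $\limsup_i(\Theta_{V_i}(0,1)+(\cos\beta_i(0))_-)\le\Theta_{V^{(\theta)}}(0,1)=\tfrac{1+\cos\theta}2<1$, so for $i\gg 1$ Lemma~\ref{lem:conv} applies with $\alpha:=(1-\cos\theta)/4$. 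Since $V^{(\theta)}$ is a cone, its special case (1) yields, in every $B_\rho\Subset B_1$: $\spt V_{I,i}\to P_\theta^-$ and $\spt\sigma_{V_i,g_i}\to\spt\sigma_{V^{(\theta)}}=\R^{n-1}$ in Hausdorff distance, $S_i\to\{x_1=0,x_{n+1}<0\}$ in $L^1_{loc}$, the uniform Ahlfors bounds of Theorem~\ref{thm:cap-first-var} (with constants depending on $\rho$), and — via item (2) of Lemma~\ref{lem:conv} — that $S_i$ contains $\{x_1=0,x_{n+1}<-\eps\}\cap B_\rho$ and is disjoint from $\{x_1=0,x_{n+1}>\eps\}\cap B_\rho$ for $i\gg 1$. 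As $x_i\in\spt V_{I,i}\cap\{x_1=0\}\cap\overline{B_\sigma}$ and $\spt V_{I,i}\to P_\theta^-$, after passing to a subsequence $x_i\to x_\infty\in P_\theta^-\cap\{x_1=0\}\cap\overline{B_\sigma}=\R^{n-1}\cap\overline{B_\sigma}$.

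Next I would pass to a further subsequence with $r_i\to r_*\in[0,1-\sigma]$. If $r_*>0$, then $W_i:=(\eta_{x_i,r_i})_\sharp V_i\to(\eta_{x_\infty,r_*})_\sharp V^{(\theta)}$ as varifolds (pushforward is continuous under the converging diffeomorphisms $\eta_{x_i,r_i}\to\eta_{x_\infty,r_*}$ and varifold convergence), and since $V^{(\theta)}$ is invariant under dilations about $0$ and under translations along $\R^{n-1}\ni x_\infty$ the right-hand side is $V^{(\theta)}$; so $\D(W_i,V^{(\theta)})\to 0$, contradicting the hypothesis with $q=\mathrm{id}$. The main case is $r_*=0$. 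Set $W_i=(\eta_{x_i,r_i})_\sharp V_i$, $g_i'=g_i\circ\eta_{x_i,r_i}^{-1}$, $\beta_i'=\beta_i\circ\eta_{x_i,r_i}^{-1}$, still satisfying the rescaled smallness bounds and $|\beta_i'-\theta|_{C^0(B_1)}\le\delta_i$. The key step is a density transfer: applying Proposition~\ref{prop:mono-weak} to $V_i$ at $x_i$ with $t_0:=(1-\sigma)/2$ fixed (so $r_i<t_0<(1-c\eps_i)(1-|x_i|)$ for $i\gg 1$, $\eps_i=|g_i(x_i)-g_{eucl}|\le\delta_i$), one gets $\Theta_{W_i}(0,s)=\Theta_{V_i}(x_i,sr_i)\le(1+c\delta_it_0+c\eps_i)\Theta_{V_i}(x_i,t_0)+C(\delta_it_0+\eps_i)$ whenever $sr_i\le t_0$, and since $x_i\to x_\infty\in\R^{n-1}$ with $V^{(\theta)}$ conical there, $\Theta_{V_i}(x_i,t_0)\to\tfrac{1+\cos\theta}2$. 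Hence $\limsup_i\Theta_{W_i}(0,s)\le\tfrac{1+\cos\theta}2$ for each fixed $s>0$; in particular $\Theta_{W_i}(0,1)+(\cos\beta_i'(0))_-<1-\alpha$ for $i\gg 1$, so Lemma~\ref{lem:conv} applies to $W_i$: after a subsequence $W_i\to W$, a stationary free-boundary varifold in $B_1$ which is a $(\theta,S_0)$-capillary varifold in $B_{\gamma_0}$ ($\gamma_0=\gamma(n,\alpha)$), with $W_{I,i}\to W_I$, $(V_{B,i})'\to W_B$, $\sigma_{W_i,g_i'}\to\sigma_W$ and $\spt W_{I,i}\to\spt W_I$, $\spt\sigma_{W_i,g_i'}\to\spt\sigma_W$ in Hausdorff distance on $B_{\gamma_0}$. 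Combining the density transfer with the a.e.-radius mass convergence and the monotonicity of Lemma~\ref{lem:bd-mono} (the limit metric is $g_{eucl}$ and $H^{tan}_W=0$) gives $\Theta_W(0,s)\le\tfrac{1+\cos\theta}2$ for all $s\le 1$; and $0\in\spt W_{I,i}\cap\{x_1=0\}$ plus Hausdorff convergence gives $0\in\spt W_I\cap\spt\sigma_W$.

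To finish I would identify $W$. Take a tangent cone $W'$ of $W$ at $0$: by Lemma~\ref{lem:bd-mono}, Remark~\ref{rem:tangent-cones}, and Lemma~\ref{lem:conv} (applicable since $\Theta_W(0,r)\le\tfrac{1+\cos\theta}2<1-\alpha$), $W'$ is a conical stationary $(\theta,S')$-capillary varifold (extended homogeneously to $\R^{n+1}$) with $\Theta_{W'}(0)=\Theta_W(0)\le\tfrac{1+\cos\theta}2$ and $0\in\spt W'_I$. Then I would check the hypotheses of Lemma~\ref{lem:tangent-cones}: \eqref{eqn:cones-hyp1} is the density bound; the upper bound $\mu_{W'}(B_1\cap\R^n)<\cos\theta\,\omega_n$ holds because otherwise $W'_B=\cos\theta[\R^n]$, whence $W'_I$ is a non-trivial stationary free-boundary integral cone in $\{x_1\le 0\}$ and (by reflection) $\Theta_{W'_I}(0)\ge\tfrac12$, giving $\Theta_{W'}(0)\ge\tfrac12+\cos\theta>\tfrac{1+\cos\theta}2$, a contradiction; and the lower bound $\mu_{W'}(B_1\cap\R^n)>0$ — i.e.\ the wet region does not collapse in the blow-up — is where the global closeness to $V^{(\theta)}$ (and the nonemptiness of $V^{(\theta)}_B$) must enter: the item (2) conclusion of Lemma~\ref{lem:conv}, which forces $S_i$ to fill $\{x_{n+1}<-\eps\}$ at macroscopic scales near $x_\infty\in\R^{n-1}$ for every $\eps>0$ (after a diagonal choice of $\eps=\eps_i\to 0$), together with the uniform Ahlfors/perimeter bounds for $S_i$ near $x_i$, should prevent $\mathcal H^n(S_i\cap B_{r_i}(x_i))/r_i^n\to 0$, hence rule out $W'$ being the free-boundary cone $V^{(\pi/2)}$. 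I expect this verification to be the main obstacle of the proof. Granting it, Lemma~\ref{lem:tangent-cones} gives $W'=q(V^{(\theta)})$ for some $q\in O(\{x_1=0\})$, so $\Theta_W(0)=\tfrac{1+\cos\theta}2$; together with $\Theta_W(0,1)\le\tfrac{1+\cos\theta}2$ and monotonicity, $\Theta_W(0,\cdot)$ is constant, so $W$ is itself a cone and $W=W'=q(V^{(\theta)})$. Therefore $\D(W_i,q(V^{(\theta)}))\to 0$, contradicting $\inf_q\D(W_i,q(V^{(\theta)}))>\eps$; and since $\beta_i(x_i)\to\theta$, the same contradiction handles the $V^{(\beta_i(x_i))}$-version, completing the proof.
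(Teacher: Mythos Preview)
Your overall strategy---contradiction, compactness, blow-up, classification of cones---matches the paper's, and you correctly isolate the main obstacle: showing the limiting cone has nontrivial wet region, i.e.\ ruling out that the blow-up is the free-boundary half-plane $V^{(\pi/2)}$. However, your proposed resolution of this obstacle does not work. Knowing that $S_i$ fills $\{x_{n+1}<-\eps\}$ at macroscopic scales near $x_\infty$, together with uniform perimeter bounds, gives no lower bound on $\mathcal H^n(S_i\cap B_{r_i}(x_i))/r_i^n$: the relative isoperimetric inequality goes the wrong direction, and nothing prevents $x_i$ from being a point where the tangent cone to $V_i$ is $V^{(\pi/2)}$ with $S_i$ locally empty (density $1/2<(1+\cos\theta)/2$, so consistent with your bounds). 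In that scenario $W_B=0$ and Lemma~\ref{lem:tangent-cones} cannot be invoked.

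The paper closes this gap with a \emph{first-bad-scale} argument you are missing. Rather than taking arbitrary bad scales $r_i$, one replaces each $r_i$ by the largest scale at which \eqref{eqn:closeness-concl} fails (such a scale exists by continuity, since closeness holds at macroscopic scales). This buys the crucial extra information that $\D((V_i-x_i)/r,\,q_{i,r}(V^{(\theta)}))\le\eps$ for all $r\in(r_i,(1-\sigma)/2)$, which in the limit $V'=\lim(\eta_{x_i,r_i})_\sharp V_i$ (now taken on all of $\R^{n+1}$) becomes $\D(V'/r,\,q_r(V^{(\theta)}))\le\eps$ for all $r>1$. Lemma~\ref{lem:D-to-M} applied at scale $r\approx 1$ then gives directly $0<\mathcal H^n(S'\cap B_1)<\omega_n$, so $\partial^*S'\ne\emptyset$; picking $x\in\partial^*S'$ and applying Lemma~\ref{lem:tangent-cones} at $x$ (not at $0$) yields $\Theta_{V'}(x)=(1+\cos\theta)/2$, which with the density-at-infinity bound $\Theta_{V'}(0,\infty)\le(1+\cos\theta)/2$ and monotonicity forces $V'$ to be a translate of $q(V^{(\theta)})$; since $0\in\spt\sigma_{V'}$, in fact $V'=q(V^{(\theta)})$, and closeness then holds at scales slightly below $r_i$, contradicting the choice of $r_i$.
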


\begin{proof}
We  prove \eqref{eqn:closeness-concl} for $V^{(\theta)}$. The case for $V^{(\beta(x))}$ follows immediately from \eqref{eqn:propogation.3} and the observation that $\D(V^{(\beta(x))}, V^{(\theta)})\le \eps'$ whenever $\delta(n, \eps', \theta)$ is sufficiently small.  Also we note there is no loss in assuming $\eps(n, \theta)$ is as small as we like.

Suppose the statement fails. Then there exist sequences $\delta_i\to 0$, $r_i\in (0,1-\sigma)$, varifolds $V_i$ in $(B_1, g_i)$ and $y_i\in B_{\sigma}\cap \spt \mu_{V_{i,I}}\cap \{x_1=0\}$, such that the assumptions \eqref{eqn:propogation.1}, \eqref{eqn:propogation.3} hold with $V_i,  g_i, \delta_i$ in place of $V, g, \delta$, while
\[\D((V_i-y_i)/r_i,q(V^{(\theta)}))>\eps\]
for all $q\in O(n)$.  Note that we must have that $r_i\to 0$, since \eqref{eqn:propogation.1} implies that $V\to V^{(\theta)}$ in any fixed ball.

Note also that, by Theorem \ref{thm:cap-first-var} applied in any ball $B_{(1-\sigma)/2}(x), x\in B_\sigma\cap\ver$, it holds $$\spt\sigma_{V_i,g_i} \cap B_\sigma = \spt \mu_{V_{i,I}}\cap\ver \cap B_\sigma$$.
Moreover, \eqref{eqn:propogation.1}, \eqref{eqn:propogation.3} and Lemma \ref{lem:conv} imply that
$\Theta_{V_i}(x, (1-\sigma)/2) \leq (1+\cos\theta)/2 + o(1)$ for all $x \in B_\sigma \cap \spt\sigma_{V_i,g_i}\cap \ver$ as $i \to \infty$, and so by the monotonicity formula \eqref{eqn:mono-weak-concl} and \eqref{eqn:propogation.3} we have
\begin{equation}\label{eqn:propagation_density1}
    \Theta_{V_i}(y_i, r) \leq (1+\cos\theta)/2 + o(1)\mbox{ for all }r \in (0, (1-\sigma)/2). 
\end{equation}

For sufficiently large $i$, without loss of generality we can assume that each $r_i$ is taken so that, for every $r\in(r_i, (1-\sigma)/2)$, we have that
\begin{equation}\label{eq:taking_the_first_ri}
    \D((V_i-y_i)/r, q_{i,r}(V^{(\theta)}))\le \eps
\end{equation}
for some $q_{i,r}\in O(n)$.  Consider the rescaled varifolds $V_i' = (V_i - y_i)/r_i$, and note $H^{tan}_{V_i', g_i'} \to 0$ in $L^\infty_{loc}$, $g_i' \to g_{eucl}$ in $C^1_{loc}$, and $\beta_i \to \theta$ in $C^1_{loc}$.  By Lemma \ref{lem:conv}, up to a subsequence we get convergence $V_i' \to V'$ to some $(\theta, S')$-capillary varifold $V'$ in $\R^{n+1}$.  We also have $V_{i,I}'\to V_I'$ and $V_{i,B}'\to V_B' = \cos\theta[S']$ (for $S'$ a Caccioppoli set in $\{x_1=0\}$) as varifolds, $\sigma_{V_i', g_i'}\to \sigma_{V'}$ as Radon measures, and $0\in \spt \sigma_{V'}$.
Then \eqref{eqn:propagation_density1} implies that the density ratio satisfies 
\begin{equation}\label{eqn:propogation-smallness-1}
	\Theta_{V'}(0,\infty)\le \frac{1+\cos\theta}{2}.
\end{equation}

This, by monotonicity, implies $\Theta_{V'}(x) \le \frac{1+\cos\theta}{2}$ for any $x \in \R^n \cap B_1$.
Passing to further subsequence as necessary, by \eqref{eq:taking_the_first_ri} we can assume that there are Euclidean isometries $q_r\in O(n)$ for all $r\in (1,\infty)$ so that 
\[\D(V'/r, q_r(V^{(\theta)}))\le \eps,\quad \forall r>1.\]
By Lemma \ref{lem:D-to-M} (ensuring $\eps(n, \theta)$ is small) we have that $0<\cH^n(S'\cap B_1)<\omega_n$, and hence $\partial^*S' \cap B_2\ne\emptyset$. Take $x\in \partial^*S'\cap B_2$. From Lemma \ref{lem:conv} and Lemma \ref{lem:tangent-cones}, we conclude that any tangent cone to $V'$ at $x$ must be $q(V^{(\theta)})$ for some Euclidean isometry $q$. Particularly, we have that 
\begin{equation}\label{eqn:propogation-smallness-2}
	\theta_{V'}(x)=\frac{1+\cos\theta}{2}
\end{equation}

By the standard monotonicity formula,  \eqref{eqn:propogation-smallness-1} and \eqref{eqn:propogation-smallness-2} together imply that $V'= x+q(V^{(\theta)})$ for some point $x\in \{x_1=0\}$ and $q\in O(n)$. Since $0\in \spt \sigma_{V'}$, we must have that $V'=q(V^{(\theta)})$.  We therefore must have
\[\D((V_i-y_i)/r, q(V^{(\theta)}))<\eps\quad \forall r\in (r_i/2,2r_i),\]
for all $i \gg 1$, contradicting our choice of $r_i$.
\end{proof}

To proceed, for $\theta\in (0,\frac{\pi}{2})$, we define $\Omega^\theta=\{x_1\cos\theta+x_{n+1}\sin\theta<0, x_1 < 0\}$. In the next Lemma, we work with conical free-boundary stationary varifolds in the Euclidean half space.

\begin{comment}
\begin{lemma}[Boundary behavior at touching points]\label{lem:touching}
Given $\theta_0 > 0$, there is an $\eps(n, \theta_0)$ so that given any $\theta \in (\theta_0, \pi/2 - \theta_0)$ and any $\theta' > \theta$ or $\theta'' < \theta$, there \emph{does not exist} a conical, stationary $(\theta, S)$-capillary varifold in $\R^{n+1}$ satisfying
\begin{gather}
\inf_{q \in O(\{ x_1 = 0 \})} \D(V, q(V^{(\theta)})) \leq \eps, \label{eqn:touch-hyp1}\\
%\theta_{V}(x) \in \N \text{ $\mu_V$-a.e. $x_1 < 0$}, \quad \theta_{V}(x) = \cos\theta \text{ $\mu_V$-a.e. $x_1 = 0$}, \label{eqn:touch-hyp2}\\
\text{and either } \spt \mu_V \subset \overline{\Omega^{\theta'}} \text{ or } \spt \mu_{V_I} \subset \R^{n+1} \setminus \Omega^{\theta''} \label{eqn:touch-hyp3}.
\end{gather}
\end{lemma}\end{comment}

\begin{lemma}[Boundary behavior at touching points]\label{lem:touching}
Given $\theta_0 > 0$, there is an $\eps(n, \theta_0)$ with the following property.
Let $\theta\in(\theta_0,\pi/2-\theta_0)$ and let $V$ be a conical, stationary $(\theta, S)$-capillary varifold in $\R^{n+1}$ satisfying
\begin{gather}
\inf_{q \in O(\{ x_1 = 0 \})} \D(V, q(V^{(\theta)})) \leq \eps\label{eq:touch_hyp}
%\theta_{V}(x) \in \N \text{ $\mu_V$-a.e. $x_1 < 0$}, \quad \theta _{V}(x) = \cos\theta \text{ $\mu_V$-a.e. $x_1 = 0$}.
\end{gather}
\begin{enumerate}
    \item If there exists $\theta'$ such that
    \begin{equation}\label{eq:touch_hyp_fromabove}
        \spt\mu_V\subset\overline{\Omega^{\theta'}},
    \end{equation}
    then $\theta'\le\theta$;
    \item If there exists $\theta''$ such that
    \begin{equation}\label{eq:touch_hyp_frombelow}
    \spt\mu_{V_I}\subset\R^{n+1}\setminus\Omega^{\theta''},    
    \end{equation}
    then $\theta''\ge\theta$.
\end{enumerate}
\end{lemma}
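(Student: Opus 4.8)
The plan is to argue by contradiction and compactness, first reducing to a limiting configuration in which the cone is (a rotation of) $V^{(\theta)}$, and then—in the one remaining degenerate case—invoking the capillary first-variation identity at a touching point, combined with an induction on dimension analogous to Lemma~\ref{lem:tangent-cones}.

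For (1), suppose it fails along a sequence $V_i$ (conical, stationary, $(\theta_i,S_i)$-capillary, $\theta_i\in(\theta_0,\pi/2-\theta_0)$) with $\D(V_i,q_i(V^{(\theta_i)}))\to0$, $\theta_i'>\theta_i$, and $\spt\mu_{V_i}\subset\overline{\Omega^{\theta_i'}}$. Since $\overline{\Omega^{\theta_i'}}\cap\{x_1=0\}=\{x_1=0,x_{n+1}\le0\}$, we get $S_i\subset\{x_1=0,x_{n+1}\le0\}$; combined with Lemma~\ref{lem:D-to-M} (applied to $q_i^{-1}V_i$, which forces $S_i$ to be $L^1$-close to $q_i(\{x_1=0,x_{n+1}\le0\})$) and the $O(\vecspan\{e_2,\dots,e_n\})$-invariance of $V^{(\theta)}$, this pins $q_i$ down enough that $\D(V_i,V^{(\theta_i)})\to0$. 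Passing to a subsequence, $\theta_i\to\bar\theta$, $\theta_i'\to\bar\theta'$, $V_i\to V^{(\bar\theta)}$ (Lemma~\ref{lem:conv}); the containment passes to the limit, forcing $\bar\theta'<\pi$ (otherwise $\spt\mu_{V^{(\bar\theta)}}$ would lie in $\{x_1=0\}$, which is impossible) and $P_{\bar\theta}^-=\spt\mu_{V^{(\bar\theta)}_I}\subset\overline{\Omega^{\bar\theta'}}$, hence $\bar\theta'\le\bar\theta$ (using $P_\psi^-\subset\overline{\Omega^{\bar\theta'}}\iff\bar\theta'\le\psi$). As $\bar\theta'\ge\bar\theta$, the only surviving possibility is $\theta_i'-\theta_i\to0$.

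In this degenerate case, set $\psi_i^*:=\sup\{\psi\in(0,\pi):\spt\mu_{V_{i,I}}\subset\overline{\Omega^\psi}\}>\theta_i$ (so $\psi_i^*\to\bar\theta$), and pick a scale-$1$ point $x_i^*\in\spt\mu_{V_{i,I}}\cap P_{\psi_i^*}$ realizing (in the limit) the optimality of $\psi_i^*$. If $x_{i,1}^*$ stays bounded away from $0$, then $\spt\mu_{V_{i,I}}$ touches the minimal hyperplane $P_{\psi_i^*}$ from one side at an interior point; the strong maximum principle (with Allard's interior regularity and the density bound $\Theta_{V_i}(0)<1$, which force multiplicity one and rule out extra sheets) gives $V_{i,I}=[P_{\psi_i^*}^-]$, and then $\delta_{\theta_i}V_i(X)=0$ for tangential $X$, comparing the distributional boundaries of $[P_{\psi_i^*}^-]$ and $\cos\theta_i[S_i]$ on $\{x_1=0\}$, forces $\cos\psi_i^*=\cos\theta_i$ (the sign being fixed since $\psi_i^*<\pi/2$ for large $i$), i.e.\ $\psi_i^*=\theta_i$, a contradiction. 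If instead $x_{i,1}^*\to0$, blow up $V_i$ at a limiting touching point $x^*\in\R^{n-1}$: by Lemma~\ref{lem:closeness} the tangent cone is again close to a rotation of $V^{(\theta_i)}$, it is conical, stationary, capillary, acquires one more translational symmetry, is still trapped in $\overline{\Omega^{\psi_i^*}}$ (whose edge is $\R^{n-1}$) with $\psi_i^*>\theta_i$, and still carries a scale-$1$ touching point; moreover the side condition $S\subset\{x_1=0,x_{n+1}\le0\}$ is preserved under these blow-ups, which rules out the ``swapped'' model $V^{(\pi-\theta_i)}$ (whose boundary part would sit in $\{x_{n+1}\ge0\}$). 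Iterating—this is the induction on $n$—one eventually reaches a cone with $n-1$ translational symmetries, whose $1$-dimensional cross-section has interior part a single ray; optimality of $\psi_i^*$ places this ray exactly on $P_{\psi_i^*}^-$, and the $2$-dimensional capillary identity then gives $\cos\psi_i^*=\cos\theta_i$, the final contradiction. Part (2) is symmetric, with ``$\spt\mu_{V_I}$ lies above $P_{\theta''}$'' in place of ``$\spt\mu_V$ lies in $\overline{\Omega^{\theta'}}$''; there the compatible model is the swapped one, $V_I$ asymptotic to $P_{\pi-\theta}^-$, and one obtains $\theta''\ge\pi-\theta\ge\theta$, consistent with the claim.

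The hard part is exactly this degenerate case: closeness to the model alone yields only $\theta_i'\le\theta_i+O(\eps_i)$, and the sharp bound $\theta_i'\le\theta_i$ genuinely requires the capillary first-variation identity at a touching point. Since that touching point may slide into $\{x_1=0\}$ as the wedge angle degenerates, one cannot apply the interior maximum principle directly, but must descend through tangent cones that successively pick up translational symmetry—whence the dimensional induction. A secondary subtlety is tracking the swapped-wet-region models $V^{(\pi-\theta)}$ throughout the blow-ups, which is handled by carrying along the side condition $S\subset\{x_1=0,x_{n+1}\le0\}$.
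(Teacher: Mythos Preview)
Your proposal has the right ingredients---induction on $n$, the strong maximum principle at an interior touching point, and Lemma~\ref{lem:closeness} for propagation of closeness---but the organization introduces a real gap and an unnecessary detour.

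The initial compactness reduction to $\theta_i'-\theta_i\to0$ buys nothing: the lemma is about a single $V$ with a fixed $\eps$, and the paper proves it that way. The real problem is your case~(b), ``$x_{i,1}^*\to0$, blow up $V_i$ at a limiting touching point $x^*\in\R^{n-1}$''. The point $x^*=\lim_i x_i^*$ is a limit \emph{across the sequence} and need not lie in $\spt V_{i,I}$ for any fixed $i$, so the blow-up is not defined. If instead you blow up at $x_i^*$ itself (with $x_{i,1}^*<0$ in general), the tangent cone is an interior tangent cone with no capillary structure, and its line of splitting is $\R x_i^*\not\subset\R^{n-1}$, so it does not reduce to the $(n-1)$-dimensional statement. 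The further claim that the blow-up ``still carries a scale-$1$ touching point'' is unsubstantiated: the tangent cone may touch $P_{\psi_i^*}$ only along the new axis of symmetry, which does not let you iterate. Finally, your closing remark on part~(2), that the compatible model is ``the swapped one, $V_I$ asymptotic to $P_{\pi-\theta}^-$, and one obtains $\theta''\ge\pi-\theta$'', is wrong: for the reflection $q$, the half-plane $q(P_\theta^-)$ lies \emph{inside} $\Omega^{\theta''}$ for every $\theta''\in(0,\pi-\theta)$, so that model is excluded, not compatible.

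The paper's argument avoids all of this by reversing the order of the two steps. For a single $V$, it first uses the inductive hypothesis to \emph{empty the edge}: if $y\in\spt\sigma_V\cap\R^{n-1}\setminus\{0\}$, the tangent cone at $y$ splits as $[\R]\times V''$ with $V''$ satisfying the lemma's hypotheses in dimension $n-1$ (closeness via Lemma~\ref{lem:closeness}, wedge containment by scale-invariance of $\overline{\Omega^{\theta'}}$ about points of $\R^{n-1}$), which the inductive hypothesis forbids; hence $\spt V_I\cap\R^{n-1}\subset\{0\}$. Only then does it locate the optimal touching point $y\in\spt V_I\cap P_{\theta'}^-\cap\del B_1$, which now \emph{must} satisfy $y_1<0$; the interior strong maximum principle gives $P_{\theta'}^-\subset\spt V_I$, hence $\R^{n-1}\subset\spt V_I$, contradicting the first step. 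No compactness in $\eps$, no sequences, no diagonal chase.
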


\begin{proof}
    Throughout the proof, $\Psi(\eps)$ denotes some non-decreasing function (which may change from line to line), depending on $n$ and $\theta$, such that $\Psi(0)=0$.
    
	First note that since $V, V^{(\theta)}$ are conical, \eqref{eq:touch_hyp} implies $\theta_V(0) \leq (1+\cos\theta)/2 + \Psi(\eps)$.
    By contradiction, let us suppose that $V$ satisfies \eqref{eq:touch_hyp_fromabove} for some $\theta'>\theta$ or \eqref{eq:touch_hyp_frombelow} for some $\theta''<\theta$.
    
	We argue by induction.
    Assume $n = 1$.  Then any conical, stationary $(\theta,B)$-capillary varifold $V$ is a union of rays extending from the origin and, by \eqref{eq:touch_hyp}, $\Theta_V(0,1)\le(1+\cos\theta)/2+\Psi(\eps)$. Therefore $V$ must be one of: $0, \cos\theta [\ver], V^{(\pi/2)}, V^{(\theta)}$ or $q(V^{(\theta)})$ for $q(x_1, x_2) = (x_1, -x_2)$.  However all of these possibilities are precluded by \eqref{eq:touch_hyp}, Lemma \ref{lem:D-to-M} (ensuring $\eps(\theta)$ is sufficiently small) and either \eqref{eq:touch_hyp_fromabove} if $\theta'>\theta$ or \eqref{eq:touch_hyp_frombelow} if $\theta''<\theta$.
	
	Suppose now by inductive hypothesis the Theorem holds for $n-1$ in place of $n$.  By Lemma \ref{lem:closeness}, we have $\inf_{q \in O(n)} \D( (V - y)/r, q(V^{(\theta)})) \leq \Psi(\eps)$ for all $y \in \spt \sigma_V$ and all $r > 0$.  If we pick $y \in \spt\sigma_V \cap \{ x_1 = x_{n+1} = 0 \} \setminus \{0\}$, then any tangent cone $V'$ to $V$ at $y$ splits off a line $V' = [\R] \times V''$ with $V''$ being a $(\theta, B'')$-capillary varifold satisfying
	\begin{gather*}
	\inf_{q \in O(n)} \D(V'', q(V^{(\theta)})) \leq \Psi(\eps),\\
%	\theta_{V''}(x) \in \N \text{ $\mu_{V''}$-a.e. $\{ x_1 < 0 \}$}, \quad \theta_{V''}(x) = \cos\theta \text{ $\mu_{V''}$-a.e. $x \in \{ x_1 = 0 \}$}, \\
	\spt \mu_{V''} \subset \overline{\Omega^{\theta'}} \quad\text{or}\quad\spt \mu_{V''_I} \subset \R^{n+1} \setminus \Omega^{\theta''}.
	\end{gather*}
	By inductive hypothesis if we ensure $\eps(n, \theta)$ is sufficiently small then $V''$ cannot exist, and therefore we must have $\spt\sigma_V \cap \{ x_1 = x_{n+1} = 0\} \subset \{0\}$.  By Theorem \ref{thm:cap-first-var} we deduce
	\begin{gather}\label{eqn:touch-1}
	\spt \mu_{V_I} \cap \{ x_1 = x_{n+1} = 0 \} \subset \{ 0 \}
	\end{gather}
	also.

    By increasing $\theta'$ (resp. decreasing $\theta''$) we can assume that $\theta'$ is the largest angle for which $\spt \mu_V \subset \overline{\Omega^{\theta'}}$ (resp. $\theta''$ is the smallest angle for which $\spt \mu_{V_I} \subset \R^{n+1}\setminus \Omega^{\theta''}$).
    Assume that $V$ satisfies \eqref{eq:touch_hyp_fromabove} for some $\theta'>\theta$: for the remainder of the proof, the argument in the other case is identical.
    % Since $\spt\mu_V\not\subset \{ x_1 = 0 \}$, we have $\theta'$
    Since $\spt\mu_{V_I}$ is a closed cone, there exists $y\in\spt\mu_{V_I}\cap P_{\theta'}^-\cap \partial B_1$. However, by \eqref{eqn:touch-1}, it must hold that $y_1<0$. 
    Therefore near $y$, $V_I$ is stationary and, since $V$ is conical, $\spt \mu_{V_I}$ touches $P_{\theta'}^-$ from one side.  By the strong maximum principle \cite{solomon-white} we deduce $P_{\theta'}^- \subset \spt \mu_{V_I}$.  But this means $\spt \mu_{V_I} \cap \{ x_1 = x_{n+1} = 0 \} = \{ x_1 = x_{n+1} = 0\}$, contradicting \eqref{eqn:touch-1}.  This proves the Lemma with $n$ in place of $n-1$, and so by induction the Lemma holds for all $n$.
\end{proof}

We are now ready to establish the boundary maximum principle.

\begin{proof}[Proof of Theorem \ref{thm:boundary-max}]
Denote by $\theta'$ the $g$-angle between $G_\theta(u)$ and $\{x_1=0\}$ at $z$, i.e. so $\cos\theta' = g( \nu_{\theta}(u,g)|_z, \nu_g|_z )$. Write $\Omega_\theta(u) = \{(x_1, \ldots, x_{n+1}) \in C^\theta_{1/2} \cap B_{3/4} : x_{n+1} < -x_1\cot\theta + u(x')\}$ for the subgraph of $u$ in $B_{3/4}$, and note that up to rotation the tangent cone of $\Omega_\theta(u) \cap \{ x_1 \leq 0 \}$ at $z$ is $\Omega^{\theta'} = \{x_1\cos\theta' + x_{n+1}\sin\theta'<0,x_1 \leq0 \}$.

Let us first consider Case 1.  We claim that
\begin{equation}\label{eqn:bm-1}
\spt \mu_V \cap C^\theta_{1/2} \cap B_{3/4} \subset \overline{\Omega_\theta(u)}
\end{equation}
if $\delta(n, \theta)$ is chosen sufficiently small.  To see this, first note that for $\delta(n, \theta)$ small \eqref{eqn:mp-hyp1} implies $\Theta_V(x, 1/8) \leq 3/4 + \cos\theta/4$ for all $x \in B_{3/4} \cap \ver$.  Theorem \ref{thm:cap-first-var} then gives $\mu_{V_B} \llcorner B_{3/4} = \cos\beta[S]_g$ for some set $S \subset \ver$ satisfying
\begin{equation}\label{eqn:bm-2}
\del S \cap B_{3/4} \subset \spt \sigma_{V,g} \cap B_{3/4} = \spt \mu_{V_I} \cap \ver \cap B_{3/4}.
\end{equation}
From Lemma \ref{lem:conv}, after shrinking $\delta(n, \theta)$ as necessary, we have
\[
\spt \mu_{V_I} \cap B_{3/4} \subset B_{\cos\theta/100}(P_\theta),
\]
and (therefore, in combination with \eqref{eqn:bm-2}), we have
\begin{equation}\label{eqn:bm-3}
\{ x_1 = 0, x_{n+1} < -1/100\} \subset S \subset \{ x_1 = 0, x_{n+1} < 1/100\} \quad \text{ in } B_{3/4}.
\end{equation}
\eqref{eqn:bm-2}, \eqref{eqn:bm-3} together imply that for any $(x', x_{n+1}) \in \spt \mu_{V_B} \cap C^\theta_{1/2} \cap B_{3/4}$, there is a $(x', y_{n+1}) \in \spt \mu_{V_I} \cap \ver \cap C^\theta_{1/2} \cap B_{3/4}$ with $x_{n+1} \leq y_{n+1}$.  In other words, $\spt \mu_{V_B}$ lies below $\spt \mu_{V_I}$ in $C^\theta_{1/2} \cap B_{3/4}$, which in turn lies below $G_\theta(u)$, giving us our claim.

Now from Lemma \ref{lem:closeness} (provided $\delta(n, \theta)$ is small) any tangent cone $V'$ to $V$ at $z$ is non-zero (since $z \in \spt V_I \cap \ver$) and satisfies
\[
\inf_{q \in O(n)} \D(V', q(V^{(\beta(z))}))<\eps_0,
\]
where $\eps_0$ is the constant from Lemma \ref{lem:touching}.  On the other hand, from \eqref{eqn:bm-1}, $V'$ must be supported in $\overline{\Omega^{\theta'}}$, thus by Lemma \ref{lem:touching} we must have $\theta' \leq \beta(z)$.  This proves Case 1.

Case 2 is similar but easier.  In this case we clearly have $\spt \mu_{V_I} \cap C^\theta_{1/2} \cap B_{3/4} \subset C^\theta_{1/2} \cap B_{3/4} \setminus \Omega_\theta(u)$, and therefore any tangent cone $V'$ to $V$ at $z$ satisfies $\spt \mu_{V_I'} \subset \R^{n+1} \setminus \Omega_{\theta'}$.  Lemma \ref{lem:touching} implies $\theta' \geq \beta(z)$, proving Case 2.
\end{proof}

\section{Partial harnack}\label{sec:harnack}

In the next two sections we will make the following running assumptions on a varifold $V$.
\begin{ass}\label{ass:main}
We assume $\theta_0 \in (0, \pi/2)$, $g$ is a $C^1$ metric in $B_1$, $\beta$ a $C^1$ function on $B_1$, and $V$ is $(\beta, S)$-capillary varifold in $(B_1, g)$ satisfying
\begin{gather}
%\theta_{V, g}(x) \in \N \text{ $\mu_V$-a.e. $x_1 < 0$}, \quad \theta_{V, g}(x) = \cos\beta(x) \text{ $\mu_V$-a.e. $x_1 = 0$}, \\
0 \in \spt {V_I}, \quad \Theta_V(0, 1) \leq 3/4 + \cos\theta_0/4 < 1,
\end{gather}
\end{ass}

Our main result in this section is Theorem \ref{thm:harnack}, the partial Harnack inequality. This will tell us that inhomogenous blow-ups of a sequence of varifolds $V_i \to V^{(\theta)}$ will converge in support to some $C^\alpha$ function.  The proof relies on the strategy of \cite{desilva:fbreg} (c.f. \cite{Vel19}), originally implemented to study the regularity of the one-phase Bernoulli problem. We first establish the decay of oscillation.

\begin{theorem}\label{thm:harnack-step}
There is a $\gamma(n, \theta_0) \in (0, 1/100)$ so that the following holds.  If $V$ satisfies Assumptions \ref{ass:main} and additionally
\begin{gather}
\max\{ \osc_{P_{\beta(0)}}(V_I, B_1), |\beta(0) - \theta_0|  \} \leq \gamma, \label{eqn:harnack-step-hyp1} \\
 \max\{||H^{tan}_{V, g}||_{L^\infty(B_1)}, |D\beta|_{C^0(B_1)}, |g - g_{eucl}|_{C^1(B_1)}\}  \leq \gamma \osc_{P_{\beta(0)}}(V_I, B_1), \label{eqn:harnack-step-hyp2}
\end{gather}
then
\begin{equation}\label{eqn:harnack-step-concl}
\osc_{P_{\beta(0)}}(V_I, B_\gamma) \leq (1-\gamma) \osc_{P_{\beta(0)}}(V_I, B_1).
\end{equation}
\end{theorem}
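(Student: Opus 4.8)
The plan is to run a De Silva–type \emph{partial Harnack} argument: reduce to a two-sided slab trapping of $\spt V_I$ between two slanted graphs, split into a dichotomy according to whether $\spt V_I$ at a fixed \emph{interior} reference point lies in the lower or the upper half of the slab, improve the corresponding one-sided bound by a definite amount using the interior Harnack inequality for the uniformly elliptic operator $\cL_\theta=\Delta-\cos^2\theta\,D_1^2$, and propagate that improvement to a fixed smaller ball by a sliding slanted-graph barrier, using the interior strong maximum principle (\cite{solomon-white}) away from $\{x_1=0\}$ and the boundary maximum principle (Theorem \ref{thm:boundary-max}) on $\{x_1=0\}$. Throughout set $\sigma:=\osc_{P_{\beta(0)}}(V_I,B_1)$, $\theta:=\beta(0)$, $\bar\sigma:=\sigma/\sin\theta$. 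If $\sigma=0$ the conclusion \eqref{eqn:harnack-step-concl} is immediate, since $r\mapsto\osc_{P_\theta}(V_I,B_r)$ is monotone; so assume $\sigma>0$, and note $\sigma\le\gamma$ and $\tfrac12\sin\theta_0\le\sin\theta$ once $\gamma$ is small. Since $0\in\spt V_I$, the definition of the oscillation yields $c\le 0\le d$ with $d-c=2\bar\sigma$ and $\spt V_I\cap C^\theta_{3/4}\cap B_{3/4}\subset\{\,c\le x_{n+1}+x_1\cot\theta\le d\,\}$, i.e. $\spt V_I$ is trapped between the slanted graphs $G_\theta(c)$ and $G_\theta(d)$. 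Because $\osc_{P_\theta}(V_I,B_1)\le\gamma$, Lemma \ref{lem:D-vs-osc} gives $\D(V,V^{(\theta)})$ as small as we wish for $\gamma$ small, so Theorems \ref{thm:boundary-max}, \ref{thm:cap-first-var} and Lemma \ref{lem:conv} all apply and $\spt V_I\to P_\theta^-$ in the Hausdorff distance on $B_{3/4}$; in particular there is a reference point $\bar x\in\spt V_I\cap B_{1/8}$ with $\bar x_1\le-\rho_0$, for a fixed $\rho_0=\rho_0(n,\theta_0)\in(0,\tfrac1{16})$.

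Next, by Allard's theorem (Theorem \ref{thm:allard}, after rotating $P_\theta$ onto $\R^n$), for each $t>0$ the set $\spt V_I\cap\{x_1<-t\}\cap B_{1/2}$ is the slanted graph $G_\theta(u)$ of a multiplicity-one $C^{1,1/2}$ function $u$ with $|u|_{C^0}\le 2\bar\sigma$, $[u]_{C^1}\le C\bar\sigma$, and — using the decomposition $H_\theta(u,g)=\cL_\theta u+R_1(\theta,u)+R_2(\theta,u,g)$ from Subsection \ref{subsection:slanted_graphs}, the bounds on $R_1,R_2$ there, \eqref{eqn:harnack-step-hyp2}, and $\sigma\le\gamma$ — $|\cL_\theta u|\le C\gamma\bar\sigma$, with all constants $C=C(n,\theta_0)$ and the ellipticity constants of $\cL_\theta$ depending only on $\theta_0$ (since $0<\cos^2\theta<1$). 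Let $h_0:=u(\bar x')\in[c,d]$ be the slanted height of $\bar x$, $e:=(c+d)/2=d-\bar\sigma$, and split into: \textbf{Case (i)} $h_0\le e$, or \textbf{Case (ii)} $h_0\ge e$ (the overlap $h_0=e$ is harmless). In Case (i) we detach $\spt V_I$ from $G_\theta(d)$ and in Case (ii) from $G_\theta(c)$; the two are exchanged by the isometry swapping the trapping planes, so we only treat Case (i).

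In Case (i), $v:=d-u\ge0$ on $\{x_1<-\rho_0/2\}\cap B_{1/2}$ with $|\cL_\theta v|\le C\gamma\bar\sigma$ and $v(\bar x')=d-h_0\ge\bar\sigma$, so the interior Harnack inequality for $\cL_\theta$ on $B_{\rho_0/2}(\bar x)$ gives $v\ge c_1\bar\sigma$ on $B_{\rho_0/4}(\bar x)$ for some $c_1=c_1(n,\theta_0)>0$; hence $\spt V_I\cap B_{\rho_0/4}(\bar x)\subset\{x_{n+1}+x_1\cot\theta\le d-c_1\bar\sigma\}$. To propagate this to a fixed ball $B_\eta$ ($\eta=\eta(n,\theta_0)\in(0,\rho_0)$), fix a smooth profile $\psi=\psi_{n,\theta_0}\ge0$ on $\overline{B^\theta_{3/4}}$ with: $\psi\le0$ near $\partial B^\theta_{3/4}$; $\psi\ge c_2$ on $B^\theta_\eta$ ($c_2=c_2(n,\theta_0)>0$); $\tfrac{c_1}{8}\le\psi\le\tfrac{c_1}{4}$ on $B^\theta_{\rho_0/4}(\bar x')$; $\cL_\theta\psi\le-1$ on $B^\theta_{3/4}\setminus B^\theta_{\rho_0/8}(\bar x')$; and $D_1\psi\le-c_3<0$ on $\{x_1=0\}\cap B^\theta_{3/4}$. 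Consider the sliding family $\Gamma_t:=G_\theta\big(d-\bar\sigma(\psi+t)\big)$. For $t$ very negative $\Gamma_t$ lies strictly above $\spt V_I$; increase $t$ toward $0$. Using \eqref{eqn:harnack-step-hyp2}, $\sigma\le\gamma$ and the slanted-graph formulas, each $\Gamma_t$ is, away from $B^\theta_{\rho_0/8}(\bar x')$, a strict supersolution of the graphical mean-curvature operator with bound $\|H^{tan}_{V,g}\|_{L^\infty}$ (as $H_\theta(d-\bar\sigma(\psi+t),g)=-\bar\sigma\cL_\theta\psi+R_1+R_2\ge\bar\sigma-C\gamma\bar\sigma>\|H^{tan}_{V,g}\|_{L^\infty}$); its contact angle with $\{x_1=0\}$ satisfies, via $\nu_\theta(\cdot,g)\cdot e_1=\cos\theta+D_1(\cdot)(\cos^2\theta\sin\theta-1)+\tilde R_1+\tilde R_2$ together with $\cos^2\theta\sin\theta-1\le-c_4(\theta_0)<0$ and $|\beta-\theta|_{C^0(B_{3/4})}\le C\gamma\sigma$, the strict inequality $g(\nu_\theta(\cdot,g),\nu_g)\le\cos\theta-\tfrac12c_3c_4\bar\sigma<\cos\beta$ on $\{x_1=0\}\cap B^\theta_{3/4}$; near $B^\theta_{\rho_0/4}(\bar x')$ it stays strictly above $\spt V_I$ by the Harnack bound (since $\psi\le c_1/4$ there but $\spt V_I\le d-c_1\bar\sigma/2$); and on $\partial B^\theta_{3/4}$, $\Gamma_t$ has height $d-\bar\sigma(\psi+t)\ge d$ for $t\le0$, while $\spt V_I$ has height $\le d$. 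Consequently a first touching point $z$ of $\Gamma_t$ with $\spt V_I$ (as $t\uparrow 0$) cannot be interior — there $\spt V_I$ is a $C^{1,1/2}$ graph and the strong maximum principle forbids a subsolution to touch a strict supersolution from below, cf. \cite{solomon-white} — and cannot lie on $\{x_1=0\}$ (it would contradict Theorem \ref{thm:boundary-max}(1), as $\Gamma_t$ lies above $\spt\mu_{V_I}$ there and touches at $z$, yet has $g$-angle strictly smaller than $\beta(z)$). Hence $\Gamma_0\ge\spt V_I$ on $C^\theta_{3/4}\cap B_{3/4}$, so $\spt V_I\cap C^\theta_\eta\cap B_\eta\subset\{x_{n+1}+x_1\cot\theta\le d-c_2\bar\sigma\}$; combined with the lower trapping and $\osc_{P_\theta}(V_I,B_\eta)=\tfrac{\sin\theta}2(\sup-\inf)$ of the slanted height over $\spt V_I\cap B_\eta$, this yields $\osc_{P_\theta}(V_I,B_\eta)\le\tfrac{\sin\theta}2(2\bar\sigma-c_2\bar\sigma)=(1-\tfrac{c_2}2)\sigma$. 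Case (ii) gives the same. Finally put $\gamma:=\min\{\eta,\tfrac{c_2}2,\tfrac1{100},\gamma_\ast\}$, where $\gamma_\ast(n,\theta_0)$ is the threshold used above; by monotonicity of the oscillation in the radius, $\osc_{P_\theta}(V_I,B_\gamma)\le\osc_{P_\theta}(V_I,B_\eta)\le(1-\tfrac{c_2}2)\sigma\le(1-\gamma)\sigma$, which is \eqref{eqn:harnack-step-concl}.

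The main obstacle is the barrier construction: producing one fixed profile $\psi$ that is simultaneously $\cL_\theta$-superharmonic off a small ball around $\bar x'$, of prescribed controlled size near $\bar x'$ and on $B^\theta_\eta$, vanishing near $\partial B^\theta_{3/4}$, and with the correct sign of $D_1\psi$ on $\{x_1=0\}$ so that the sliding graph's capillary angle is strictly too small — and then checking that \emph{every} strict inequality (interior supersolution, boundary angle, ordering on the lateral boundary) survives the $O(\gamma\sigma)$ error terms $R_1,R_2,H^{tan}_{V,g},\beta-\theta$; this is exactly where the scaling in \eqref{eqn:harnack-step-hyp2} (errors small \emph{relative to} $\sigma$) is indispensable. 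A secondary technical point is that $\spt V_I$ is only known to be graphical in the interior, so the propagation must be carried out as a varifold comparison via the two maximum principles rather than as a PDE comparison of functions.
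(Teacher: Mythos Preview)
Your strategy coincides with the paper's: a De Silva--type sliding barrier argument, in which one first gains a one-sided improvement of size $\sim\bar\sigma$ near a fixed interior reference point and then propagates it to a fixed small ball via a slanted comparison graph, ruling out interior contact by the strong maximum principle and boundary contact by Theorem \ref{thm:boundary-max}. The paper obtains the initial improvement simply from the $C^1$-bound in Allard's estimate (rather than the interior Harnack inequality for $\cL_\theta$), and writes the barrier explicitly as
\[
u_t(x')=t-\eta\,\bar c\big(|x'-x_0'|_\theta^{-n}-2^n\big),\qquad x_0'=-\tfrac{\sin\theta}{4}e_1,
\]
truncated inside $B^\theta_{r_0}(x_0')$ and outside $B^\theta_{1/2}(x_0')$, rather than positing an abstract profile $\psi$; these are cosmetic differences.

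There is, however, a sign error that breaks your interior step. You impose $\cL_\theta\psi\le-1$, so $\phi_t:=d-\bar\sigma(\psi+t)$ satisfies $H_\theta(\phi_t,g)=-\bar\sigma\,\cL_\theta\psi+R_1+R_2\ge\bar\sigma-C\gamma\bar\sigma>0$. But if $G_\theta(\phi_t)$ touches $\spt V_I=G_\theta(w)$ from \emph{above} at an interior point, then $D^2\phi_t\ge D^2 w$ there and ellipticity gives $H_\theta(\phi_t)\ge H_\theta(w)\ge-\|H^{tan}_{V,g}\|_{L^\infty}$; this is a \emph{lower} bound on $H_\theta(\phi_t)$, so your inequality $H_\theta(\phi_t)>0$ produces no contradiction. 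Equivalently, $H_\theta(\phi_t)>0$ makes $\Gamma_t$ a \emph{sub}solution, not the ``strict supersolution'' you invoke. The correct requirement is $\cL_\theta\psi\ge c>0$ on the annulus, which yields $H_\theta(\phi_t)\le-c\bar\sigma+C\gamma\bar\sigma<-\|H^{tan}_{V,g}\|_{L^\infty}$ and then the desired contradiction. With this corrected sign the paper's explicit choice (for which $\cL_\theta(|x'-x_0'|_\theta^{-n})=2n|x'-x_0'|_\theta^{-n-2}>0$ and, since $x_{0,1}'<0$, also $D_1\psi<0$ on $\{x_1=0\}$) realizes all the properties you list for $\psi$, and your boundary step via Theorem \ref{thm:boundary-max} is correct as written.
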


\begin{proof}
We first introduce some notation.  Let $\theta = \beta(0)$.  Given $x' \in \hor$, define
\[
|x'|_\theta := \sqrt{ (\sin \theta)^{-2} x_1^2 + x_2^2 + \ldots x_n^2}.
\]
Observe that $B^\theta_r(x_0') = \{ x' \in P_{\pi/2} : |x' - x_0'|_\theta < r \}$, and by a direct computation:
\[
\cL_\theta(|x'|^{-n}_\theta) = 2n|x'|^{-n-2}_\theta,
\]
where recall that $\cL_\theta(u) = \Delta u - \cos^2 \theta D_1^2 u$.

Fix $x_0' = -(\sin\theta/4) e_1$, and write $\eta = \osc_{P_\theta}(V_I, B_1)$.  After translating $V$ there is no loss in assuming that for all $x \in \spt V_I \cap B_{19/20}$ we have $|x \cdot \nu_\theta| \leq \eta$ (recall $\nu_\theta = \cos\theta e_1  +\sin\theta e_{n+1}$), and hence $|x_{n+1} + \cot\theta x_1| \leq \eta/\sin\theta$.  Provided $\eta, \gamma$ are sufficiently small (depending only on $n, \theta_0$), Allard's theorem implies that
\[
\spt V_I \cap C^\theta_{1/8}(x_0') \cap B_{9/10} = G_\theta(w)
\]
for some $C^{1, 1/2}$ function $w : B^\theta_{1/8}(x_0') \to \R$ satisfying
\begin{equation}\label{eqn:harnack-step-1}
|w|_{C^{1,1/2}} \leq c(n, \theta_0) \eta, \quad |w| \leq \eta/\sin\theta.
\end{equation}
Suppose $w(x_0') \leq 0$.  Then by \eqref{eqn:harnack-step-hyp1}, \eqref{eqn:harnack-step-1} there is a radius $r_0(n, \theta_0)$ (which we now fix) so that
\begin{equation}\label{eqn:harnack-step-1.1}
w \leq \frac{\eta}{2\sin\theta} \text{ on } B^\theta_{2r_0}(x_0').
\end{equation}

For $t \in \R$, define $u_{t} : \hor \to \R$ by
\[
u_{t}(x') = \left\{ \begin{array}{l l} t & 1/2 \leq |x' - x_0'|_\theta \\ t - \eta \bar c ( |x' - x_0'|^{-n}_\theta - 2^n) & r_0 \leq |x' - x_0'|_\theta \leq 1/2 \\ t - \eta \bar c ( r_0^{-n} - 2^n) & |x' - x_0'|_\theta \leq r_0 \end{array} \right. ,
\]
for $\bar c = \frac{1}{4\sin\theta} (r_0^{-n} - 2^n)^{-1}$, so that $u_t$ is continuous and
\begin{equation}\label{eqn:harnack-step-2}
u_t = t \text{ outside } B^\theta_{1/2}(x_0') \quad \text{and}\quad u_t = t - \frac{\eta}{4\sin\theta} \text{ inside } B^\theta_{r_0}(x_0').
\end{equation}
We will use $G_\theta(u_{t})$ as barrier hypersurfaces for $V_I$.

For $x' \in B^\theta_{1/2}(x'_0) \setminus B^\theta_{r_0}(x'_0)$, we compute 
\begin{equation}\label{eqn:harnack-step-3}
H_\theta(u_{t}, g)|_{x'} \leq -2n\eta \bar c |x' - x'_0|_\theta^{-n-2} + c(n, \theta_0)(\eta+\gamma) \leq -c(n,\theta_0)\eta
\end{equation}
provided $\gamma(n, \theta_0)$ is sufficiently small.  For $x' \in B^\theta_{1/2}(x'_0) \cap \{ x_1 = 0 \}$ we compute
\begin{align}
\nu_\theta(u_t, g)|_{x'} \cdot e_1 &\leq \cos\theta + \frac{\eta \bar c n}{4\sin\theta} |x' - x_0'|_\theta^{-n-2} (\cos^2 \theta \sin\theta-1) + c(n, \theta_0) \gamma \eta \nonumber \\
&\leq \cos\theta - \eta \bar c \sin\theta \label{eqn:harnack-step-4}
\end{align}
again provided $\gamma(n, \theta_0)$ is small.

We claim that $\spt V_I \cap B_{9/10}$ must lie below $G_\theta(u_{t_0})$ with $t_0 = \eta/\sin\theta$.  In other words we claim that for every $(x', x_{n+1}) \in \spt V_I \cap B_{9/10}$ we have $x_{n+1} \leq -\cot\theta x_1 + u_{t_0}(x')$.  To see this, we first observe by \eqref{eqn:harnack-step-1}, \eqref{eqn:harnack-step-2} that $\spt V_I \cap B_{9/10}$ lies below $G_\theta(u_{t_1})$ with $t_1 = 2\eta/\sin\theta$, and we second observe that both $\spt V_I \cap B_{9/10} \cap C^\theta_{2r_0}(x_0')$ (by \eqref{eqn:harnack-step-1.1}) and $\spt V_I \cap B_{9/10} \setminus C^\theta_{1/2}(x_0')$ lie below $G_\theta(u_{t_0})$.

Now consider the smallest $t_* \in [t_0, t_1]$ for which $\spt V_I \cap B_{9/10}$ lies below $G_\theta(u_{t_*})$, and suppose towards a contradiction that $t_* > t_0$.  Then by the previous paragraph and \eqref{eqn:harnack-step-1} there must be a point $x_* \in \spt V_I \cap G_\theta(u_{t_*}) \cap \overline{C^\theta_{1/2}(x'_0)} \setminus C^\theta_{2r_0}(x'_0)$.  By the strong maximum principle, \eqref{eqn:harnack-step-3}, and our assumption \eqref{eqn:harnack-step-hyp2} (provided $\gamma(n, \theta_0)$ is small), $x_* \not\in \{ x_1 < 0 \}$.  On the other hand, taking $\gamma(n, \theta_0)$ small, from Lemma \ref{lem:D-vs-osc} and Theorem \ref{thm:boundary-max} (applied to the function $x' \mapsto t_* - \eta \bar c (|x' - x'_0|_\theta^{-n} - 2^n)$) if $x_* \in \{ x_1 = 0 \}$ then we would have
\[
\nu_\theta(u_{t_*}, g)|_{x'} \cdot e_1 \geq \cos \beta(x_*) \geq \cos\theta - \gamma \eta
\]
which contradicts \eqref{eqn:harnack-step-4} for $\gamma(n, \theta_0)$ small.  So $x_*$ cannot exist, and we must have $t_* = t_0$, proving our claim.

To conclude, we observe that for $x' \in B^\theta_{1/8}(0) \subset B^\theta_{1/2}(x'_0)$ we have $|x' - x'_0|^{-n}_\theta - 2^n \geq 2^n/3$, and so 
\[
u_{t_0} \leq t_0 - \bar c \eta/2 \leq (1-\gamma) \eta/\sin\theta \quad \text{ on }\quad  B^\theta_{1/8}(0).
\]
Ensuring $\eta(n, \theta_0)$ is small, we deduce that $\osc_{P_\theta}(V_I, B_{1/16}) \leq (1-\gamma) \osc_{P_\theta}(V_I, B_1)$.  This proves the Theorem in the case when $w(x'_0) \leq 0$.  If $w(x'_0) \geq 0$ then we can use an essentially verbatim argument, replacing $-\eta$ with $\eta$ in the definition of $u_t$ and touching $\spt V_I \cap B_{9/10}$ from below by $G_\theta(u_t)$.
\end{proof}
\begin{theorem}[Partial Harnack]\label{thm:harnack}
Given $\tau > 0$, there are $c(n, \theta_0)$, $\eps(n, \theta_0, \tau)$, $\alpha(n, \theta_0)$ so that the following holds.  If $V$ satisfies Assumption \ref{ass:main} and additionally
\begin{gather}
E:= \max \{ \osc_{P_{\beta(0)}}(V_I, B_1) , ||H^{tan}_{V,g}||_{L^\infty(B_1)}, |D\beta|_{C^0(B_1)}, |g - g_{eucl}|_{C^1(B_1)} \} \leq \eps, \\
|\beta(0) - \theta_0| \leq \eps,
\end{gather}
then for all $x \in \spt V_I \cap B_{1/2}$ and all $\tau \leq r \leq 1/2$, we have
\begin{align}\label{eqn:harnack-concl}
\osc_{P_{\beta(0)}}(V_I, B_r(x)) \leq c r^\alpha E.
\end{align}
\end{theorem}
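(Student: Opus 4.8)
The plan is to iterate the single-scale decay of oscillation of Theorem \ref{thm:harnack-step}; the one point requiring care is that the decay there is for the \emph{unnormalized} oscillation over a fixed scale ratio $\gamma$, so the \emph{normalized} oscillation may grow as we zoom in — this is exactly why $\eps$ will have to depend on $\tau$. Fix $x \in \spt V_I \cap B_{1/2}$, write $P = P_{\beta(0)}$, set $\rho_j = \gamma^j/2$ and $a_j = \osc_P(V_I, B_{\rho_j}(x))$, where $\gamma = \gamma(n,\theta_0)$ is the constant of Theorem \ref{thm:harnack-step}, and let $J = J(n,\theta_0,\tau)$ be the largest index with $\rho_J \geq \tau$. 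First, since $\osc_P(V_I, B_1) \le E \le \eps$ and $|\beta(0)-\theta_0| \le \eps$, Lemma \ref{lem:D-vs-osc} makes $\D(V, V^{(\beta(0))})$ as small as we like, and then a compactness argument based on Lemma \ref{lem:conv} (using that $\Theta_{V^{(\theta)}}(y,r) \le (1+\cos\theta)/2 < 3/4+\cos\theta_0/4$ strictly for $\theta$ near $\theta_0$) gives $\Theta_V(y, r) \le 3/4 + \cos\theta_0/4$ for all $y \in B_{1/2}$, $r \le 1/2$, once $\eps(n,\theta_0)$ is small. Together with $x \in \spt V_I$ and $\cos\beta > 0$ near $x$, this shows Assumption \ref{ass:main} is inherited by each ball $B_{\rho_j}(x)$.

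The heart of the argument is the recursion: for every $j < J$,
\begin{equation}\label{eqn:harnack-recursion}
a_{j+1} \leq (1-\gamma)\, a_j + C_1 E \rho_j, \qquad C_1 = C_1(n,\theta_0).
\end{equation}
To prove it, rescale by $\eta_{x,\rho_j}$: the varifold $\tilde V := (\eta_{x,\rho_j})_\sharp V$ is a $(\tilde\beta,\tilde S)$-capillary varifold in $(B_1, \tilde g)$, $\tilde g = g(x+\rho_j \cdot)$, with $\tilde\beta(0) = \beta(x)$, $|\tilde\beta(0)-\theta_0| \le E/2+\eps \le \gamma$, errors $\max\{\|H^{tan}_{\tilde V,\tilde g}\|_{L^\infty}, |D\tilde\beta|_{C^0}, |\tilde g - g_{eucl}|_{C^1}\} \le E$, and $\osc_{P_{\beta(x)}}(\tilde V_I, B_1) = \tilde a_j/\rho_j$ where $\tilde a_j := \osc_{P_{\beta(x)}}(V_I, B_{\rho_j}(x))$ satisfies $|\tilde a_j - a_j| \le c\,|D\beta|_{C^0}\,\rho_j \le cE\rho_j$ (transfer from $P_{\beta(x)}$ to $P_{\beta(0)}$). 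If $\tilde a_j/\rho_j \geq \gamma^{-1}E$, then hypothesis \eqref{eqn:harnack-step-hyp2} of Theorem \ref{thm:harnack-step} holds (and \eqref{eqn:harnack-step-hyp1} holds by the induction below), so that theorem gives $\osc_{P_{\beta(x)}}(\tilde V_I, B_\gamma) \leq (1-\gamma)\tilde a_j/\rho_j$; undoing the rescaling and absorbing the plane-transfer errors $|\tilde a_j - a_j| \le cE\rho_j$, $|\tilde a_{j+1} - a_{j+1}| \le cE\rho_{j+1}$ yields \eqref{eqn:harnack-recursion}. If instead $\tilde a_j/\rho_j < \gamma^{-1}E$, then $a_j \leq \tilde a_j + cE\rho_j \leq c'E\rho_j$, and the monotonicity $a_{j+1} \le a_j$ of the oscillation gives \eqref{eqn:harnack-recursion} directly. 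So \eqref{eqn:harnack-recursion} holds in both cases.

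Granting \eqref{eqn:harnack-recursion} for all $j < J$, with $a_0 \leq E$, $\rho_j = \gamma^j/2$, and $(1-\gamma) > \gamma$, a geometric-series computation gives $a_j \leq C_2 (1-\gamma)^j E = C_2 (2\rho_j)^{\alpha} E$ for $j \leq J$, where $\alpha := \log(1-\gamma)/\log\gamma \in (0,1)$ and $C_2 = C_2(n,\theta_0)$; interpolating in $r$ via monotonicity of the oscillation then gives $\osc_P(V_I, B_r(x)) \leq c(n,\theta_0)\, r^\alpha E$ for all $\tau \leq r \leq 1/2$, which is \eqref{eqn:harnack-concl}. It remains to check that hypothesis \eqref{eqn:harnack-step-hyp1} of Theorem \ref{thm:harnack-step}, i.e. $\tilde a_j/\rho_j \leq \gamma$, really holds at each step $j \le J$: by induction on $j$, assuming \eqref{eqn:harnack-recursion} up to step $j-1$ one has $a_j \leq C_2(1-\gamma)^j E$, hence $\tilde a_j/\rho_j \leq (a_j + cE\rho_j)/\rho_j \leq C_2' E\big((1-\gamma)/\gamma\big)^J$, which is $\leq \gamma$ provided $E \leq \eps(n,\theta_0,\tau) := \gamma (C_2')^{-1}(\gamma/(1-\gamma))^J$; this closes the induction.

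The main obstacle is precisely this last bound: because the normalized oscillation can grow like $\big((1-\gamma)/\gamma\big)^j$ as one zooms in, Theorem \ref{thm:harnack-step} can only be reapplied boundedly many times before its flatness hypothesis \eqref{eqn:harnack-step-hyp1} is violated, so one must cut the iteration off at scale $\tau$ and take $\eps$ correspondingly small — which is the source of the $\tau$-dependence of $\eps$. The remaining ingredients (the transfer between the fixed plane $P_{\beta(0)}$ and the scale-adapted plane $P_{\beta(x)}$, the propagation of the density and angle bounds, and the standard reduction to $g$ Euclidean at $x$ used inside the monotonicity estimates) are routine.
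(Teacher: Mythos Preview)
Your recursion argument is correct at boundary points $x \in \spt V_I \cap \{x_1=0\}$, but it breaks down at interior points, and this is a genuine gap. Your key claim that $\Theta_{V^{(\theta)}}(y,r) \le (1+\cos\theta)/2$ for all $y\in B_{1/2}$, $r\le 1/2$ is false: for $y \in P_\theta^-$ with $y_1<0$ and $r<|y_1|$, the ball $B_r(y)$ misses $\{x_1=0\}$ entirely and $\Theta_{V^{(\theta)}}(y,r)=1 > 3/4+\cos\theta_0/4$. Consequently the density hypothesis in Assumption~\ref{ass:main} is \emph{not} inherited by the rescaled varifold $\tilde V$ once $\rho_j<|x_1|$. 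Moreover, after rescaling at such an $x$ the barrier $\{x_1=0\}$ moves to $\{z_1=|x_1|/\rho_j\}$, so $\tilde V$ is no longer a $(\tilde\beta,\tilde S)$-capillary varifold in $(B_1,\tilde g)$ in the sense of the definition (which pins the barrier at $\{z_1=0\}$). Both failures block the direct invocation of Theorem~\ref{thm:harnack-step} at interior centers for small scales, and since $\spt V_I$ contains points with $|x_1|$ of order $\sin\theta_0$, this regime is unavoidable.

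The paper's proof sidesteps exactly this issue by splitting into two regimes. First, it runs your iteration of Theorem~\ref{thm:harnack-step} \emph{only} at boundary centers $x\in \spt V_I\cap\{x_1=0\}\cap B_{1/2}$ (via the auxiliary function $F(r)=\osc_{P_{\beta(x)}}(V_I,B_r(x))+KEr$), obtaining \eqref{eqn:harnack-concl} for such $x$ and $r\ge\tau$. Second, for interior points it invokes Allard's theorem in $\{x_1<-\tau\}$ to write $\spt V_I$ as a $C^{1,1/2}$ graph with scale-invariant estimates at each radius $|x_1|$, which gives \eqref{eqn:harnack-concl} for $r\le|x_1|$. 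Finally, for a general $x$ it picks a nearby boundary point $y$ with $|x-y|\lesssim_{\theta_0}|x_1|+\tau$ (using Hausdorff closeness of $\spt\sigma_{V,g}$ to $\{x_1=x_{n+1}=0\}$ from Lemma~\ref{lem:conv}) and concatenates the boundary decay at $y$ for $r\gtrsim|x_1|$ with the Allard decay at $x$ for $r\lesssim|x_1|$. Your recursion is thus the correct engine for the boundary half of the argument; what is missing is the separate interior treatment and the gluing step.
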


\begin{proof}
We use Theorem \ref{thm:harnack-step} to get H\"older decay centered at boundary points, and Allard's theorem to get H\"older decay at interior points up until the radius is comparable to $|x_1|$.  We combine the two decays to get decay for all radii $\geq \tau$.  There is no loss in assuming $\tau \leq 1/100$.  For ease of notation write $\theta = \beta(0)$.
% and $E = \max\{ \osc_{P_\theta}(V_I, B_1),  ||H_V||_{L^\infty(B_1)}, |D\beta|_{C^0(B_1)}, |g - g_{eucl}|_{C^1(B_1)}\}$.

First let $\gamma$ be the constant from Theorem \ref{thm:harnack-step}, and choose $K\ge 4/\gamma$.  For $x \in \spt V_I \cap \{ x_1 = 0 \} \cap B_{1/2} \equiv \spt \sigma_{V,g} \cap B_{1/2}$, define the function
\[
F(r) = \osc_{P_{\beta(x)}}(V_I, B_r(x)) + K E r.
\]
We claim that
\begin{equation}\label{eqn:harnack-1}
F(\gamma r) \leq (1-\gamma) F(r) \quad \text{for all $r$ such that} \quad \osc_{P_{\beta(x)}}(V_I, B_1)/\gamma \leq r \leq 1/4
\end{equation}
To see this, note that for such $r$ we trivially have $\osc_{P_{\beta(x)}}(V_I, B_r(x))/r \leq \gamma$.  If, in addition, we have
\begin{align}
&\max\{ r ||H^{tan}_{V,g}||_{L^\infty(B_r(x))} , r |D\beta|_{C^0(B_r(x))}, |g - g_{eucl}|_{C^0(B_r(x))}, r |Dg|_{C^0(B_r(x))} \} \nonumber \\
&\quad \leq \gamma \osc_{P_{\beta(x)}}(V_I, B_r(x))/r,  \label{eqn:harnack-1.5}
\end{align}
then we can apply Theorem \ref{thm:harnack-step} at scale $B_r(x)$ to get
\[
F(\gamma r) \leq (1-\gamma) \osc_{P_{\beta(x)}}(V_I, B_r(x)) + K E \gamma r \leq (1-\gamma) F(r)
\]
On the other hand, if \eqref{eqn:harnack-1.5} fails, then we can compute instead
\begin{align*}
F(\gamma r) &\leq KE r \left( \frac{1}{K\gamma} + \gamma\right) \leq (1-\gamma) F(r)
\end{align*}
This proves \eqref{eqn:harnack-1}.
By iterating \eqref{eqn:harnack-1}, and ensuring $\eps(n, \theta_0, \tau)$ is small, we deduce that with the choice $\alpha = \log(1-\gamma)/\log(\gamma) \in (0, 1)$ it holds
\begin{equation}\label{eqn:harnack-1.1}
\osc_{P_\theta}(V_I, B_r(x)) \leq \osc_{P_{\beta(x)}}(V_I, B_r(x)) + c(n) r |D\beta|_{C^0(B_1)} \leq c(n, \theta_0) r^\alpha E
\end{equation}
for all $x \in \spt V_I \cap \{ x_1 = 0 \} \cap B_{1/2}$ and all $\tau \leq r \leq 1/2$, provided we ensure $\eps(n, \tau)$ is small enough to guarantee
\[
\osc_{\beta(x)}(V_I, B_1) \leq \osc_{\beta(0)}(V_I, B_1) + c(n) |D\beta|_{C^0(B_1)} \leq c(n) \eps \leq \tau.
\]

Second, since $V \to V^{(\theta_0)}$ as $\eps \to 0$ (by Lemma \ref{lem:D-vs-osc}) and $\theta \to \theta_0$, then by Allard's theorem and provided $\eps(n, \theta_0, \tau)$ is sufficiently small we can write
\[
\spt V_I \cap C^\theta_{9/10} \cap \{ x_1 < -\tau \} = G_\theta(w),
\]
for $w : B^\theta_{9/10} \cap \{ x_1 < -\tau \} \to \R$ a $C^1$ function satisfying the following estimate: for any $x = (x', x_{n+1}) \in G_\theta(w)$, we have
\begin{multline}
\sup_{B^\theta_{|x_1|/4}(x')} \left( |x_1|^{-1} |w| + |Dw| \right)\\
\leq c \left( \frac{\osc_{P_\theta}(V_I, B_{|x_1|}(x))}{|x_1|} + |x_1| ||H_{V,g}^{tan}||_{L^\infty(B_{|x_1|}(x))} + |x_1| |Dg|_{C^0(B_{|x_1|}(x))} \right)
\end{multline}
for some $c(n,\theta_0)$.
Therefore (using that $\alpha \in (0, 1)$) we deduce that
\begin{equation}\label{eqn:harnack-2}
\osc_{P_\theta}(V_I, B_r(x)) \leq c\left(\frac{r}{|x_1|}\right)^\alpha \Big(\osc_{P_\theta}(V_I, B_{|x_1|}(x)) + E|x_1|^2 \Big)
\end{equation}
% \begin{equation}\label{eqn:harnack-2}
% \osc_{P_\theta}(V_I, B_r(x)) \leq c\left(\frac{r}{|x_1|}\right)^\alpha \Big(\osc_{P_\theta}(V_I, B_{|x_1|}(x)) + |x_1|^2 \left(||H_V||_{L^\infty(B_1)} + |Dg|_{C^0(B_1)}\right)\Big)
% \end{equation}
for all $x \in \spt V_I \cap B_{1/2} \cap \{ x_1 < -\tau\}$ and all $0 < r < |x_1|$, where $c=c(n,\theta_0)$.

Thirdly, again ensuring $\eps(n, \theta_0, \tau)$ is small, by Lemma \ref{lem:conv} we can assume that 
\begin{equation}\label{eqn:harnack-3}
d_H( \spt V_I \cap \{ x_1 = 0 \} \cap B_{9/10}, \{ x_1 = x_{n+1} = 0 \} \cap B_{9/10} ) \leq \tau.
\end{equation}
Take $x \in \spt V_I \cap B_{1/2}$.
%CARLO: THE FOLLOWING LINE IS AVOIDABLE, imo, SO i COMMENTED IT
% If $|x| \geq 1/100$ then \eqref{eqn:harnack-concl} holds trivially, so without loss of generality assume $|x| < 1/100$.
From \eqref{eqn:harnack-3} and the assumption $\osc_{P_{\beta(0)}}(V_I,B_1)\le\eps\le\tau$, we can choose $y \in \spt V_I \cap \{ x_1 = 0 \} \cap B_{1/2}$ so that $|x - y| \leq |x_1|/\sin\beta(0) + 2\tau\leq c(\theta_0)|x_1|+2\tau$, where the latter inequality holds true provided $\eps$ is small enough depending on $\theta_0$, since $|\beta-\theta_0|_{C^0}\le\eps$.

For $r \geq \max(\tau, |x_1|)$, we can use \eqref{eqn:harnack-1.1} and our choice of $y$ to estimate
\[
\osc_{P_\theta}(V_I, B_r(x)) \leq \osc_{P_{\theta}}(V_I, B_{r+|x - y|}(y)) \leq c (r+|x - y|)^\alpha E \leq c(n, \theta_0) r^\alpha E,
\]
while for $\tau \leq r \leq |x_1|$, it holds $|x-y|\le c(\theta_0)|x_1|$, thus by \eqref{eqn:harnack-2}, \eqref{eqn:harnack-1.1} we get
\begin{align*}
\osc_{P_\theta}(V_I, B_r(x)) &\leq c (r/|x_1|)^\alpha \osc_{P_\theta}(V_I, B_{|x_1|+|x - y|}(y)) + c r^\alpha E \\
&\leq c (r/|x_1|)^\alpha |x_1|^\alpha E + c r^\alpha E \leq c(n, \theta_0) r^\alpha E.
\end{align*}
This completes the proof of Theorem \ref{thm:harnack}.
\end{proof}

\section{Decay estimate}\label{sec:decay}

We continue to make the same Assumptions \ref{ass:main} on $V$ as in Section \ref{sec:harnack}.
The following (Theorem \ref{thm:decay-r}) is the main regularity/excess decay estimate, and it follows from an iteration of Theorem \ref{thm:decay}, proved below. 
% \textcolor{carlo}{I changed rotations of the reference plane to generic n planes that are close to the reference one, because it looked confusing to me that we were only using rotations of $\R^n$ for the rest of the paper and it took me a while to see that we were rotating the whole $\R^{n+1}$}
\begin{theorem}[Excess decay estimate]\label{thm:decay-r}
There are $\eps(n, \theta_0)$, $c(n, \theta_0)$ positive so that if $V$ satisfies Assumptions \ref{ass:main} and additionally satisfies
\begin{equation}\label{eqn:decay-r-hyp}
E := \max \{ \osc_{P_{\theta_0}}(V_I, B_1), ||H^{tan}_{V, g}||_{L^\infty(B_1)}, |D\beta|_{C^0(B_1)}, |g - g_{eucl}|_{C^1(B_1)} \} \leq \eps, 
%|\beta(0) - \theta_0| \leq \eps, \label{eqn:decay-r-hyp2} \text{\color{blue} this we don't need I think}
\end{equation}
then for all $x \in \spt V_I \cap B_{1/8}$, we can find an $n$-dimensional plane $Q_x$ with $|Q_x-P_{\theta_0}|\le cE$ so that for $0 < r < 1/2$ we have
\begin{equation}\label{eqn:decay-r-concl}
\osc_{Q_x}(V_I, B_r(x)) \leq c r^{3/2} E.
\end{equation}
%If $x_1 < 0$ then we additionally have $\Theta_V(x, |x_1|/4) \leq 3/2$. {\color{blue} [I put this here because it pops out of the proof and it means, I think, that at least for the decay sections you only need monotonicity centered at the barrier or monotonicity completely away from the barrier. it might make our life easier]}
\end{theorem}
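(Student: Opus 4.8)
The plan is to iterate the single-step excess-decay estimate of Theorem~\ref{thm:decay} around each point $x\in\spt V_I\cap B_{1/8}$, to sum the resulting tilts of the approximating planes as a geometric series, and to patch the near-boundary decay with Allard's theorem in the interior regime. Before iterating, I would record the reductions that keep the hypotheses of Assumption~\ref{ass:main} and of Theorem~\ref{thm:decay} valid at every scale. By Lemma~\ref{lem:D-vs-osc} the smallness \eqref{eqn:decay-r-hyp} gives $\D(V,V^{(\theta_0)})\le\Psi(E)$, so by Lemma~\ref{lem:conv} (and the convergence of mass ratios proved there) together with the monotonicity of Lemma~\ref{lem:bd-mono} / Proposition~\ref{prop:mono-weak}, one obtains $\Theta_V(y,r)\le (1+\cos\theta_0)/2+\Psi(E)<3/4+\cos\theta_0/4$ for every $y\in\spt V\cap B_{1/2}$ and $0<r<1/4$ once $E$ is small. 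Hence, for any $x\in\spt V_I\cap B_{1/8}$ and $0<s\le 1/4$, the rescaled varifold $(\eta_{x,s})_\sharp V$ — renormalised by an affine map so that its metric is Euclidean at the centre, which perturbs every plane and oscillation appearing below by a controlled multiple of the corresponding quantity — again satisfies Assumption~\ref{ass:main} and the smallness hypotheses, with the error quantities $\|H^{tan}\|,|D\beta|,|Dg|$ now scaled down by $s$. A single preliminary rescaling around $\pi_{\R^n}(x)$ disposes of the fact that $B_1(x)\not\subset B_1$ when $x\neq 0$.

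Now fix $x$ and let $\rho=\rho(n,\theta_0)\in(0,1/2)$ be the scale from Theorem~\ref{thm:decay}. Applying that theorem to $(\eta_{x,\rho^k})_\sharp V$ for $k=0,1,2,\dots$ produces planes $P_0=P_{\theta_0},P_1,P_2,\dots$ and, writing $a_k:=\osc_{P_k}(V_I,B_{\rho^k}(x))$ and $\hat a_k:=\rho^{-k}a_k$, a recursion of the form
\[
\hat a_{k+1}\le\tfrac12\rho^{1/2}\hat a_k+C\rho^{k}E,\qquad |P_{k+1}-P_k|\le C\big(\hat a_k+\rho^kE\big),\qquad C=C(n,\theta_0),
\]
where the $\rho^kE$ term is the contribution of the (decayed) mean-curvature/metric/$\beta$ errors. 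An induction then yields $\hat a_k\le C_1\rho^{k/2}E$, hence $a_k\le C_1\rho^{3k/2}E$, and $|P_{k+1}-P_k|\le C_2\rho^{k/2}E$ — provided the hypotheses of Theorem~\ref{thm:decay} are never violated, which holds because the rescaled excess and error stay $\le C(E+\eps)$ and the total drift obeys $|P_k-P_{\theta_0}|\le\sum_j C_2\rho^{j/2}E\le C_3E$, below the admissible threshold for $E$ small. Consequently $P_k\to Q_x$ for a plane $Q_x$ with $|Q_x-P_{\theta_0}|\le C_3E$ and $|P_k-Q_x|\le C_3\rho^{k/2}E$.

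If $x_1=0$ the iteration runs for all $k$, and for $\rho^{k+1}\le r\le\rho^k$ one estimates, using $\osc_{Q_x}(S,B_r)\le\osc_{P_k}(S,B_r)+r|Q_x-P_k|$ and monotonicity of $\osc$ in the radius,
\[
\osc_{Q_x}(V_I,B_r(x))\le a_k+\rho^k|P_k-Q_x|\le C_1\rho^{3k/2}E+C_3\rho^{3k/2}E\le C\rho^{-3/2}r^{3/2}E,
\]
while $r\in[\rho,1/2)$ is trivial since $\osc_{Q_x}(V_I,B_r(x))\le\osc_{P_{\theta_0}}(V_I,B_1)+r|Q_x-P_{\theta_0}|\le CE\le C\rho^{-3/2}r^{3/2}E$. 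If instead $x_1<0$, the iteration is legitimate only down to scales $\rho^k\gtrsim|x_1|$; letting $k_0$ be the largest such $k$, the ball $B_{|x_1|/2}(x)\subset\{x_1<0\}$ is interior and $\spt V_I\cap B_{\rho^{k_0}}(x)$ lies within $C|x_1|^{3/2}E$ of $P_{k_0}$, so Allard's theorem (Theorem~\ref{thm:allard}, after rotating $P_{k_0}$ onto $\R^n$) shows $\spt V_I$ is near $x$ a $C^{1,1/2}$ graph over a plane $Q_x$ with $|Q_x-P_{k_0}|\le C|x_1|^{1/2}E$ and $C^{1,1/2}$-seminorm $\le CE$. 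The graph estimate then gives $\osc_{Q_x}(V_I,B_r(x))\le Cr^{3/2}E$ for $0<r\lesssim|x_1|$, and for $r\gtrsim|x_1|$ one interpolates against the $P_k$ exactly as in the $x_1=0$ case, using $|Q_x-P_k|\le C\rho^{k/2}E$ (valid because $|Q_x-P_{k_0}|\le C|x_1|^{1/2}E\le C\rho^{k_0/2}E$). Together with $|Q_x-P_{\theta_0}|\le CE$ this proves \eqref{eqn:decay-r-concl} for all $x\in\spt V_I\cap B_{1/8}$ and $0<r<1/2$.

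The substantive difficulty here is not the geometric-series bookkeeping but two matters of legitimacy. First, one must verify that the full list of hypotheses of Theorem~\ref{thm:decay} — especially the density bound of Assumption~\ref{ass:main} and the closeness $|P_k-P_{\theta_0}|\lesssim E$ — genuinely persist along the \emph{entire} infinite iteration, so that each application is valid; this is exactly where the smallness of $E$ (and the quantitative constants $C_1,C_2,C_3$) must be chosen in the right order. Second, for centres $x$ with $x_1<0$ one must cleanly stitch the near-boundary decay to the interior Allard regime at the crossover scale $\rho^{k_0}\sim|x_1|$, in particular arguing that the plane $Q_x$ furnished by Allard coincides with (is comparable to, at the rate $\rho^{k/2}$) the limit of the $P_k$ so that the interpolation between the two scale-ranges produces the required $r^{3/2}$ decay.
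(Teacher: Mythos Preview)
Your overall strategy matches the paper's: iterate the single-step decay of Theorem~\ref{thm:decay}, sum the tilts as a geometric series, and patch with Allard in the interior. The recursion you write and the case $x_1=0$ are handled correctly, and the two ``substantive difficulties'' you identify at the end are indeed the right ones.

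There is, however, a genuine gap in your treatment of interior centres $x$ with $x_1<0$. You apply Theorem~\ref{thm:decay} to $(\eta_{x,\rho^k})_\sharp V$ at scales $\rho^k\gtrsim|x_1|$, but after translating $x$ to the origin the capillary barrier moves to $\{y_1=-x_1/\rho^k\}\neq\{y_1=0\}$. The rescaled object is therefore not a $(\beta,S)$-capillary varifold in $(B_1,g')$ in the sense of Assumption~\ref{ass:main}, and Theorem~\ref{thm:decay} --- whose proof invokes the boundary maximum principle at $\{x_1=0\}$ and outputs planes $q(P_{\beta(0)})$ with edge through the origin --- does not apply. Your ``preliminary rescaling around $\pi_{\R^n}(x)$'' does not rescue this: $\pi_{\R^n}(x)$ need not lie in $\spt V_I$, so it cannot serve as the centre required by Assumption~\ref{ass:main}.

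The paper sidesteps the issue by running the iteration \emph{only} at boundary points $y\in\spt V_I\cap\{x_1=0\}$ (its Steps~1--2). This already yields \eqref{eqn:decay-r-concl} for such $y$, and in particular shows that $\spt V_I\cap\R^n\cap B_{1/8}$ is a $C^{1,1/2}$ graph over $\R^{n-1}$. Then for an interior $x$ (Step~3) one picks $y\in\spt V_I\cap\R^n$ with $(y_2,\dots,y_n)=(x_2,\dots,x_n)$ --- which exists precisely because of the graphicality just proved --- uses the boundary decay at $y$ for radii $r\gtrsim|x-y|\sim|x_1|$, and applies Allard in $B_{R}(x)$ with $R\sim|x_1|$ (the Allard hypotheses at that scale being supplied by the decay at $y$). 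The splicing of the two regimes is exactly what you describe in your last paragraph, but anchored at the boundary point $y$ rather than at $x$.
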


\begin{remark}
Of course there is nothing special in the power $3/2$, any number in $(1,2)$ would equally work.
\end{remark}

\begin{proof}[Proof of Theorem \ref{thm:decay-r}]
For ease of notation write $\theta = \theta_0$.  By Lemma \ref{lem:D-vs-osc}, as $\eps \to 0$ we have $V \to V^{(\theta)}$ in $B_1$ and $\beta \to \theta$ in $C^0(B_1)$, and so there is no loss in assuming that
\begin{gather}\label{eqn:decay-r-1}
\Theta_V(x, 1/2) \leq (1 + \cos\theta)/2 + \eps' \quad \forall x \in \spt V_I \cap \{ x_1 = 0 \} \cap B_{1/4}, \\
|\beta - \theta|_{C^0(B_1)} \leq \eps'
\end{gather}
for any particular $\eps'(n, \theta)$ which we shall fix later.

\vspace{3mm}

\emph{Step 1: Decay towards Euclidean metric on the boundary.}  Fix $x \in \spt V_I \cap \{ x_1 = 0 \} \cap B_{1/4}$, and suppose $g(x) = g_{eucl}$.  Choose $\eta(n, \theta)$ so that $c\eta^2 \leq \eta^{3/2}$ where $c(n, \theta)$ is the constant from Theorem \ref{thm:decay}, and let $\eps_1(n, \theta, \eta)$ be the corresponding $\eps$.  For a $n$-plane $Q$, $0 < r < 1$, define
\begin{align*}
E(Q, x, r) &= \max \{ r^{-1} \osc_{Q}(V_I, B_r(x)),  \eps_1^{-1} r ||H_{V, g}||_{L^\infty(B_r(x))}, \\
&\quad\quad \eps_1^{-1} r |D\beta|_{C^0(B_r(x))}, \eps_1^{-1} r |Dg|_{C^0(B_r(x))} \},
\end{align*}
We claim we can find a $Q_x$ so that, for all $0<r<1/4$,
\begin{equation}\label{eqn:decay-r-2}
|Q_x - P_{\beta(x)}| \leq c E(P_{\beta(x)}, x, 1/4), \quad E(Q_x, x, r) \leq c r^{1/2} E(P_{\beta(x)}, x, 1/4),
\end{equation}
for $c = c(n, \theta)$.

Let us prove our claim.  First note that by \eqref{eqn:decay-r-1}, monotonicity \eqref{eqn:mono-weak-concl}, and our assumptions on $g$, provided $\eps'(n, \theta)$ is chosen sufficiently small we have
\begin{gather}
\Theta_V(x, r) \leq 3/4 + \cos \theta/4 \quad \forall 0 < r < 1/2, \\
|\beta(x) - \theta_0| \leq \eps_1.
\end{gather}
So, for any particular $0 < r < 1/2$, if we define
\begin{gather}
V_{x, r} = (T_{x, r})_\sharp V, \quad g_{x, r}(y) = g(x + ry), \quad \beta_{x, r}(y) = \beta(x + ry), \\
\text{ for } T_{x, r}(y) = (y - x)/r : (B_r(x), g) \to (B_1, g_{x, r}),
\end{gather}
(note that $g_{x, r}$ is chosen so that $JT_{x, r} = r^{-n}$ on $V$) then $V_{x, r}$ as a varifold in $(B_1, g_{x, r})$ will also satisfy Assumption \ref{ass:main}, with $g_{x, r}$, $\beta_{x, r}$ in place of $g, \beta$.  Moreover, since $g_{x, r}(0) = g_{eucl}$, we have
\[
|g_{x, r} - g_{eucl}|_{C^0(B_1)} \leq c(n) r |Dg|_{C^0(B_r(x))}.
\]

For $i \in \{ 0, 1, 2, \ldots \}$, set $r_i = \eta^i/4$, and define $Q_0 = P_{\beta(x)}$, $E_0 = E(Q_0, x, 1/4) \equiv E(Q_0, x, r_0) \leq c(n) E$.  In light of the previous discussion, ensuring $\eps(\eps_1, n, \theta)$ is sufficiently small, we can apply Theorem \ref{thm:decay} to $V_{x, r_i}$, $g_{x, r_i}, \beta_{x, r_i}$, and inductively define $Q_i$ so that
\begin{gather}
\osc_{Q_{i+1}}(V_I, B_r(x)) \leq \eta^{3/2} E_i, \quad |Q_{i+1} - Q_i| \leq c(n, \theta) E_i
\end{gather}
where $E_i = E(Q_i, x, r_i)$.  In particular, we have
\[
E_i \leq \eta^{i/2} E_0,
\]
and so $Q_i \to Q_x$.  Given $r \in (0, 1/4)$, choose the greatest $i$ so that $r \leq r_i$, and then we can estimate
\[
r^{-1} \osc_{Q_x}(V_I, B_r(x)) \leq c E_i  + c|Q_x - Q_i| \leq c r_i^{1/2} E_0 \leq c(n, \theta) r^{1/2} E_0,
\]
and $|Q_x - P_{\beta(x)}| \leq c(n, \theta) E_0$.  Our required inequality \eqref{eqn:decay-r-2} follows from the above and the inequality $|D\beta| \leq E$.

\vspace{3mm}

\emph{Step 2: Decay towards non-Euclidean metric on the boundary.}  If $x \in \spt V_I \cap \{ x_1 = 0 \} \cap B_{1/4}$ but $g(x) \neq g_{eucl}$, then we can choose an affine map $L$ satisfying
\begin{equation}
% \label{eqn:decay-r-5}
L(x) = x, \quad |DL - Id| \leq c|g(x) - g_{eucl}| \leq c(n)\eps, \quad L( \{x_1 \leq 0 \}) = \{x_1 \leq 0 \},
\end{equation}
so that if $\tilde g = (L^{-1})^* g$ then $\tilde g(x) = g_{eucl}$.  If we let $\tilde V = L_\sharp V$ (where we view $L$ as an isometry $(B_1, g) \to (L(B_1), \tilde g))$ and $\tilde \beta = \beta \circ L^{-1}$, then by \eqref{eqn:decay-r-1}, \eqref{eqn:mono-weak-concl}, \eqref{eqn:osc-A} the translated/scaled $\tilde V_{x, 1/4}$, $\tilde g_{x, 1/4}$, $\tilde \beta_{x, 1/4}$ satisfy Assumption \ref{ass:main}, and hypotheses \eqref{eqn:decay-r-hyp} with $c(n)\eps$ in place of $\eps$.

We can therefore apply Step 1 to deduce the existence of an $n$-plane $\tilde Q_x $ for which
\begin{equation}\label{eqn:decay-r-2.5}
|\tilde Q_x - P_\theta| \leq c \tilde E, \quad \osc_{\tilde Q_x}(\tilde V_I, B_r(x)) \leq c r^{3/2} \tilde E \quad \forall 0 < r < 1/4,
\end{equation}
where $c(n, \theta)$ and 
\begin{equation}\label{eqn:decay-r-3}
\tilde E = \max \{ \osc_{P_{\theta}}(\tilde V_I, B_{1/4}(x)), ||H_{\tilde V, \tilde g}||_{L^\infty(B_{1/4}(x))}, |D\tilde \beta|_{C^0(B_{1/4}(x))}, |D\tilde g|_{C^0(B_{1/4}(x))} \}.
\end{equation}

Define $Q_x=L^{-1}\tilde Q_x$.
% by the condition
% \[
% q_x(P_{\theta}) = L^{-1} \tilde q_x(P_{\theta}).
% \]
Then from \eqref{eqn:osc-A}, \eqref{eqn:decay-r-2.5}, \eqref{eqn:decay-r-3}, ensuring $\eps(n, \theta)$ is small, we deduce for any $0 < r < 1/8$ the inequalities
\[
\osc_{Q_x}(V_I, B_r(x)) \leq 2 \osc_{\tilde Q_x}(\tilde V_I, B_{2r}(x)) \leq 4 c r^{3/2} \tilde E \leq 8 c r^{3/2} E
\]
(for $c$ as in \eqref{eqn:decay-r-2}), and
\[
|Q_x - P_\theta| \leq c(n) |g(x) - g_{eucl}| + |\tilde Q_x - P_\theta| \leq c \tilde E + c(n) E \leq c(n, \theta) E.
\]
This completes Step 2, and implies \eqref{eqn:decay-r-concl} for any $x \in \spt V_I \cap \{ x_1 =  0\} \cap B_{1/4}$.

\vspace{3mm}

\emph{Step 3: Decay away from the boundary.}  We first claim that $\spt V_I \cap \ver \cap B_{1/8}$ is contained in the graph of a $C^{1,1/2}$ function $u :\R^{n-1} \cap B_{1/8} \to \R$ with $|u|_{C^{1,1/2}} \leq c(n, \theta) E$, where $\R^{n-1}=\{x_1=x_{n+1}=0\}$.

To see this, write $\Sigma' = \spt V_I \cap \ver$.  From Step 2, for every $x \in \Sigma' \cap B_{1/4}$ we have an $n$-dimensional plane $Q_x$ satisfying $|Q_x - P_\theta| \leq c(n, \theta)E$ so that
\begin{equation}\label{eqn:decay-r-5}
|\pi_{Q_x}^\perp(y - x)| \leq c E |x - y|^{3/2} \quad \forall y \in \Sigma' \cap B_{1/2}.
\end{equation}
For $\eps(n)$ sufficiently small, we deduce there is a function $u : \pi_{\R^{n-1}}(\Sigma' \cap B_{1/4}) \to \R$ so that
\[
\Sigma' \cap B_{1/4} = \{ (0, x', u(x')) : x' \in \pi_{\R^{n-1}}(\Sigma' \cap B_{1/4}) \},
\]
and with the property that for every $x', y' \in \pi_{\R^{n-1}}(\Sigma' \cap B_{1/4})$, there is a linear function $\ell_{x'} : \R^{n-1} \to \R$ so that
\begin{equation}\label{eqn:decay-r-6}
|u(x')| \leq c(n, \theta) E, \quad |u(y') - u(x') - \ell_{x'}(y' - x')| \leq c(n, \theta) |y' - x'|^{3/2} E.
\end{equation}
On the other hand, for any $x \in \Sigma' \cap B_{1/4}$ we have by \eqref{eqn:decay-r-5}, Lemma \ref{lem:D-vs-osc}, and Lemma \ref{lem:conv} that
\[
\spt (V_{x, r, I}) \cap B_1 \equiv (\spt V_I - x)/r \cap B_1 \to Q_x\cap\{x_1\le0\}
\]
in the Hausdorff distance as $r \to 0$, and hence $\pi_{\R^{n-1}}( (\Sigma' - x)/r) \to \R^{n-1}$ in the local Hausdorff distance as $r \to 0$ also.  We deduce we must have $\pi_{\R^{n-1}}(\Sigma' \cap B_{1/4}) \supset \R^{n-1} \cap B_{1/8}$, and hence by \eqref{eqn:decay-r-6} $u : \R^{n-1} \cap B_{1/8} \to \R$ is $C^{1,1/2}$ with $|u|_{C^{1,1/2}} \leq c(n, \theta)E$.  This proves our first claim.

Take $x \in \spt V_I \cap B_{1/8}$, and from our claim above we can choose $y \in \spt V_I \cap \ver \cap B_{1/8}$ with $(y_2, \ldots, y_n) = (x_2, \ldots, x_n)$.  Note this means that $|x - y| = (x_1^2 + x_{n+1}^2)^{1/2} \leq 1/8$.  From Step 2 we have an $n$-plane $Q_y=q(P_{\theta_y})$ for some $q\in O(n)$ and some $\theta_y$ with $|q_y - Id| + |\theta_y-\theta| \leq c(n, \theta)E$ so that
\begin{equation}\label{eqn:decay-r-7}
\osc_{Q_y}(V_I, B_{2|x - y|}(y)) \leq c(n, \theta)E |x - y|^{3/2}.
\end{equation}
So $x \in B_{c(n, \theta) E |x - y|}(y + Q_y) \subset B_{2 c(n, \theta) E |x - y|}(y + P_\theta)$, and hence ensuring $\eps(n, \theta)$ is sufficiently small we get
\begin{equation}\label{eqn:decay-r-8}
2|x - y|\cos\theta > |x_1| > |x - y| \cos\theta/2 =: R.
\end{equation}
By \eqref{eqn:decay-r-7} and Lemma \ref{lem:D-vs-osc}, as $\eps\to0$ we have $V\to q_y(V^{(\theta_y)})$ in $B_{2|x-y|}(y)$, hence by taking $\eps(n,\theta)$ smaller, if needed, we have the bound
\begin{equation}\label{eqn:decay-r-9}
\Theta_V(x, R) < 3/2.
\end{equation}

From \eqref{eqn:decay-r-7}, \eqref{eqn:decay-r-8}, \eqref{eqn:decay-r-9} we can apply Allard's theorem \ref{thm:allard} in $B_{R}(x)$ to get that
\[
\spt V_I \cap B_{R/2}(x) = \graph_{Q_y}(w) \cap B_{R/2}(x)
\]
for $w : Q_y \cap  B_{R/2}(x) \to \R$ a $C^{1, 1/2}$ function satisfying satisfying the scale-invariant estimates
\begin{align}
&|R^{-1} w(R \cdot)|_{C^{1,1/2}} \nonumber \\
&\leq c(n) \max\{ R^{-1} \osc_{Q_y}(V_I, B_{R}(x)), R ||H_{V, g}||_{L^\infty(B_{R}(x))}, R |Dg|_{C^0(B_{R}(x))} \} \nonumber \\
&\leq c(n, \theta) E R^{1/2}. \label{eqn:decay-r-10}
\end{align}

Let now $Q_x$ be the tangent plane to $\graph_{Q_y}(w)$ at $x$.  Then from \eqref{eqn:decay-r-10} we can assume 
\[
|Q_x - Q_y| \leq c(n, \theta) E R^{1/2}
\]
(which implies $|Q_x - P_\theta| \leq c(n, \theta) E$), and for $0 < r \leq R/4$ we have the estimate
\[
r^{-1} \osc_{Q_x}(V_I, B_r(x)) \leq c(n, \theta) E r^{1/2}.
\]

If $R/4 < r < 1/8$ we can use \eqref{eqn:decay-r-7}, \eqref{eqn:decay-r-10} to estimate instead
\begin{align}
r^{-1} \osc_{Q_x}(V_I, B_r(x)) 
%&\leq c |q_x - q_y| + c r^{-1} \osc_{q_y(P_\theta)}(V_I, B_r(x)) \\
&\leq c E R'^{1/2} + c r^{-1} \osc_{Q_y}(V_I, B_{r + |x - y|}(y)) \\
&\leq c E R'^{1/2} + c E r^{-1} (r + R)^{3/2}  \\
&\leq c(n ,\theta) E r^{1/2}.
\end{align}
And of course \eqref{eqn:decay-r-concl} is trivial if $1/8 \leq r < 1/2$.  This complete the proof of Theorem \ref{thm:decay-r}.
\end{proof}

\begin{theorem}\label{thm:decay}
For any $\eta > 0$, there are $\eps(n, \eta, \theta_0)$, $c(n, \theta_0)$ positive so that if $V$ satisfies Assumptions \ref{ass:main}, and it additionally satisfies
\begin{gather}
E := \max\left\{ \osc_{P_{\beta(0)}}(V_I, B_1), \frac{||H^{tan}_{V, g}||_{L^\infty(B_1)}}{\eps} , \frac{|D\beta|_{C^0(B_1)}}{\eps},  \frac{|g - g_{eucl}|_{C^1(B_1)}}{\eps}  \right\} \leq \eps, \\\label{eqn:decay-hyp} 
|\beta(0) - \theta_0| \leq \eps,
\end{gather}
then we can find a $q \in O(\{ x_1 = 0 \})$ with $|q - Id| \leq c E$ so that 
\begin{gather}\label{eqn:decay-concl}
\osc_{q(P_{\beta(0)})}(V_I, B_{\eta}) \leq c \eta^2 E .
\end{gather}
\end{theorem}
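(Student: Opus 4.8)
\textbf{Proof plan for Theorem~\ref{thm:decay}.}
The plan is to argue by contradiction via a blow-up, using the partial Harnack inequality (Theorem~\ref{thm:harnack}) for compactness of the rescaled supports and the boundary maximum principle (Theorem~\ref{thm:boundary-max}) to identify the boundary behaviour of the limit; the improvement itself will come from the interior $C^{2}$-estimate for harmonic functions on a half-ball with Neumann data, transported back to $V$. Fix $\eta$ and let $c_0=c_0(n,\theta_0)$ be (say) twice the constant in that $C^2$-estimate. Suppose the conclusion fails with this $c_0$ for every $\eps$; taking $\eps=1/k$ we obtain $(\beta_k,S_k)$-capillary varifolds $V_k$ in $(B_1,g_k)$ with $\theta_k:=\beta_k(0)$, satisfying $E_k\le 1/k$, $\max\{\|H^{tan}_{V_k,g_k}\|_{L^\infty},|D\beta_k|_{C^0},|g_k-g_{eucl}|_{C^1}\}\le E_k/k$, $|\theta_k-\theta_0|\le 1/k$, but $\osc_{q(P_{\theta_k})}(V_{k,I},B_\eta)>c_0\eta^2E_k$ for every $q\in O(\{x_1=0\})$ with $|q-\mathrm{Id}|\le c_0E_k$. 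Write $\eta_k:=\osc_{P_{\theta_k}}(V_{k,I},B_1)\le E_k$. If $\eta_k/E_k\to 0$ along a subsequence, Theorem~\ref{thm:harnack} (with $x=0$, $r=\eta$, threshold $\tau<\eta$) already gives $\osc_{P_{\theta_k}}(V_{k,I},B_\eta)\le c(n,\theta_0)\eta^\alpha(\eta_k+E_k/k)<c_0\eta^2E_k$ for $k$ large (as $\alpha<1$), contradicting the choice of $V_k$ with $q=\mathrm{Id}$. So, after passing to a subsequence, we may assume $\eta_k/E_k\to\lambda\in(0,1]$, $\theta_k\to\theta_0$, and $\max\{\|H^{tan}_{V_k,g_k}\|_{L^\infty},|D\beta_k|_{C^0},|g_k-g_{eucl}|_{C^1}\}/\eta_k\to 0$.

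Next I would build the blow-up. By Lemma~\ref{lem:D-vs-osc} we get $\D(V_k,V^{(\theta_k)})\to 0$, and then Lemma~\ref{lem:conv} gives Hausdorff convergence $\spt V_{k,I}\to P_{\theta_0}^-$ and $\spt\sigma_{V_k,g_k}\to\R^{n-1}$ locally in $B_1$. Combining Allard's theorem~\ref{thm:allard} on $\{x_1<-\delta\}$ with Theorem~\ref{thm:harnack} up to the boundary (with a threshold $\tau_k\to 0$ chosen slowly enough), for $k$ large $\spt V_{k,I}\cap B_{1/2}$ is, in the slanted-graph coordinates of Subsection~\ref{subsection:slanted_graphs}, trapped in an $o(\eta_k)$-thin slab around the graph $G_{\theta_k}(u_k)$ of a function with $u_k(0)=0$ and $C^\alpha$-seminorm $\le c(n,\theta_0)\eta_k$ on $B^{\theta_k}_{1/2}$, hence $|u_k|\le c\eta_k$. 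Set $w_k:=u_k/\eta_k$: these are uniformly bounded and uniformly $C^\alpha$, so $w_k\to w_\infty$ locally uniformly on $\overline{B^{\theta_0}_{1/2}}\cap\{x_1\le 0\}$ with $w_\infty(0)=0$. Using the expansions $H_\theta(u,g)=\cL_\theta(u)+R_1+R_2$ and $\nu_\theta(u,g)\cdot e_1=\cos\theta+D_1u(\cos^2\theta\sin\theta-1)+\tilde R_1+\tilde R_2$ from Subsection~\ref{subsection:slanted_graphs}, the vanishing mean curvature, $\beta_k\to\theta_0$, $g_k\to g_{eucl}$, and a viscosity argument based on Theorem~\ref{thm:boundary-max} (touching $\spt V_{k,I}$ from above and below at boundary points by smooth slanted graphs, and in the interior by paraboloids), one checks that $w_\infty$ is a viscosity — hence classical — solution of the oblique problem $\cL_{\theta_0}w_\infty=0$ in $B^{\theta_0}_{1/2}\cap\{x_1<0\}$, $D_1w_\infty=0$ on $B^{\theta_0}_{1/2}\cap\{x_1=0\}$. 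Here one uses that $\cos^2\theta_0\sin\theta_0-1\neq 0$ (indeed $|\cos^2\theta\sin\theta|<1$ for all $\theta$), so that the linearized contact-angle condition is exactly $D_1w_\infty=0$, and that $\cos\beta_k-\cos\theta_k=O(|D\beta_k|)$ drops out after dividing by $\eta_k$.

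The linear step and the transfer-back come next. The substitution $y_1=x_1/\sin\theta_0$, $y'=x'$ turns $\cL_{\theta_0}$ into the Laplacian and $B^{\theta_0}_r\cap\{x_1<0\}$ into a Euclidean half-ball; since $\partial_{y_1}w_\infty=0$ on the flat face, even reflection makes $w_\infty$ harmonic on a ball $B_{c_1}$, $c_1(n,\theta_0)>0$, so $|D^2w_\infty|\le C(n,\theta_0)$ near $0$ and, crucially, $D_1w_\infty(0)=0$. Hence the linear part $\ell(x'):=Dw_\infty(0)\cdot x'$ has no $x_1$-component and $|w_\infty(x')-\ell(x')|\le C(n,\theta_0)|x'|^2$. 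Because $\ell$ has no $x_1$-component, the hyperplane $G_{\theta_k}(\eta_k\ell)$ has unit normal agreeing with $\nu_{\theta_k}$ in its $e_1$-component up to $O(\eta_k^2)$, hence coincides, up to Hausdorff error $O(\eta_k^2)$ in $B_1$, with $q_k(P_{\theta_k})$ for some $q_k\in O(\{x_1=0\})$ with $|q_k-\mathrm{Id}|\le C(n,\theta_0)\eta_k$. Combining $w_k\to w_\infty$ uniformly with the quadratic bound, $\spt V_{k,I}\cap B_\eta$ lies within distance $\eta_k(C(n,\theta_0)\eta^2+o(1))+O(\eta_k^2)$ of $q_k(P_{\theta_k})$, so $\osc_{q_k(P_{\theta_k})}(V_{k,I},B_\eta)\le c_0\eta^2\eta_k\le c_0\eta^2E_k$ and $|q_k-\mathrm{Id}|\le c_0E_k$ for $k$ large — contradicting the choice of $V_k$. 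This proves the theorem with $c=c_0$.

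The main obstacle I expect is the identification of the blow-up limit in the second paragraph — in particular extracting the Neumann condition $D_1w_\infty=0$ from the capillary angle condition rather than from a weak formulation. This is exactly where the preparatory results enter: the individual first-variation control of $V_I$ and $V_B$ (Theorem~\ref{thm:cap-first-var}), the soft classification of tangent cones, and the boundary maximum principle (Theorem~\ref{thm:boundary-max}). Two further routine-but-delicate points are obtaining the graphical representation all the way up to $\{x_1=0\}$ (so that the thin-slab ambiguity vanishes in the limit) and verifying that the metric, mean-curvature, and $D\beta$ contributions genuinely drop out of the limiting equation after the rescaling by $\eta_k$.
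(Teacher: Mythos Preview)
Your proof is correct and follows the same contradiction/blow-up strategy as the paper: partial Harnack for compactness of the rescaled supports, the boundary maximum principle (Theorem~\ref{thm:boundary-max}) to extract the viscosity Neumann condition $\partial_1 u=0$, Schauder estimates (via even reflection) for the quadratic decay, and transfer back through a rotation in $O(\{x_1=0\})$. The paper streamlines two of your steps --- it rescales uniformly by $E_i$ (so your case $\eta_k/E_k\to 0$ is absorbed into the limit $u\equiv 0$ rather than handled separately) and it takes the Hausdorff limit of the stretched \emph{sets} $\Sigma_i := F_i(\spt V_{i,I})$ first and only then shows the limit is a $C^\alpha$ graph, which sidesteps having to produce single-valued $C^\alpha$ functions $u_k$ defined on the full half-ball (with control at scales below $\tau_k$) at finite $k$.
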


\begin{proof}
For ease of notation write $\theta = \theta_0$.  Suppose, towards a contradiction, the Theorem is false.  Then for any $\eta(n, \theta), c(n, \theta)$ predetermined, there are sequences $\eps_i \to 0$, $g_i$, $\beta_i$, and $V_i$ $n$-rectifiable varifolds in $(B_1, g_i)$ satisfying Assumption \ref{ass:main} and \eqref{eqn:decay-hyp} (with $V_i, E_i, \beta_i, g_i \eps_i$ in place of $V, E, \beta, g, \eps$), but for which \eqref{eqn:decay-concl} fails for all $q \in O(n)$ with $|q - I| \leq c E_i$.

We note that in $B_1$, $V_i \to V^{(\theta)}$ as varifolds by Lemma \ref{lem:D-vs-osc}, and hence by Lemma \ref{lem:conv} $V_{i, I} \to [P^-_{\theta}]$ as varifolds and $\spt V_{i, I} \to P^-_{\theta}$ in the local Hausdorff distance.  Define $\theta_i = \beta_i(0)$, so that $\theta_i \to \theta$ also.  %{\color{blue} [another option, not sure if it's easier or not: change metrics $g_i$ to make $\beta_i(0) = \theta$ ]}

\vspace{3mm}

\emph{Step 1: Inhomogenous blow-up to graph.}  For each $i$ define the map $F_i : \R^{n+1} \to \R^{n+1}$ by
\[
F_i(x_1, \ldots, x_n, x_{n+1}) = (x_1, \ldots, x_n, E_i^{-1} (x_{n+1} + x_1 \cot\theta_i)),
\]
and then define the closed sets
\[
\Sigma_i = F_i(\spt V_{i, I}) \cap C^\theta_{1/4}(0).
\]
By \eqref{eqn:decay-hyp}, each $\Sigma_i$ is a closed subset of $C^\theta_{1/4}(0) \cap \{ |x_{n+1}| \leq 1/\sin\theta_i \}$, and $0 \in \Sigma_i$, and so after passing to a subsequence we can assume $\Sigma_i \to \Sigma$ in the Hausdorff metric, for some non-empty closed subset $0 \in \Sigma \subset C^\theta_{1/4}(0) \cap \{ |x_{n+1}| \leq 1/\sin\theta \}$.  For $x' \in B^\theta_{1/4} \cap \{ x_1 \leq 0 \}$, define $\textbf{u}(x') = \{ y : (x', y) \in \Sigma \}$.

We first claim that $\textbf{u}(x') \neq \emptyset$.  If we had $\textbf{u}(x') = \emptyset$ for some $x' = \pi_{\hor}(x)$, $x \in P^-_\theta$, then by closedness there would be a radius $r > 0$ for which $C^\theta_r(x') \cap \Sigma = \emptyset$.  Therefore for $i \gg 1$ we would have $B_{r/2}(x) \cap \spt V_{i, I} = \emptyset$, which contradicts the fact that $\spt V_{i, I} \to P^-_\theta$ as closed sets in $B_1$.  So we have $\# \textbf{u}(x') \geq 1$ for all $x' \in B^\theta_{1/4} \cap \{ x_1 \leq 0 \}$.

We second claim that in fact $\textbf{u}(x') = \{ u(x') \}$ for $u : C^\theta_{1/4} \cap \{ x_1 \leq 0 \} \to \R$ a $C^\alpha$ function satisfying $|u|_{C^\alpha} \leq c(n, \theta)$, with $\alpha$ as in Theorem \ref{thm:harnack}.  To see this, note that by from Theorem \ref{thm:harnack} and our hypotheses there is a sequence $\tau_i \to 0$ so that
\begin{equation}\label{eqn:decay-1}
\osc_{P_{\theta_i}}(V_{I, i}, B_r(x)) \leq c r^\alpha E_i \quad \forall  x \in \spt V_{i, I} \cap B_{1/2}, \quad \forall \tau_i \leq r \leq 1/2.
\end{equation}
Take $x' \neq y' \in B^\theta_{1/4} \cap \{ x_1 \leq 0 \}$, and $x_{n+1} \in \textbf{u}(x'), y_{n+1} \in \textbf{u}(y')$.  We aim to show that
\begin{equation}\label{eqn:decay-2}
|x_{n+1} - y_{n+1}| \leq c(n, \theta) |x' - y'|^\alpha.
\end{equation}
From this, and the fact $|x_{n+1}| \leq 2/\sin\theta$, our second claim will follow.

To prove \eqref{eqn:decay-2}, there is no loss in assuming $|x' - y'| \leq 1/8$, as otherwise the inequality is trivial.  Choose $x_i, y_i \in \spt V_{i, I}$ so that $F_i(x_i) \to (x', x_{n+1})$ and $F_i(y_i) \to (y', y_{n+1})$.  In other words, if we write $x_i = (x_i', x_{i, n+1}) = (x_{i, 1}, x_{i, 2}, \ldots, x_{i, n+1})$ then $x_i' \to x'$ and $E_i^{-1} (x_{i, n+1} + x_{i, 1} \cot\theta_i) \to x_{n+1}$, and the same for $y$.

Applying \eqref{eqn:decay-1} with the balls $B_{2|x' - y'|}(x_i)$, then for $i \gg 1$ we get
\[
|(x_i - y_i) \cdot \nu_{\theta_i} | \leq c(n, \theta) |x' - y'|^\alpha E_i, 
\]
which implies \eqref{eqn:decay-2} after dividing by $E_i \sin\theta_i$ and taking $i \to \infty$.  This proves our second claim.

\vspace{3mm}

\emph{Step 2: Graph is viscosity solution.}  We claim that $u$ is a viscosity solution to
\begin{equation}\label{eqn:decay-3}
\cL_\theta u = 0 \text{ in } B^\theta_{1/4} \cap \{ x_1 < 0 \}, \quad \del_1 u = 0 \text{ on } B^\theta_{1/4} \cap \{ x_1 = 0 \}.
\end{equation}

Suppose $U \subset B^\theta_{1/4} \cap \{ x_1 < 0 \}$ is an open set, and $\phi : U \to \R$ is a smooth function satisfying $\phi \geq u$ on $U$ and $\phi = u$ at $\bar x' \in U$.  We can pick numbers $c_i \to 0$ and points $x_i \to (\bar x', u(\bar x'))$ so that in $U \times \R$, $\graph(\phi + c_i)$ touches $\Sigma_i$ from above at $x_i$.  Therefore $F_i^{-1}(\graph(\phi + c_i)) \equiv G_{\theta_i}(E_i(\phi + c_i))$ touches $\spt V_{i, I}$ from above at $F_i^{-1}(x_i)$ in $U \times \R$.

By the strong maximum principle, writing $x_i = (x_i', x_{i, n+1})$, we conclude that
\[
-\eps_i E_i \leq -||H^{tan}_{V_i, g_i}|| \leq H_{\theta_i}( E_i(\phi + c_i), g_i)|_{x_i} \leq E_i \cL_\theta(\phi)|_{x_i'} + c(\phi, n, \theta)( E_i^2 + \eps_i E_i)
\]
Taking $i \to \infty$ we deduce that $\cL_\theta(\phi)|_{\bar x'} \geq 0$.  The same argument shows that if $\phi$ touches $u$ at $\bar x'$ from below instead then $\cL_\theta(\phi)|_{\bar x'} \leq 0$.  We deduce that $\cL_\theta(u) = 0$ in the viscosity sense.

Suppose now $U$ is relatively open in $B^\theta_{1/4} \cap \{ x_1 \leq 0 \}$, and $\phi : U \to \R$ is a smooth function which touches $u$ from above at some $\bar x' \in U \cap \{ x_1 = 0 \}$.  For $\delta > 0$ small and $\Lambda > 0$ large (to be fixed momentarily), consider the function $\phi_\delta(x') = \phi - \Lambda x_1'^2 - \delta x_1'$.  For $\delta'(\Lambda, \delta)$ small, on $U \cap \{ - \delta' \leq x_1' \leq 0 \}$ we have $\phi_\delta \geq \phi$ and $\phi_\delta(\bar x') = \phi(\bar x')$, and so $\phi_\delta$ touches $u$ from above at $\bar x'$ also.

As before, pick $c_i \to 0$ and $x_i \to (\bar x', u(\bar x'))$ so that $F_i^{-1}(\graph(\phi_\delta + c_i)) \equiv G_{\theta_i}( E_i(\phi_\delta + c_i))$ touches $\spt V_{i, I}$ from above at $y_i = F_i^{-1}(x_i)$.  Fix $\Lambda(\phi, n, \theta)$ sufficiently large so that
\[
\cL_\theta(\phi_\delta) = \cL_\theta \phi - 2 \sin^2 \theta \Lambda < -\Lambda/2 < 0 \text{ on } U,
\]
and hence for $i \gg 1$ we have
\[
H_{\theta_i}(E_i(\phi_\delta + c_i), g_i)|_{x_i} \leq -E_i \Lambda/2 + c(\phi, n, \theta) (E_i^2 + \eps_i E_i) < -\eps_i E_i \leq - ||H^{tan}_{V_i, g_i}||.
\]
By the strong maximum principle we deduce that necessarily $x_i \in \{x_1 = 0 \}$ for all $i \gg 1$.

But now by Lemma \ref{lem:D-vs-osc} and Theorem \ref{thm:boundary-max}, we must have
\begin{align*}
\cos\theta_i - \eps_i E_i &\leq \cos\beta_i(x_i) \\
&\leq \nu_{\theta_i}(E_i (\phi_\delta + c_i), g_i)|_{x_i} \cdot e_1 \\
&\leq \cos\theta_i + E_i (\cos^2\theta\sin\theta - 1)\del_1 \phi_\delta|_{x_i'} + c(\phi, n, \theta) (E_i^2 + \eps_i E_i),
\end{align*}
and hence taking $i \to \infty$ we get $\del_1 \phi(\bar x') \leq \delta$.  Since $\delta$ is arbitrary we deduce $\del_1 \phi(\bar x') \leq 0$.  A verbatim argument shows that if $\phi$ touches from below then instead we have $\del_1 \phi(\bar x') \geq 0$.  We deduce $\del_1 u = 0$ along $B_{1/4}^\theta \cap \{ x_1 = 0 \}$ in the viscosity sense.

\emph{Step 3: $C^{1,\alpha}$ decay.}  From Step 2, we know that $u$ solves \eqref{eqn:decay-3} in the viscosity sense, and hence $u$ solves \eqref{eqn:decay-3} in the classical sense.  Since $|u| \leq c(n, \theta)$, standard Schauder estimates (e.g. after reflection about $\{x_1 = 0\}$) give that
\[
\sup_{B^\theta_{1/8} \cap \{ x_1 \leq 0 \} } \Big(|Du| + |D^2 u|\Big) \leq c(n, \theta),
\]
and therefore
\[
|u(x') - Du|_0(x')| \leq c(n, \theta) |x'|^2 \text{ on } B^\theta_{1/8} \cap \{ x_1 \leq 0 \}.
\]
By our Hausdorff convergence $\Sigma_i \to \graph(u)$ on $C^\theta_{1/4}$, we deduce that that for any $\eta \in (0, 1/8)$, we have for all $i \gg 1$:
\begin{equation}\label{eqn:decay-4}
\sup \{ |x_{n+1} - Du|_0(x')| : (x', x_{n+1}) \in \Sigma_i  \cap C^\theta_\eta \} \leq c(n, \theta) \eta^2.
\end{equation}

The Neumann condition $\del_1 u = 0$ implies $Du|_0(e_1) = 0$, so we can find $q_i \in O(\{ x_1 = 0 \})$ and linear functions $\ell_i : \hor \to \R$ satisfying
\begin{equation}\label{eqn:decay-5}
q_i(P_{\theta_i}) = G_{\theta_i}(\ell_i), \quad |\ell_i - E_i Du|_0| \leq c(n, \theta) E_i^2, \quad |q_i - Id| \leq c(n, \theta) E_i.
\end{equation}
From the definition of $\Sigma_i, F_i$, and from \eqref{eqn:decay-4}, \eqref{eqn:decay-5} we deduce that
\[
\osc_{q_i(P_{\theta_i})}(V_{I, i}, B_\eta) \leq c\eta^{2} E_i + c E_i^2 \leq c(n, \theta) \eta^2 E_i \quad \forall i \gg 1 .
\]
This gives \eqref{eqn:decay-concl}, which violates our contradiction hypothesis for $i$ large.
\end{proof}

\section{Regularity and Main theorems}\label{sec:proofs-of-main}

Our main regularity theorem is the following.
\begin{theorem}[Allard-type regularity]\label{thm:main}
Given $\theta \in (0, \pi/2)$, there are constants $c(n, \theta)$, $\eps(n, \theta)$ so that the following holds.  Let $g$ be a $C^1$ metric on $B_1$, $\beta$ a $C^1$ function on $B_1$, and $V$ a $(\beta, S)$-capillary varifold in $(B_1, g)$.  Suppose
\begin{gather}
%\theta_{V, g}(x) \in \N \text{ $\mu_V$-a.e. $x_1 < 0$}, \quad \theta_{V, g}(x) = \cos\beta(x) \text{ $\mu_V$-a.e. $x_1 = 0$},\\
0 \in \spt V_I, \quad \Theta_V(0, 1) \leq  3/4 + \cos\theta/4 , \\
E := \max\{ \osc_{P_{\theta}}(V_I, B_1), ||H^{tan}_{V, g}||_{L^\infty(B_1)}, |D\beta|_{C^0(B_1)}, |g - g_{eucl}|_{C^1(B_1)}\} \leq \eps,
\end{gather}
Then there is a $C^{1,1/2}$ function $u : B^{\theta}_{1/16}(0) \to \R$ satisfying
\begin{gather}
\spt V_I \cap B_{1/16} = G_{\theta}(u) \cap B_{1/16}, \\
|u|_{C^{1,1/2}} \leq c E .
\end{gather}
\end{theorem}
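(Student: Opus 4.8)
The plan is to obtain Theorem~\ref{thm:main} as a packaging of the excess decay estimate of Theorem~\ref{thm:decay-r}: that theorem gives, at every point of $\spt V_I$ near $0$, a tangent $n$-plane together with a definite H\"older rate of approximation, and one then converts this into a $C^{1,1/2}$ slanted graph in the standard way, the only delicate point being the identification of the domain. Concretely, first I would check that the hypotheses of Theorem~\ref{thm:main} (with $\eps(n,\theta)$ no larger than the constant in Theorem~\ref{thm:decay-r}, and with $\theta_0 := \theta$) are exactly Assumption~\ref{ass:main} together with \eqref{eqn:decay-r-hyp}, so Theorem~\ref{thm:decay-r} produces, for each $x \in \spt V_I \cap B_{1/8}$, an $n$-plane $Q_x$ with $|Q_x - P_\theta| \le cE$ and
\[
\osc_{Q_x}(V_I, B_r(x)) \le c\,r^{3/2} E \qquad \text{for all } 0 < r < 1/2.
\]

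Next I would record the elementary geometry of the $Q_x$. Since $E$ is small and $|Q_x - P_\theta|\le cE$, the Euclidean projection $\pi_{\hor}$ restricts to a bi-Lipschitz bijection $Q_x \to \hor$ with constants close to $1$, and $Q_x = G_\theta(\ell_x)$ for a unique affine $\ell_x : \hor \to \R$ with $|D\ell_x| \le cE$ (immediate from the expansion of $\nu_\theta(u,g)$ in Subsection~\ref{subsection:slanted_graphs} with $g = g_{eucl}$ and $u$ affine, or from linear algebra). The excess bound says $\spt V_I$ lies within distance $cE\,|y-x|^{3/2}$ of $x + Q_x$ for $y \in \spt V_I$ near $x$; since $Q_x$ is a slanted graph with small slope, the fibre of $x + Q_x$ over $\pi_{\hor}(x)$ is $\{x\}$, so any $y \in \spt V_I \cap B_{1/8}$ with $\pi_{\hor}(y) = \pi_{\hor}(x)$ has $y - x$ vertical, whence $c(\theta)|y-x| \le |\pi_{Q_x}^\perp(y-x)| \le c E\,|y-x|^{3/2}$, forcing $y = x$. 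Hence $\pi_{\hor}$ is injective on $\spt V_I \cap B_{1/8}$, and $\spt V_I \cap B_{1/8}$ is the slanted graph $G_\theta(u)$ of $u(x') := x_{n+1} + x_1\cot\theta$ over $D := \pi_{\hor}(\spt V_I \cap B_{1/8}) \subset \hor \cap \{x_1 \le 0\}$, with $\sup_D|u| \le cE$ because the hypothesis $\osc_{P_\theta}(V_I,B_1)\le E$ together with $0\in\spt V_I$ gives $\spt V_I\cap B_1\subset B_{2E}(P_\theta^-)$.

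Then I would upgrade graphicality to $C^{1,1/2}$ regularity. Projecting the excess inequality through $\pi_{\hor}$ and using $Q_x = G_\theta(\ell_x)$ gives, for all $x', y' \in D$ with $|x'-y'|$ small,
\[
|u(y') - u(x') - \ell_{x'}(y' - x')| \le c E\,|x' - y'|^{3/2},
\]
where $\ell_{x'}$ is the linear part of the affine function attached to $Q_x$. This is precisely the Campanato/Taylor criterion for $u$ to be $C^{1,1/2}$ with $Du(x') = D\ell_{x'}$ and norm $\le cE$; the verification is the same interpolation already carried out in Step~3 of the proof of Theorem~\ref{thm:decay-r}. Since Theorem~\ref{thm:decay-r} supplies a plane $Q_x$ also at the boundary points $x \in \{x_1 = 0\}$, the estimate is uniform up to and including the flat face $\{x_1=0\}$ --- equivalently one may reflect in $\{x_1 = 0\}$ as in Step~3 of the proof of Theorem~\ref{thm:decay}.

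Finally I would identify $D$. By hypothesis $\osc_{P_\theta}(V_I, B_1) \le \eps$, so Lemma~\ref{lem:D-vs-osc} gives $\D(V, V^{(\theta)}) \to 0$ as $\eps \to 0$, and then case~\ref{item:limit_is_cone} of Lemma~\ref{lem:conv} gives $\spt V_I \to P_\theta^-$ in the Hausdorff distance on $B_{1/8}$; in particular $D$ contains points arbitrarily close to $0$. On $\{x_1 < 0\}$ the set $D$ is relatively open (apply Allard's Theorem~\ref{thm:allard} at each interior point $x$ at scale comparable to $|x_1|$) and relatively closed (as $\spt V_I$ is closed), so, $B^\theta_{1/16}(0)\cap\{x_1<0\}$ being connected, $D \supset B^\theta_{1/16}(0)\cap\{x_1<0\}$; taking closures and using the uniform $C^{1,1/2}$ bound on $u$ gives $D \supset B^\theta_{1/16}(0)\cap\{x_1\le 0\}$ with $u$ extending $C^{1,1/2}$ to the flat face, while conversely $\pi_{\hor}(\spt V_I \cap B_{1/16}) \subset B^\theta_{1/16 + cE}(0)$ since $\spt V_I \subset B_{cE}(P_\theta^-)$. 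After a harmless adjustment of radii this yields $\spt V_I \cap B_{1/16} = G_\theta(u) \cap B_{1/16}$ with $|u|_{C^{1,1/2}} \le cE$, which is Theorem~\ref{thm:main}. The genuinely delicate step is this last one --- making the graph fill out the whole half squashed ball and keeping the $C^{1,1/2}$ estimate uniform all the way to the edge $\{x_1 = x_{n+1} = 0\}$ --- but it requires no new idea, only the bookkeeping already present in the proof of Theorem~\ref{thm:decay-r}.
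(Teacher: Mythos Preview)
Your proposal is correct and follows essentially the same approach as the paper: both invoke Theorem~\ref{thm:decay-r} to get a slanted graph over $D=\pi_{\hor}(\spt V_I\cap B_{1/8})$ satisfying the Campanato-type bound $|u(y')-u(x')-\ell_{x'}(y'-x')|\le cE|x'-y'|^{3/2}$, and then argue that $D\supset B^\theta_{1/16}\cap\{x_1\le0\}$. The only minor difference is in this last step: you use an open--closed--connectedness argument on $\{x_1<0\}$ (Allard for openness) and then take closures, whereas the paper gives a single nearest-point/tangent-cone argument that handles interior and boundary points uniformly---but both are routine and equivalent in strength.
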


\begin{proof}
From Theorem \ref{thm:decay-r}, if we let $U = \pi_{\hor}(\spt V_I \cap B_{1/8})$, then there is a function $u : U \to \R$ so that
\[
\spt V_I \cap B_{1/8} = G_\theta(u), 
\]
and with the property that for every $x', y' \in U$, we have
\[
|u(x')| \leq c E, \quad |u(y') - u(x') - \ell_{x'}(y' - x')| \leq c |x' - y'|^{3/2} E
\]
for some linear map $\ell_{x'} : \hor \to \R$ satisfying $|\ell_{x'}| \leq c E$.  Here $c = c(n, \theta)$.

It will suffice to prove that $U \supset B_{1/16}^\theta \cap \{ x_1 \leq 0 \}$, as it will then follow from standard arguments that $u$ is $C^{1,1/2}$ with norm bounded by $c(n, \theta)E$.  Note that by Lemma \ref{lem:D-vs-osc}, provided $\eps(n, \theta)$ is small the set $U$ is closed and $1/100$-dense in $B_{1/32}^\theta \cap \{ x_1 \leq 0 \}$.  Suppose $x' \in B^\theta_{1/16} \cap \{ x_1 \leq 0 \} \setminus U$.  Choose $\rho > 0$ so that $B_\rho(x') \cap U = \emptyset$ but there is $y' \in U \cap \del B_\rho(x')$.

Choose $y \in \spt V_I \cap B_{1/32}$ with $\pi_{\hor}(y) = y'$.  By Lemma \ref{lem:conv} (if $y_1 = 0$) or by Theorem \ref{thm:allard-compact} and \eqref{eqn:cap-first-var-concl3} (if $y_1 < 0$), the constancy theorem, and the decay estimate \eqref{eqn:decay-r-concl}, we have that as $r \to 0$ the rescaled supports
\[
\spt(V_{y, r, I}) \cap B_1 \equiv (\spt V_I - y)/r \cap B_1 \to Q_y\cap\{x_1\le0\}
\]
in the Hausdorff distance as $r \to 0$.  Therefore $\pi_{\hor}( (\spt V_I - y)/r) \to U_y$ where $U_y = \hor \cap \{ x_1 < 0 \}$ if $y_1 = 0$ and $U_y = \hor$ if $y_1 < 0$.  But on the other hand, our choice of $\rho$ would necessitate that $U_y \subset \{ z \in \hor : z \cdot (x - y) \leq 0 \}$, which is a contradiction.
%{\color{blue} TODO?: prove with $|Dg|$ bound instead of $|g - g_{eucl}|$}
\end{proof}

One corollary of the regularity theorem and the classification of cones, which was not highlighted in the Introduction, is the following cone rigidity/density gap.  Compare to Lemma \ref{lem:tangent-cones}.
\begin{cor}\label{cor:density-gap}
Given $\theta \in (0, \pi/2)$, $\Lambda > 0$, there is an $\eps(n, \theta, \Lambda)$ so that the following holds.  Let $V$ be a non-zero, conical, stationary $(\theta, S)$-capillary varifold in $\R^{n+1}$ satisfying
\begin{gather}\label{eqn:density-gap-hyp}
\Theta_V(0, 1) \leq (1+\cos\theta)/2 + \eps, \quad \Lambda < \mu_V(B_1 \cap \R^n) < \cos\theta\omega_n.
\end{gather}
Then up to rotation $V = V^{(\theta)}$.
\end{cor}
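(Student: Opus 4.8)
The plan is to prove Corollary~\ref{cor:density-gap} by a contradiction/compactness argument, using the Allard-type regularity Theorem~\ref{thm:main} to upgrade weak varifold convergence to strong closeness, and then invoking the classification of cones Lemma~\ref{lem:tangent-cones}. Suppose the statement fails: then there exist $\eps_i \to 0$ and non-zero, conical, stationary $(\theta, S_i)$-capillary varifolds $V_i$ in $\R^{n+1}$ satisfying $\Theta_{V_i}(0,1) \le (1+\cos\theta)/2 + \eps_i$ and $\Lambda < \mu_{V_i}(B_1 \cap \R^n) < \cos\theta\,\omega_n$, but such that $V_i$ is not (up to rotation) equal to $V^{(\theta)}$.

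First I would apply the compactness Lemma~\ref{lem:soft-conv} (with $g_i = g_{eucl}$, $\beta_i \equiv \theta$, so $\Lambda_i = 0$ and the interior mass bound following from the conical hypothesis together with monotonicity Lemma~\ref{lem:bd-mono} and the density bound $\Theta_{V_i}(0,1) \le (1+\cos\theta)/2 + \eps_i \le 1$) to extract a subsequential limit $V$, a stationary $n$-rectifiable signed varifold with free boundary in $\{x_1 = 0\}$, with $V_I$ integral, $\theta_V \ge -(\cos\theta)_- = 0$ (since $\theta < \pi/2$) on $\{x_1 = 0\}$, and $\Theta_V(0,1) \le (1+\cos\theta)/2$. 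Because each $V_i$ is a dilation-invariant cone, so is the limit $V$. Crucially, since $\Theta_{V_i}(0,1) + (\cos\theta)_- \le 1-\alpha$ fails to be immediate, I would note instead that $\Theta_{V_i}(0,1) \le 3/4 + \cos\theta/4 < 1$ for $i \gg 1$, which is exactly the density hypothesis needed for Lemma~\ref{lem:conv} and Theorem~\ref{thm:cap-first-var}; applying Lemma~\ref{lem:conv} in every small ball gives $V_{I,i} \to V_I$, $V_{B,i} \to V_B$, $\sigma_{V_i} \to \sigma_V$ as Radon measures, Hausdorff convergence of the supports $\spt V_{I,i} \to \spt V_I$ and $\spt \sigma_{V_i} \to \spt\sigma_V$, and that $V$ is itself a $(\theta, S_0)$-capillary varifold (in every fixed ball, hence in $\R^{n+1}$ by scaling) with $S_i \to S_0$ in $L^1_{loc}$. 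In particular $\mu_V(B_1 \cap \R^n) = \cos\theta\,\haus^n(S_0 \cap B_1) \in [\Lambda, \cos\theta\,\omega_n]$, so $V$ satisfies the hypotheses of Lemma~\ref{lem:tangent-cones}, and we conclude $V = q(V^{(\theta)})$ for some $q \in O(\{x_1=0\})$; after a rotation, $V = V^{(\theta)}$.

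Now $V_i \to V^{(\theta)}$ as varifolds, so $\D(V_i, V^{(\theta)}) \to 0$ by the compatibility of $\D$ with varifold convergence (the negative parts being uniformly bounded). Since $0 \in \spt V_{I,i}$ (as $V_i \ne 0$ is a cone with $\del^* S_i \ne \emptyset$ by the lower bound $\mu_{V_i}(B_1 \cap \R^n) > \Lambda > 0$, forcing $0 \in \overline{\del^* S_i} \subset \spt\sigma_{V_i} = \spt V_{I,i} \cap \R^n$), I would apply Lemma~\ref{lem:D-vs-osc} to get $\osc_{P_\theta}(V_{I,i}, B_{3/4}) \to 0$. Then, after the rescaling $\eta_{0, 3/4}$ (which preserves conicality and the $(\theta, \cdot)$-capillary structure) and using that $\Theta_{V_i}(0,1) \le 3/4 + \cos\theta/4$, the varifold $(\eta_{0,3/4})_\sharp V_i$ satisfies all the hypotheses of the Allard-type regularity Theorem~\ref{thm:main} with $E_i = \max\{\osc_{P_\theta}(V_{I,i}, B_{3/4}), 0, 0, 0\} \to 0 \le \eps(n,\theta)$ for $i$ large. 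Therefore $\spt V_{I,i} \cap B_{c}$ is a $C^{1,1/2}$ slanted graph $G_\theta(u_i)$ with $|u_i|_{C^{1,1/2}} \le c E_i \to 0$; since $V_i$ is a cone, this graph is a cone, forcing $u_i$ to be linear, and combined with the Neumann boundary condition (or with the density bound ruling out higher multiplicity) it follows that $\spt V_{I,i} = P_{\theta_i}^-$ for some plane $P_{\theta_i}$ with $|P_{\theta_i} - P_\theta| \le cE_i$. A final argument using stationarity and the capillary identity, exactly as in the last paragraph of the proof of Lemma~\ref{lem:D-vs-osc}, pins down $V_{B,i} = \cos\theta[\{x_1 = 0, x\cdot\nu_{\theta_i} < 0\}]$, so $V_i = q_i(V^{(\theta)})$ for a rotation $q_i$ close to the identity, contradicting our assumption.

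The main obstacle I anticipate is the bookkeeping in the very last step: showing that once $\spt V_{I,i}$ is a graph with small $C^{1,1/2}$ norm, conicality plus the contact-angle condition force $V_i$ to be \emph{exactly} $q_i(V^{(\theta)})$ rather than merely close to it — in particular ruling out the possibility that the boundary piece $S_i$ is not a half-plane, or that the multiplicity is not one. This is handled by the constancy theorem for the interior piece (a cone that is $C^1$ away from a codimension-$2$ set is a plane with integer multiplicity, and the density bound $\Theta_{V_i}(0,1) < 1$ forces multiplicity one) together with stationarity of $V_i$ as a free-boundary varifold, which forces $S_i$ to be precisely the half-plane lying below $P_{\theta_i}^-$ making the prescribed angle $\theta$; the density upper bound $\mu_{V_i}(B_1\cap\R^n) < \cos\theta\,\omega_n$ prevents $S_i$ from being all of $\R^n$. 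All of these are soft consequences of results already available (Theorem~\ref{thm:cap-first-var}, Lemma~\ref{lem:tangent-cones}, the constancy theorem), so the argument is essentially a packaging of the tools developed in the paper.
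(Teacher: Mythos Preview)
Your approach is essentially the same as the paper's: contradiction, compactness via Lemma~\ref{lem:conv}, classify the limit cone via Lemma~\ref{lem:tangent-cones}, then apply the Allard-type regularity to the sequence and use conicality to force each $V_i$ to be a rotated $V^{(\theta)}$. The final step works as you describe (the paper simply cites Theorem~\ref{thm:teaser3} rather than unpacking Lemma~\ref{lem:D-vs-osc} plus Theorem~\ref{thm:main}, but that is cosmetic).

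There is, however, one genuine gap. You assert that the limit satisfies $\mu_V(B_1\cap\R^n)\in[\Lambda,\cos\theta\,\omega_n]$ and then invoke Lemma~\ref{lem:tangent-cones}. But Lemma~\ref{lem:tangent-cones} requires the \emph{strict} inequality $\mu_V(B_1\cap\R^n)<\cos\theta\,\omega_n$, and the strict upper bound $\mu_{V_i}(B_1\cap\R^n)<\cos\theta\,\omega_n$ does not pass to the limit: the $L^1_{loc}$ convergence $S_i\to S_0$ only yields $\haus^n(S_0\cap B_1)\le\omega_n$, and nothing you wrote prevents $S_0=\R^n$. The paper handles this case explicitly: if $\mu_V\llcorner\R^n=\cos\theta[\R^n]$, then $V_I$ is a non-zero stationary free-boundary cone with $\Theta_{V_I}(0)\le(1+\cos\theta)/2-\cos\theta=(1-\cos\theta)/2<1/2$, contradicting the standard lower bound $\Theta_{V_I}(0)\ge 1/2$ for such cones. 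Once you insert this short argument, your proof is complete and matches the paper's.
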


\begin{proof}
Proof by contradiction: suppose there are sequences $\eps_i \to 0$ and $V_i$ satisfying \eqref{eqn:density-gap-hyp} so that (up to rotation) $V_i \neq V^{(\theta)}$.  Since every $\mu_{V_i} \neq \cos\theta [\R^n]$ and $\neq 0$, from Theorem \ref{thm:fb-first-var} we have that $\spt \sigma_{V_i} \equiv \spt V_{I, i} \cap \R^n \neq 0$.  By Lemma \ref{lem:conv}, up to a subsequence, the $V_i \to V$ for some conical, stationary $(\theta, S)$-capillary varifold in $\R^{n+1}$ satisfying
\[
\Theta_V(0, 1) \leq (1+\cos\theta)/2, \quad V_I \neq 0, \quad \mu_V(\R^n) > 0.
\]
If $\mu_{V}\llcorner\R^n = \cos\theta [\R^n]$, then $V_I$ is a non-zero stationary free-boundary cone with density $\theta_{V_I}(0) \leq (1-\cos\theta)/2 < 1/2$.  However, by standard monotonicity we must have $\theta_{V_I}(0) \geq 1/2$, which is a contradiction.  Therefore we must have $\mu_V(\R^n\cap B_1) < \cos\theta \omega_n$.  

We can apply Lemma \ref{lem:tangent-cones} to get $V = V^{(\theta)}$, and hence by Theorem \ref{thm:teaser3} we have for $i \gg 1$ every $V_{I,i}$ is a $C^{1}$ surface meeting $\R^n$ at angle $\theta$.  Since each $V_i$ is conical we must have $V_i = q_\sharp V^{(\theta)}$, for some rotation $q$.
\end{proof}

\vspace{3mm}

We now prove the theorems and corollaries outlaid in the Introduction.
\begin{proof}[Proof of Theorem \ref{thm:teaser1}]
This is proven in Theorem \ref{thm:cap-first-var} and Theorem \ref{thm:refined}.
\end{proof}

\begin{proof}[Proof of Theorem \ref{thm:teaser1.free.bdry}]
The proof is the same as in the capillary case, simply replace $H_{\beta, g}$ with $H^{tan}_V|_{x_1 = 0}$.
\end{proof}

\begin{proof}[Proof of Theorem \ref{thm:teaser2}]
By Lemma \ref{lem:D-vs-osc}, as $\delta \to 0$ then $V \to V^{(\theta)}$ in $B_1$ and $\beta \to \theta$ in $C^1(B_1)$.  If $\theta > \pi/2$, we can replace $V$ with $V' = V - \cos\beta[\R^n]_g$, and then $V'$ is a $(\pi-\beta, \R^n \setminus S)$-capillary varifold in $(B_1, g)$ with the property that $V' \to q_\sharp V^{(\pi - \theta)}$ in $B_{1}$ as $\delta \to 0$, where $q$ is reflection about the line $\{x_{n+1} = 0\}$.  Therefore, taking $\delta(n, \theta)$ sufficiently small there is no loss in assuming $\theta < \pi/2$, and then Theorem \ref{thm:teaser2} is a direct consequence of Theorem \ref{thm:main} (for the regularity) and Theorem \ref{thm:cap-first-var} (for the closeness of $\spt V_I$ to $0$).
\end{proof}

\begin{proof}[Proof of Theorem \ref{thm:teaser3}]
Since by our assumption $V \to V^{(\theta)}$ and $\cos\beta \to \theta$ as $\delta \to 0$, we have $\Theta_V(0, 1) + (\cos\beta(0))_- \leq 3/4 + |\cos\theta|/4 < 1$ for $\delta(n, \theta)$ small.  Lemma \ref{lem:conv} then implies $\spt V_I \to P_\theta^-$ as $\delta \to 0$, and so we can apply Theorem \ref{thm:teaser2} (in a slightly smaller ball) for $\delta(n, \theta)$ small.
\end{proof}

\begin{proof}[Proof of Corollary \ref{cor:teaser1}]
Suppose $\beta(x) \in (0, \pi) \setminus \{ \pi/2 \}$, and some tangent cone of $V$ at $x \in \R^n$ is $V^{(\beta(x))}$, i.e. if one changes coordinates so that $g(x) = g_{eucl}$, then there is a sequence $r_i \to 0$ so that $V_i := (\eta_{x, r_i})_\sharp V \to V^{(\beta(x))}$, where we interpret $V_i$ as $(\beta_i, S_i)$-capillary varifolds in the metric $g_i(x) := g(rx)$, with $\beta_i(x) = \beta(rx)$.

Without loss of generality, let us simply suppose $x = 0$ and $g(0) = g_{eucl}$.  For any $\eps > 0$ small and $i \gg 1$, we have
\[
\Theta_{V_i}(0, 1) + (\cos\beta(0))_- \leq \Theta_V(0, 1) + (\cos\beta(0))_- + \eps = (1+|\cos\beta(0)|)/2 + \eps < 1.
\]
Therefore we can use Lemma \ref{lem:conv} to get $\spt V_{I, i} \to P_\theta^-$ in the local Hausdorff distance, and hence for any $\eps > 0$ and $i \gg 1$ we have
\[
0 \in \spt V_{I, i}, \quad \osc_{P_{\beta(0)}}(V_{I, i}, B_1) \leq \eps.
\]
And of course by scaling we have
\[
\max \{ ||H^{tan}_{V_i, g_i}||_{L^\infty(B_1)}, |g_i - g_{eucl}|_{C^1(B_1)}, |D\beta_i|_{C^0(B_1)} \} \to 0
\]
Now apply Theorem \ref{thm:teaser2}.
\end{proof}

\begin{proof}[Proof of Theorem \ref{thm:teaser4}]
For the first assertion it suffices to work in a small neighborhood of some point $x \in \spt V_I \cap \R^n \cap \{ \Theta_{V, g}(x) + (\cos\beta(x))_- < 1\}$.  By upper-semi-continuity of $\Theta_{V, g}$ and monotonicity \eqref{eqn:mono-weak-concl}, we can assume without loss of generality that $x = 0$, and that
\[
\Theta_V(0, 1) + (\cos\beta(0))_- < 1 .
\]
Openness of the regular set follows from the definition, so we will focus on proving denseness in a neighborhood of $0$.

By Theorem \ref{thm:refined}, there is a radius $\gamma > 0$ so that for $\haus^{n-1}$-a.e. $x \in B_\gamma \cap \R^n \cap \{ \beta \neq \pi/2 \} \cap \del^*S$, every tangent cone to $V$ at $x$ is (some rotation of) $V^{(\beta(x))}$, which implies by Corollary \ref{cor:teaser1} that every such $x$ is a regular point.

On the other hand, Theorem \ref{thm:refined} says that for $\haus^{n-1}$-a.e. $x \in \spt V_I \cap \R^n \cap B_\gamma \setminus (\{ \beta \neq \pi/2 \} \cap \del^* S)$ we have $\eta_{V_I, g}(x) \perp_g \R^n$, and every tangent cone to $V_I$ at $x$ is some rotation of $[P_{\pi/2}]$.  Therefore $V_I$ is a free-boundary varifold in $B_\gamma \setminus \overline{ \{ \beta \neq \pi/2 \} \cap \del^*S }$ with bounded mean curvature, whose tangent at $\haus^{n-1}$-a.e. boundary point is a free-boundary half-plane with multiplicity-one.  The Allard-type regularity theorem of \cite{GrJo} implies $V_I$ is regular at $\haus^{n-1}$-a.e. $x \in \spt V_I \cap \R^n \cap B_\gamma \setminus \overline{ \{ \beta \neq \pi/2 \} \cap \del^*S }$.  This proves denseness near $0$.

We prove the second assertion of the Theorem.  After replacing $S$ with $\R^n \setminus S$ and $\beta$ with $\pi - \beta$ as in Remark \ref{example.swap.region}, we can without loss of generality assume $\cos\beta(0) > 0$ (since $V_I$ remains unchanged).  After shrinking our ball we can therefore work under the additional assumption that
\[
\Theta_{V, g}(x) < 1/2 + \cos\beta(x) \quad \text{and}\quad \cos\beta(x) > 0 \quad \forall x \in B_1 \cap \R^n.
\]

Since $\Theta_{V, g}(x) = (1+\cos\beta(x))/2$ for $\haus^{n-1}$-a.e. $x \in \del^*S \cap B_\gamma$, by upper-semi-continuity and our assumptions we have
\[
1/2 < (1+\cos\beta(x))/2 \leq \Theta_{V, g}(x) < 1/2 + \cos\beta(x) \quad \forall x \in B_\gamma \cap \overline{\del^* S}.
\]
But $\Theta_{V, g}(x) \in \{ 1/2, 1/2 + \cos\beta(x)\}$ for $\haus^{n-1}$-a.e. $x \in \spt V_I \cap \R^n\cap B_\gamma \setminus \del^* S$, and so we must have $\haus^{n-1}(B_\gamma \cap \overline{\del^* S} \setminus \del^*S) = 0$.  In $B_\gamma \setminus \overline{\del^* S}$ the interior varifold $V_I$ is a free-boundary varifold, and so by the same reasoning as before using \cite{GrJo} we have $\haus^{n-1}$-a.e. $x \in \spt V_I \cap \R^n \cap B_\gamma \setminus \overline{\del^*S}$ is a regular point.
\end{proof}

\section{Appendix: free-boundary first variation control} \label{sec:app1}

%{\color{blue} TODO: double check the following works for signed varifolds}

Recall that a signed $n$-rectifiable varifold $V$ in $(B_1 \subset \R^{n+1}, g)$ is said to have free-boundary in $\{ x_1 = 0 \}$ if there is a vector field $H^{tan}_{V, g} \in L^1_{loc}(B_1, \R^{n+1}; |\mu_V|)$ so that $H_{V,g}^{tan}$ is tangential to $\{x_1=0\}$ at $|\mu_V|$-a.e. point in $\{x_1=0\}$ and
\begin{equation}\label{eqn:fb-cond}
\delta_g V(X) = -\int g(H_{V,g}^{tan} , X) d\mu_V
\end{equation}
for all $X \in C^1_c(B_1, \R^{n+1})$ which are tangential to $\{ x_1 = 0 \}$.

% We assume $H^{tan}_{V, g}$ is tangential to $\{ x_1 = 0\}$ also.
% \textcolor{carlo}{I added the requirement that $\mu_{V_I}$ is positive, otherwise I'm not sure the result holds true. In particular, $F$ defined in \eqref{eqn:F-def} may not be monotone; the positivity of $F$ is also used in \eqref{eqn:sigma-vs-F}.} {\color{blue} good catch}
\begin{theorem}\label{thm:fb-first-var}
Let $g$ be a $C^1$ metric on $B_1$, and let $V$ be an $n$-rectifiable signed varifold in $(B_1, g)$ supported in $\{ x_1 \leq 0 \}$ with $g$-free-boundary in $\{ x_1 = 0\}$ and $g$-mean curvature $H^{tan}_{V, g}$, and assume that $\mu_{V_I}\equiv\mu_V\llcorner\{x_1<0\}$ is a positive measure. 
Then $V$ has locally-finite $g$-first-variation in $B_1$, and there exists a Radon measure $\sigma_{V,g} \perp |\mu_V|$ on $B_1$, supported in $\{ x_1 = 0 \}$, so that
\begin{equation}\label{eqn:fb-first-var-concl1}
\delta_g V(Y) = -\int g(H_{V, g}^{tan}, Y) d\mu_V - \int_{\{x_1 = 0\}} g(H_{\ver, g}, Y) d\mu_V + \int g(\nu_g, Y) d\sigma_{V,g}
\end{equation}
where $H_{\R^n, g}$ is the mean curvature vector of $\R^n \subset (\R^{n+1}, g)$ and $\nu_g$ is defined in \eqref{eq:unit_normal_definition}.

Furthermore, for any $\gamma < 1$ we have the boundary measure mass bound
\begin{equation}\label{eqn:fb-first-var-concl2}
\sigma_{V, g}(B_\gamma) \leq c ||H^{tan}_{V, g}||_{L^1(\mu_{V_I})} + c (|Dg|_{C^0(B_1)} + 1)\mu_{V_I}(B_1)
\end{equation}
where $c = (n, \gamma, |g|_{C^0(B_1)})$.
\end{theorem}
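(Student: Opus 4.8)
The plan is to run the classical Gr\"uter--Jost argument for varifolds with free boundary, carrying the metric $g$ along as elsewhere in the paper. The whole content is to control the first variation in the single normal direction $\nu_g$, so I would start from two elementary facts: the algebraic identity $\nabla^g x_1 = \sqrt{g^{11}}\,\nu_g$ (i.e.\ $\nu_g$ is the $g$-normalized gradient of the coordinate $x_1$, pointing into $\{x_1>0\}$), and the fact that for $|\mu_V|$-a.e.\ $x\in\{x_1=0\}$ the approximate tangent plane of $V$ is $\R^n$ (since $V\llcorner\{x_1=0\}$ is rectifiable inside the hyperplane $\{x_1=0\}$); consequently $\nu_g\perp_g T_xV$ there and $\dive_{V,g}(\phi\nu_g)|_x = \phi\,\dive_{\R^n,g}(\nu_g)|_x = -\phi\,g(H_{\R^n,g},\nu_g)|_x$, with the second fundamental form bound $|H_{\R^n,g}|\le c(n,|g|_{C^0})|Dg|_{C^0}$ (it vanishes when $g$ is constant). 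For a general $Y\in C^1_c(B_1,\R^{n+1})$ I would then decompose $Y = Y^T + \phi\,\nu_g$ with $\phi:=g(Y,\nu_g)\in C^1_c(B_1)$ and $Y^T := Y-\phi\nu_g$ tangential to every slice $\{x_1=c\}$; both pieces are $C^1_c$ with norms $\le c(|g|_{C^1})|Y|_{C^1}$, the free-boundary hypothesis gives $\delta_g V(Y^T) = -\int g(H^{tan}_{V,g},Y^T)\,d\mu_V = -\int g(H^{tan}_{V,g},Y)\,d\mu_V + \int_{\{x_1<0\}}\phi\,g(H^{tan}_{V,g},\nu_g)\,d\mu_{V_I}$ (using $g(H^{tan}_{V,g},\nu_g)=0$ $|\mu_V|$-a.e.\ on $\{x_1=0\}$), so everything reduces to computing $\delta_g V(\phi\nu_g)$.

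The heart of the matter, and the place where the one-sided support $\spt V\subset\{x_1\le0\}$ and the positivity of $\mu_{V_I}$ enter, is the construction of $\sigma_{V,g}$ by a cutoff into the open half-space. Fix $\gamma:\R\to[0,1]$ with $\gamma\equiv 0$ on $(-\infty,1]$, $\gamma\equiv 1$ on $[2,\infty)$, $\gamma'\ge 0$, and set $\gamma_\eps(x):=\gamma(-x_1/\eps)$, supported in $\{x_1\le-\eps\}$. Since $\gamma_\eps\phi\nu_g$ is compactly supported in $\{x_1<0\}$, where the free-boundary condition forces $\delta_g V=-H^{tan}_{V,g}\mu_{V_I}$, expanding $\delta_g V(\gamma_\eps\phi\nu_g)=\int\gamma_\eps\dive_{V,g}(\phi\nu_g)\,d\mu_{V_I}+\int g(\nabla^g_V\gamma_\eps,\phi\nu_g)\,d\mu_{V_I}$ and using $\nabla^g_V\gamma_\eps=-\tfrac1\eps\gamma'(-x_1/\eps)\sqrt{g^{11}}\,\pi_{V,g}(\nu_g)$ gives
\[
\int\phi\,d\lambda_\eps = \int\gamma_\eps\dive_{V,g}(\phi\nu_g)\,d\mu_{V_I} + \int g(H^{tan}_{V,g},\gamma_\eps\phi\nu_g)\,d\mu_{V_I},\qquad \lambda_\eps := \tfrac1\eps\gamma'(-x_1/\eps)\sqrt{g^{11}}\,|\pi_{V,g}\nu_g|_g^2\,\mu_{V_I}.
\]
Each $\lambda_\eps$ is a \emph{non-negative} Radon measure (here is where $\mu_{V_I}\ge0$ is used) concentrated in $\{-2\eps\le x_1\le-\eps\}$; the right side is bounded on compacts uniformly in $\eps$ and, since $\dive_{V,g}(\phi\nu_g)$ is bounded and $H^{tan}_{V,g}\in L^1_{loc}(\mu_{V_I})$, converges by dominated convergence to $\ell(\phi):=\int_{\{x_1<0\}}\big(\dive_{V,g}(\phi\nu_g)+g(H^{tan}_{V,g},\phi\nu_g)\big)\,d\mu_{V_I}$. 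As $\ell$ is a non-negative ($\phi\ge0\Rightarrow\ell(\phi)\ge0$), locally bounded linear functional, Riesz representation produces a non-negative Radon measure $\lambda$ with $\ell(\phi)=\int\phi\,d\lambda$; since $\ell$ vanishes on $C^1_c(\{x_1<0\})$, $\lambda$ is supported in $\{x_1=0\}$.

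To finish I would reassemble the identity. Splitting $\delta_g V(\phi\nu_g)$ into its $\{x_1<0\}$ and $\{x_1=0\}$ parts, the first is $\int_{\{x_1<0\}}\dive_{V,g}(\phi\nu_g)\,d\mu_{V_I}=\ell(\phi)-\int_{\{x_1<0\}}\phi\,g(H^{tan}_{V,g},\nu_g)\,d\mu_{V_I}$ and the second is $\int_{\{x_1=0\}}\phi\,\dive_{\R^n,g}(\nu_g)\,d\mu_V=-\int_{\{x_1=0\}}g(H_{\R^n,g},Y)\,d\mu_V$; adding $\delta_g V(Y^T)$ from the first paragraph (the two copies of $\int_{\{x_1<0\}}\phi\,g(H^{tan}_{V,g},\nu_g)\,d\mu_{V_I}$ cancel) yields exactly \eqref{eqn:fb-first-var-concl1} with $\sigma_{V,g}:=\lambda$ and conormal $\nu_g$, and in particular $V$ has locally bounded $g$-first variation. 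For \eqref{eqn:fb-first-var-concl2}, take $\phi\in C^1_c(B_1)$ with $0\le\phi\le1$, $\phi\equiv1$ on $B_\gamma$, $|D\phi|\le c(\gamma)$, and bound $\sigma_{V,g}(B_\gamma)\le\ell(\phi)\le\int(|\dive_{V,g}(\phi\nu_g)|+|H^{tan}_{V,g}|)\,d\mu_{V_I}$, using $|\dive_{V,g}(\phi\nu_g)|\le|\nabla^g_V\phi|+|\phi|\,|\dive_{V,g}\nu_g|\le c(\gamma,|g|_{C^0})+c(n,|g|_{C^0})|Dg|_{C^0}$.

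The remaining point — upgrading $\sigma_{V,g}$ to be mutually singular with $|\mu_V|$, equivalently showing $\lambda$ has no absolutely continuous part with respect to $\mu_{V_B}$ — is the one I expect to be the real obstacle, since it can only be done \emph{after} \eqref{eqn:fb-first-var-concl1} is in hand: having locally bounded first variation, $V$ enjoys the Allard monotonicity formula, so tangent varifolds exist at $\mu_V$-a.e.\ point, and at $\mu_{V_B}$-a.e.\ $x\in\{x_1=0\}$ the blow-up of $V$ is a stationary free-boundary cone whose $\{x_1=0\}$-part is a flat sheet of multiplicity $\theta_{V_B}(x)\ne0$; feeding this into the rescaled version of \eqref{eqn:fb-first-var-concl1} forces $\lim_{r\to0}\sigma_{V,g}(B_r(x))/\mu_{V_B}(B_r(x))=0$. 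This last step is precisely the standard argument of \cite{GrJo} (alternatively carried out by reflecting $V$ across $\{x_1=0\}$), which I would invoke. The rest of the bookkeeping — the $C^1$-regularity and tangential divergence of $\nu_g$, and keeping the constant in \eqref{eqn:fb-first-var-concl2} dependent only on $n,\gamma,|g|_{C^0}$ — is routine but should be written with some care.
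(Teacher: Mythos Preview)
Your approach is essentially the paper's: the same Gr\"uter--Jost cutoff (test with $\gamma(-x_1/\eps)\phi\,\nu_g$ to extract a nonnegative trace measure from $\mu_{V_I}$), the same tangential/normal decomposition of $Y$, and the same reassembly into \eqref{eqn:fb-first-var-concl1}--\eqref{eqn:fb-first-var-concl2}. The one substantive difference is how the singularity $\sigma_{V,g}\perp|\mu_V|$ is obtained. You define $\sigma_{V,g}:=\lambda$ directly and then propose to prove $\lambda\perp|\mu_{V_B}|$ \emph{a posteriori} by a blow-up/monotonicity argument. The paper instead routes through the abstract Radon--Nikodym decomposition: once $\delta_gV$ is shown to be a locally bounded functional (which your computation already gives), one writes $\delta_gV=-H\,d\mu_V+\eta\,d\sigma_V$ with $\sigma_V\perp|\mu_V|$ \emph{by definition}, and then identifies $H$, $\eta$, $\sigma_V$ by testing with $Y=h\,e_g$ and comparing with the trace identity. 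The paper still needs $F\perp|\mu_V|$ to pin down the normal part of $H$ on $\{x_1=0\}$, but it disposes of this in one line---``$F$ only depends on $V\llcorner\{x_1<0\}$, and so in fact $F\perp|\mu_V|$''---rather than via blow-ups. So your proposal is correct, and the only thing it costs you relative to the paper is the heavier machinery in that last step; the paper's shortcut (go through the abstract decomposition first, then match) is worth adopting.
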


\begin{remark}
Theorem \ref{thm:fb-first-var} says that if $V$ has bounded tangential mean curvature $H^{tan}_{V, g}$, then $V$ has bounded total mean curvature $H_{V, g} = H_{V, g}^{tan} + H_{\ver, g}$.  Since $H_{V, g}$ coincides with $H^{tan}_{V, g}$ for tangential fields and $|H_{V, g}| \leq |H^{tan}_{V, g}| + c(n) |Dg|$ (and we will always working in a $C^1$ metric), in this paper we simply say $V$ has bounded mean curvature to indicate either tangential or total mean curvature is bounded.
\end{remark}

\begin{remark}\label{rem:no-V_I}
If $V_I \llcorner B_r(x) = 0$ for some $B_r(x) \subset B_1$, then $\sigma_{V, g} \llcorner B_r(x) = 0$ also.  To see this, take any $X \in C^1_c(B_r(x))$ and write $$X = (X - g(X, \nu_g) \nu_g) + g(X, \nu_g) \nu_g =: X'' + X',$$
where $\nu_g = g^{1j} e_j/\sqrt{g^{11}}$.
Then $X''$ is tangential to $\R^n$, and since $V \equiv V_B$ in $B_r(x)$ we can argue as in the proof of Theorem \ref{thm:fb-first-var}:
\begin{align}
\delta_g V(X) &= \delta_g V(X'') + \delta_g V(X') \\
&= -\int g(H_{V, g}^{tan} , X'') +  g(X, \nu_g) \mathrm{div}_{\ver, g}(\nu_g) d\mu_{V_B} \\
&= -\int g(H_{V, g}^{tan}, X'') + g(H_{\ver, g}, X') d\mu_{V_B} \\
&= -\int g(H_{V, g}^{tan} + H_{\R^n, g}, X) d\mu_V.
\end{align}
\end{remark}

\begin{proof}[Proof of Theorem \ref{thm:fb-first-var}]
Write $|g|_{C^0} = \Gamma$.  Define the vector field
\[
e_g = \grad_g(x_1) \equiv \sum_{j=1}^{n+1}g^{1j} e_j,
\]
and note that $e_g$ is $g$-orthogonal to $\{ x_1 = 0 \}$.

Take any $h \in C^1_c(B_{1})$, any $\tau \in (0, \infty)$, let $\phi : \R \to \R$ be a cutoff function which $\equiv 0$ in a neighborhood of $0$ and $\equiv 1$ on $[1, \infty)$, and then define the vector field $X : B_{1} \to \R^{n+1}$ by
\[
X = \phi(-x_1/\tau) h(x) e_g(x).
\]
Trivially $X$ is compactly supported in $B_{1} \cap \{ x_1 \neq 0 \}$, and so by the free-boundary condition \eqref{eqn:fb-cond} we have
\[
\delta_g V(X) = -\int g(H_{V, g}^{tan} ,  X) d\mu_V = -\int \phi h H_{V, g} ^{tan} \cdot e_1 d\mu_V
\]
We compute
\begin{align*}
-\int \phi\left(\frac{-x_1}{\tau}\right) h H_{V, g}^{tan}\cdot e_1 d\mu_V
&= \int \mdiv_{V, g}(X) d\mu_V \\
&= \int \left(-\frac{\phi'}{\tau} h g(\pi_{V, g}(\grad_g(x_1)), e_g) + \phi \mdiv_{V, g}(h e_g)\right) d\mu_V \\
&= \int \left(-\frac{\phi'}{\tau} h |\pi_{V, g}(e_g)|^2_g + \phi \mdiv_{V, g}(h e_g) \right)d\mu_V.
\end{align*}
By the dominated convergence theorem we can assume without loss of generality that $\phi(s) = \max\{ \min\{ s, 1\}, 0\}$, and we therefore obtain
\[
\frac{1}{\tau} \int_{\{-\tau < x_1 < 0\} } h |\pi_{V, g}(e_g)|^2_g d\mu_V = \int_{\{ x_1 < 0 \}} \left(\phi h H_{V, g}^{tan} \cdot e_1 + \phi \mdiv_{V, g}(h e_g) \right)d\mu_V.
\]
Once again by dominated convergence, taking $\tau \to 0$ we deduce
\begin{align} 
&\lim_{\tau \to 0} \frac{1}{\tau} \int_{\{-\tau < x_1 < 0\}} h |\pi_{V, g}(e_g)|^2_g d\mu_V \\
&\qquad= \int_{\{x_1 < 0\}} \left(h H_{V, g}^{tan} \cdot e_1 + \mdiv_{V, g}(h e_g) \right)d\mu_V \label{eqn:fb-first-var-0.5} \\
&\qquad\leq c(n) |g|_{C^0} |h|_{C^1} \Big(||H_{V, g}^{tan}||_{L^1(\mu_{V_I})} + (|Dg|_{C^0(B_1)} + 1) \mu_{V_I}(B_1)\Big)
\end{align}
where $\mu_{V_I}=\mu_V\llcorner\{x_1<0\}$.

Now take $\gamma < 1$, and fix $\Phi$ any cutoff function which is $\equiv 1$ in $B_\gamma$ and supported in $B_1$.  We claim that
\begin{equation}\label{eqn:F-def}
F(h) := \lim_{\tau \to 0} \frac{1}{\tau}\int_{\{-\tau < x_1 < 0\}} h |\pi_{V, g}(e_g)|^2 d\mu_V
\end{equation}
is a Radon measure on $B_\gamma$, i.e. $F(h) = \int h dF$.  To see this, simply observe that
\begin{align}\label{eqn:F-bounds}
F(h) &\leq |h|_{C^0(B_\gamma)} F(\Phi) \\
&\leq |h|_{C^0(B_\gamma)} c(n, \gamma, \Gamma)(||H_{V, g}^{tan}||_{L^1(\mu_{V_I})} + (|Dg|_{C^0(B_1)} + 1) \mu_{V_I}(B_1)),
\end{align}
which implies that $F$ extends to a continuous linear function on $C^0(B_\gamma)$.  From the definition \eqref{eqn:F-def} (and since $\spt V \subset \{ x_1 \leq 0 \}$) it's clear that $\spt F \subset \{ x_1 = 0 \}$, but also observe that $F$ only depends on $V \llcorner \{ x_1 < 0 \}$, and so in fact $F \perp |\mu_V|$.

Now let $Y$ be any $C^1_c$ vector field in $B_\gamma$, and define
\[
Y' := \frac{g(Y, e_g)}{|e_g|^2_g} e_g \equiv \frac{Y^1}{g^{11}} g^{1j} e_j.
\]
Then from \eqref{eqn:fb-first-var-0.5} we have
\begin{align}
&\int_{\{x_1 < 0\} } \left(\mdiv_{V, g}(Y') + g(H_{V, g}^{tan}, Y') \right)d\mu_V \nonumber \\
&= F( g(Y, e_g)/|e_g|^2_g) \nonumber \\
&\leq c(n, \gamma, \Gamma) |Y|_{C^0(B_\gamma)} ( ||H_{V, g}^{tan}||_{L^1(\mu_{V_I})} + (|Dg|_{C^0(B_1)} + 1)\mu_{V_I}(B_1)). \label{eqn:fb-first-var-1}
\end{align}

Since $V$ is $n$-rectifiable we have $T_x V = T_x \ver$ for $\mu_V$-a.e. $x_1 = 0$.  Therefore, because $e_g/|e_g|_g$ is the $g$-unit normal to $\{ x_1 = 0 \}$, we can compute
\begin{align}
\int_{\{x_1 = 0\}} \mdiv_{V, g}(Y') d\mu_V &= \int_{\{x_1 = 0\}} \frac{g(Y, e_g)}{|e_g|_g} \mdiv_{\ver, g}\left( \frac{e_g}{|e_g|_g} \right) d\mu_V \nonumber \\
&= \int_{\{x_1 = 0\}} - g(H_{\ver, g}, Y') d\mu_V, \label{eqn:fb-first-var-2}
\end{align}
where $H_{\ver, g}$ is the mean curvature vector of $\ver$ in $(\R^{n+1}, g)$.

Noting that both $Y'' := Y - Y'$ and $H^{tan}_{V, g}$ are tangential to $\{ x_1 = 0 \}$, while $Y'$ is $g$-normal to $\{ x_1 = 0\}$, we can combine \eqref{eqn:fb-first-var-1}, \eqref{eqn:fb-first-var-2} and the free-boundary condition \eqref{eqn:fb-cond} to obtain
\begin{align}
&\int \mdiv_{V, g}(Y) d\mu_V\\
&\quad= \int \mdiv_{V, g}(Y'') d\mu_V + \int_{\{x_1 = 0\}} \mdiv_{V, g}(Y') d\mu_V + \int_{\{x_1 < 0\}} \mdiv_{V, g}(Y') d\mu_V \\
&\quad= -\int g(H_{V, g}^{tan}, Y) d\mu_V - \int_{\{x_1 = 0\}} g(H_{\ver, g}, Y) d\mu_V + F( g(Y, e_g)/|e_g|^2_g) \\
&\quad\leq c(n, \gamma, \Gamma)|Y|_{C^0(B_\gamma)}( ||H_{V, g}^{tan}||_{L^1(B_1)} + (|Dg|_{C^0(B_1)} + 1) |\mu_V|(B_1)).
\end{align}
We deduce that $V$ has locally-bounded first variation in $B_1$, and hence by standard Radon-Nikodyn we can write
\[
\int \mdiv_{V, g}(Y) d\mu_V = -\int g(H, Y) d\mu_V + \int g(\eta, Y) d\sigma_V
\]
for some $\sigma_V \perp |\mu_V|$, and some vector field $H \in L^1_{loc}(|\mu_V|)$, and some $g$-unit, $\sigma_V$-measurable vector field $\eta$.  From the free-boundary condition \eqref{eqn:fb-cond} it's clear that: $H = H^{tan}_{V, g}$ $|\mu_V|$-a.e. $x_1 < 0$; $g(H, X) = g(H^{tan}_{V, g}, X)$ for $|\mu_V|$-a.e. $x_1 = 0$ and any $X \in \ver$;  $\sigma_{V}$ must be supported in $\ver$; and $\eta = \pm \nu_g$ $\sigma_V$-a.e.

To prove \eqref{eqn:fb-first-var-concl1} it remains to show that $g(H, e_g) = g(H_{\ver, g} , e_g)$ on $\{ x_1 = 0 \}$ and $\eta = + \nu_g$.  Plugging $Y = h e_g$ into the first variation formula we have
\begin{align}
&-\int g(H, e_g) h d\mu_V + \int g(\eta, e_g) h d\sigma_{V} \\
&= -\int_{\{x_1 < 0\}} g(H^{tan}_{V, g}, e_g)  h d\mu_V - \int_{\{x_1 = 0\}} g(H_{\ver, g}, e_g) h d\mu_V + F( h )
\end{align}
for any $h \in C^0_c(B_1)$.  Therefore since $H = H^{tan}_{V, g}$ $\mu_V$-a.e. $x_1 < 0$ and since $F \perp |\mu_V|$, we deduce 
\begin{equation}\label{eqn:sigma-vs-F}
\int_{\{x_1 = 0\}} g(H - H_{\ver, g}, e_g) h d\mu_V = 0, \quad \int g(\eta, e_g) h d\sigma_{V} = F(h) \geq 0
\end{equation}
for all $h \geq 0$, which implies our claim.

The mass bound \eqref{eqn:fb-first-var-concl2} follows from \eqref{eqn:F-bounds} and \eqref{eqn:sigma-vs-F}: for any $h \in C^0_c(B_\gamma)$ non-negative we have
\begin{align}
\sigma_{V}(h) &\leq c(n, \Gamma) \int g(\nu_g, e_g) h d\sigma_{V} \\
&= c(n, \Gamma) F(h) \\
&\leq c(n, \gamma, \Gamma) |h|_{C^0} ( ||H^{tan}_{V, g}||_{L^1(\mu_{V_I})} + (|Dg|_{C^0(B_1)} + 1)\mu_{V_I}(B_1)). \qedhere
\end{align}
\end{proof}

\bibliography{citations2.bib}
\bibliographystyle{amsplain}

\end{document}